\documentclass[12pt]{amsart} % article/book/letter/amsart
\usepackage[utf8]{inputenc}
\usepackage[T1]{fontenc}	% accents

\usepackage{cite}

\usepackage{lmodern}			% police de caractères

\usepackage{graphicx}	% insertion des images / graphiques
\usepackage[plainpages=false,colorlinks,linkcolor=bleuFonce,citecolor=rougeFonce,urlcolor=vertFonce,breaklinks]{hyperref}

\usepackage{placeins}
\usepackage{float} % pour mettre H dans les figures

\usepackage{tikz}
\usepackage{makecell}
\usepackage{mathtools}
\usetikzlibrary{patterns}
\usetikzlibrary{decorations.pathreplacing}

\usepackage[normalem]{ulem}

\allowdisplaybreaks[4]

% **********  Couleurs personnalisées  **********

\usepackage{color}
\definecolor{vertFonce}	{rgb}{0,0.5,0}
\definecolor{numLignes}	{rgb}{0.17,0.57,0.7}	%{43,145,175}
\definecolor{gris}		{rgb}{0.5,0.5,0.5}
\definecolor{grisFonce}	{rgb}{0.2,0.2,0.2}
\definecolor{orange}	{rgb}{1,0.65,0.31}		%{255,167,79}
\definecolor{orangeFonce}{rgb}{1,0.4,0}
\definecolor{bleuFonce}	{rgb}{0,0,0.4}
\definecolor{rougeFonce}{rgb}{0.3,0,0}
\definecolor{rougeWord}	{rgb}{0.5,0,0}
\definecolor{vertClair}	{rgb}{0.8,1,0.8}
\definecolor{rougeClair}{rgb}{1,0.5,0.5}
\definecolor{violet}	{rgb}{0.5,0,0.5}

% **********  Dessin  **********

\usepackage{pict2e}
\setlength{\unitlength}{4pt}

\usepackage{multido}
\usepackage{upgreek}

% **********  Maths  **********

% paquets standards de l'ams, notamment theoremes et certains caracteres speciaux
\usepackage{amsfonts,amssymb,amsthm,amsmath}
\usepackage{dsfont}				% utilisation des caractères \mathds
\usepackage{mathrsfs}
\usepackage[mathscr]{euscript}
\usepackage{yfonts}
\usepackage{cancel}
\usepackage{enumerate}

\usepackage{geometry}
\geometry{left=1in,right=1in,top=1in,bottom=1in}

% Le [section] peut par exemple être remplacé par [chapter]
% il permet de numéroter les éléments par rapport aux numéros de chapitre

\newtheorem{theorem}{Theorem}[section]
\newtheorem{lem}[theorem]{Lemma}

\newtheorem{prop}[theorem]{Proposition}

\newtheorem{remark}{Remark}[section]

% **********  Commands  **********

% - longues fleches de cv faible -

% \xlrightharpoonup[under]{over}

%
\usepackage{stmaryrd}
\SetSymbolFont{stmry}{bold}{U}{stmry}{m}{n}
\newcommand		{\subsetArrow}	{\mathrel{\ooalign{$\subset$\cr%
\hidewidth\raise-.087ex\hbox{$_\shortrightarrow\mkern-1.5mu$}\cr}}}
\newcommand		{\subsetarrow}	{\mathrel{\ooalign{$\subset$\cr%
\hidewidth\raise-1.45ex\hbox{$\vec{}\mkern6mu$}\cr}}}

%\renewcommand\labelitemi{--}
% espaces
		% -2/18 quad
	% -2/18 quad
	% -1/18 quad
		% 1/18 quad
		% 2/18 quad (1/9)
		% 3/18 quad
		% 4/18 quad (2/9)
		% 5/18 quad
		% 6/18 quad (1/3)

% - fleches/relations -

 % "=a.e." or "=p.p."

% - Ensembles -
\newcommand		{\N}		{\mathbb N}			% naturels
\newcommand		{\RR}		{\mathbb R}			% real numbers
\newcommand		{\R}		{\RR}

\newcommand		{\Rd}		{\R^3}
\newcommand		{\RRd}		{\R^6}

\newcommand		{\CC}		{\mathbb C}			% complexes
			% sphère unité
\newcommand		{\cH}		{\mathcal H}		% Hilbert
\newcommand		{\cD}		{\mathcal D}		% C^\infty_c muni de la bonne topologie pour définir les distributions
\newcommand		{\cM}		{\mathcal M}		% Mesures bornées
\newcommand		{\cN}		{\mathcal N}		% Particle number operator
		% Probas
		% Applications linéaires ou loi d'une variable aléatoire
		% Applications linéaires
		% Applications linéaires continues
\newcommand		{\cK}		{\mathcal K}		% Opérateurs compacts
		% Opérateurs de trace 1
		% Espaces de Sobolev quantiques

\newcommand		{\cA}		{\mathcal A}
\newcommand		{\cB}		{\mathcal B}
\newcommand		{\cE}		{\mathcal E}
\newcommand		{\cG}		{\mathcal G}
\newcommand		{\cI}		{\mathcal I}

\newcommand		{\cQ}		{\mathcal Q}
\newcommand		{\cR}		{\mathcal R}
\newcommand		{\cS}		{\mathcal S}

\newcommand		{\cU}		{\mathcal U}
\newcommand		{\cV}		{\mathcal V}
\newcommand		{\cL}		{\mathcal L}		% Liouvillian

% mathsf

\newcommand		\sfA		{\mathsf A}
\newcommand		\sfB		{\mathsf B}
\newcommand		\sfC		{\mathsf C}
\newcommand		\sfD		{\mathsf D}

			% Entropy

\newcommand		\sfJ		{\mathsf J}

			% Opérateur linéaire dissipatif
			% Semi-groupe de Markov
\newcommand		\sfR		{\mathsf R}			% Bogoliubov transform
			% Transport Operator

\newcommand		\sfX		{\mathsf X}			% Exchange operator
		% Projection operator

\newcommand		\sfc		{\mathsf c}
			% Opérateur impulsion quantique
			% Opérateur impulsion quantique

% - () -
\newcommand		{\lt}			{\left}				%
\newcommand		{\rt}			{\right}			%
\renewcommand	{\(}			{\lt(}
\renewcommand	{\)}			{\rt)}
\newcommand		{\bangle}[1]	{\lt\langle #1\rt\rangle}
\newcommand		{\weight}[1]	{\bangle{#1}}	% <x>

\newcommand		{\inprod}[2]	{\bangle{#1, #2}}
\newcommand		{\com}[1]		{\lt[{#1}\rt]}		% commutator
			% lie bracket
				% integer left bracket
				% integer right bracket

\newcommand		{\n}[1]			{\lt\lvert #1 \rt\rvert}
\newcommand		{\norm}[1]		{\big\lVert #1 \big\rVert}		
\newcommand		{\nrm}[1]		{\lt\lVert #1\rt\rVert}
\newcommand		{\Nrm}[2]		{\nrm{#1}_{#2}}

% -  -
		% indicatrice
		% indicatrice

% - Operateurs -
%\let\d\relax
%\DeclareMathOperator{\d}		{d}
\renewcommand		{\d}		{\mathrm{d}}		% différentielle
\newcommand			{\dd}		{\,\d}				% différentielle (intg)
\newcommand			{\bd}		{\partial}			
\newcommand			{\dpt}		{\partial_t}

	% dérivée temporelle

\newcommand			{\grad}		{\nabla}
\newcommand			{\lapl}		{\Delta}

\newcommand			{\conj}[1]	{\overline{#1}}		% conjugaison complexe
		% Identity operator
%\DeclareMathOperator{\vp}		{vp}				% valeur principale

\DeclareMathOperator{\cF}		{\mathcal{F}}		% transformée de fourier
		% Esperance
		% Proba(...)
				% divergence
				% rotationel
				% signe
\DeclareMathOperator{\re}		{Re}				% partie réelle
\DeclareMathOperator{\im}		{Im}				% partie réelle
				% diamètre
\DeclareMathOperator{\tr}		{Tr}				% Trace
\DeclareMathOperator{\diag}		{diag}

\DeclareMathOperator{\Nor}		{Nor}

\newcommand			{\ch}		{\operatorname{ch}}
\newcommand			{\sh}		{\operatorname{sh}}

	% Fonction Gamma
		% transformée de fourier
	% Esperance
	% Proba(...)
	% divergence
	% rotationel
	% signe
		% partie réelle
\renewcommand	{\Im}[1]		{\im\!\( #1 \)}		% partie réelle
	% diamètre
\newcommand		{\Tr}[1]		{\tr\!\( #1 \)}		% Trace

\newcommand		{\nor}[1]		{\Nor\!\lt\{ #1 \rt\}}

% - int & sum -
\newcommand		{\intd}			{\int_{\Rd}}
\newcommand		{\intdd}		{\int_{\RRd}}

% - Autres -

\newcommand		{\ii}			{\mathrm{i}}	% i\in\N (et pas i^2 = -1)
\newcommand		{\jj}			{\mathrm{j}}	% j\in\N
\newcommand		{\init}			{\mathrm{in}}

\newcommand		{\cC}			{\mathcal{C}}

% Schrodinger
%\usepackage{upgreek}
\usepackage{braket}
% ----> \bra{ }  \ket{ }  \braket{ }  \set{ }	(small versions)
% ----> \Bra{ }  \Ket{ }  \Braket{ }  \Set{ }	(expanding versions)

				% kernel of \op
	% density operator

	% operator
	% operator

	% operator

	% operator

%\newcommand		{\bra}[1]	{\langle #1 |}
%\newcommand		{\ket}[1]	{| #1 \rangle}
		% Quasi distance de Wasserstein
		% Husimi transform
					% Wigner transform
			% Wigner transform

%\DeclareMathOperator{\weyl}	{\op}			% Weyl transform
				% Weyl transform

	% N body op corresponding to the solution of HF eq
	% N body op corresponding to the solution of Vlasov eq

	% quantum gradient
	% quantum Laplacian
			% quantum d_x
		% quantum d_xi
	% quantum d_xi
		% quantum d_x_j
		% quantum d_xi_j
\newcommand{\vect}[1]{\boldsymbol{\mathbf{#1}}}

% Second quantization
\DeclareMathOperator{\dG}	{\d\varGamma}			% Second quantization operator
				% Second quantization operator left
				% Second quantization operator right
		% Creation operator left right
		% Creation operator right left
		% Annihilation operator left right
		% Annihilation operator right left
		% Second quantization operator
		% Second quantization operator left
		% Second quantization operator right
	% Creation operator left right
	% Creation operator right left
	% Annihilation operator left right
	% Annihilation operator right left

\newcommand		{\h}		{\mathfrak{h}}		% Hilbert space (L^2(\Rd))

% ------------------------------- Font Shortcut ------------------------------ %

% mathsf

\newcommand		\sfK		{\mathsf K}

\newcommand		\sfS		{\mathsf S}

% mathfrak

% euler script mathscr

\newcommand     {\ecS}      {\mathscr{S}}

%% mathbb

%% ============================================================================
% Math commands for this paper
%% ============================================================================

              % condensate wave function

\newcommand{\id}{\mathds{1}}

\newcommand {\fluc}     {\mathrm{fluc}}
\newcommand {\hfluc}    {\cH_{\fluc}}

\newcommand {\asc}      {\mathfrak{a}}
\newcommand{\ol}{\overline}

% --------------------- This is the end of the preamble. --------------------- %
%% ============================================================================

\newcommand{\nv}[1]{\Vert #1\Vert }

\newcommand{\bbn}[1]{\Big\Vert #1 \Big \Vert }
\newcommand{\lr}[1]{\left\{ #1 \right\} }
\newcommand{\lrc}[1]{\left[ #1 \right] }
\newcommand{\lrs}[1]{\left( #1 \right) }

\newcommand{\lra}[1]{\langle #1 \rangle }

\newcommand{\bbabs}[1]{\Big | #1 \Big|}
\newcommand{\wq}{\infty}
\newcommand{\pa}{\partial}

\newcommand{\la}{\lambda}

\newcommand{\be}{\beta}

\newcommand{\ve}{\varepsilon}

\newcommand{\vp}{\varphi}
\newcommand{\ka}{\kappa}
\newcommand{\bu}{\vect{u}}

\newtheorem{lemma}[theorem]{Lemma}

% To include the section number in the equation numbering:
\numberwithin{equation}{section}

\begin{document}

%%
%% The title of the paper goes here.  Edit to your title.
%%

\title[Derivation of the Compressible Euler Equations]{Derivation of the Compressible Euler Equations from the dynamics of interacting Bose Gas in the Hard-core Limit Regime}

%%
%% Now edit the following to give your name and address:
%%

\author[J. Chong]{Jacky Chong}
\address{School of Mathematical Sciences, Peking University,
Beijing, China}
\email{jwchong@math.pku.edu.cn}

\author[S. Shen]{Shunlin Shen}
\address{School of Mathematical Sciences, University of Science and Technology of China, Hefei 230026,
Anhui Province, China}
\email{slshen@ustc.edu.cn}

\author[Z. Zhang]{Zhifei Zhang}
\address{School of Mathematical Sciences, Peking University,
Beijing, China}
\email{zfzhang@math.pku.edu.cn}

\subjclass[2010]{Primary  81V73, 35Q31, 35Q40
; Secondary  81Q05, 81U05,
81Q20.}

\begin{abstract}
We investigate the dynamics of short-range interacting Bose gases with varying degrees of diluteness and interaction strength. By applying a combined mean-field and semiclassical space-time rescaling to the dynamics in both the Gross--Pitaevskii and hard-core limit regimes, we prove that the local one-particle mass, momentum, and energy densities of the many-body system can be quantitatively approximated by solutions to the compressible Euler system in the strong sense, up to the first blow-up time of the fluid description, as the number of particles tends to infinity. In the hard-core limit regime, two novel results are presented. First, we rigorously prove, for the first time, that the internal energy of the fluid takes the form $4\pi \mathfrak{c}_{0}\rho^{2}$ (equivalently, pressure $P=2\pi \mathfrak{c}_{0}\rho^{2}$), arising solely from the kinetic energy density of the many-body system, rather than the interaction energy density, marking a fundamental difference from the Gross--Pitaevskii and other mean-field regimes. Second, the newly discovered coupling constant $\mathfrak{c}_{0}$ is the electrostatic capacity of the interaction potential, corresponding to the scattering length of the hard-core potential. Furthermore, in other limiting regimes, including those beyond the Gross--Pitaevskii regime, we find that the limiting equation is described by an eikonal system, offering a rigorous first-principle justification for using the ``geometric optics approximation'' to describe the dynamics of ultracold Bose gases.
\end{abstract}
% \bigskip
\keywords{Quantum many-body dynamics, hard-core limit regime, electrostatic capacity, compressible Euler equations, Gross--Pitaevskii equation, eikonal systems.}
\maketitle

\tableofcontents

%% ============================================================================
\section{Introduction}
%% ============================================================================

Consider a system of $N$ identical interacting bosons in $\R^3$ confined by a trapping potential with characteristic length $L_0$ that depends on $N$.
The associated $N$-body Hamiltonian is given by
\begin{align}\label{def:N-body_Hamiltonian}
    H^{\mathrm{trap}}_N = \sum^N_{j=1}\(-\frac{1}{2}\lapl_{x_j}+W_{\mathrm{trap}}(x_j)\)+\lambda\sum^N_{i<j} v(x_i-x_j)
\end{align}
where $\lambda>0$ is a coupling constant that may depend on $N$, and $v:\R^3\rightarrow [0, \infty]$ is a positive radial function with compact support $\{x\in \R^3\mid \n{x}\le R_0\}$, where $R_0$ is of order one.

We then consider the dynamics of such a system. More precisely, at initial time we switch off the trapping confinement and consider the evolution of the system, whose dynamics are governed by the $N$-body linear
Schr\"odinger equation
\begin{align}\label{eq:N-body_Schrodinger}
    &i\bd_t\psi_N = \(\sum^N_{j=1}-\frac{1}{2}\lapl_{x_j}+\lambda\sum^N_{i<j} v(x_i-x_j)\)\psi_N\\
    &\text{with } \quad \psi_N(0)=\psi_{N}^{\init} \in \cS(\R^{3 N})\ , \notag
\end{align}
where $\Nrm{\psi_N^{\init}}{L^2(\R^{3 N})}^2=1$. Here, we work in units where both the mass of each particle is equal to one and that the Planck's constant $\hbar$ is also one.

\subsection{Scalings and models}\label{section:meand_field_and_semiclassical_scaling} To obtain an effective
description of the many-body system at some macroscopic scale, we need to consider different space-time scales of the system in hope to find a scale that would yield a meaningful effective obervation of the many-body dynamics.

We begin by recasting Equation \eqref{eq:N-body_Schrodinger} in terms of dimensionless variables. Let $d=3$, $L_0>0$ be the characteristic length scale as defined above, and $T$ be some characteristic time scale. Then  we introduce the dimensionless variables
\begin{align*}
    \hat x = x/L_0, \qquad \hat t = t/T,
\end{align*}
and the rescaled interaction potential
\begin{align*}
    \widehat v(\hat z)=\frac{\lambda N\, T^2}{L_0^2}v(L_0\, \hat z)\ .
\end{align*}
To rewrite the system in terms of the dimensionless parameter
\begin{align*}
    \varepsilon = T/L_0^2
\end{align*}
and the new rescaled wave function
\begin{align*}
    \widehat \psi_N(\hat t, \hat x_1,\ldots, \hat x_N) := L_0^{dN/2}\psi_N(t, x_1, \ldots, x_N)\ ,
\end{align*}
we  multiply Equation \eqref{eq:N-body_Schrodinger}  by $T^2/L^2$ which then  yields the dimensionless equation
\begin{align*}
    i\varepsilon\,\bd_{\hat t}\widehat \psi_N = \(\sum^N_{j=1}-\frac{\varepsilon^2}{2}\lapl_{\hat x_j}+ \frac{1}{N}\sum^N_{i<j} \widehat v (\hat x_i-\hat x_j)\)\widehat\psi_N\ .
\end{align*}
Furthermore, fix some real parameters $\kappa$ and $\beta$, and consider the scales where
\begin{align}\label{eq:scaling_condition}
    \lambda NT^2/L_0^{2+d}=  \lambda N\varepsilon^2/L_0^{d-2}=\lambda N^{1-\beta} \varepsilon^{2(1-\kappa)}=:\widetilde\lambda(N)\ .
\end{align}
Equivalently, we could express $L_0, T$ of the scaling condition \eqref{eq:scaling_condition} in terms of $N, \varepsilon$
\begin{align}\label{def:characteristic_length}
    L_0 = (N^{\beta}\varepsilon^{2\kappa})^{\frac{1}{d-2}}  \quad \text{ and } \quad  T= (N^{\beta}\varepsilon^{2\kappa+\frac{d}{2}-1})^{\frac{2}{d-2}}
\end{align}
and write
\begin{align*}
    \widehat v (\hat z)= \widetilde\lambda(N)\(N^{\beta}\varepsilon^{2\kappa}\)^{\frac{d}{d-2}} v((N^{\beta}\varepsilon^{2\kappa})^{\frac{1}{d-2}}\, \hat z)\ .
\end{align*}
In this choice of scales, we see the $N^{-1}$ prefactor in front of the interaction potential, which is usually referred to as the mean-field scaling. Furthermore, we choose scales where the dimensionless parameter $\varepsilon>0$ becomes negligible\footnote{$A(N)\ll B(N)$ means $\lim_{N\rightarrow \infty}\frac{A(N)}{B(N)}\rightarrow 0$.}, i.e., $T\ll N^{\frac{2\beta}{d-2}}$. We shall refer to this set of scales as the semiclassical scalings. Lastly, we also restrict ourselves to models where $N^{\beta}\varepsilon^{2\kappa+\frac{d}{2}-1}\gg 1$ so that $(\widehat x, \widehat t\,)$ are indeed macroscopic variables.

In light of the above scaling argument, let us introduce the following dimensionless $N$-body Schr\"odinger equation
\begin{align}\label{eq:dimensionless_N-body}
    i\varepsilon\,\bd_{t}\psi_N^\varepsilon = \(\sum^N_{j=1}-\frac{\varepsilon^2}{2}\lapl_{x_j}+ \sum^N_{i<j}  U^\varepsilon_N \(x_i- x_j\)\)\psi_N =: \widehat H_{N}\psi_N^\varepsilon
\end{align}
with the interaction potential
\begin{align}\label{def:rescaled_interaction_potential}
   U_N^\varepsilon(x):= \tfrac{1}{N}V_{N}^\varepsilon(x)\quad \text{ where } \quad V_{N}^{\varepsilon}(x):=\lambda(N) (N^{\beta}\varepsilon^{2\kappa})^3v(N^{\beta}\varepsilon^{2\kappa}\, x)
\end{align}
where $\beta> 0$ and $0\le \kappa\le 1$ are fixed parameters and the coupling $\lambda$ could depend on $N$ and $\varepsilon$. Again, we impose the mild restriction of $N^{\beta}\varepsilon^{\frac{d-1}{2}+2\kappa}\gg 1$ on the parameter space of $(N, \varepsilon)$ for now. Also, for the simplicity of notations, we will still use $H_N$ to denote the rescaled Hamiltonian $\widehat H_N$.  The introduction of $\beta, \kappa$ allows us to interpolate between different physical models.  We shall explore the physical meaning of these  parameters below.

\subsection{One-particle conservation laws}\label{section:one-particle conservation laws}  In this work, we adopt a statistical approach to the study of the quantum system and consider the corresponding density operator $\gamma_N^\varepsilon= \ket{\psi_N^\varepsilon}\!\!\bra{\psi_N^\varepsilon}$ acting on $L^2(\R^{3N})$ with the integral kernel
\begin{align}
    \gamma_N^\varepsilon(t, X_N; X_N') := \psi_N^\varepsilon(t, X_N)\,\conj{\psi_N^\varepsilon(t, X_N')}
\end{align}
where $X_N:= (x_1, \ldots, x_N)\in \R^{3N}$.
Then its dynamics is governed by the Liouville--von Neumann equation
\begin{align}\label{eq:Liouville_vonNeumann}
    i\varepsilon\, \bd_t \gamma_N^\varepsilon  = [H_N, \gamma_N^\varepsilon]
\end{align}
where  $[\sfA, \sfB]:=\sfA\sfB - \sfB\sfA$ denotes the usual operator commutator. With the obervation that $\tr_{L^2(\R^{3N})}(\gamma_N^\varepsilon)=1$, we define the $k$-particle marginal density operator $\gamma_{N:k}^\varepsilon$ to be the operator acting on $L^2(\R^{3k})$ with the Schwartz kernel
\begin{align*}
    \gamma_{N:k}^\varepsilon(t, X_k; X_k'):= \int_{\R^{3(N-k)}}\psi_N^\varepsilon(t, X_k, Y_{N-k})\conj{\psi_N^\varepsilon(t, X_k', Y_{N-k})}\dd Y_{N-k}
\end{align*}
and its corresponding $k$-particle density and probability density functions
\begin{align*}
    \varrho_{N:k}^\varepsilon(t, X_k) := \binom{N}{k}\gamma_{N:k}^\varepsilon(t, X_k, X_k)\quad \text{ and } \quad \rho_{N:k}^\varepsilon(t, X_k):= \gamma_{N:k}^\varepsilon(t, X_k, X_k)\ .
\end{align*}
In particular, the sequence of $k$-particle density matrices satisfy the BBGKY hierarchy equations
\begin{multline}
    i\varepsilon\,\bd_t\gamma^\varepsilon_{N:k} = \com{\sum^k_{\jj=1}-\tfrac{\varepsilon^2}{2}\lapl_{\jj}+\frac{1}{N}\sum_{\ii<\jj}^kV^{\varepsilon}_{N, \ii\jj}\, , \gamma^\varepsilon_{N:k}}
    +\frac{N-k}{N}\tr_{k+1}\(\com{\sum^k_{\jj=1} V^{\varepsilon}_{N, \jj k+1}\, ,\gamma_{N:k+1}^\varepsilon}\)
\end{multline}
for $k<N$ and $\gamma^\varepsilon_{N:N}:=\gamma^\varepsilon_N$ satisfies Equation \eqref{eq:Liouville_vonNeumann}. Notice for convenience, we have used the same notation for both the density operator and its integral kernel.

Since the dynamics of the system of identical bosons conserves mass, momentum, and energy, it is also natural to consider the one-particle formulation of the conservation laws.  Define the local one-particle mass, momentum, and energy densities
\begin{align*}
    \rho^\varepsilon_{N:1}(x):=&\, \diag(\gamma^\varepsilon_{N:1})(x)= \gamma_{N:1}^\varepsilon(x, x)\ ,\\
    \vect{J}_{N:1}^\varepsilon(x):=&\, \im\diag(\varepsilon\grad\gamma^\varepsilon_{N:1})(x)\\
    =&\, \textstyle\frac{1}{2i}\intd \(\varepsilon\grad_x \gamma^\varepsilon_{N:1}(x, x')-\varepsilon\grad_{x'} \gamma^\varepsilon_{N:1}(x, x')\)\delta(x-x')\dd x'\ ,\notag\\
    E^\varepsilon_N(x):=&\,  \textstyle \tfrac12\diag(\varepsilon\grad_{x}\cdot \varepsilon\grad_{x'}\gamma^\varepsilon_{N:1})(x) +\frac{N-1}{2N}\intd V^\varepsilon_N(x-y)\gamma_{N:2}^\varepsilon(x, y; x, y)\dd y\ .
\end{align*}
Furthermore, we also define the following quantities
\begin{align*}
    \boldsymbol{\sigma}^\varepsilon_{N:1}(x):=&\, \tfrac12 \diag\((\varepsilon\grad_{x}\otimes\varepsilon\grad_{x'})(\gamma^\varepsilon_{N:1}+(\gamma^\varepsilon_{N:1})^\top)\)\ ,\\
    P_{N}^\varepsilon(x):=&\,\textstyle -\tfrac{\varepsilon^2}{4}\lapl \rho^\varepsilon_{N:1}(x)+\frac{N-1}{2N}\intd V^\varepsilon_N(x-y)\gamma_{N:2}^\varepsilon(x, y; x, y)\dd y\ ,
\end{align*}
and
\begin{align*}
    \vect{l}^\varepsilon_{N:1} := \textstyle-\frac{N-1}{2N} \intd V^\varepsilon_N(x-y)\(\grad_x -\grad_y\)\gamma^\varepsilon_{N:2}(x, y; x, y)\dd y\ ,
\end{align*}
where  $\gamma^\top$ denotes the transpose operator. Then, by following the computation in \cite{chong2021global}, we see that $\rho^\varepsilon_{N:1}$ and $\vect{J}^\varepsilon_{N:1}$ satisfy the balance equations
\begin{align}
    &\bd_t \rho^\varepsilon_{N:1} + \grad\cdot \vect{J}^\varepsilon_{N:1}=0\ ,\\
    &\bd_t \vect{J}^\varepsilon_{N:1}+ \grad\cdot\(\boldsymbol{\sigma}^\varepsilon_{N:1}+P^\varepsilon_{N}\vect{I}\)+\vect{l}^\varepsilon_{N:1} = 0\ .
\end{align}
Formally, we see that $\vect{l}^\varepsilon_{N:1}\rightarrow 0$ as $N$ tends to infinity. Hence the limiting system is expected to converge to some Euler-type system. In fact, one of the goals in this work is to show rigorously that
\begin{align}
    \rho_{N:1}^\varepsilon \xrightarrow[\frac1N+\varepsilon \rightarrow 0]{} \rho\quad \text{ and } \quad \vect{J}_{N:1}^\varepsilon \xrightarrow[\frac1N+\varepsilon \rightarrow 0]{} \vect{J}
\end{align}
in some suitable topology, where $(\rho, \vect{J})$ solves the compressible Euler equation
\begin{equation}\label{eq:euler_equation_momentum_form}
    \begin{cases}
        \bd_t \rho +\grad\cdot \vect{J} = 0\\
        \bd_t \vect{J} +\grad\cdot \(\frac{\vect{J}\otimes \vect{J}}{\rho}\)+\grad P(\rho)=0
    \end{cases}\ ,
\end{equation}
for some pressure $P(\rho)=\frac12 c\rho^2$ where $c$ depends on the limiting regime. Moreover, this motivates us to define the microscopic pressure of the Bose gas by $P_N^\varepsilon$.

 \subsection{The compressible Euler equations and eikonal system}
 In semiclassical analysis, Madelung's quantum hydrodynamic picture of wave mechanics  \cite{madelung1927quantum} (similar to the formulation in the previous section) establishes a connection between linear Schr\"{o}dinger-type wave equations with classical fluid equations. Based on this formulation, there have been many developments on the studies of the asymptotic behavior of the wave function as the Planck's constant tends to zero. In fact, this approach has been extended to nonlinear Schr\"odinger-type wave equations. The compressible Euler equations~\eqref{eq:euler_equation_momentum_form} is one such a system of equations that is obtained as the leading order asymptotic behavior of the cubic nonlinear Schr\"odinger equation in the semiclassical expansion.  For a survey related to the rigorous study of the semiclassical limit of nonlinear quantum hydrodynamic equations, see \cite{carles2021semi,zhang2008wigner} and the references within. According to different physical regimes, the limiting behavior of the wave function are described by different classical field equations.

The existence of classical solutions to System~\eqref{eq:euler_equation_momentum_form} can in general only hold locally in time because of physical phenomena such as the focusing and breaking of waves, and the development of shock waves (see, e.g., \cite{dafermos2016hyperbolic}).  The local-in-time existence of smooth solutions has been established by the standard hyperbolic PDE theory, see \cite{majda1984compressible, makino1986local,makino1986compressible}. More specifically,
we write the compressible Euler equation~\eqref{eq:euler_equation_momentum_form}
in its velocity form
\begin{equation}\label{eq:euler_equation_velocity_form}
    \begin{cases}
        \partial _{t}\rho +\nabla \cdot \( \rho\, \bu\) =0 \\
        \partial _{t}\bu+(\bu\cdot \nabla )\bu+c\nabla_{x}\rho =0
    \end{cases}\ ,
\end{equation}
where $\rho(t,x)$ is the density function and $\bu(t,x)$ is the velocity function. Then
System~\eqref{eq:euler_equation_velocity_form} has a unique solution $(\rho, \bu)$ for any time $T_{0}<T_\ast<\infty$ such that
\begin{equation}\label{equ:euler-poisson equation, regularity condition}
	\left\{
	\begin{aligned}
        &\rho\in C([0,T_{0}];H^{s}(\mathbb{R}^{3})),\quad \bu\in C([0,T_{0}];H^{s-1}(\mathbb{R}^{3}; \R^3))\\
        &\rho(t,x)\geq 0,\quad \textstyle\intd \rho(t,x)\dd x=1
	\end{aligned}
	\right. \ ,
\end{equation}
provided $s$ is sufficiently large. Here $T_\ast$ denotes the blow-up time of the solution $(\rho, \bu)$, which depends on the initial data $(\rho^\init, \bu^\init)$.

Another important limiting description is given by the following coupled transport and Hamilton--Jacobi equations of geometric optics
 \begin{equation}\label{eq:geometric_optics}
    \left\{
    \begin{aligned}
        &\pa_{t}a+\nabla \varphi_{\mathrm{eik}}\cdot \nabla a+\tfrac{1}{2}a\, \Delta \varphi_{\mathrm{eik}}=-i c|a|^{2}a \\
        &\pa_{t}\varphi_{\mathrm{eik}}+\tfrac{1}{2}|\nabla \varphi_{\mathrm{eik}}|^{2}=0
    \end{aligned}
    \right.\ ,
 \end{equation}
where $a(t,x)$ is the (complex) amplitude function and $\varphi_{\mathrm{eik}}(t,x)$ is the phase function.
The latter equation is also referred to as the time-dependent eikonal equation and the local well-posedness of System~\eqref{eq:geometric_optics} can follow from the energy method, such as \cite{majda1984compressible}. The system gives a classical corpuscular interpretaton of wave propagation.  We shall refer to this hyperbolic system~\eqref{eq:geometric_optics} as the eikonal system.

These two systems are important in many areas of pure and applied mathematics, and usually derived based on Newtonian mechanical principle. Nevertheless, they do have a fundamental origin from quantum many-body dynamics.
A key goal of statistical mechanics is to understand their emergence from
 microscopic theories in appropriate scaling regimes.

\subsection{Physical models and motivations} The model
\begin{align}\label{def:ESY_Hamiltonian}
    H_N=\sum^N_{j=1}-\frac{1}{2}\lapl_{x_j}+ \frac{1}{N}\sum^N_{i<j}  N^{3\beta} v \(N^\beta(x_i- x_j)\)
\end{align}
with the parameter $0\le \beta\le 1$ was first introduced in \cite{erdos2007derivation} to interpolated between different physical models of Bose gases. Notice the Hamiltonian \eqref{def:ESY_Hamiltonian} corresponds to the space-time rescaling of Hamiltonian \eqref{def:N-body_Hamiltonian} with $\varepsilon =1, \lambda = N^{\beta-1}$, and $L = N^\beta$. One can immediately see that Model \eqref{def:ESY_Hamiltonian} corresponds to three different physical situations: (i) $\beta=0$ corresponds to the mean-field interaction potential case, (ii) $0<\beta<1$ is interpreted as the weakly interacting Bose gas case, and (iii) $\beta=1$ corresponds to the Gross--Pitaevskii scaling regime. Let us give a brief overview of the different existing models/scaling regimes and compare them with the above proposed Model \eqref{eq:dimensionless_N-body}. The discussion will also allow us to give meaning to the case $\beta>1$ which was not covered by the original models of \cite{erdos2007derivation}.

The largest lengthscale of the problem is the size of the system, which is usually comparable with the variational scale of the particle density $\varrho(x)$ of the system, is set to be of order one. Consequently, this means that the size of $\varrho$ is of order $N$ since $N = \int \varrho$ and the average interparticle distance is given by $N^{-\frac13}$. Moreover, the interaction potential $U_N(x):=N^{3\beta-1}v(N^\beta x)$ also defines two lengthscales: the range of the interaction potential, denoted by $R_U$, and the ($s$-wave) scattering length associated with the interaction potential, denoted by $\asc_U$. The significant of the scattering length is that it determines the effective lengthscale for which one could observe two particle correlations. Another useful gas parameter is given by $\varrho\, R_U^3$ which measures the number of particles within the effective interaction range of a particle. When $\varrho\,R_U^3\ll 1$, we call this the dilute gas regime.

To understand the physical motivation for the model, we need to first review the notion of the scattering length. Suppose $U\ge 0$ is a radially symmetric function whose support is contained in $\{x \in \R^3\mid \n{x}\le R_U\}$. Consider the positive radial solution (called the $s$-wave)to the zero-energy scattering problem
\begin{align}\label{eq:zero-energy_scattering}
    \(-\lapl + U\) f = 0
\end{align}
satisfying the boundary condition $f(x)\rightarrow 1$ as $\n{x}\rightarrow \infty$. See \cite[Appendix C]{lieb2005mathematics} for a proof of existence of a unique positive radial solution to Problem \eqref{eq:zero-energy_scattering}.   Then the scattering length $\asc_U$ is defined by
\begin{align}\label{def:scattering_length}
    \asc_U   := \frac{1}{4\pi}\intd U(x)f(x)\dd x\ .
\end{align}
Moreover, it is easy to see that for $\n{x}>R_U$, we have that
\begin{align}
    f(x)= 1- \frac{\asc_U}{\n{x}}
\end{align}
which also means that if
\begin{align}
    w(x):= 1-f(x)
\end{align}
then we have that
\begin{align}
    \lim_{\n{x}\rightarrow \infty} \n{x}w(x) = \asc_U\ .
\end{align}
Furthermore, we have the obvious upper bound $\asc_U\le R_U$, which follows from the positivity of $f(x)$, and the upper bound (see \cite{spruch1959upper} or Remark 4 in \cite[Appendix C]{lieb2005mathematics})
\begin{align}\label{est:upper_bound_on_scattering_length}
    \asc_U \le \frac{1}{4\pi}\intd U(x)\dd x\ .
\end{align}
Notice, for each potential $U$, there are two important lengthscales to keep in mind: the interaction range of $U$ and the scattering length of $U$.

In the case $\beta =0$, the interaction range is the size of the system and the scattering length associated to $U_N$ is of the order $N^{-1}$, i.e. $\asc_N\ll R_{U_N}\sim 1$ where $\asc_N$ is the scattering length of $U_N$. In this case, it has been proved that the effective two-body scattering process is given by $U_N\ast\varrho$, i.e. of mean-field type which also corresponds to the derivation of the Hartree equation. In the case $0<\beta<1$, the range of the interaction is given by $R_{U_N}= \mathcal{O}(N^{-\beta})$ which is much smaller than the variational scale of the density $\varrho$ and the scattering length is again of order $N^{-1}$, that is, $\asc_N\ll R_{U_N}\ll 1$. Notice $\beta<\frac13$ corresponds to a dense gas model and $\beta>\frac13$ corresponds to a dilute gas model. In either cases, the effective two-body scattering process is given by its Born approximation and the corresponding effective potential is given by $(\int U_N)\varrho$. Lastly, the case $\beta = 1$ corresponds to the scenario where $\asc_{N}\simeq R_{U_N}=\mathcal{O}(N^{-1})$, which means the effective two-body scattering process is exactly the full two-body scattering process. Here, the effective potential is given by $4\pi\asc_0 \varrho/N$, where $\asc_0$ is the scattering length of $v$. Moreover, computing the gas parameter,we see that
\begin{align}
    \varrho\,\asc_N^3 = \mathcal{O}\(N^{-2}\)
\end{align}
which is an extremely diluted regime. For this reason,  the GP regime is also known as an ultradilute regime in the literature.

\subsubsection{Hard-core limit regime} Another interesting model we consider is the hard-sphere model, which corresponds to the Hamiltonian \eqref{def:N-body_Hamiltonian} with the hard-core interaction potential
\begin{align*}
    v_{\rm hc}(x) =
\begin{cases}
    \infty &  \text{if } \n{x} < R_0 \\
    0 & \text{if } \n{x} \geq R_0
\end{cases}
\ .
\end{align*}
More precisely, for simplicity, we set the interaction range to $R_0 = N^{-1}$. It follows that $\varrho \asc_0^3 \simeq N R_0^3 \ll 1$, since $\asc_0 = R_0$ for the hard-core potential. Hence, we are in the ultradilute gas regime; in fact, this is simply the Gross--Pitaevskii regime but with a potential that does not belong to $L^1$ in the large $N$ limit. However, for technical reasons, we do not work directly with the hard-core potential. Instead, we consider a limiting family of Hamiltonians. This perspective, that the dynamics of the hard-core model should be well-approximated by the dynamics of a smooth family of potentials, was noted in \cite[Remark 1.0.1]{GSRT13} for classical systems, and our work addresses the quantum analog of this claim. Mathematically, it is an interesting and challenging problem to rigorously understand in what sense the hard-core model can be approximated by models with smooth interactions for large particle systems.
%However, we cannot work directly with the hard-core potential due to that the quantum many-body dynamics for the hard-core potential are not fully understood. Also, there might be some well-definiteness problems in different limit regimes, as the parameter $\la$ in \eqref{def:N-body_Hamiltonian}, which plays a key role in describing different scaling regimes, does make no sense for the hard-core potential. Instead, we must consider a limiting family of Hamiltonians. This viewpoint was also noted in \cite[Remark 1.0.1]{GSRT13} for classical systems. Indeed, it remains a challenge to justify the hard-sphere limit by an approximation of smooth potentials to effective equations in the classical particle systems.
%our work expands on the details of their claim, considering the quantum analog.

More specifically, we consider the interaction potential $\lambda R_0^{-2} v(\n{x}/R_0)$, where $v: \mathbb{R}^3 \rightarrow [0, \infty)$ is a positive, radially symmetric, continuous function with support on $\{x \mid \n{x} \leq 1\}$, and $\lambda = \lambda(N) \nearrow \infty$ as $N \rightarrow \infty$. Rewriting the Hamiltonian in its dimensionless form with the characteristic length of the interaction range, we have
\begin{align*}
        H_N = \sum_{j=1}^N -\frac{1}{2} \Delta_{x_j} + \frac{1}{N} \sum_{i<j}^N \lambda(N) N^3 v \left( N (x_i - x_j) \right)\ .
\end{align*}
We call this the hard-core limit regime. In practice, we shall consider $\lambda(N) = (\ln N)^\upalpha$ for some $0 < \upalpha < 1$.

\subsubsection{The hyper-ultradilute limit regime: $\beta>1$ Case} As indicated in the above discussion, the case $\beta>1$ can be viewed as a rescaling of Model \eqref{def:N-body_Hamiltonian} with $\lambda = N^{\beta-1}$ and $L_0=N^\beta$. Computing the gas parameter of the rescaled model, we see that $\varrho\,R_{U_N}^3= \mathcal{O}(N^{1-3\beta})\ll \mathcal{O}(N^{-2})$; for this reason,  we refer to the case $\beta>1$ as the hyper-ultradilute regime.

Moreover, unlike the hard-core limit case, here we rescale the spatial variable by the characteristic length $L_0=N^\beta$, where  $\widetilde\lambda=1$ and $\varepsilon=1$, as defined in Equations \eqref{def:characteristic_length}. In particular, the potential belongs in $L^1$ uniformly in $N$, which is not the case in the hard-core limit regime.

An important feature of this scaling is that the corresponding scattering length is comparable to the range of the interaction potential, which is much smaller than $N^{-1}$, that is, $R_{U_N}\sim \asc_N=\mathcal{O}(N^{-\beta}) \ll N^{-1}$. As a result, the corresponding effective potential includes a smallness factor $N^{1-\beta}$, which arises due to the small scattering length. In lay terms, the hyper-ultradiluteness of the gas introduces this smallness factor when analyzing the corresponding scattering process.

\subsubsection{Beyond Gross--Pitaevskii regime} In the case $0<\varepsilon<1$ with $\varepsilon\rightarrow 0$ as $N\rightarrow \infty$ and $L_0= N\varepsilon^{2}$ (similarly for $\kappa \in [0, 1)$) in Model~\eqref{def:N-body_Hamiltonian}, we see that the gas parameter yields
\begin{align*}
    \varrho\,\asc_U^3 =\mathcal{O}(N^{-2}\varepsilon^{-6})\gg N^{-2}\ ,
\end{align*}
which corresponds to a model less dilute than the Gross--Pitaevskii regime. In the studies of the corresponding time-independent problems, these models are the beyond Gross--Pitaevskii regimes (cf. \cite{adhikari2021bose}). To studied the dynamics, it is natural to consider a semiclassical limit problem. But unlike the time-independent problem, we are not able to consider very large $\varepsilon$. The techniques used in this work does not allow for a polynomial dependence of $\varepsilon$ on $N^{-1}$.  Hence, it is a mathematical interesting and physically important open problem to consider large values of $\varepsilon$. However, this will not be a focus of our paper.

\bigskip

In this paper, we focus on
\begin{itemize}
\item the hard-core (HC) limit regime:
\begin{align*}
    \text{$\beta=1$, $\kappa=1$, and $\lambda = (\ln N)^\upalpha$ for $\upalpha \in (0, 1)$\ .}
\end{align*}
\item the Gross--Pitaevskii (GP) limit regime:
\begin{align*}
    \text{$\beta=1$, $\kappa=1$, and $\lambda =1$\ .}
\end{align*}
\item the beyond Gross--Pitaevskii (BGP) limit regime:
\begin{align*}
    \text{$\beta=1$, $\kappa\in(0,1)$, $\la=(\ln N)^{\upalpha}$ for
$\upalpha\in[0,1)$\ .}
\end{align*}

\item the semiclassical Gross--Pitaevskii (SGP) limit regime:
\begin{align*}
    \text{$\beta=1$, $\kappa = 0$, $\la=(\ln N)^{\upalpha}$ for
$\upalpha\in[0,1)$\ .}
\end{align*}
\item the hyper-ultradilute (HD) limit regime:
\begin{align*}
    \text{$\beta>1$, $\kappa \in [0, 1]$, $\la=(\ln N)^{\upalpha}$ for
$\upalpha\in[0,1)$\ .}
\end{align*}
\end{itemize}
%the Gross--Pitaevskii (GP) limit regime ($\beta=1$, $\kappa=1$, and $\lambda =1$), the beyond Gross--Pitaevskii (BGP) limit regime ($\beta=1$, $\kappa \in (0, 1)$), the semiclassical Gross--Pitaevskii (SGP) limit regime ($\beta=1$, $\kappa = 0$), the hard-core (HC) limit regime ($\beta=1$, $\kappa=1$, and $\lambda = (\ln N)^\upalpha$ for $\upalpha \in (0, 1)$), and the hyper-ultradilute (HD) limit regime ($\beta>1$, $\kappa \in [0, 1]$).
Note that the GP limit regime is, in fact, a beyond GP-type limit regime as discussed above; however, since we employed a different method in our analysis, it was more convenient to single out this case and assign it a different name.

\subsection{Bosonic Fock space} We adopt a grand canonical (second quantization) perspective when studying the dynamics of the many-body boson system as opposed to the canonical picture, fixed number of particles, discussed above. The purpose of this section is to introduce the necessary mathematical definitions needed to state the main results of our work; more details on the method of second quantization can be found in Section~\ref{section:bogoliubov_approximation}.

Let $\h = L^2(\R^3, \CC)$ denote the one-particle state space. We define the bosonic Fock space over $\h$, denoted by $\cF_s$, to be the closure of the algebraic direct sum pre-Hilbert space
\begin{align}
    \cF_s^{ \mathrm{alg}}:= \CC\oplus \bigoplus^\infty_{n=1} \bigotimes^n_{s}\h
\end{align}
with respect to the norm $\Nrm{\cdot}{}=\Nrm{\cdot}{\cF}$ induced by the Fock inner product
\begin{align}
    \inprod{\Psi}{\Phi} :=  \conj{\psi_0}\, \phi^{\init} + \sum^\infty_{n=1}\int_{\R^{3n}}\conj{\psi_{n}(x_1,\ldots, x_{n})}\,\phi_n(x_1,\ldots, x_{n})\dd x_1\cdots \d x_n
\end{align}
defined for all vectors $\Psi =(\psi_0, \psi_1, \ldots), \Phi = (\phi^{\init}, \phi_1, \ldots )\in \cF^{\mathrm{alg}}_{s}$. Here $\otimes_s$ is the symmetric tensor product of Hilbert spaces and $\bigotimes^n_{s}\h$ denotes the subspace of $\h^{\otimes n}= L^2(\R^{3n}, \CC)$ generated by simple symmetric tensors. Elements of $\cF_s$ are called (pure) state vectors. An important state vector in $\cF_s$ is the vacuum state
\begin{align}
    \Omega := (1, 0, 0, \ldots)
\end{align}
which describe a state with no particle. More generally, we define the number operator $\cN$ acting on $\cF_s$ by
\begin{align}
    \cN\Psi = (n\, \psi_n)_{n\in \N}\ .
\end{align}
Then the expected number of particles in state $\Psi$ is given by
\begin{align*}
   \inprod{\Psi}{\cN\, \Psi} = \sum^\infty_{n=1} n \Nrm{\psi_n}{L^2(\R^{3n})}^2\ .
\end{align*}

The Fock Hamiltonian $\cH_{N}$, dependent on $\varepsilon$ and $N$, is defined to be the operator acting on $\cF_s$ whose action on the $n$th sector of $\cF_s$ is given by
\begin{align}
    (\cH_N\Psi)_n = \(\sum^n_{j=1}-\frac{\varepsilon^2}{2}\lapl_{x_j}+ \frac{1}{N}\sum^n_{i<j}  V_N^\varepsilon \(x_i- x_j\)\)\psi_n\ ,
\end{align}
with the obvious modification when $n=0, 1$. Here, the interaction potential
\begin{align*}
V_{N}^{\ve}(x)=\lambda(N) (N^{\beta}\varepsilon^{2\kappa})^3v(N^{\beta}\varepsilon^{2\kappa}\, x),
 \end{align*}
 which is given by the scaling analysis in \eqref{def:rescaled_interaction_potential}. Then, we consider the Cauchy problem in Fock space described by the equation
\begin{align}\label{def:Fock_space_cauchy_problem}
    i\varepsilon\,\dpt \Psi = \cH_N \Psi \quad \text{ with } \quad \Psi(0)=\Psi^\init\ ,
\end{align}
where $\Nrm{\Psi^\init}{}=1$ and $\inprod{\Psi^\init}{\cN\, \Psi^\init}=\mathcal{O}(N)$. In fact, we consider a class of initial states called the Bogoliubov states, which we shall postponed its definition till Section~\ref{section:bogoliubov_approximation}.

As in the canonical picture, we consider the statistical formulation. Let $\Psi_N^\varepsilon = (\psi_{N, k}^\varepsilon)_{k\in \N}$ denotes the solution to Problem~\eqref{def:Fock_space_cauchy_problem} and define the corresponding density operator $\Gamma_N^\varepsilon :=\ket{\Psi_N^\varepsilon}\!\!\bra{\Psi_N^\varepsilon}$ acting on $\cF_s$. We define the corresponding one-particle reduced density operator acting on $\h$ by the kernel
\begin{align}\label{def:one-particle_reduced_density}
    \Gamma_{N:1}^\varepsilon(x, x')
    := \frac{1}{\inprod{\Psi_N^\varepsilon}{\cN\,\Psi_N^\varepsilon}}\sum^\infty_{n=1} n \int_{\R^{3(n-1)}}\psi^\varepsilon_{N, n}(x , X_{n-1})\conj{\psi^\varepsilon_{N, n}(x', X_{n-1})}\dd X_{n-1}\ .
\end{align}
and the two-particle reduced density operator
\begin{align}\label{def:two-particle_reduced_density}
    &\Gamma_{N:2}^\varepsilon(x_1, x_2, x_1', x_2')\\
    :=& \frac{1}{\inprod{\Psi_N^\varepsilon}{\cN(\cN-1)\,\Psi_N^\varepsilon}}\sum^\infty_{n=2} \binom{n}{2}  \int_{\R^{3(n-2)}}\psi^\varepsilon_{N, n}(x_1, x_2, X_{n-1})\conj{\psi^\varepsilon_{N, n}(x_1', x_2', X_{n-1})}\dd X_{n-1}\notag.
\end{align}
As in Section~\ref{section:one-particle conservation laws}, from the definitions~\eqref{def:one-particle_reduced_density}--\eqref{def:two-particle_reduced_density} we define the local one-particle mass, momentum, and energy densities as follows
\begin{align*}
    \rho^\varepsilon_{N:1}(x):=&\, \diag(\Gamma^\varepsilon_{N:1})(x)\ ,\\
    \vect{J}_{N:1}^\varepsilon(x):=&\, \im\diag(\varepsilon\grad\Gamma^\varepsilon_{N:1})(x)\ ,\\
    E^\varepsilon_{N}(x):=&\, E^\varepsilon_{N, \mathrm{kin.}}(x)+E^\varepsilon_{N, \mathrm{int.}}(x)\ , \\
    E^\varepsilon_{N, \mathrm{kin.}}(x):=&\,  \textstyle \tfrac12\diag(\varepsilon\grad_{x}\cdot \varepsilon\grad_{x'}\Gamma^\varepsilon_{N:1})(x)\ ,\\
    E^\varepsilon_{N, \mathrm{int.}}(x):=&\,  \textstyle\frac{\inprod{\Psi_N^\varepsilon}{\cN(\cN-1)\,\Psi_N^\varepsilon}}{2N\inprod{\Psi_N^\varepsilon}{\cN\,\Psi_N^\varepsilon}}\intd V^\varepsilon_N(x-y)\Gamma_{N:2}^\varepsilon(x, y; x, y)\dd y\ .
\end{align*}

\subsection{Literature overview}

\subsubsection{Effective description of interacting Bose gas}
The first rigorous estimate of the ground state energy of a system of interacting dilute (hard-core) Bose gas in the thermodynamic limit was provided by Dyson in \cite{dyson1957ground}. Dyson proved the correct leading upper bound, $4\pi\asc_0\varrho$, which rigorously made appear the scattering length, but his lower bound was off by a factor of about 14. The problem was eventually resolved by Lieb and Yngvason in \cite{lieb1998ground}, and, more recently, the next-order correction was obtained by Fournais and Solovej in \cite{fournais2020energy,fournais2023energy}. The work of Lieb and Yngvason inspired numerous subsequent studies, including the rigorous proof that there is $100\%$ Bose--Einstein condensate (BEC) in the ground state in the GP limit, and showed that the Gross--Pitaevskii theory correctly describes the ground state properties of the Hamiltonian~\eqref{def:ESY_Hamiltonian} with $\beta=1$ (see \cite{lieb2002proof,lieb2000bosons}). However, to prove BEC in the thermodynamic limit remains a major open problem. In recent years, a substantial amount of effort has been made to go beyond the Gross--Pitaevskii scaling in hopes of reaching the ``thermodynamic length scales'' (see \cite{adhikari2021bose,brennecke2022excitation,Fournais2020LengthSF}).

In the time-dependent setting, the validity of the Gross--Pitaevskii theory was established in a series of fundamental papers by Erd{\"o}s, Schlein, and Yau in \cite{erdos2006derivation, erdos2007derivation, erdos2009rigorous, erdos2010derivation} where they rigorously derived the 3D time-dependent Gross--Pitaevskii equation from the dynamics of a repulsive quantum many-body system in the GP limit, which motivated a large amount of works on the studies of the quantum BBGKY and Gross--Pitaevskii hierarchy such as \cite{CHPS15,CP11,CH16correlation,CH16on,CH19,HS16,klainerman2008uniqueness,KSS11,Soh15}. In parallel, with a slightly different and more quantitative approach, Pickl in \cite{pickl2015derivation} also provided a derivation of the Gross--Pitaevskii equation.

Subsequently, a quantitative approach to the derivation of the Gross--Pitaevskii equation in the canonical and grand canonical formalism by studying the quantum fluctuations about the effective dynamics was given in \cite{ benedikter2015quantitative,boccato2017quantum, caraci2024quantum}, which were in turn based on the earlier works \cite{ginibre1979classical,grillakis2010second,  hepp1974classical,lewin2015bogoliubov,rodnianski2009quantum}. The main idea behind the approach is to study the quantum fluctuations about the effective dynamics via Bogoliubov theory, which has been proven quite powerful in obtaining strong quantitative estimates for the error between the many-body dynamics and the effective mean-field dynamics in the subcritical regime, $0\le \beta<1$, (see, e.g. \cite{brennecke2019fluctuations, brennecke2019GPdynamics,chen2012second,chong2022dynamical,grillakis2013pair, kuz2017exact,lewin2015fluctuations, nam2017bogoliubov, nam2017note}). In fact, in certain regimes, one could construction approximation which is optimal and to arbitrary precision in the decay of $N$ (see \cite{bossmann2022beyond,chen2011rate,erdos2009quantum}). However, unlike the time-independent counterpart, the hard-core potential was not considered in the above works.

\subsubsection{Semiclassical limit of the Gross--Pitaevskii equation} Long before being rigorously justified from first principles, the Gross--Pitaevskii equation (or nonlinear Schrödinger (NLS) equation) was already a widely accepted phenomenological model for BEC and nonlinear optical phenomena. Hence, to understand the behavior of its solutions, it was natural to analyze high-frequency regimes using semiclassical techniques such as the WKB (Wentzel--Kramers--Brillouin) approximation. These approaches allow one to neglect the wave-like behaviors of the solution and replace them with a more classical mechanical approach (see, e.g., \cite{lifshitz1980statistical}).

The rigorous justification that the leading order behavior of the WKB expansion of the solutions to the semiclassical cubic NLS equation in short time is given by the compressible Euler system can be traced back to the works of Gérard in \cite{gerard1993remarques}, assuming analytic initial data, and Grenier in \cite{Gre98}, with just Sobolev initial data. In the 1D defocusing cubic NLS case, Jin, Levermore, and McLaughlin in \cite{JLM99} established the semiclassical limit of the NLS equation to compressible Euler equations for all time by exploiting the complete integrability structure of the 1D cubic NLS equation. See also \cite{carles2021semi} for the case of more general nonlinearities.

Taking a different approach through the method of modulated energy, Lin and Zhang investigated the semiclassical limit of the Gross--Pitaevskii equation in 2D exterior domains and obtained the compressible Euler equations as the limiting system. Notably, the concept of using modulated energy, while assuming and employing the regularity of the limiting solution, is similar to the relative entropy method introduced by Yau in \cite{yau1991relative} for establishing the stability of hydrodynamic limits. It also resembles Brenier's modulated entropy approach in \cite{brenier2000convergence} for deriving kinetic-to-fluid limits, which was itself based on Lions' notion of dissipative solutions in fluid mechanics, as outlined in \cite{lions1996mathematical}. Currently, the method of modulated energy has been fully developed and successfully employed in many areas of mathematics, notably in mean-field limit problems, as seen in the works of Serfaty \cite{Ser17,serfaty2020mean}.

\subsubsection{Many-body dynamics to Euler-type equations}
Perhaps more importantly, we  aim to understand how nonlinear equations of classical physics emerge as descriptions of microscopic systems in specific asymptotic regimes.  Starting from classical particle systems, one strategy to derive macroscopic fluids equations involves first passing to the Boltzmann equations and then taking the hydrodynamic limit to obtain the fluid equations. See the monographs \cite{CIP94,GSRT13,SR09} and the references within for further details.

In quantum mechanics, the derivation of the compressible Euler equations from quantum fermionic many-body dynamics was first established by Nachtergaele and Yau \cite{nachtergaele2002derivation,nachtergaele2003derivation} under specific technical conditions. For an introduction and survey of the scaling limit problem, see \cite{yau1998scaling}. Deriving nonlinear equations of classical physics from quantum systems, particularly for singular interaction potentials, is far from trivial. For the Coulomb potential, Golse and Paul in \cite{golse2022mean} obtained the first result on deriving the pressureless Euler--Poisson equations from quantum many-body dynamics in the mean-field and semiclassical regimes. Their approach is based on Serfaty's inequality \cite{serfaty2020mean}, a powerful tool in the classical mean-field limit that also plays a key role in the quantum mean-field limit. For example, see \cite{Ros21}, which combined the mean-field, semiclassical, and quasi-neutral limits to derive the incompressible Euler equation with a Coulomb interaction potential.

For a singular $\delta$-type potential, the second and third authors, along with Chen and Wu,  derived the compressible Euler equations from the quantum bosonic many-body dynamics in \cite{chen2023derivation} using a different scheme that combines the BBGKY hierarchy method and the modulated energy method. This scheme was later adopted in \cite{CSZ23} to obtain the quantitative convergence rate to the pressureless Euler--Poisson equation under a specific restriction between $N$ and $\hbar$. In \cite{CSZ23mean}, the second and third authors, along with Chen, developed a new approach to establish the quantum functional inequality for the $\delta$-type potential, resulting in the derivation of the full Euler--Poisson equation with pressure for $\beta\in (0,1)$ in Model~\eqref{def:ESY_Hamiltonian}.

\subsection{Main results}

Let us start by stating the main result for the GP and HC limit regimes.
\begin{theorem}[GP/HC limit regime]\label{thm:main_result_GP-HC}
    Fix $\beta=\kappa=1$ and $\lambda(N)=(\ln N)^\upalpha$ for $\upalpha \in [0, 1)$ fixed. Let $v$ be a bounded nonnegative radially symmetric function supported in $\conj{B_{R_0}(0)}=\conj{\{\n{x}< R_0\}}$ and is strictly positive in $B_{R_0}(0)$. Suppose $\Psi_{N, t}$ is a solution to the Cauchy problem~\eqref{def:Fock_space_cauchy_problem} with initial data $\Psi^{\init}=e^{-\cA(\sqrt{N}\phi^{\init})} e^{-\cB(k_0)}\Xi_0$ being a generalized Bogoliubov state defined by Expression~\eqref{eq:Fock_space_cauchy_problem_data}  satisfying the conditions:
    \begin{enumerate}[$(1)$]
        \item  $\phi^\init $ is an $L^2$ normalized wave function and satisfies the uniform $H^{4}$ bound
        \begin{align*}
        \norm{\weight{\varepsilon\grad}^4\phi^{\init}}_{L^{2}_x}\leq C\ .
        \end{align*}
        \item $\Xi_0 \in \cF_s$ with $\Nrm{\Xi_0}{}=1$ satisfying the following bounds: there exists $D>0$, independent of $N$ and $\varepsilon$, such that we have the bounds
        \begin{align*}
            \inprod{\Xi_0}{\cN\,\Xi_0},\ \frac{1}{N}\inprod{\Xi_0}{\cN^2\,\Xi_0},\ \inprod{\Xi_0}{\cH_N\,\Xi_0} \le D\ .
        \end{align*}
    \end{enumerate}
    Let $(\rho^\init, \bu^\init)\in H^5(\R^3)\times H^4(\R^3;\R^3)$ where $\rho^\init\ge 0$ with $\intd\rho^\init \dd x = 1$. Assume the modulated energy at the initial time tends to zero as $\varepsilon \rightarrow 0$, that this,
    \begin{align*}
        \cM[\phi^\init, \bu^\init, \rho^\init]=\cM^\init:=&\, \frac12\intd \n{(i\varepsilon\grad + \bu^\init)\phi^\init}^2\dd x+\cM_{\mathrm{pot.}}[\phi^\init, \rho^\init]\xrightarrow[\varepsilon \rightarrow 0]{} 0\ ,
    \end{align*}
    where
    \begin{align*}
        \cM_{\mathrm{pot.}}[\phi^\init, \rho^\init]:=&\,  \frac12\intd (K\ast\rho^\init_0)\rho^\init_0\dd x
         +
        \left\{
        \begin{aligned}
             \textstyle  4\pi \mathfrak{a}_{0},\,\, &\text{GP regime} \\
             \textstyle  4\pi \mathfrak{c}_{0},\,\, &\text{HC regime}
        \end{aligned}
        \right\}\times
         \frac12\intd [(\rho^\init)^2-2\,\rho^\init\rho_0^\init]\dd x\ .
    \end{align*}
        Here, $\asc_0$ is the scattering length associated with $v$ and $\mathfrak{c}_0:=R_0$ is the electrostatic capacity of $v$. Suppose $(\rho, \bu)$,  defined on an order one interval $[0, T_0]$ and  satisfies
        \begin{equation*}
            \left\{
            \begin{aligned}
                &\rho\in C([0,T_{0}];H^{5}(\R^3)),\quad \bu\in C([0,T_{0}];H^{4}(\R^3; \R^3)) \\
                &\rho(t,x)\geq 0,\quad \textstyle\intd \rho(t,x)\dd x=1
            \end{aligned}
            \right. \ ,
        \end{equation*}
     is a solution to the compressible Euler system~\eqref{eq:euler_equation_velocity_form}
     \begin{equation*}
        \begin{cases}
            \partial _{t}\rho +\nabla \cdot \( \rho\, \bu\) =0 \\
            \partial _{t}\bu+(\bu\cdot \nabla )\bu+\left\{
            \begin{aligned}
                 \textstyle  4\pi \mathfrak{a}_{0},\,\, &\text{GP regime} \\
                 \textstyle  4\pi \mathfrak{c}_{0},\,\, &\text{HC regime}
            \end{aligned}
            \right\}\times \nabla_{x}\rho=0
        \end{cases}\ ,
    \end{equation*}
    with initial datum $(\rho^\init, \bu^\init)$.

     Under the restriction that\footnote{Here, the implicit constant in Condition~\eqref{equ:restriction,theorem,gp,hc} could depend on the fixed parameters such as the time $T_{0}$, the energy bounds, and the Sobolev norms of $(\rho^{\mathrm{in}}$, $\bu^{\mathrm{in}}$), but is independent of $(N,\ve)$. The 100 in Condition~\eqref{equ:restriction,theorem,gp,hc} is inessential and could be replaced by another large constant.}
    \begin{align}\label{equ:restriction,theorem,gp,hc}
        N\gtrsim \exp(\varepsilon^{-100/(1-\upalpha)})\ ,
    \end{align}
    then we have the following quantitative estimates:
    \begin{enumerate}[$(i)$]
        \item For the GP regime:
        $$\beta=\kappa=1,\ \la(N)=1\ ,$$
         we have the following quantitative convergence of mass, momentum, and energy densities
         \begin{equation}
            \left\{
            \begin{aligned}
                &\Nrm{\rho_{N:1}^{\varepsilon}-\rho}{L^{\infty}_t([0, T_{0}])L^{2}_x}^{2}\lesssim \cM^\init+\varepsilon^{2}+\tfrac{1}{\ln N}\ ,\\[3pt]
                &\Nrm{\vect{J}_{N:1}^{\varepsilon}-\rho\,\bu}{L^{\infty}_t([0, T_{0}])L^{1}_x}^{2}\lesssim \cM^\init+\varepsilon^{2}+\tfrac{1}{\ln N}\ ,\\[3pt]
                &\Nrm{E_{N,\mathrm{kin.}}^{\varepsilon}-\tfrac{1}{2}\rho\n{\bu}^2-4\pi \mathfrak{b}_0 \rho^{2}}{L^{\infty}_t([0,T_{0}])L^{1}_x}^{2}
                \lesssim \cM^\init+\varepsilon^{2}+\tfrac{1}{\ln N}\ ,\\
                &\Nrm{E_{N,\mathrm{int.}}^{\varepsilon}-4\pi (\mathfrak{a}_{0}-\mathfrak{b}_{0})\rho^{2}}{L_{t}^{\infty}([0, T_{0}])L_{x}^{1}}^{2}\lesssim
                \cM^\init+\varepsilon^{2}+\tfrac{1}{\ln N}\ ,
            \end{aligned}
            \right.
         \end{equation}
       where
          \begin{align}
               \mathfrak{b}_0= \frac{1}{4\pi}\intd \n{\grad f_0}^2\dd x = \asc_0 - \frac{1}{4\pi}\intd v (f_0)^2\dd x\ .
           \end{align}
       \item For the HC regime:
       $$\beta=\kappa=1,\ \la(N)=(\ln N)^{\upalpha} \text{ with } \upalpha\in(0,1)\ ,$$
        we have the following quantitative convergence of mass, momentum, and energy densities
         \begin{equation}\label{equ:semiclassical limit,convergence rate,k=1}
       \left\{
       \begin{aligned}
       &\Nrm{\rho_{N:1}^{\varepsilon}-\rho}{L^{\infty}_t([0, T_{0}])L^{2}_x}^{2}\lesssim \cM^\init+\varepsilon^{2}+\eta(\tfrac{1}{\lambda(N)})+\tfrac{1}{\ln N}\ ,\\[3pt]
       &\Nrm{\vect{J}_{N:1}^{\varepsilon}-\rho\,\bu}{L^{\infty}_t([0, T_{0}])L^{1}_x}^{2}\lesssim \cM^\init+\varepsilon^{2}+\eta(\tfrac{1}{\lambda(N)})+\tfrac{1}{\ln N}\ ,\\[3pt]
       &\Nrm{E_{N,\mathrm{kin.}}^{\varepsilon}-\tfrac{1}{2}\rho\n{\bu}^2-4\pi \mathfrak{c}_0 \rho^{2}}{L^{\infty}_t([0,T_{0}])L^{1}_x}^{2}
       \lesssim \cM^\init+\varepsilon^{2}+\eta(\tfrac{1}{\lambda(N)})+\tfrac{1}{\ln N}\ ,\\
       &\nv{E_{N,\mathrm{int.}}^{\varepsilon}}_{L_{t}^{\infty}([0, T_{0}])L_{x}^{1}}^{2}\lesssim
       \eta(\tfrac{1}{\lambda(N)})+\tfrac{1}{\ln N}\ ,
       \end{aligned}
       \right.
       \end{equation}
       where $\eta(\mu)\to 0$ as $\mu\to 0$, which is defined by Expression~\eqref{def:convergence_rate_of_scattering_length} is simply the convergence rate of $\asc_0^{\mu}$ to $\mathfrak{c}_0$.
       \end{enumerate}
\end{theorem}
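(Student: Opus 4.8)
\noindent\emph{Strategy of the proof.} The argument rests on two essentially independent ingredients, which in practice may be intertwined through a single functional. Step~A is a Bogoliubov (norm) approximation, showing that the many-body state $\Psi_{N,t}$ stays quantitatively close to a Bogoliubov state built over a condensate wave function $\phi_t^\varepsilon$ that solves the semiclassically rescaled Gross--Pitaevskii equation
\begin{align*}
  i\varepsilon\,\dpt\phi_t^\varepsilon=-\tfrac{\varepsilon^2}{2}\lapl\phi_t^\varepsilon+4\pi\asc_0^{\lambda}\,\n{\phi_t^\varepsilon}^2\phi_t^\varepsilon,\qquad \phi_0^\varepsilon=\phi^\init,
\end{align*}
where $\asc_0^{\lambda}$ is the scattering length of $\lambda v$ in the sense of~\eqref{def:scattering_length} (so $\asc_0^1=\asc_0$ in the GP case, while in the HC case $\asc_0^{\lambda(N)}\to\mathfrak{c}_0$). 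Step~B is a Grenier-type modulated-energy estimate \cite{Gre98}: the hydrodynamic fields of $\phi_t^\varepsilon$ converge, on the prescribed interval $[0,T_0]$, to the given solution $(\rho_t,\bu_t)$ of the Euler system~\eqref{eq:euler_equation_velocity_form}; here the assumed regularity $(\rho,\bu)\in C([0,T_0];H^5\times H^4)$ is used, so that no Euler well-posedness needs to be established. The two ingredients are then spliced together, and the short-scale two-particle correlation structure — governed by the zero-energy scattering solution $f_0^{\lambda}$ of~\eqref{eq:zero-energy_scattering} for the potential $\lambda v$ — is expanded to produce the explicit $\rho^2$ contributions to the limiting densities.

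\noindent\emph{Step~A: effective dynamics.} Conjugate the Fock Hamiltonian $\cH_N$ successively by the generalized coherent-state operator $e^{-\cA(\sqrt N\phi_t^\varepsilon)}$ (extracting the condensate) and by a Bogoliubov transformation $e^{-\cB(k_t)}$ (extracting the singular pair correlations), with $k_t$ built from $f_0^{\lambda}$ truncated at a suitably chosen intermediate length scale. Expanding the conjugated generator and invoking the hypotheses on $\Xi_0$ — the bounds on $\inprod{\Xi_0}{\cN\,\Xi_0}$, $N^{-1}\inprod{\Xi_0}{\cN^2\,\Xi_0}$ and $\inprod{\Xi_0}{\cH_N\,\Xi_0}$ — one closes Grönwall estimates for $\inprod{\Psi_{N,t}}{\cN\,\Psi_{N,t}}$, $N^{-1}\inprod{\Psi_{N,t}}{\cN^2\,\Psi_{N,t}}$ and the renormalized fluctuation energy, with every constant tracked through its dependence on $\lambda(N)=(\ln N)^\upalpha$. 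It is precisely the requirement that the resulting $\lambda$- and $\varepsilon$-dependent error terms be controlled by $\varepsilon^2+(\ln N)^{-1}$ that forces the restriction~\eqref{equ:restriction,theorem,gp,hc}. The outcome is the quantitative closeness, in trace and energy norms, of $\Gamma^\varepsilon_{N:1}$ and $\Gamma^\varepsilon_{N:2}$ to the corresponding objects built from $\phi_t^\varepsilon$ and $f_0^{\lambda}$.

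\noindent\emph{Step~B: semiclassical limit.} The uniform $H^4$ bound on $\phi^\init$ propagates, by standard energy estimates for the semiclassical GP equation together with the regularity of $(\rho,\bu)$, to $\varepsilon$-uniform Sobolev control of $\phi_t^\varepsilon$ on $[0,T_0]$. One then differentiates the (suitably corrected) modulated energy $\cM[\phi_t^\varepsilon,\bu_t,\rho_t]$ in time, substitutes the GP equation and the Euler system~\eqref{eq:euler_equation_velocity_form}, integrates by parts using the assumed regularity, and closes a Grönwall inequality of the form $\dt\cM[\phi_t^\varepsilon,\bu_t,\rho_t]\lesssim\cM[\phi_t^\varepsilon,\bu_t,\rho_t]+\varepsilon^2$, whence $\cM[\phi_t^\varepsilon,\bu_t,\rho_t]\lesssim\cM^\init+\varepsilon^2$ on $[0,T_0]$. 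Coercivity of $\cM$ then controls $\Nrm{\n{\phi_t^\varepsilon}^2-\rho_t}{L^2_x}^2$, $\Nrm{\im(\varepsilon\conj{\phi_t^\varepsilon}\grad\phi_t^\varepsilon)-\rho_t\bu_t}{L^1_x}^2$, and the convergence of the kinetic and interaction energy densities of the correlated state towards $\tfrac12\rho_t\n{\bu_t}^2+4\pi\mathfrak{b}_0^{\lambda}\rho_t^2$ and $4\pi(\asc_0^{\lambda}-\mathfrak{b}_0^{\lambda})\rho_t^2$, respectively, the split being dictated by the Green identities $4\pi\mathfrak{b}_0^{\lambda}=\intd\n{\grad f_0^{\lambda}}^2\dd x$ and $4\pi\asc_0^{\lambda}=\intd\n{\grad f_0^{\lambda}}^2\dd x+\intd\lambda v\,(f_0^{\lambda})^2\dd x$ for~\eqref{eq:zero-energy_scattering}.

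\noindent\emph{Assembling and the main obstacle.} Combining Steps~A and~B gives $\rho^\varepsilon_{N:1}=\n{\phi_t^\varepsilon}^2+(\text{Step~A error})$, and likewise for $\vect{J}^\varepsilon_{N:1}$ and the energy densities once the correlation expansion is inserted; this yields case~(i). For case~(ii) one additionally lets $\lambda=\lambda(N)=(\ln N)^\upalpha\to\infty$: one proves $\asc_0^{\lambda}\to\mathfrak{c}_0$ and $\mathfrak{b}_0^{\lambda}\to\mathfrak{c}_0$ with the quantitative rate $\eta(1/\lambda)$, which is the source of the $\eta(1/\lambda(N))$ term in the HC bounds, so that $\asc_0^{\lambda}-\mathfrak{b}_0^{\lambda}=\tfrac{1}{4\pi}\intd\lambda v\,(f_0^{\lambda})^2\dd x\to0$, the interaction-energy density vanishes in the limit, and the full pressure constant $4\pi\mathfrak{c}_0$ is carried entirely by the kinetic-energy density — the announced structural novelty. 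The principal difficulty is Step~A in the hard-core regime: because the interaction potential fails to be uniformly integrable and the scattering profile $f_0^{\lambda}$ degenerates towards the hard-core profile as $\lambda\to\infty$, the Bogoliubov fluctuation estimates must be made uniform in $\lambda$, which is exactly what dictates the stringent lower bound~\eqref{equ:restriction,theorem,gp,hc} and requires quantitative, $\lambda$-uniform control of $f_0^{\lambda}$, $\asc_0^{\lambda}$ and $\mathfrak{b}_0^{\lambda}$ as $\lambda\to\infty$.
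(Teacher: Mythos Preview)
Your two-step architecture (Bogoliubov approximation to an effective one-body equation, then modulated energy to Euler) is exactly the paper's strategy, and your identification of the $\mathfrak{b}_0^\lambda$/$\asc_0^\lambda-\mathfrak{b}_0^\lambda$ split for the kinetic/interaction energy densities, together with $\mathfrak{b}_0^\lambda\to\mathfrak{c}_0$ and $\asc_0^\lambda-\mathfrak{b}_0^\lambda\to 0$ in the HC regime, matches the paper's Proposition~3.6 and Lemma~A.2.

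Two technical points where your sketch departs from the paper's implementation are worth flagging. First, the intermediate one-body equation is not the cubic GP $i\varepsilon\dpt\phi=-\tfrac{\varepsilon^2}{2}\lapl\phi+4\pi\asc_0^\lambda|\phi|^2\phi$ but the \emph{modified} GP $i\varepsilon\dpt\phi=-\tfrac{\varepsilon^2}{2}\lapl\phi+(K\ast|\phi|^2)\phi$ with $K=V_N^\varepsilon f^\varepsilon_{N,\ell}$; the pair excitation kernel $k_t$ is built from the ground state $f^\varepsilon_{N,\ell}$ of a Neumann problem on a ball of radius $\ell=\varepsilon^4$, not a truncation of the zero-energy solution. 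These specific choices are what make the off-diagonal quadratic term in the fluctuation Hamiltonian cancel (Proposition~4.8), and the modulated-energy functional in the paper is defined with the convolution $K\ast\rho^\varepsilon$ rather than the local $4\pi\asc_0^\lambda(\rho^\varepsilon)^2$; the passage from one to the other produces additional error terms $\lambda(N)/L$ and $\eta(\mu)$ that you would otherwise miss. Second, your claim that the $H^4$ bound on $\phi^\init$ propagates to $\varepsilon$-uniform Sobolev control is not correct in the $\beta=\kappa=1$ regime: the paper (Lemma~B.3, via Bourgain's $X^{s,b}$ trick) only obtains $\Nrm{\weight{\varepsilon\grad}^s\phi_t}{L^2}\le C_T\,P(\varepsilon^{-1})$ for a polynomial $P$, and it is precisely this polynomial loss, together with the $\lambda$-dependent constants in the fluctuation bounds of Proposition~4.11, that is absorbed by the restriction $N\gtrsim\exp(\varepsilon^{-100/(1-\upalpha)})$.
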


\begin{remark}
    Recall the total energy density corresponding to Euler equations is given by $E=\tfrac12 \rho\n{\bu}^2+\rho e$ where $\rho e$ is the internal energy density and $e$ is the specific internal energy which is proportional to the density, i.e. $e = c\rho$. We can restate the convergence of the energy density functions as follow
    \begin{align*}
        &\lim_{(N,\ve)\rightarrow (\infty,0)}\Nrm{E_{N,\, \mathrm{kin.}}^\varepsilon-\rho\, (\tfrac12\!\n{\bu}^2+\uptheta e)}{L^\infty_t([0, T_0])L^1_x} = 0\ ,\\
         &\lim_{(N,\ve)\rightarrow (\infty,0)}\Nrm{E_{N,\, \mathrm{int.}}^\varepsilon- (1-\uptheta)\rho e}{L^\infty_t([0, T_0])L^1_x}= 0\ ,
    \end{align*}
    where $\uptheta \in (0, 1]$ given by
    \begin{align*}
        \uptheta :=
        \begin{cases}
            \intd \n{\grad f_0}^2\dd x/\asc_0  & \text{GP regime}\ , \\[3pt]
            1  & \text{HC regime}\ .
        \end{cases}
    \end{align*}
    Hence, Theorem $\ref{thm:main_result_GP-HC}$ demonstrates rigorously how the internal energies arise from the microscopic dynamics. In particular, in the hard-core limit, the internal energy depends only on the microscopic kinetic energy of the quantum system.
\end{remark}

\begin{remark}
    As in Lemma $\ref{lem:bogoliubov_states}$, the generalized Bogoliubov state\footnote{
    This choice of the localization parameter $l$ permits a wide variety of initial data. Specifically, we set $l=\ve^{4}$ to streamline our analysis. See more explanations in Section \ref{subsection:the_neumann_problem}.} $\Psi^{\init}=e^{-\cA(\sqrt{N}\phi^{\init})} e^{-\cB(k_0)}\Xi_0$ concentrates around $N$ particles with a deviation of the order $N^{1/2+}$. This state naturally emerges as an approximation to the ground state of a trapped Bose gas when the system's chemical potential is tuned to admit an expected $N$ particles. Furthermore, as shown in Lemma~\ref{lem:bogoliubov_states} and the result of \cite{lieb2000bosons}, if $\phi^\init$ is the normalized minimizer of the Gross--Pitaevskii functional, it follows that $\Psi^\init$ has, to leading order, the energy of the ground state.  Thus, $\Psi^{\init}$ models the state prepared in experiments by cooling the trapped Bose gas to very low temperatures called a ultracold Bose gas.
\end{remark}

\begin{remark}\label{remark:localization}
    We localize the correlation structure to a ball of size $\ell^3$ for two main reasons: (1) to simplify the notation, as otherwise we would encounter numerous estimates involving terms like $e^{c\varepsilon^{-2}}$ when estimating $\Nrm{\sh(k_{N, t}^\varepsilon)}{\mathrm{HS}}$, and (2) to improve the dependence between $N$ and $\varepsilon$. However, even without this localization, the result remains the same, but with a dependence of
    $N \gtrsim \exp(\kappa_1 \exp(\kappa_2\, \varepsilon^{-\upgamma}))$.

\end{remark}

\begin{remark}
As we only require uniform $H^{4}$ bounds on the initial wave function $\phi^{\mathrm{in}}$, it is not hard to find examples of $\phi^{\mathrm{in}}$ satisfying the initial condition that the modulated energy $\mathcal{M}^{\mathrm{in}} \to 0$. One typical example is the WKB-type initial datum
\begin{align*}
    \phi^{\mathrm{in}}(x) = \sqrt{\rho^{\mathrm{in}}(x)} e^{iS(x)/\varepsilon}, \quad \bu^{\mathrm{in}}(x) = \nabla S(x),
\end{align*}
for which $\mathcal{M}^{\mathrm{in}} = \mathcal{O}(\varepsilon^{2})$. The rate $\varepsilon^{2}$ should also be optimal in the sense that it matches the order of the error terms from the kinetic energy part of the modulated energy.
\end{remark}

\begin{remark}
The convergence rate of the scattering length $\asc_0^{\mu}$ to the capacity $\mathfrak{c}_0$, denoted $\eta(\mu)$, depends significantly on the profile near the zero value of the potential $v(x)$. See Section $\ref{subsection:semiclasscial_limit_of_the_scattering_length}$ for more details.
\end{remark}

\begin{remark}
The condition that the interaction potential $v(x)$ is strictly positive in the set $B_{R_0}(0)$ could be further relaxed. Here, we impose the positivity condition to simplify some calculations in the semiclassical limit analysis of the scattering length.
\end{remark}

In the BGP, SGP, and HD limit regimes, we shall employ WKB analysis and restrict ourselves to WKB states.
\begin{theorem}[BGP/SGP limit regimes]\label{thm:main_result_HD}
    Assume the same hypotheses on $v$ and $\Psi^\init$ as in Theorem~\ref{thm:main_result_GP-HC}. Moreover, assume $\phi^\init$ is a $L^2$ normalized WKB state, that is, $\phi^\init$ has the form
    \begin{align}
        \phi^\init = a^{\ve,\, \init} e^{i\varphi^\init_{\mathrm{eik}}/\varepsilon}
    \end{align}
    where $(a^{\ve, \init}, \varphi^\init_{\mathrm{eik}})$ satisfies the following conditions:
    \begin{enumerate}[$(1)$]
        \item there exists $C$, independent of $\varepsilon$, such that
        \begin{align*}
            &\varphi_{\mathrm{eik}}^{\mathrm{in}}\in C^{\infty}(\R^3; \R),\quad \pa_{x}^{\sigma}\varphi_{\mathrm{eik}}^{\mathrm{in}}\in L^{\infty}(\R^3) \quad \forall \sigma \in \N_0^3,\ |\sigma|:=\textstyle\sum^3_{\jj=1}\sigma_\jj \geq 2\ ,\\
            &\text{ and }\quad  \nv{a^{\ve,\init}}_{H^{4}_x}\leq C\ .
        \end{align*}
        In particular, we have that $\weight{\varepsilon\grad}^4\phi^\init \in L^2(\R^3)$, uniformly bounded in $\varepsilon$.
        \item      Let $a^\init \in \bigcap_{k\ge 0} H^{k}(\R^3)$ where $\intd|a^\init|^2 \dd x = 1$ and suppose $(a, \varphi_{\mathrm{eik}})$,  defined on an order one interval $[0, T_0]$ and satisfies
        \begin{equation*}
            \left\{
            \begin{aligned}
                &\varphi_{\mathrm{eik}}\in C([0,T_{0}];C^\infty(\R^3)),\quad a\in C([0,T_{0}];\textstyle\bigcap_{k\ge 0}H^{k}(\R^3))\ ,\\
                &\textstyle\intd \n{a(t,x)}^2\dd x=1\ .
            \end{aligned}
            \right.
        \end{equation*}
    \end{enumerate}
    Under the restriction that
    \begin{align}
    N\gtrsim \exp(\varepsilon^{-\frac{100}{1-\upalpha}})\ ,
    \end{align}
 for $s>\frac{3}{2}$, $p\in[1,2]$,  the following statements hold:

    \begin{enumerate}[$(i)$]
        \item For the BGP regime in the critical case:
        $$\beta=1,\ \kappa=\tfrac{1}{2},\ \la(N)=(\ln N)^{\upalpha}\text{ with }\upalpha\in[0,1)\ ,$$
        we have the following quantitative convergence of mass, momentum, and energy densities
         \begin{equation}\label{equ:semiclassical limit,convergence rate,k=1/2}
        \left\{
        \begin{aligned}
        &\Nrm{\rho_{N:1}^{\varepsilon}-|a|^{2}}{L^{\infty}_t([0, T_{0}])L^{p}_x}\lesssim \nv{a^{\varepsilon,\mathrm{in}}-a^{\mathrm{in}}}_{H^{s}_x}+\varepsilon+\eta(\varepsilon)+\tfrac{1}{\ln N}\ ,\\[3pt]
        &\Nrm{\vect{J}_{N:1}^{\varepsilon}-|a|^{2}\nabla \varphi_{\mathrm{eik}}}{L^{\infty}_t([0, T_{0}])L^{1}_x}\lesssim \nv{a^{\varepsilon,\mathrm{in}}-a^{\mathrm{in}}}_{H^{s}_x}+\varepsilon+\eta(\varepsilon)+\tfrac{1}{\ln N}\ ,\\[3pt]
        &\Nrm{E_{N,\mathrm{kin.}}^{\varepsilon}-\tfrac{1}{2}|a|^{2}|\nabla \varphi_{\mathrm{eik}}|^{2}}{L^{\infty}_t([0,T_{0}])L^{1}_x}
        \lesssim \Nrm{a^{\varepsilon,\mathrm{in}}-a^{\mathrm{in}}}{H^{s}_x}+\varepsilon+\eta(\varepsilon)+\tfrac{1}{\ln N}\ ,\\[3pt]
        &\Nrm{E_{N,\mathrm{int.}}^{\varepsilon}}{L^{\infty}([0, T_{0}] ;L_{x}^{1})}\lesssim
        \varepsilon+\tfrac{1}{\ln N}\ ,
        \end{aligned}
        \right.
        \end{equation}
        where $(a,\varphi_{\mathrm{eik}})$ satisfies the eikonal system with the capacity constant $\mathfrak{c}_{0}$
         \begin{equation}\label{equ:eikonal equation,theorem}
         \left\{
         \begin{aligned}
        &\pa_{t}a+\nabla \varphi_{\mathrm{eik}}\cdot \nabla a+\tfrac{1}{2}a\,\Delta \varphi_{\mathrm{eik}}=-i 4\pi \mathfrak{c}_{0}|a|^{2}a\\
        &\pa_{t}\varphi_{\mathrm{eik}}+\tfrac{1}{2}|\nabla \varphi_{\mathrm{eik}}|^{2}=0
         \end{aligned}
         \right.\ .
         \end{equation}
        \item For the SGP regime:
        $$\beta=1,\ \kappa=0,\ \la(N)=(\ln N)^{\upalpha} \text{ with } \upalpha\in[0,1)\ ,$$
        we have
         \begin{equation}\label{equ:semiclassical limit,convergence rate,k=0}
        \left\{
        \begin{aligned}
        &\Nrm{\rho_{N:1}^{\varepsilon}-|a|^{2}}{L^{\infty}_t([0, T_{0}])L^{p}_x}\lesssim \nv{a^{\varepsilon,\mathrm{in}}-a^{\mathrm{in}}}_{H^{s}}+\varepsilon+\tfrac{1}{\ln N}\ ,\\[3pt]
        &\Nrm{\vect{J}_{N:1}^{\varepsilon}-|a|^{2}\nabla \varphi_{\mathrm{eik}}}{L^{\infty}_t([0, T_{0}])L^{1}_x}\lesssim
        \nv{a^{\varepsilon,\mathrm{in}}-a^{\mathrm{in}}}_{H^{s}}+\varepsilon+\tfrac{1}{\ln N}\ ,\\[3pt]
        &\Nrm{E_{N,\mathrm{kin.}}^{\varepsilon}-\tfrac{1}{2}|a|^{2}|\nabla \varphi_{\mathrm{eik}}|^{2}}{L^{\infty}_t([0, T_{0}])L^{1}_x}
        \lesssim \nv{a^{\varepsilon,\mathrm{in}}-a^{\mathrm{in}}}_{H^{s}}+\varepsilon+\tfrac{1}{\ln N}\ ,\\[3pt]
        &\Nrm{E_{N,\mathrm{\mathrm{int.}}}^{\varepsilon}}{L^{\infty}_t([0, T_{0}])L^{1}_x}\lesssim
        \varepsilon+\tfrac{1}{\ln N}\ ,
        \end{aligned}
        \right.
        \end{equation}
        where $(a,\varphi_{\mathrm{eik}})$ satisfies the trivial eikonal system
         \begin{equation}\label{equ:eikonal equation}
         \left\{
         \begin{aligned}
        &\pa_{t}a+\nabla \varphi_{\mathrm{eik}}\cdot \nabla a+\tfrac{1}{2}a\Delta \varphi_{\mathrm{eik}}=0\\
        &\pa_{t}\varphi_{\mathrm{eik}}+\tfrac{1}{2}|\nabla \varphi_{\mathrm{eik}}|^{2}=0
         \end{aligned}
         \right.\ .
         \end{equation}
        \end{enumerate}
    \end{theorem}
\begin{remark}
For the HD regime:
        $$\beta>1,\ \kappa\in[0,1],\ \la(N)=(\ln N)^{\upalpha} \text{ with } \upalpha \in [0,1)\ ,$$
the result is similar to the SGP regime and the limit equation is also the trivial eikonal system. To avoid redundancy, we omit the detailed description.
\end{remark}
%\begin{remark}
% % As shown in Section~\ref{section:Semiclassical Limit of the Modified Gross--Pitaevskii Equation}, for the $\beta=1$ situation, there are three typical cases  $\ka\in \{0, \frac{1}{2}, 1\}$, which would lead to different effective limiting equations. For the BGP regime, for simplicity, we shall only provide the proof for the typical $\kappa=\frac{1}{2}$ case, since the proof for more general $\ka\in(0,1)$ is similar but requires more tedious work.
% As shown in Section~\ref{section:Semiclassical Limit of the Modified Gross--Pitaevskii Equation}, for the case of $\beta = 1$, there are three typical scenarios corresponding to the values of $\kappa$ in $\{0, \frac{1}{2}, 1\}$, each leading to a different effective limiting equation. For the BGP regime, we will focus on providing the proof for the typical case of $\kappa = \frac{1}{2}$ for simplicity, as the proof for more general $\kappa \in (0, 1)$ is similar but involves more tedious calculations.
%\end{remark}

\subsection{Outline, novelties, and notations: a guide to the paper} The essential idea behind the proof of the main results is to demonstrate the convergence from the dynamics of the many-body system to the solution of the compressible Euler system by factoring through the intermediate dynamics given by the Gross--Pitaevskii equation. The key to the proof lies in the quantitative nature of the approach: we use quantitative techniques from \cite{benedikter2015quantitative, boccato2017quantum} to estimate the difference between the many-body dynamics and the Gross--Pitaevskii equation, and then we compare the dynamics of the Gross--Pitaevskii equation to the compressible Euler system via the method of modulated energy as seen in \cite{lin2006semiclassical}, which is also a quantitative method.

One challenge in proving the convergence of local conserved densities, specifically energy densities, is the need to work at the regularity level of the many-body system's energy. In this context, the quantitative convergence problem is not only sharp from the viewpoint of energy conservation but also critical for the GP/HC scaling regime, as the strong particle correlations in this regime necessitate a nontrivial correction to the effective energy approximation, unlike the subcritical cases where the correlation structure is subleading \cite{chen2023derivation, CSZ23mean}. To address these difficulties, we use the effective approximation operator with a correction term:
\begin{align}\label{equ:correction operator,intro}
\Gamma^\varepsilon_{\mathrm{cor},t}:=\ket{\phi^\varepsilon_{N, t}}\!\!\bra{\phi^\varepsilon_{N, t}}+\tfrac{1}{N}\sh(k^\varepsilon_{N, t})\,\conj{\sh(k^\varepsilon_{N, t})}\ ,
\end{align}
which indeed plays a key role in explaining the transition mechanism of energies in appropriate regimes.

\subsubsection{Outline} In Section~\ref{section:Scattering Function and its Semiclassical Analysis}, we review some results regarding the properties of the scattering function. The emphasis of the section is on the role the semiclassical parameter $\mu$ plays in the estimates for the scattering function and how it affects the scattering length. Section~\ref{section:bogoliubov_approximation} provides a review of the Fock space formalism and shows how the local one-particle densities of the many-body dynamics can be approximated by densities of the (modified) Gross--Pitaevskii dynamics through controlling the fluctuation dynamics. Section~\ref{section:Bounds on the Fluctuations} contains all the analysis of the fluctuation dynamics needed to prove the main results. Section~\ref{section:Semiclassical Limit of the Modified Gross--Pitaevskii Equation} examines the convergence of the (modified) Gross--Pitaevskii dynamics to the dynamics of the compressible Euler system and more; moreover, we also provide a more detailed analysis on certain aspects of the space-time scaling. Finally, Section~\ref{sect:proof_of_main_results} combines the results from the previous sections to present the proofs of the main results.

Let us summarize the sketch and the key idea of the proof.

\vspace{0.25cm}
\noindent\textbf{Step 1. The $W^{1,1}$-type quantitative approximation: quantum many-body dynamics vs. modified Gross--Pitaevskii dynamics}.

For the quantum mean-field limit to the nonlinear Schr\"{o}dinger equations, the convergence in the trace norm, $L^{2}$ norm, or even weak sense, is sufficient to describe the physical phenomenon of the Bose--Einstein condensate. However,
in the derivation of macroscopic equations, a
$W^{1,1}$-type quantitative approximation is required, as the quantum momentum and kinetic energy densities require at least one derivative of regularity to be well-defined.

Continuing a great deal of efforts as in \cite{benedikter2015quantitative, boccato2017quantum,erdos2006derivation, erdos2007derivation, erdos2009rigorous, erdos2010derivation} regarding the study of the Gross--Pitaevskii regime, we establish the following $W^{1,1}$-type quantitative approximation estimates
\begin{align}
\Nrm{\diag(\Gamma_{N:1, t}^\varepsilon)-\diag(\Gamma^\varepsilon_{\mathrm{cor},t})}{L^1_x}\leq&
 \lrs{\frac{1}{\ln N}}^{100},\\
\Nrm{\diag(\varepsilon \grad_{x}\Gamma_{N:1, t}^\varepsilon)-\diag(\varepsilon \grad_{x}\Gamma^\varepsilon_{\mathrm{cor},t})}{L^1_x}\leq&\lrs{\frac{1}{\ln N}}^{100},\\
\Nrm{\diag(\varepsilon \grad_{x}\cdot\varepsilon\grad_{x'}\Gamma_{N:1, t}^\varepsilon)-\diag(\varepsilon \grad_{x}\cdot\varepsilon\grad_{x'}\Gamma^\varepsilon_{\mathrm{cor},t})}{L^1_x}\lesssim& \lrs{\frac{1}{\ln N}}^{100}.
\end{align}
It should be noted that the correction term
$\tfrac{1}{N}\sh(k^\varepsilon_{N, t})\,\conj{\sh(k^\varepsilon_{N, t})}$ in the  operator $\Gamma^\varepsilon_{\mathrm{cor},t}$ defined by \eqref{equ:correction operator,intro} is
a smallness factor of the order of $\frac{1}{N}$ in the $L^{2}$ level, but is indeed an effective quantity in the $H^{1}$ level, which we have shown in Lemma~\ref{lem:kinetic_energy_density_approx}. Moreover, the correction term also plays a key role in the classification of the different scaling regimes.

In addition to the quantum mass, momentum, and kinetic energy densities, a significant challenge arises in handling the quantum interaction energy density. Unlike these quantities, which are determined by the one-particle operator $\Gamma_{N:1,t}^{\ve}$, the interaction energy density is defined in terms of the two-particle operator $\Gamma_{N:2,t}^{\ve}$. Identifying an effective approximation is particularly nontrivial, especially when a correction term is incorporated into the one-particle operator. Nevertheless, we establish the effective approximation and rigorously prove that
\begin{align}\label{equ:interaction energy approximation,intro}
    \Nrm{\diag\(\tr_2\(V^\varepsilon_{N, 12}\Gamma^\varepsilon_{N:2, t}\)\)-4\pi\mu \la \(\asc_0^{\mu}-\mathfrak{b}_{0}^{\mu}\)\(\rho^\varepsilon_{t}\)^2}{L^1_x} \lesssim \lrs{\frac{1}{\ln N}}^{100}\ ,
\end{align}
where the parameter $\mu=\frac{N^{1-\beta}\ve^{2(1-\kappa
)}}{\la}$, $\mathfrak{a}_{0}^{\mu}$ denotes the scattering length, and $\mathfrak{b}_{0}^{\mu}$ is a characteristic length defined in \eqref{equ:characteristic length}.

\vspace{0.25cm}
\noindent\textbf{Step 2. Semiclassical limit of the scattering length.}

For clarity of presentation, we focus on the \textbf{hard-core limit regime}, characterized by the parameters $\lambda \to \infty$ and $\mu = \frac{1}{\lambda} \to 0$. As a direct consequence of \eqref{equ:interaction energy approximation,intro}, it follows that the interaction energy satisfies
\begin{align}\label{equ:interaction energy,intro,vanish}
    \Nrm{\diag\(\tr_2\(V^\varepsilon_{N, 12}\Gamma^\varepsilon_{N:2, t}\)\)}{L^{1}_{x}}\sim \lrs{\frac{1}{\ln N}}^{100}+|\asc_{0}^{\mu}-\mathfrak{b}_{0}^{\mu}|.
\end{align}

At present, it remains unclear whether the interaction energy given in \eqref{equ:interaction energy,intro,vanish} vanishes in the limit $N \to \infty$ and $\mu \to 0$. This behavior depends on the semiclassical limit of the scattering length $\mathfrak{a}_{0}^{\mu}$ and the characteristic length $\mathfrak{b}_{0}^{\mu}$ as $\mu \to 0$.

Furthermore, the modified Gross--Pitaevskii equation takes the form
\begin{align}\label{equ:mgp,intro}
    i\varepsilon\,\bd_t\phi_{N}^{\ve}  = -\tfrac{\varepsilon^2}{2}\lapl \phi_{N}^{\ve} + (K_{N}^{\ve}*|\phi_{N}^{\ve}|^{2})\phi_{N}^{\ve},
\end{align}
where the nonlinear term satisfies the approximation
\[
(K_{N}^{\ve}*|\phi_{N}^{\ve}|^{2})\phi_{N}^{\ve}\simeq 4\pi \asc_{0}^{\mu}|\phi_{N}^{\ve}|^{2}\phi_{N}^{\ve},
\]
which also necessitates a semiclassical limit analysis of the scattering length.

To address this, we develop a quantitative method for analyzing the convergence of the scattering length to the capacity constant $\mathfrak{c}_{0}$ of the interaction potential $v(x)$, establishing that
\begin{align}
    \eta(\mu):=\mathfrak{c}_{0}-\mathfrak{a}_{0}^{\mu}=\mathcal{O}(\mu^{\frac{1}{n+2}}),
\end{align}
where the exponent $\frac{1}{n+2}$ depends on the profile of $v(x)$.

The proof utilizes techniques from elliptic equations, including the comparison principle and the construction of auxiliary functions. For completeness, we provide a detailed proof in Appendix~\ref{appendix:semiclassical_two-body_problem}.

\vspace{0.25cm}
\noindent \textbf{Step 3. Deriving the limit equations from the modified Gross--Pitaevskii dynamics.}

The final step involves analyzing the semiclassical limit problem and comparing the modified Gross–Pitaevskii equation with the limiting equation. Significant advancements have been made in the study of the semiclassical limit of nonlinear Schrödinger equations (NLS), including the modulated energy method introduced in \cite{lin2006semiclassical} and the WKB expansion method developed in \cite{carles2007wkb,carles2021semi}.

In this work, we address the modified Gross–Pitaevskii equation \eqref{equ:mgp,intro}, which differs from NLS. The convolution structure $(K_{N}^{\ve}*|\phi_{N}^{\ve}|^{2})\phi_{N}^{\ve}$, a nonlocal term, obstructs the direct application of these established methods. Consequently, additional modifications are required, including a detailed error term analysis and a quantitative investigation of the semiclassical limit of the scattering length. Nevertheless, it remains necessary to establish a $W^{1,1}$-type quantitative approximation, analogous to Step 1:
\begin{align}
    &\Nrm{|\phi_{N}^{\ve}|^{2}-\rho}{L^{\infty}_t([0,T_{0}])L^{2}_x}^{2}\lesssim
    \cM \lrc{\phi_{N}^{\ve},\rho,\bu}(0)+\ve^{2}+\eta(\tfrac{1}{\lambda})+\frac{1}{\ln N}\ ,\\
    &\Nrm{\vect{J}_{N}^{\ve}-\rho \bu}{L^{\infty}_t([0,T_{0}])L^{1}_x}^{2}\lesssim \cM \lrc{\phi_{N}^{\ve},\rho, \bu}(0)+\ve^{2}+\eta(\tfrac{1}{\lambda})+\frac{1}{\ln N}\ ,\\
    &\Nrm{|i\ve\nabla \phi_{N}^{\ve}|^{2}-\rho\n{\bu}^{2}}{L^{\infty}([0,T_{0}])L^{1}_x}^{2}\lesssim
    \cM \lrc{\phi_{N}^{\ve},\rho,\bu}(0)+\ve^{2}+\eta(\tfrac{1}{\lambda})+\frac{1}{\ln N}\ .
\end{align}

By incorporating the $W^{1,1}$-type estimates established in Steps 1–3, along with the explicit bound on the correction term $\tfrac{1}{N}\sh(k^\varepsilon_{N, t})\,\conj{\sh(k^\varepsilon_{N, t})}$, we conclude the proof of the main result in the hard-core limit regime. Furthermore, the unified scheme outlined above is applicable to other scaling regimes as well.

\subsubsection{Novelties}
% {\color{blue}we should talk about the completeness of the results: almost all regimes, all convergence of densities, etc.}
 Let us summarize the key contributions of our work:
\begin{enumerate}[(1)]
\item One of the main contributions is the derivation of the compressible Euler equation (with the pressure term  $P=2\pi \mathfrak{c}_0\rho^2$) from the dynamics of quantum Bose gases in the hard-core limit regime, up to the blow-up time of the compressible Euler equations, without factoring through kinetic-type descriptions such as the Boltzmann equation.  In fact, we provide a complete derivation of all the fluid conserved quantities from the microscopic dynamics.
 \item  In the hard-core limit regime, there are two novel findings: first, we rigorously prove that the internal specific energy in the macroscopic fluid description of a hard-core gas is entirely determined by its microscopic kinetic energy, which is a feature of hard-sphere fluid. Second, we identify the influence of microscopic dynamics on the fluid description; specifically, we demonstrate the emergence of the (electrostatic) capacity of the microscopic interaction potential $v(x)$, which is the scattering length of the hard-core potential, in the coupling constant of the Euler pressure (or the internal specific energy).

 \item Compared with the previous works \cite{chen2023derivation,CSZ23mean},  we address the Gross--Pitaevskii scaling regime, specifically the $\beta=1$ case, and derive the compressible Euler equation with the scattering length $\mathfrak{a}_{0}$. Technically, the natural extension of the previous mentioned works is the semiclassical Gross--Pitaevskii limit regime ($\kappa=0$ case), which we also handled.

 \item  In the beyond Gross--Pitaevskii regime, we give a rigorous derivation of the eikonal system, which justifies the usage of geometric optics approximation to study the dynamics of ultracold Bose gases.
 To our knowledge, this is the first result of its kind.
 % Especially, one of the most interesting is the nonlinear term carrying the coupling constant $\mathfrak{c}_{0}$, which is also characterized by the capacity of the interaction potential.
 Moreover, it offers a physical interpretation for the semiclassical limit of the NLS equation with nonlinearities containing various smallness factors, as seen in \cite{carles2007wkb,carles2021semi}, in terms of the scalings of many-body systems.  Specifically, our result provides a many-body interpretation for NLS models of the form
\begin{align*}
i\ve\pa_{t}\psi=-\ve^{2}\Delta\psi+\ve^{2(1-\kappa)}\mathfrak{c}_{0}|\psi|^{2}
\psi\ .
\end{align*}
 \item We examine the hyper-ultradilute regime ($\beta>1$), which was not addressed in previous works. We find that the limiting equation is the trivial pressureless compressible Euler equation, which, in some sense, explains why this scaling regime is less interesting.

 \item Another interesting result is the convergence rate from the quantum microscopic level to the effective equations in certain critical regimes. This rate depends on the profile of the interaction potential. As a result, it may be possible to infer information about the potential $v(x)$ from the convergence rate, which could be determined via numeric analysis.
\end{enumerate}

\subsubsection{Notations} We collect here the list of parameters and some useful notations for the convenience of the reader.
\begin{itemize}
\item $N$: The particle number which tends to the infinity, $N\to \infty$.
\item $\ve$: The semiclassical scaling parameter which tends to zero, $\ve\to 0$.
\item $\la(N)=(\ln N)^{\upalpha}$, $\upalpha\in [0,1)$: The prefactor of the interaction energy part. For $\upalpha=0$, $\la(N)\equiv 1$. For $\upalpha\in(0,1)$, $\la(N)\to \infty$.
\item $\ka\in [0,1]$: A given parameter which characterizes the different scaling regimes.
\item $\beta\geq 1$: A given parameter which characterizes the different physical regimes.
\item $\mathfrak{a}_{0}^{\mu}$: The scattering length which depends on the parameter $\mu$. When $\mu=1$, we use the short-hand notation $\mathfrak{a}_{0}=\mathfrak{a}_{0}^{1}$.
\item $R_{0}$: The radius of the support set of the potential $v(x)$.
\item $\mathfrak{c}_{0}$: The capacity of the interaction potential $v(x)$ which equals the $R_0$ when $v$ is radial and strictly positive in the interior of the support.
\item $\eta(\mu)=\mathfrak{c}_{0}-\mathfrak{a}_{0}^{\mu}$: The convergence rate between the scattering length $\mathfrak{a}_{0}^{\mu}$ and the capacity $\mathfrak{c}_{0}$, which also depends on the profile of the potential $v(x)$. See Subsection~\ref{subsection:semiclasscial_limit_of_the_scattering_length}.
\item $\mu=N^{1-\beta}\varepsilon^{2(1-\kappa)}/\la(N)$: The shorthand parameter. In most cases except the Gross--Pitaevskii regimes, we have that $\mu \to 0$.  See Subsection~\ref{subsection:the_neumann_problem}.
%\item $\widetilde{\mu}=N^{1-\beta}\varepsilon^{2(1-\kappa)}$: The shorthand parameter. Except the Gross--Pitaevskii and hard-core limit regime, we have that $\widetilde{\mu} \to 0$. See Subsection~\ref{subsection:mean-field approximation of energies}
\item $l=\ve^{4}\to 0$: The localization parameter that occurs in the analysis of the Neumann boundary problem. See Subsection~\ref{subsection:the_neumann_problem}.
\item $L=N^{\beta}\ve^{2\kappa}l\to \infty$: The shorthand parameter that occurs in the analysis of the Neumann boundary problem. See Subsection~\ref{appendix:neumann}.
\item $V_{N}^{\varepsilon}(x):=\lambda(N) (N^{\beta}\varepsilon^{2\kappa})^3v(N^{\beta}\varepsilon^{2\kappa}\, x)$: the interaction potential between particles.
\item $U_N^\varepsilon(x):= \tfrac{1}{N}V_{N}^\varepsilon(x)$: The short notation that occurs in the semiclassical limit of the scattering length.
\item $f_{0}^{\mu}(x)$: The zero-energy scattering solution to Equation~\eqref{def:zero-energy_scattering_problem}.
\item $f_{N}^{\varepsilon}(x):= f^\mu_0(N^\beta \varepsilon^{2\kappa} x)$:  The zero-energy scattering solution of the scaling Problem  \eqref{eq:rescaled_two-body_zero-energy_scattering}.
\item $\asc_N^{\mu}:=\frac{\asc_0^{\mu}}{N^\beta \varepsilon^{2\kappa}}$: The scattering length of the rescaled Problem  \eqref{eq:rescaled_two-body_zero-energy_scattering}.
\item $\mathfrak{b}_0^{\mu}:= \frac{1}{4\pi}\intd \n{\grad f^\mu_0}^2\dd x$: A characteristic length that occurs in the analysis of the approximation of correction terms. See Subsection \ref{subsection:mean-field approximation of energies}.
\item $f^{\mu}_{L}(x)$: The ground state of the Neumann problem \eqref{eq:neumann_boundary_problem}.
\item  $f^\varepsilon_{N, \ell}(x):=f^\mu_L(N^\beta\varepsilon^{2\kappa}x)$: The ground state of the rescaled Neumann problem \eqref{def:correlation_structure_neumann_problem}.
 \end{itemize}

\section{Scattering Function and its Semiclassical Analysis}\label{section:Scattering Function and its Semiclassical Analysis}

In this section, we state some observations of the zero-energy scattering function which will allow us to handle all the limit regimes.

\subsection{Semiclassical limit of the scattering length}\label{subsection:semiclasscial_limit_of_the_scattering_length}
Let $\mu>0$ be a positive parameter. We assume that $v(x)$ is a bounded positive radial function with compact support in the ball $B_{R_0}(0)=\{x\in \R^3\mid \n{x}\le R_0\}$. Consider the zero-energy scattering problem:
\begin{equation}
\left\{
\begin{aligned}\label{def:zero-energy_scattering_problem}
  \(-\mu\,\lapl + v\)f_{0}^{\mu} = 0,\\
  \lim_{|x|\to \infty}f_{0}^{\mu}(x)=1.
\end{aligned}
\right.
\end{equation}
We study the asymptotic behavior of the scattering function $f_{0}^{\mu}$ as the parameter $\mu\to 0$. In particular, let us note that the limit of the scattering length associated  to Problem~\eqref{def:zero-energy_scattering_problem} $$\lim_{\mu\rightarrow 0}\asc_0^\mu$$ has a nontrivial limit. Here, the scattering length is defined by
\begin{align*}
\asc_0^\mu:=\frac{1}{4\pi \mu}\int_{\mathbb{R}^{3}}
v(x)f_{0}^{\mu}(x)\dd x\ ,
\end{align*}
In this section, we investigate its quantitative rate of convergence, and show that the convergence rate depends on the behavior near the boundary of the support of the potential $v(x)$.

The limiting behavior of the scattering length was studied by Kac and Luttinger in \cite{kac1974probabilistic, kac1975scattering} who conjectured that the semiclassical limit of the scattering length yields the electrostatic capacity $\mathfrak{c}_0$ of the support of $v$ (Kac used a probabilistic interpretaton). The conjecture was subsequently proved by Taylor \cite{taylor1976scattering} for $d\ge 3$. For an alternative proof not involving probability, see \cite{tamura1992semi}. However, our case is much simpler than the general positive compactly supported potential case. For radially symmetric compactly supported positive potential on the ball $B_R(0)$, the corresponding capacity is simply the radius of its support.

%Let us recall the result.
%\begin{lem}[Semiclassical limit of scattering length, \cite{tamura1992semi,taylor1976scattering}]\label{lem:semiclassical_limit_of_scattering_length}
%    Suppose $v$ is a nonnegative bounded function with compact support (not necessarily radial). Then the following limit holds
%    \begin{align}\label{eq:semiclassical_limit_scattering_length}
%    \lim_{\mu\to 0}\mathfrak{a}_{0}^{\mu}=\mathfrak{c}_{0}\ ,
%    \end{align}
%    where $\mathfrak{c}_{0}$ is the capacity of $v(x)$.
%\end{lem}

%Lemma \ref{lem:semiclassical_limit_of_scattering_length} only indicates convergence, but it does not provide a rate.
 By assuming additional assumptions on $v$, we can
 provide a more quantitative analysis of the convergence in the following proposition. The proof is given in Appendix \ref{appendix:semiclassical_two-body_problem}.

\begin{prop}\label{prop:quantitative_scattering_length}
    Suppose $v$ is a bounded  nonnegative radial potential satisfying the following assumptions:
    \begin{enumerate}[$(a)$]
    \item The interior of the support of $v$ is a simply connected region, that is,
    \begin{equation*}
    \begin{cases}
    v(x)>0 & \text{ if } |x|<R_{0}\\
    v(x)=0 & \text{ if }  |x|\geq R_{0}
    \end{cases}
    \ .
    \end{equation*}
    In particular, the capacity of $v$ is $\mathfrak{c}_{0}=R_{0}$.
    \item $v$ has $n$-order of vanishing on the boundary $\{\n{x}=R_0\}$ for some $n\in [0,\infty)$, i.e.,
    \begin{align*}
     v(x)=(R_{0}-|x|)^{n}v_{1}(x)\ ,
    \end{align*}
    with $v_{1}(x)>0$ for $|x|\leq R_{0}$.
    \end{enumerate}
    Consider the zero-energy scattering Problem \eqref{def:zero-energy_scattering_problem}.
    Then we have the quantitative rate of convergence for the scattering length
    \begin{align}\label{def:convergence_rate_of_scattering_length}
    \eta(\mu):=\mathfrak{c}_{0}-\mathfrak{a}_{0}^{\mu}=\mathcal{O}(\mu^{\frac{1}{n+2}})\ .
    \end{align}
    A standard example is given by
    $v(x)= \id_{B_{1}(0)}(x)(1-|x|)^{n}$, for which the capacity is $\mathfrak{c}_{0}=1$ and the convergence rate is $1-\mathfrak{a}_{0}^{\mu}=\mathcal{O}(\mu^{\frac{1}{n+2}})$.
\end{prop}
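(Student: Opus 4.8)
The plan is to reduce \eqref{def:zero-energy_scattering_problem} to its radial Sturm--Liouville form and then to obtain the rate from a single explicit barrier calibrated at the scale $\mu^{1/(n+2)}$. First I would pass to $m(r):=r\,f_0^\mu(r)$, so that by \eqref{eq:differential_identity} the function $m$ solves $-\mu m''+v\,m=0$ on $[0,R_0]$ with $m(0)=0$ (as the $s$-wave is bounded at the origin) and $m'(R_0)=1$, exactly as in \eqref{eq:Sturm-Liouville_problem}. Since $v$ vanishes for $|x|\ge R_0$, the solution is affine there and $m(r)=r-\asc_0^\mu$ for $r\ge R_0$; evaluating at $R_0$ and using $\mathfrak{c}_0=R_0$ gives the key identity $\eta(\mu)=\mathfrak{c}_0-\asc_0^\mu=m(R_0)$, so it suffices to prove $m(R_0)=O(\mu^{1/(n+2)})$. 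I would also record the elementary facts $m\ge 0$, $m''=\mu^{-1}v\,m\ge 0$ (so $m$ is convex), and $0\le m'\le m'(R_0)=1$, whence $0\le m\le R_0$.

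The crux is that $v$ has a boundary layer of width $\mu^{1/(n+2)}$ at $|x|=R_0$. Put $c_1:=\min_{\overline{B_{R_0}}}v_1>0$ (assumptions (a)--(c)), so that $v(r)\ge c_1(R_0-r)^n$ on $[0,R_0]$, and introduce the stretched variable $\sigma=(R_0-r)\mu^{-1/(n+2)}$: under it the pure-power model equation $-\mu M''+c_1(R_0-r)^n M=0$ becomes the $\mu$-independent equation $-\Phi''+c_1\sigma^n\Phi=0$ on $[0,\infty)$, solved by $\sqrt{\sigma}\,K_{1/(n+2)}\!\big(\tfrac{2\sqrt{c_1}}{n+2}\sigma^{(n+2)/2}\big)$ and its companion. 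By standard ODE theory this equation has a unique recessive solution on $[0,\infty)$, it is strictly positive, and $\Phi_0'(0)<0$; after normalising so that $\Phi_0'(0)=-1$ one has $\Phi_0(0)>0$. I would then take as barrier
\[
\bar m(r):=\mu^{\frac{1}{n+2}}\,\Phi_0\!\big((R_0-r)\mu^{-\frac{1}{n+2}}\big),\qquad r\in[0,R_0],
\]
which is strictly positive, satisfies $\bar m'(R_0)=1$, obeys $-\mu\bar m''+c_1(R_0-r)^n\bar m=0$ by construction, and has $\bar m(R_0)=\Phi_0(0)\,\mu^{1/(n+2)}$.

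The proof then closes by comparison. For $w:=m-\bar m$ one computes, using $v\ge c_1(R_0-r)^n$ and $\bar m\ge 0$,
\[
-\mu w''+v\,w=\bar m\big(c_1(R_0-r)^n-v\big)\le 0,
\]
so $w$ is a subsolution of $-\mu\partial_r^2+v$ with $w(0)=-\bar m(0)\le 0$ and $w'(R_0)=m'(R_0)-\bar m'(R_0)=0$. A positive maximum of $w$ is impossible: not at $r=0$ (where $w\le 0$); not at an interior point (there $w''\le 0$ while the inequality forces $w''\ge\mu^{-1}v\,w>0$); and not at $r=R_0$ (there $w'(R_0)=0$ and, since $v>0$ just inside its support, $w$ is strictly convex on a left-neighbourhood of $R_0$, hence strictly decreasing there, contradicting maximality at $R_0$). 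Therefore $w\le 0$ on $[0,R_0]$, giving $\eta(\mu)=m(R_0)\le\bar m(R_0)=\Phi_0(0)\,\mu^{1/(n+2)}$. The same comparison from below, with $c_2:=\max_{\overline{B_{R_0}}}v_1$ in place of $c_1$, yields the matching lower bound $\eta(\mu)\gtrsim\mu^{1/(n+2)}$ and, for $v=\id_{B_{1}(0)}(1-|x|)^n$, the explicit leading constant $\Phi_0(0)$ in terms of $\Gamma$- and Bessel-function values.

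I expect the main obstacle to be the qualitative analysis of the model profile $\Phi_0$: one must verify that $-\Phi''+c_1\sigma^n\Phi=0$ on $[0,\infty)$ has a positive recessive solution with $\Phi_0'(0)<0$ (so the normalisation $\Phi_0'(0)=-1$ is legitimate), which I would do either from the Bessel representation above or from the convexity dichotomy ($\Phi''=c_1\sigma^n\Phi$ forces any sign change or any critical point of a solution to destroy decay), and identify $\Phi_0(0)$ if the sharp constant is wanted. A secondary technical point, isolated in the maximum-principle step, is the Neumann endpoint $r=R_0$, where the coefficient $v$ degenerates to $0$ when $n>0$; the short convexity argument indicated there is exactly what confronts that degeneracy.
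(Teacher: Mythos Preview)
Your upper-bound argument is correct and suffices for the proposition as stated. The route, however, is genuinely different from the paper's. The paper sandwiches $v$ between two \emph{piecewise-constant} potentials
\[
\underline{v}=\theta^{n}\mathbf{1}_{[0,1-\theta)},\qquad \overline{v}=\mathbf{1}_{[0,1-\theta)}+\theta^{n}\mathbf{1}_{[1-\theta,1]},
\]
solves the corresponding Sturm--Liouville problems explicitly via $\sinh/\cosh$, applies the standard Sturm comparison $\overline m\le m\le\underline m$, and then optimises the free parameter by setting $\theta=\mu^{1/(n+2)}$. Your approach instead takes the exact power-law barrier $c_1(R_0-r)^n$ and absorbs $\mu$ by the boundary-layer rescaling $\sigma=(R_0-r)\mu^{-1/(n+2)}$, so that the scale $\mu^{1/(n+2)}$ emerges structurally and the leading constant is $\Phi_0(0)$ for the recessive Bessel-type profile. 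The paper's argument is more elementary (no special functions, only hyperbolic ones) and is entirely self-contained; yours is more canonical, identifies the sharp constant, and makes the role of the boundary layer transparent, at the cost of invoking the qualitative theory of the recessive solution of $-\Phi''+c_1\sigma^n\Phi=0$ (which, as you note, is the main technical check).

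One small caveat on your lower-bound remark: replacing $c_1$ by $c_2=\max v_1$ and running ``the same comparison'' does not quite close, because your barrier built from the recessive profile is strictly positive at $r=0$ while $m(0)=0$, so the difference $\underline m-m$ is positive at the left endpoint and the maximum-principle argument as written does not apply. The discrepancy is only $\mu^{1/(n+2)}\Phi_0(R_0\mu^{-1/(n+2)})$, which is exponentially small, so a minor modification (e.g.\ subtracting this constant, or using the solution with a zero at $\sigma=R_0\mu^{-1/(n+2)}$) would fix it; the paper sidesteps this entirely because its step-function comparison solutions are built with $\overline m(0)=0$ from the outset. Since the proposition only asserts the $\mathcal{O}$-bound, this does not affect your proof of the stated result.
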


\begin{remark}
    Proposition $\ref{prop:quantitative_scattering_length}$ indicates that the convergence rate of the scattering length to the capacity, $\eta(\mu)$, depends highly on certain aspects of the profile of the potential.
\end{remark}

\subsection{Scattering length of the scaled potential and the Neumann problem}\label{subsection:the_neumann_problem}
We consider the zero-energy scattering problem
\begin{align}\label{eq:rescaled_two-body_zero-energy_scattering}
    \(- \varepsilon^2\lapl + U_N^\varepsilon\) f_{N}^{\ve} = 0 \quad \text{ and } \quad \lim_{\n{x}\rightarrow \infty} f_{N}^{\ve}(x) = 1\ ,
\end{align}
where the potential
$$U_N^\varepsilon(x)=\tfrac{\la}{N} (N^{\beta}\varepsilon^{2\kappa})^3v(N^{\beta}\varepsilon^{2\kappa}\, x)$$
is given by \eqref{def:rescaled_interaction_potential}.
By rescaling, the rescaled solution $f_{N}^{\varepsilon}(x):= f^\mu_0(N^\beta \varepsilon^{2\kappa} x)$ solves Problem \eqref{eq:rescaled_two-body_zero-energy_scattering}, where $f_{0}^{\mu}$ is the solution to the zero-energy scattering problem \eqref{def:zero-energy_scattering_problem}
with
$$\mu := N^{1-\beta}\varepsilon^{2(1-\kappa)}/\lambda\ .$$
 In particular, the corresponding scattering length of Problem \eqref{eq:rescaled_two-body_zero-energy_scattering}, denoted by $\asc_N^{\mu}$, is
\begin{align}
    \asc_N^{\mu}:=  \frac{1}{4\pi\, N\varepsilon^2}\intd \lambda \(N^{\beta}\varepsilon^{2\kappa}\)^3v(N^\beta \varepsilon^{2\kappa} x) f^\varepsilon_{N}(x)\dd x = \frac{\asc_0^{\mu}}{N^\beta \varepsilon^{2\kappa}}\ ,
\end{align}
where $\asc_0^{\mu}$ is the scattering length corresponding to Problem \eqref{def:zero-energy_scattering_problem}.

For the technical reasons, we shall localize to regions of size $\ell^3$ where $\asc_N^{\mu}\ll \ell$ to observe the two particle correlation structure. More precisely, we shall choose $\ell$ so that the `size' of the total correlation within the region is of order one; in fact, we shall choose $\ell=\varepsilon^4$. Therefore, we need to consider more than just the solution to Problem \eqref{eq:rescaled_two-body_zero-energy_scattering} for the correlation structure between two particles, we have to consider $f^\varepsilon_{N, \ell}$ the ground state of the Neumann problem
\begin{equation}\label{def:correlation_structure_neumann_problem}
\left\{
\begin{aligned}
    &\(-\varepsilon^2\lapl+ U_N^\varepsilon\) f^\varepsilon_{N, \ell} = E_{N, \ell}^\varepsilon f^\varepsilon_{N, \ell},\quad |x|\leq l,\\
    & f_{N,l}^{\ve}(l)=1,\quad \pa_{r}f_{N,l}^{\ve}(l)=0.
\end{aligned}
\right.
\end{equation}

Let us summarize some of its important properties in the following lemma, with the case $(\beta, \varepsilon, \lambda) = (1, 1, 1)$ already provided in \cite[Appendix A]{erdos2006derivation}. We need to extend the analysis to the semiclassical regime $\mu \to 0$. Due to the nonlinear dependence between the scattering length $\mathfrak{a}_{0}^{\mu}$ and the parameter $\mu$, a linear scaling analysis would not work smoothly. Instead, based on \cite{erdos2006derivation}, our proof also relies on techniques from elliptic equations, including the comparison principle and the construction of auxiliary functions. For the sake of clarity, we have included the proof in Appendix~\ref{appendix:semiclassical_two-body_problem}.

\begin{lem}\label{lem:correlation_structure}
    Suppose $v$ is a nonnegative bounded function with compact support  and $\asc_0^{\mu}$ be the corresponding scattering length. Let $f^\varepsilon_{N, \ell}$ be the ground state of the Neumann problem \eqref{def:correlation_structure_neumann_problem}.
    Then for $N$ sufficiently large (so that $R_0N^{-\beta}\varepsilon^{-2\kappa}<\ell$) we have:
    \begin{enumerate}[$(i)$]
        \item The ground state energy satisfies the asymptotic upper bound
\begin{align}\label{eq:neumann_gs_eigenvalue_approximation}
            E_{N, \ell}^\varepsilon \le \frac{3\asc_N^{\mu} \varepsilon^2}{\ell^3}\(1+\mathcal{O}\(\frac{R_0}{\ell\varepsilon^{2\kappa}N^\beta}\)\)= \frac{3\asc_0^{\mu}\varepsilon^2}{\ell^3\varepsilon^{2\kappa}N^\beta}\(1+\mathcal{O}\(\frac{R_0}{\ell\varepsilon^{2\kappa}N^\beta}\)\)\ .
        \end{align}
        \item There exists $\sfc_{1}^{\mu}>0$, dependent on $v$ and $\mu$, such that
        \begin{align}
           0<  \exp\(-\zeta_0^\frac12/\mu^{\frac12}\)\leq \sfc_{1}^{\mu} \le f^\varepsilon_{N, \ell}(x)\le 1\ ,
        \end{align}
        where $\zeta_0:= R_0^2  \Nrm{v}{L^\infty}$.
        Moreover, let $w^\varepsilon_{N, \ell}(x):= 1-f^\varepsilon_{N, \ell}(x)$. Then we have the following pointwise estimate
        \begin{align}\label{est:correlation_w_pointwise_bound}
            w^\varepsilon_{N, \ell}(x)\le \min\(1-\sfc_{1}^{\mu}, \frac{\asc_0^{\mu}}{N^{\beta}\varepsilon^{2\kappa}\n{x}}\)\ .
        \end{align}
        \item We also have the following pointwise gradient estimates
        \begin{align}\label{est:correlation_grad_w_pointwise_bound}
            \n{\grad  w^\varepsilon_{N, \ell}(x)} \le  \frac{\asc_0^{\mu}+\mathcal{O}(\frac{R_0}{\ell\varepsilon^{2\kappa}N^\beta})}{N^{\beta}\varepsilon^{2\kappa}\n{x}^2} \quad \text{ and } \quad \n{\grad  w^\varepsilon_{N, \ell}(x)} \le \frac{2+\mathcal{O}(\frac{R_0}{\ell\varepsilon^{2\kappa}N^\beta})}{\n{x}}
        \end{align}
        for all $\n{x}\le \ell$.
        \item  Finally, there exists $C>0$, independent of $\mu$, such that we have the following uniform point-wise bound on the gradient
        \begin{align}\label{est:correlation_w_uniform_bound}
            \n{\grad w_{N, \ell}^\varepsilon(x)} \le \frac{C N^{\beta}\varepsilon^{2\kappa}}{N^{2\beta}\varepsilon^{4\kappa}\n{x}^2+R_0^2}\(1+\frac{\mathfrak{c}_0-\asc_0^{\mu}}{\mu}\)\ .
        \end{align}
    \end{enumerate}
\end{lem}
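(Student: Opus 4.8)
The plan is to strip away all the scaling factors, reducing \eqref{def:correlation_structure_neumann_problem} to a one–dimensional Sturm--Liouville problem in which the semiclassical parameter $\mu$ is made explicit, and then run elliptic comparison arguments that track the $\mu$–dependence. Concretely, setting $y=N^{\beta}\varepsilon^{2\kappa}x$ and $L:=N^{\beta}\varepsilon^{2\kappa}\ell$ turns \eqref{def:correlation_structure_neumann_problem} into the $\mu$–rescaled Neumann problem $(-\mu\lapl_{y}+v)f^{\mu}_{L}=E^{\mu}_{L}f^{\mu}_{L}$ on $\{\n{y}\le L\}$ with $f^{\mu}_{L}=1$, $\bd_{r}f^{\mu}_{L}=0$ on $\n{y}=L$, where $f^{\varepsilon}_{N,\ell}(x)=f^{\mu}_{L}(N^{\beta}\varepsilon^{2\kappa}x)$ and $E^{\varepsilon}_{N,\ell}=\varepsilon^{2}(N^{\beta}\varepsilon^{2\kappa})^{2}\mu^{-1}E^{\mu}_{L}$. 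Since $v$ is supported in $\{\n{y}\le R_{0}\}$ and $R_{0}<L$ for $N$ large (this is exactly the hypothesis $R_{0}N^{-\beta}\varepsilon^{-2\kappa}<\ell$), radial reduction via $m^{\mu}_{L}(r):=r f^{\mu}_{L}(r)$ gives $-\mu (m^{\mu}_{L})''+v\,m^{\mu}_{L}=E^{\mu}_{L}m^{\mu}_{L}$ on $(0,L)$ with $m^{\mu}_{L}(0)=0$, $m^{\mu}_{L}(L)=L$, $(m^{\mu}_{L})'(L)=1$, and $v\equiv 0$ on $(R_{0},L)$. Every estimate in the statement follows from the analogous estimate on $f^{\mu}_{L}$ by undoing the change of variables ($\n{\grad_{x}w^{\varepsilon}_{N,\ell}}=N^{\beta}\varepsilon^{2\kappa}\n{\grad_{y}w^{\mu}_{L}}$, etc.), so it suffices to prove the latter.

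For (i), I would use the variational characterization of the Neumann ground-state eigenvalue, $E^{\mu}_{L}=\inf\{\int_{\n{y}\le L}(\mu\n{\grad g}^{2}+v\n{g}^{2})\,/\int_{\n{y}\le L}\n{g}^{2}\}$ over $g\in H^{1}$ (the Neumann condition is natural, so trial functions need not satisfy it), testing against the full-space zero-energy scattering solution $f^{\mu}_{0}$ of \eqref{def:zero-energy_scattering_problem} restricted to $\{\n{y}\le L\}$. Integrating by parts against $-\mu\lapl f^{\mu}_{0}+v f^{\mu}_{0}=0$ turns the numerator into the boundary term $4\pi L^{2}\mu\,f^{\mu}_{0}(L)(f^{\mu}_{0})'(L)=4\pi\mu\,\asc_{0}^{\mu}\big(1+\mathcal{O}(\asc_{0}^{\mu}/L)\big)$, using $f^{\mu}_{0}(r)=1-\asc_{0}^{\mu}/r$ for $r\ge R_{0}$ (Lemma~\ref{lem:semiclassical_estimates_of_scattering_function}), while the denominator is $\tfrac{4\pi}{3}L^{3}\big(1+\mathcal{O}(R_{0}/L)\big)$. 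This gives $E^{\mu}_{L}\le 3\mu\asc_{0}^{\mu}L^{-3}\big(1+\mathcal{O}(R_{0}/L)\big)$; unscaling yields \eqref{eq:neumann_gs_eigenvalue_approximation}, and in particular one records that $E^{\mu}_{L}/\mu=\mathcal{O}(\asc_{0}^{\mu}/L^{3})\ll 1$ and, a fortiori, that $E^{\mu}_{L}$ is much smaller than the lowest Dirichlet eigenvalue $\mu\pi^{2}L^{-2}$ of $-\mu\partial_{r}^{2}$ on $(0,L)$.

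For (ii), from $(r^{2}(f^{\mu}_{L})')'=\mu^{-1}(v-E^{\mu}_{L})r^{2}f^{\mu}_{L}$ together with the smallness of $E^{\mu}_{L}$, a maximum-principle argument (using positivity of the operator $-\mu\partial_{r}^{2}+(v-E^{\mu}_{L})$, which holds since $v-E^{\mu}_{L}\ge -E^{\mu}_{L}$ and the Dirichlet eigenvalue dominates $E^{\mu}_{L}$) shows $f^{\mu}_{L}>0$ is nondecreasing in $r$; hence $\min f^{\mu}_{L}=f^{\mu}_{L}(0)$, $\max f^{\mu}_{L}=f^{\mu}_{L}(L)=1$, giving $0<f^{\mu}_{L}\le 1$ and $0\le w^{\mu}_{L}\le 1-f^{\mu}_{L}(0)$. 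For the decay bound on $(R_{0},L)$ the function $m^{\mu}_{L}$ is nearly linear ($\n{(m^{\mu}_{L})''}=E^{\mu}_{L}\n{m^{\mu}_{L}}/\mu$ is tiny) and pinned by $m^{\mu}_{L}(L)=L$, $(m^{\mu}_{L})'(L)=1$, so $m^{\mu}_{L}(r)\ge r-\asc_{0}^{\mu}-\mathcal{O}(R_{0}/L)$, i.e.\ $w^{\mu}_{L}(r)\le\asc_{0}^{\mu}/r+\mathcal{O}(R_{0}/(rL))$, and on $(0,R_{0})$ one uses $m^{\mu}_{L}\ge 0$ and monotonicity; this is \eqref{est:correlation_w_pointwise_bound} after unscaling. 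For the lower constant, on $[0,R_{0}]$ the function $\sinh(\omega r)$ with $\omega:=\sqrt{\Nrm{v}{L^\infty}/\mu}=\zeta_{0}^{1/2}/(R_{0}\mu^{1/2})$ is a subsolution of $-\mu\partial_{r}^{2}+(v-E^{\mu}_{L})$, so comparison gives $m^{\mu}_{L}(r)\ge m^{\mu}_{L}(R_{0})\sinh(\omega r)/\sinh(\omega R_{0})$ and hence $f^{\mu}_{L}(0)=(m^{\mu}_{L})'(0)\ge\omega\,m^{\mu}_{L}(R_{0})/\sinh(\omega R_{0})$; matching the exponentially growing interior profile to the nearly linear exterior one yields $m^{\mu}_{L}(R_{0})=R_{0}-\asc_{0}^{\mu}-\mathcal{O}(\cdot)\gtrsim\omega^{-1}$ (only the lower half of the two-sided estimate $\mathfrak{c}_{0}-\asc_{0}^{\mu}\asymp\sqrt{\mu}$ is needed, which is softer than Proposition~\ref{prop:quantitative_scattering_length}), so $f^{\mu}_{L}(0)\ge e^{-\omega R_{0}}=e^{-\sqrt{\zeta_{0}/\mu}}$ for $N$ large. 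For (iii)--(iv), write $(f^{\mu}_{L})'(r)=(r(m^{\mu}_{L})'-m^{\mu}_{L})/r^{2}$: on $(R_{0},L)$ near-linearity gives $r(m^{\mu}_{L})'-m^{\mu}_{L}=\asc_{0}^{\mu}+\mathcal{O}(R_{0}/L)$, hence $\n{\grad w^{\mu}_{L}}\le(\asc_{0}^{\mu}+\mathcal{O}(R_{0}/L))\n{y}^{-2}$ and $\le(2+\mathcal{O}(R_{0}/L))\n{y}^{-1}$; on $(0,R_{0})$, differentiating the $\sinh$–comparison and using the ODE to bound $(m^{\mu}_{L})'$ bounds $(f^{\mu}_{L})'$ by $CR_{0}^{-1}\big(1+\omega(\mathfrak{c}_{0}-\asc_{0}^{\mu})\big)\le CR_{0}^{-1}\big(1+(\mathfrak{c}_{0}-\asc_{0}^{\mu})/\mu\big)$; patching the two regimes and unscaling gives \eqref{est:correlation_grad_w_pointwise_bound} and \eqref{est:correlation_w_uniform_bound}.

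\textbf{The main obstacle} is the lower bound in (ii) with its explicit $\mu$–dependent constant $\exp(-\sqrt{\zeta_{0}/\mu})$: one must run a genuine matched-asymptotics analysis — exponential (WKB) smallness of $f^{\mu}_{L}$ inside the support, where the effective barrier $\mu^{-1}v$ acts, glued to the nearly linear radiation-region profile outside — and in particular establish the sharp coupling $m^{\mu}_{L}(R_{0})\asymp\omega^{-1}$, equivalently the lower bound $\mathfrak{c}_{0}-\asc_{0}^{\mu}\gtrsim\sqrt{\mu}$, uniformly as $\mu\to 0$. Ensuring at each comparison step that $E^{\mu}_{L}$ is genuinely negligible (not merely $o(1)$) is a recurring point one cannot skip. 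Everything else is careful but routine bookkeeping of the scaling factors $N^{\beta}\varepsilon^{2\kappa}$ and adaptation of the $\mu=1$ arguments of \cite{erdos2006derivation}.
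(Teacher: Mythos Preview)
Your overall strategy --- rescale via $y=N^\beta\varepsilon^{2\kappa}x$, $L=N^\beta\varepsilon^{2\kappa}\ell$ to the $(\mu,L)$-problem and then run variational/comparison/ODE arguments --- is exactly the paper's (this is precisely Lemma~\ref{lem:appendix_version_correlation_structures}), and your treatment of (i), (iii), (iv) is essentially the same as the paper's.

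The substantive difference is in (ii). You attack the lower bound $f^{\mu}_{L}(0)\ge e^{-\sqrt{\zeta_0/\mu}}$ by a direct matched-asymptotics argument on $f^{\mu}_{L}$ (interior $\sinh$ profile glued to the near-linear radiation profile), which forces you to establish $m^{\mu}_{L}(R_0)\gtrsim\omega^{-1}$, equivalently the lower bound $\mathfrak{c}_0-\asc_0^{\mu}\gtrsim\sqrt{\mu}$, and you flag this as the main obstacle. The paper sidesteps this entirely with a one-line maximum-principle comparison: since $(-\mu\lapl+v)(f_0^{\mu}-f^{\mu}_{L})=-E^{\mu}_{L}f^{\mu}_{L}\le 0$ on $\{\n{y}\le L\}$ and $f_0^{\mu}\le f^{\mu}_{L}$ on the boundary, one gets $f^{\mu}_{L}\ge f_0^{\mu}$ everywhere, and the lower bound $f_0^{\mu}\ge e^{-\sqrt{\zeta_0/\mu}}$ is already supplied by the Dirichlet analysis (Lemma~\ref{lem:semiclassical_estimates_of_scattering_function}). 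The same comparison $w^{\mu}_{L}\le w_0^{\mu}\le \asc_0^{\mu}/\n{y}$ immediately gives the decay bound \eqref{est:correlation_w_pointwise_bound} on the \emph{whole} ball, including the scattering region $\n{y}\le R_0$, which your radiation-zone near-linearity argument alone does not cover. So what you call ``the main obstacle'' is an artifact of working with $f^{\mu}_{L}$ in isolation rather than comparing it to $f_0^{\mu}$; the paper's route is shorter and requires no quantitative lower bound on $\mathfrak{c}_0-\asc_0^{\mu}$ at all. (For the companion upper bound the paper uses a second auxiliary comparison with $f_0^{\mu}+\tfrac{2\asc_0^{\mu}}{L}-\tfrac{E^{\mu}_{L}}{6\mu}\n{y}^2$, again via the maximum principle, yielding $f^{\mu}_{L}\le f_0^{\mu}+C\asc_0^{\mu}/L$.)
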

%\begin{remark}
%As usual, the asymptotic lower bound in \eqref{eq:neumann_gs_eigenvalue_approximation} is needed to prove the emergence of the scattering length for the Neumann boundary problem. By a more refined analysis, the asymptotic lower bound in \eqref{eq:neumann_gs_eigenvalue_approximation} indeed holds provided that
%\begin{align}\label{equ:lower bound,ground state energy}
% L:= N^\beta\ve^{2\ka}\ell\ge (\mathfrak{c}_0-\asc_0^{\mu})/\sfc_{1}^{\mu}\gtrsim \exp(-\mu^{-\frac{1}{2}})\ .
%\end{align} This would not be a problem for the GP/BGP/SGP regimes in which $\mu\gtrsim \ve^{2}$. However, for the HC/HD regimes in which $\mu \gtrsim \frac{1}{\la(N)}$ or $\mu\gtrsim N^{1-\beta}$, the inequality \eqref{equ:lower bound,ground state energy} might not be true. Nevertheless,
%we take a slightly different argument in Lemma \ref{lem,semiclassical estimates,scattering function,appendix} so that  the asymptotic upper bound in \eqref{eq:neumann_gs_eigenvalue_approximation} is sufficient.
%\end{remark}
%% ============================================================================
\section{Bogoliubov Approximation of the Many-Body Dynamics}\label{section:bogoliubov_approximation}
%% ============================================================================

\subsection{Second quantization} Let us start by stating some notations and reviewing some rudimentary facts about the second quantization formalism. For simplicity, we may forgo some of the mathematical rigor and care that is usually required when handling unbounded operators, which in our case are the creation and annilihation operators defined shortly below. For a more comprehensible treatment of the subject, we refer the interested audience to \cite{berezin_method_1966,bratteli1997operator,folland1989harmonic, solovej2007many}.

\subsubsection{Operators on the Fock Space}

% For every $f \in \h$ we associate an unbounded, densely defined, closed operator $a^\ast(f)$ defined on $\cF_s$ (with domain $\cF_s^{\mathrm{alg}}$) by
% \begin{align*}
%     (a^\ast(f)\Psi)_n := f\otimes_s \psi_{n-1} = \frac{1}{\sqrt{n}}\sum^n_{\jj=1}f(x_{\jj})\psi_{n-1}(x_1, \ldots, x_{\jj-1}, x_{\jj+1},\ldots, x_n)
% \end{align*}
% for every $\Psi \in \cF_s^{\mathrm{alg}}$. We called $a^\ast(f)$ the creation operator and its adjoint operator, denoted by $a(f)=(a^\ast(f))^\ast$, the annilihation operator. It can readily be shown that
% \begin{align*}
%     (a(f)\Psi)_n:= \sqrt{n+1} \intd \conj{f(x)}\,\psi_{n+1}(x, x_1, \ldots, x_n)\dd x
% \end{align*}
% and that $a(f)\Omega = 0$.
% Notice $a^\ast$ on $\h$ is linear in its augment and $a$ is anti-linear. Moreover, the collection of creation and annilihation operators satisfies the canonical commutation relations (CCR): for every $f, g \in \h$, we have that
% \begin{align}\label{eq:CCR}
%     [a(f), a^\ast(g)] = \inprod{f}{g}_{\h} \quad \text{ and } \quad [a(f), a(g)] = [a^\ast(f), a^\ast(g)] = 0
% \end{align}
% on $\cF_{s}^{\mathrm{alg}}$.

% Sometimes it is computationally more convenient to deal with localized objects as opposed to $a^\ast(f)$ and $a(f)$.
% Therefore,

For every $x \in \R^3$, we define the creation and annihilation operator-valued distributions, denoted by $a^\ast_x$ and $a_x$ respectively, by their actions on the sectors $\h^{\otimes_s n-1}$ and $\h^{\otimes_s n+1}$ of $\cF_s$ as follows
\begin{subequations}
    \begin{align}
        (a^\ast_x\Psi)_n =&\, \frac{1}{\sqrt{n}}\sum_{\jj=1}^n \delta(x-x_{\jj})\,\psi_{n-1}(x_1, \ldots, x_{\jj-1}, x_{\jj+1},\ldots, x_n),\\
        (a_x\Psi)_n =&\, \sqrt{n+1}\, \psi_{n+1}(x, x_1, \ldots, x_n)\ .
    \end{align}
\end{subequations}
Then, it can be readily checked that $a^\ast_x$ and $a_x$ satisfy the following canonical commutation relations (CCR):
\begin{align}
    [a_x, a^\ast_y] = \delta(x-y) \quad \text{ and } \quad [a_x, a_y]=[a^\ast_x, a^\ast_y] =0\ .
\end{align}
Also, for each $f \in \h$, we also define the operators
\begin{align}
    a^\ast(f)= \intd f(x)\, a^\ast_x \dd x \quad \text{ and } \quad a(f)=\intd \conj{f(x)}\, a_x\dd x\ .
\end{align}

% Let $O$ be a single particle observable on $\h$.
% We define its second quantized operator $\dG(O):D\subseteq\cF_s\rightarrow \cF_s$ by
% \begin{align}
%     \dG(O) = 0\oplus \bigoplus^\infty_{n=1}\dG_n(O)
% \end{align}
% where $\dG_n(O)$ is the $n$-particle operator
% \begin{align}
%     \dG_n(O)= \sum^n_{j=1} \id_{\h^{\otimes (j-1)}}\otimes O \otimes \id_{\h^{\otimes (n-j)}}\ .
% \end{align}

 Let $O$ be a single particle observable on $\h$ with the distributional kernel $O(x, y)$, then we define the second quantized operator associated to $O$ by
\begin{align}
    \dG(O) = \intdd O(x, y)\, a^\ast_x a_y\dd x\d y\ .
\end{align}
An important example of second quantized operators is given by the second quantization of the identity map
\begin{align}
    \cN := \dG(\id) = 0\oplus \bigoplus^\infty_{n=1} n\, \id_{\h^{\otimes n}} =\intd a^\ast_x a_x\dd x
\end{align}
which counts the number of particles in each sector of $\cF_s$. More generally, for every $\alpha>0$ and real $K$, we could defined the operator
\begin{align*}
    \(\cN+K\)^\alpha := K\oplus \bigoplus^\infty_{n=1} (n+K)^\alpha\, \id_{\h^{\otimes n}}\ .
\end{align*}
Let us state some standard bounds for these operators without proof.
\begin{lem}\label{lem:estimates_for_creation_annilihation_operators}
    Let $f \in\h$, then we have the bounds
    \begin{align}
        \Nrm{a(f)\Psi}{} \le \Nrm{f}{L^2}\Nrm{\cN^\frac12 \Psi}{},\quad
        \Nrm{a^\ast(f)\Psi}{} \le \Nrm{f}{L^2}\Nrm{(\cN+1)^\frac12 \Psi}{}\ .
    \end{align}
    If $O$ is a bounded operator on $\h$, then we have the bound
    \begin{align}
        \Nrm{\dG(O)\Psi}{} \le \Nrm{O}{\mathrm{op}}\Nrm{\cN\,\Psi}{}\ .
    \end{align}
\end{lem}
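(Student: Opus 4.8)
The plan is to establish all three estimates by decomposing $\Psi = (\psi_n)_{n\ge 0}$ into its Fock sectors, bounding the action of each operator sector by sector, and then summing. It suffices to argue on the dense subspace of finite-particle vectors (so that all sums converge) and to extend the resulting inequalities by density.

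For the annihilation bound, I would start from the definition $(a(f)\Psi)_n(x_1,\ldots,x_n) = \sqrt{n+1}\intd \conj{f(x)}\,\psi_{n+1}(x, x_1,\ldots,x_n)\dd x$ and apply the Cauchy--Schwarz inequality in the variable $x$ to obtain the pointwise bound $\n{(a(f)\Psi)_n(x_1,\ldots,x_n)}^2 \le (n+1)\Nrm{f}{L^2}^2\intd \n{\psi_{n+1}(x, x_1,\ldots,x_n)}^2\dd x$. Integrating in $(x_1,\ldots,x_n)$ and summing over $n\ge 0$ gives $\Nrm{a(f)\Psi}{}^2 \le \Nrm{f}{L^2}^2\sum_{m\ge 1} m\,\Nrm{\psi_m}{L^2(\R^{3m})}^2 = \Nrm{f}{L^2}^2\Nrm{\cN^\frac12\Psi}{}^2$, which is the first estimate. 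For the creation bound, rather than wrestling directly with the symmetrization in the explicit kernel of $a^\ast(f)$, I would use that $a^\ast(f)$ is the adjoint of $a(f)$ together with the commutator relation $[a(f), a^\ast(f)] = \Nrm{f}{L^2}^2$, obtained by pairing the CCR $[a_x, a^\ast_y] = \delta(x-y)$ against $\conj{f(x)}f(y)$. Then $\Nrm{a^\ast(f)\Psi}{}^2 = \inprod{\Psi}{a(f)a^\ast(f)\Psi} = \Nrm{a(f)\Psi}{}^2 + \Nrm{f}{L^2}^2\Nrm{\Psi}{}^2 \le \Nrm{f}{L^2}^2\big(\Nrm{\cN^\frac12\Psi}{}^2 + \Nrm{\Psi}{}^2\big) = \Nrm{f}{L^2}^2\Nrm{(\cN+1)^\frac12\Psi}{}^2$, using the first estimate in the middle step.

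For the $\dG(O)$ bound, I would use the standard fact that on the $n$-particle sector $\dG(O)$ acts as $\sum_{\jj=1}^n O^{(\jj)}$, where $O^{(\jj)}$ denotes $O$ acting on the $\jj$-th variable; this operator is permutation-invariant and hence preserves the symmetric subspace, and explicitly $(\dG(O)\Psi)_n = \sum_{\jj=1}^n \intd O(x_\jj, y)\,\psi_n(x_1,\ldots, x_{\jj-1}, y, x_{\jj+1}, \ldots, x_n)\dd y$. The triangle inequality and boundedness of $O$ give $\Nrm{(\dG(O)\Psi)_n}{} \le n\,\Nrm{O}{\mathrm{op}}\Nrm{\psi_n}{}$, so squaring and summing over $n$ yields $\Nrm{\dG(O)\Psi}{}^2 \le \Nrm{O}{\mathrm{op}}^2\sum_{n\ge 0}n^2\Nrm{\psi_n}{}^2 = \Nrm{O}{\mathrm{op}}^2\Nrm{\cN\Psi}{}^2$. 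There is no serious obstacle in this proof; the only delicate point is the bookkeeping of the symmetric tensor structure in the creation-operator estimate, which is precisely why the argument is routed through the adjoint identity and the canonical commutation relations rather than through the explicit sector formula for $a^\ast(f)$.
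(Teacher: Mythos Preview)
Your proposal is correct and follows the standard sector-by-sector argument for these basic Fock-space bounds. The paper itself states this lemma explicitly ``without proof'' as a collection of standard facts, so there is no paper proof to compare against; your argument is precisely the conventional one and would serve perfectly well as the omitted justification.
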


If $W$ is a two-particle observable on $\h\otimes_s \h$, we define its second quantization by
\begin{align}
    \dG^{(2)}(W) := \frac12 \intdd \intdd W(x, y, x', y')\, a^\ast_x a^\ast_y a_{y'} a_{x'}\dd x\d y\d x' \d y'\ .
\end{align}
For our purpose, the most relevant example of second quantized two-particle operators is given by the second quantization of the pairwise interaction potential $v_{12}=v(x_1-x_2)$ on $\h\otimes_s\h$ with distributional kernel $v(x_1-x_2)\delta(x_1-x'_1)\delta(x_2-x_2')$ which is given by
\begin{align}
    \dG^{(2)}(v_{12}) = \frac12 \intdd v(x-y)\, a^\ast_x a^\ast_y a_y a_x\dd x\d y\ .
\end{align}

One of the more important second quantized operators that we will be working with is the Fock Hamiltonian operator acting on $\cF_s$  defined by
\begin{subequations}
    \begin{align}
        &\cH_N :=\, \cK + N^{-1}\cV\ ,\\
        &\cK :=\,\dG(-\tfrac{\varepsilon^2}{2}\lapl)  =\intd a^\ast_x\(-\tfrac{\varepsilon^2}{2}\lapl\)a_x\dd x\ ,\\
        &\cV:=\,  \frac12 \intdd V^\varepsilon_N(x-y)\, a^\ast_x a^\ast_y a_y a_x\dd x\d y\ .
    \end{align}
\end{subequations}
where  $V^\varepsilon_N(x)= \lambda(N^\beta \varepsilon^{2\kappa})^3v(N^\beta \varepsilon^{2\kappa}\, x)$. By direct computation, one can readily verify that $\cH_N$ commutes with $\cN$ (any operator on $\cF_s$ that commutes with $\cN$ is called a diagonal operator) and that
\begin{align}
    \(\cH_N\Psi\)_n = \(\sum^n_{j=1}-\frac{\varepsilon^2}{2}\lapl_{x_j}+ \frac{1}{N}\sum^n_{i<j}  V_N^\varepsilon \(x_i- x_j\)\)\psi_n\ ,
\end{align}
which yields the mean-field Hamiltonian $H_N$ on the $N$th sector. This agrees with the definition of $\cH_N$ given in the introduction. In particular, this means that the $N$th sector of the equation
\begin{align}\label{eq:Fock_Schrodinger}
    i\varepsilon\,\bd_t\Psi = \cH_N \Psi
\end{align}
encodes the $N$-body Schr\"odinger equation \eqref{eq:dimensionless_N-body}. Hence this procedure of lifting the $N$-body quantum dynamics to a Schr\"odinger dynamics on Fock space is known as the second quantization of the $N$-body system.

\subsubsection{Weyl Operators and Coherent States} Just as in the case of the Cauchy problem associated to the $N$-body Schr\"odinger equation \eqref{eq:dimensionless_N-body}, it is highly nontrivial to obtain an effective dynamical description for the many-body Schr\"odinger dynamics \eqref{eq:Fock_Schrodinger} starting from a general initial state. Therefore, it is natural to restrict our studies to a subclass of initial data which is also physically meaningful. An important class of states to consider is the set of coherent states where the elements exhibit attributes of classical fields. In general, coherent states can be viewed as the lifting of the $N$-fold tensor product $\phi^{\otimes N}:=\phi\otimes\cdots \otimes\phi$ states to $\cF_s$. From a physical perspective, $\Psi(f)$ offers a way to approximate the condensate in a Bose gas by means of a classical field $f$ which we shall discuss later.

For every $f \in \h$, we define the following unbounded, densely defined skew-adjoint closed operator
\begin{align}
    \cA(f) := a(f)-a^\ast(f)
\end{align}
and its corresponding bounded unitary operator
\begin{align}\label{def:Weyl_operator}
    e^{-\cA(f)} = e^{a^\ast(f)-a(f)}
\end{align}
on $\cF_s$ called the Weyl operator (also known as the displacement operator). With this, we could define the coherent state associated to $f$ by
\begin{align}\label{def:coherent_state}
    \Psi(f)= e^{-\cA(f)}\Omega\ .
\end{align}

Let us collect some well-known useful properties of Weyl operators and coherent states in the following lemma. The proof can be found in standard references, e.g. \cite[Section 5.2.1.2]{bratteli1997operator}.
\begin{lem}\label{lem:properties_of_weyl_operator}
    For every $f \in \h$, we define the unitary operator $e^{-\cA(f)}$. Then it follows that
    \begin{enumerate}[$(i)$]
        \item For every $f, g \in \h$, we have the identity
        \begin{align}
            e^{-\cA(f)}e^{-\cA(g)}= e^{-\cA(g)}e^{-\cA(f)}\,e^{-2i\im \inprod{f}{g}}
        \end{align}
        called the Weyl relation, which encodes the canonical commutation relations.
        \item For every $f \in \h$, we have the
        % following operator identities
        % \begin{subequations}
        %     \begin{align}
        %         e^{\cA(f)} a^\ast(g) e^{-\cA(f)} =&\, a^\ast(g)+\inprod{f}{g},\\
        %         \cW(f)^\ast a(g) \cW(f) =&\, a(g) + \inprod{g}{f}.
        %     \end{align}
        % \end{subequations}
        % In particular, this yields the
        following operator-valued distribution identities
        \begin{subequations}\label{eq:operator-valued_dist_weyl_conjugation_id}
            \begin{align}
                e^{\cA(f)} a^\ast_x e^{-\cA(f)} =&\, a^\ast_x+\conj{f(x)}\ ,\\
                e^{\cA(f)} a_x e^{-\cA(f)} =&\, a_x + f(x)\ .
            \end{align}
        \end{subequations}
        \item The set of coherent states $\ecS=\{\Psi(f)\in \cF_s \mid f\in \h\}$ is linearly independent and the linear manifold $\operatorname{Span}\ecS$ generated by its algebraic span is dense in $\cF_s$.
        \item \label{lem:coherent_state_form} On the domain $\operatorname{Span}\ecS$ the Weyl operator admits the factorization
        \begin{align}
            e^{-\cA(f)} = e^{-\tfrac12 \Nrm{f}{}^2}e^{a^\ast(f)}e^{-a(f)}
        \end{align}
        which yields
        \begin{align}
            \Psi(f) =  e^{-\tfrac12 \Nrm{f}{}^2}e^{a^\ast(f)}\Omega  = e^{-\tfrac12 \Nrm{f}{}^2}\(1\oplus\bigoplus^\infty_{n=1} \frac{1}{\sqrt{n!}}\, f^{\otimes n}\)\ .
        \end{align}
    \end{enumerate}
\end{lem}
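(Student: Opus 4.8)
The plan is to deduce all four statements from the canonical commutation relations, exploiting that the only commutators that occur are scalar. Since the excerpt already records that $\cA(f)=a(f)-a^\ast(f)$ is skew-adjoint, $e^{-\cA(f)}$ is a well-defined unitary by Stone's theorem, and it (together with $e^{\pm a^\ast(f)}$ and $e^{\pm a(f)}$) leaves invariant the dense core $\cD$ of finite-particle vectors with Schwartz components; all the exponential manipulations below will first be carried out on $\cD$ and then extended by continuity. From $[a_x,a^\ast_y]=\delta(x-y)$ together with $a^\ast(f)=\intd f(x)\,a^\ast_x\dd x$, $a(f)=\intd\conj{f(x)}\,a_x\dd x$, one computes $[a(f),a^\ast(g)]=\inprod{f}{g}$, hence the scalar identities $[\cA(f),a^\ast_x]=\conj{f(x)}$, $[\cA(f),a_x]=f(x)$, $[a^\ast(f),a(f)]=-\Nrm{f}{}^{2}$, and $[\cA(f),\cA(g)]=\conj{\inprod{f}{g}}-\inprod{f}{g}=-2i\im\inprod{f}{g}$.

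For (i) and (iv) the tool is the Baker--Campbell--Hausdorff identity $e^{X}e^{Y}=e^{X+Y}e^{\frac12[X,Y]}$, valid whenever $[X,Y]$ is a scalar; on $\cD$ this is justified by checking that $t\mapsto e^{tX}e^{tY}$ and $t\mapsto e^{t(X+Y)+\frac12 t^{2}[X,Y]}$ satisfy the same linear ODE with the same value at $t=0$. Statement (i) then follows by applying this with $(X,Y)=(-\cA(f),-\cA(g))$ and then with $(X,Y)=(-\cA(g),-\cA(f))$, and comparing the two. Statement (iv) follows by applying it with $(X,Y)=(a^\ast(f),-a(f))$: the sum is $-\cA(f)$ and the commutator is $\Nrm{f}{}^{2}$, so $e^{-\cA(f)}=e^{-\frac12\Nrm{f}{}^{2}}e^{a^\ast(f)}e^{-a(f)}$; since $a_x\Omega=0$ we have $a(f)\Omega=0$, whence $e^{-a(f)}\Omega=\Omega$ and $\Psi(f)=e^{-\frac12\Nrm{f}{}^{2}}e^{a^\ast(f)}\Omega$. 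A one-line induction from the defining formula for $a^\ast_x$ gives $(a^\ast(f))^{n}\Omega=\sqrt{n!}\,f^{\otimes n}$, so $e^{a^\ast(f)}\Omega=\bigoplus_{n\ge0}\tfrac{1}{\sqrt{n!}}f^{\otimes n}$, a series that converges with squared norm $\sum_{n}\Nrm{f}{}^{2n}/n!=e^{\Nrm{f}{}^{2}}$, which also re-confirms $\Nrm{\Psi(f)}{}=1$.

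For (ii), smear against a Schwartz function $g$ and set $A(t):=e^{t\cA(f)}a^\ast(g)e^{-t\cA(f)}$; then $A'(t)=e^{t\cA(f)}[\cA(f),a^\ast(g)]e^{-t\cA(f)}=\inprod{f}{g}$ is constant, so $A(t)=a^\ast(g)+t\inprod{f}{g}$, and $t=1$ yields the first identity; the second is identical with $a(g)$ in place of $a^\ast(g)$. For (iii), using (iv) one first computes $\inprod{\Psi(g)}{\Psi(f)}=e^{-\frac12\Nrm{f}{}^{2}-\frac12\Nrm{g}{}^{2}}\sum_{n\ge0}\tfrac{\inprod{g}{f}^{n}}{n!}=e^{-\frac12\Nrm{f}{}^{2}-\frac12\Nrm{g}{}^{2}+\inprod{g}{f}}$. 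If $\sum_{j=1}^{m}c_j\Psi(f_j)=0$ with the $f_j$ pairwise distinct, choose $h\in\h$ with $\inprod{h}{f_i-f_j}\ne0$ for all $i\ne j$ (possible since $\h$ is not a finite union of proper hyperplanes), pair with $\Psi(\lambda h)$ for arbitrary $\lambda\in\CC$, and obtain $\sum_j d_j e^{\bar\lambda\inprod{h}{f_j}}=0$ with $d_j=c_j e^{-\frac12\Nrm{f_j}{}^{2}}$; since the exponents are pairwise distinct the exponentials are linearly independent, forcing $d_j=0$ and hence $c_j=0$. For density, if $\Psi=(\psi_n)_n$ is orthogonal to every $\Psi(f)$, then replacing $f$ by $\lambda f$ shows the entire function $\lambda\mapsto\sum_n\tfrac{\bar\lambda^{n}}{\sqrt{n!}}\inprod{f^{\otimes n}}{\psi_n}$ vanishes identically, so $\inprod{f^{\otimes n}}{\psi_n}=0$ for all $n$ and all $f\in\h$; since by polarization the span of $\{f^{\otimes n}:f\in\h\}$ is dense in $\bigotimes^{n}_{s}\h$, we get $\psi_n=0$ for all $n$, i.e. $\Psi=0$.

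The only genuine obstacle is the functional-analytic bookkeeping for unbounded operators: $\cA(f),a(f),a^\ast(f)$ are unbounded and $e^{a^\ast(f)}$ is merely densely defined, so each Baker--Campbell--Hausdorff-type step must be performed on the invariant core $\cD$ (where domains are preserved and the series involved either terminate or converge absolutely) before being extended by unitarity. As this is entirely standard, for the complete details one may simply refer to \cite[Section~5.2.1.2]{bratteli1997operator}.
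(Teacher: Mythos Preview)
Your proposal is correct and in fact goes well beyond what the paper does: the paper provides no proof at all for this lemma, simply stating that ``the proof can be found in standard references, e.g.\ \cite[Section 5.2.1.2]{bratteli1997operator}.'' Your sketch via Baker--Campbell--Hausdorff for (i) and (iv), the ODE argument for (ii), and the Gram-matrix/polarization argument for (iii) is the standard route and is exactly what one finds in that reference, so there is no discrepancy to report.
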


\subsubsection{Bogoliubov Transformation} In this section, we give the definition of the Bogoliubov transformation and briefly recall some its rudimentary properties. A rigorous treatment of the content presented below via the Weyl relations can be found in the work of Shale \cite{shale1962linear}. Also see \cite{berezin_method_1966} for a rigorous introduction to Bogoliubov transformations.

For $f_1\oplus J f_2 \in \h\oplus \h^\ast$ where $J f:= \conj{f}$, we define the generalized creation and annilihation operators $A^\ast$ and $A$ on $\h\oplus\h^\ast$ by
\begin{subequations}
    \begin{align}
        A^\ast(f_1\oplus Jf_2):=&\, a^\ast(f_1)+a(f_2)\ ,\\
        A(f_1\oplus Jf_2):=&\, a(f_1)+a^\ast(f_2)\ .
    \end{align}
\end{subequations}
Notice $A^\ast$ is a $\CC$-linear mapping from $\h\oplus\h^\ast$ to the space of operators on $\cF_s$ whereas $A$ is conjugate linear. Moreover, $A^\ast$ and $A$ satisfy the relation
\begin{align}
    A^\ast(f_1\oplus J f_2)  =  A(\sfJ (f_1\oplus J f_2))
\end{align}
where $\sfJ:\h\oplus\h^\ast\rightarrow \h\oplus\h^\ast$  is the anti-linear map defined by $\sfJ(f_1\oplus J f_2):= f_2\oplus J f_1$. From the CCR of $a$ and $a^\ast$, one could readily show that the generalized creation and annilihation operators satisfy the following commutation relations: for any two pairs $F=f_1\oplus J f_2, G = g_1\oplus J g_2 \in \h\oplus \h^\ast$, we have that
\begin{align*}
    [A(F), A^\ast(G)] = \inprod{F}{\sfS G}_{\h\oplus\h^\ast} \quad \text{ where } \quad  \sfS
    G = \begin{pmatrix}
        \id & 0\\
        0 & -\id
    \end{pmatrix}
    \begin{pmatrix}
        g_1\\
        g_2
    \end{pmatrix}
    =
    \begin{pmatrix}
        g_1\\
        -g_2
    \end{pmatrix}\ .
\end{align*}
A more convenient way to rewrite the above commutation relation is via the following expression
\begin{align*}
    [A(F), A(G)] = \inprod{F}{\sfS\sfJ G}_{\h\oplus\h^\ast} =: B(F, G)
\end{align*}
where $B(\cdot, \cdot)$ is a non-degenerate skew-symmetric bilinear form.

A linear continuous isomorphism $\nu:\h\oplus\h^\ast \rightarrow \h\oplus\h^\ast$ is called a (bosonic) Bogoliubov map if
\begin{align}
    A^\ast(\nu F) = A(\nu \sfJ F)
\end{align}
for all $F \in \h\oplus\h^\ast$ and
\begin{align}
    [A(\nu F), A(\nu G)] = \inprod{F}{\sfS\sfJ G}_{\h\oplus\h^\ast}
\end{align}
for all $F, G\in \h\oplus\h^\ast$. Equivalently, this means that $\nu$ is a Bogoliubov map provided it satisfies the conditions
\begin{align}\label{conditions:bogoliubov_map}
    \nu \sfJ = \sfJ \nu \quad \text{ and } \quad \sfS\sfJ = \nu^\ast \sfS\sfJ \nu \ .
\end{align}
In fact, the class of mapping that satisfies Conditions \eqref{conditions:bogoliubov_map} are the infinite-dimensional generalization of the class of $2n\times 2n$ real symplectic matrices $\mathrm{Sp}(2n)$ and we shall denote it by $\mathrm{Sp}(\CC)$. In particular, a general Bogoliubov map $\nu$ has the form
\begin{align}
    \nu =
    \begin{pmatrix}
        U & \conj{V}\\
        V & \conj{U}
    \end{pmatrix}
\end{align}
where $U, V: \h\rightarrow \h$ are bounded linear maps satisfying
\begin{align}
    U^\ast U - V^\ast V = \id \quad \text{ and } \quad U^\ast \conj{V}=V^\ast \conj{U}\ .
\end{align}
Here we have used $\conj{A}$ to denote $J A J$.

Following \cite{grillakis2010second, grillakis2011second}, we could also study the infinitesimal generators of the Bogoliubov map. Consider block `matrices' defined on $\h\oplus \h^\ast$ of the form
\begin{align}\label{def:symplectic_matrix}
    L =
    \begin{pmatrix}
        d & k\\
        l & -d^\top
    \end{pmatrix}
\end{align}
where $d:\h\rightarrow \h$ is a self-adjoint operator, and $k:\h^\ast \rightarrow \h$ and $l:\h \rightarrow \h^\ast$ are operators with symmetric kernel. This class of operators generalizes the lie algebra of the real symplectic matrices $\mathsf{sp}(2n)$, which in turn are yields all the infinitesimal generators of the Bogoliubov maps. For the purpose of work, we only need to consider the matrix
\begin{align}
    \sfK =
    \begin{pmatrix}
        0 & k\\
        \conj{k} & 0
    \end{pmatrix}
\end{align}
which is the infinitesimal generator of the symplectic matrix
\begin{align}\label{def:bogoliubov_map_of_k}
    \nu_k := e^\sfK =
    \begin{pmatrix}
        \ch(k) & \sh(k)\\
        \conj{\sh(k)} & \conj{\ch(k)}
    \end{pmatrix}
\end{align}
where $\ch(k)$ and $\sh(k)$ are defined by the following formal power series
\begin{align}
  \ch(k)= \sum^\infty_{n=0}\frac{(k\,\conj{k})^n}{(2n)!} \quad \text{ and } \quad \sh(k) = \sum^\infty_{n=0}\frac{(k\,\conj{k})^n k}{(2n+1)!}\ .
\end{align}
Here, $k$ is viewed as an operator and the product is given by composition of linear maps. We could also define the operators $\ch(2k)$ and $\sh(2k)$ using the above power series.
Furthermore, notice the above series are convergent in the operator norm provided $k$ is a Hilbert--Schmidt operator. It is also convenient to define the operators
\begin{subequations}
    \begin{align}
        p(k):=&\, \ch(k)-\id\ , \quad p(2k):=\ch(2k)-\id\ ,\\
        r(k):=&\, \sh(k)-k\ .
    \end{align}
Sometimes for compactness of notations, we simply write
\begin{align}\label{def:sh_ch_and_r}
    \sh:=&\, \sh(k), \quad \ch:=\ch(k), \quad p:=p(k), \quad r:=r(k)\ ,\\
    p_2:=&\, p(2k), \quad \sh_2:=\sh(2k), \quad r_2:=r(2k)\ .
\end{align}
\end{subequations}
It is straightforward to check that
\begin{subequations}\label{eq:hyperbolic_trig_identities}
    \begin{align}
        &\tfrac12\,\sh(2k) =\, \ch(k)\,\sh(k) = \sh(k)\,\conj{\ch(k)} = k+\tfrac12\, r(2k)\ ,\\
        &\tfrac12\,\(\ch(2k)-\id\) =\, \sh(k)\,\conj{\sh(k)}=\ch(k)^2-\id=\tfrac12\, p(2k)\ .
    \end{align}
\end{subequations}

Given a Bogoliubov map $\nu$, we call the associated mapping $A^\ast(F)\mapsto A^\ast(\nu F)$ of Fock space operators a Bogoliubov transformation. For instance, if we consider the Bogoliubov map \eqref{def:bogoliubov_map_of_k},  we have that
\begin{subequations}
    \begin{align}
        b^\ast(f):=&\,A^\ast(\nu_k\,(f\oplus 0))= a^\ast\(\ch(k)f\)+a\(\sh(k)\conj{f}\),\\
        b(f):=&\,A(\nu_k\,(0\oplus J f))= a^\ast(\sh(k)\conj{f})+a(\ch(k)f)
    \end{align}
\end{subequations}
are the transformed creation and annilihation operators. It is clear from the definition of Bogoliubov maps that $b^\ast$ and $b$ satisfies the canonical commutation relations. Moreover, a Bogoliubov transformation is said to be unitarily implementable in $\cF_s$ provided there exists a unitary map $\cU_\nu:\cF_s\rightarrow \cF_s$ such that
\begin{align*}
    A(\nu F) = \cU_\nu^\ast\, A(F)\,\cU_\nu
\end{align*}
for all $F \in \h\oplus \h^\ast$.  A necessary and sufficient condition for the transformation $\nu$
to be implementable is given by Shale \cite{shale1962linear}: a Bogoliubov map $\nu$ is unitarily implementable if and only if $V$ is a Hilbert--Schmidt operator. It is standard to refer to $\cU_\nu$ as the Bogoliubov transformation in the physics literature and is called the Segal--Shale--Weil (or metaplectic) representation (cf. \cite[Chapter 4]{folland1989harmonic}).

As observed in \cite{grillakis2010second, grillakis2011second}, a useful tool for computation is given by the infinitesimal Segal--Shale--Weil representation: for any block matrix operator $L$ of the form \eqref{def:symplectic_matrix}, we defined its mapping $\cI$ to the a class of skew-Hermitian quadratic operators acting $\cF_s$ via
\begin{align*}
    \cI(L):=-\frac12 \intdd d(x, y)\,a_x a^\ast_y + d(y, x)\, a^\ast_x a_y + k(x, y)\,a^\ast_x a^\ast_y - l(x, y)\,a_x a_y\dd x\d y\ .
\end{align*}
The crucial property of $\cI$ is the fact that it is a Lie algebra isomorphism between $\mathsf{sp}(\CC)$ and the space of skew-Hermitian quadratic operators, which means that
\begin{align*}
    \lt[\cI(L_1), \cI(L_2)\rt] = \cI([L_1, L_2])\ .
\end{align*}
In the case of the Bogoliubov map \eqref{def:bogoliubov_map_of_k}, if we know that $\nu_k$ is implementable, that is,
$\sh(k)$ is a Hilbert--Schmidt operator, then it can be show that
\begin{align}\label{def:Bogoliubov_transformation_of_k}
    e^{\cI(\sfK)} = e^{-\cB(k)}
\end{align}
is the unitary implementation of $\nu_k$ where its generator is given by the skew-adjoint operator
\begin{align*}
    \cB(k)= \frac12\intdd \conj{k(x, y)}\,a_xa_y-k(x, y)\,a^\ast_x a^\ast_y\dd x\d y\ .
\end{align*}

Let us collect some useful facts regarding $e^{-\cB(k)}$ and $\nu_k$. The proof of the following lemmas can be found in \cite{benedikter2015quantitative,berezin_method_1966, grillakis2010second, grillakis2011second}.

\begin{lem}\label{lem:bogoliubov_conjugation}
    Let $\nu_k$ denote the Bogoliubov map \eqref{def:bogoliubov_map_of_k} where $k$ is a Hilbert--Schmidt operator and the Bogoliubov transformation $e^{-\cB(k)}$ is defined by \eqref{def:Bogoliubov_transformation_of_k}. Then  for every $F\in \h\oplus\h^\ast$, we have that
        \begin{align*}
            e^{\cB(k)} A(F) e^{-\cB(k)} = A(\nu_k F)\ .
        \end{align*}
        In particular, for every $f \in \h$ we have the operator identities
        \begin{subequations}
            \begin{align}
               b^\ast(f):=&\, e^{\cB(k)} a^\ast(f) e^{-\cB(k)}  = a^\ast\(\ch(k)f\)+a\(\sh(k)\conj{f}\)\ , \\
               b(f):=&\, e^{\cB(k)} a(f) e^{-\cB(k)}  = a^\ast(\sh(k)\conj{f})+a(\ch(k)f)\ ,
            \end{align}
        \end{subequations}
        which yields the operator-valued distribution identities
        \begin{subequations}\label{eq:bogoliubov_conjugation_operator-value_distribution_id}
            \begin{align}
                b^\ast_x:=&\, e^{\cB(k)} a^\ast_x e^{-\cB(k)}  = \intd \ch_{x}(y)\, a^\ast_y+\conj{\sh_x(y)}\,a_y\dd y\ , \\
                b_x:=&\, e^{\cB(k)} a_x e^{-\cB(k)}  = \intd \sh_x(y)\, a^\ast_y+\conj{\ch_{x}(y)}\,a_y\dd y\ ,
             \end{align}
        \end{subequations}
        where $\sh_x(y):= \sh(y, x)=\sh(k)(y, x)$ and $\ch_{x}(y):=c(y, x)=\ch(k)(y, x)$.
\end{lem}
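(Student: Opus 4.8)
The plan is to deduce the entire statement from one infinitesimal commutation relation, which is then integrated. First I would establish that
\[
    \bigl[\cB(k),A(F)\bigr]=A(\sfK F)\qquad\text{for every }F=f_1\oplus Jf_2\in\h\oplus\h^\ast\ .
\]
This is a direct computation with the canonical commutation relations and the definition $\cB(k)=\tfrac12\intdd\conj{k(x,y)}\,a_xa_y-k(x,y)\,a^\ast_xa^\ast_y\dd x\d y$: since $a_z$ commutes with products of annihilation operators and $a^\ast_z$ with products of creation operators, and since $k(x,y)=k(y,x)$, one obtains $[\cB(k),a_z]=\intd k(x,z)\,a^\ast_x\dd x$ and $[\cB(k),a^\ast_z]=\intd\conj{k(x,z)}\,a_x\dd x$; smearing the first against $\conj{f_1}$ and the second against $f_2$ gives $[\cB(k),A(F)]=a^\ast(k\conj{f_1})+a(k\conj{f_2})$, which coincides with $A(\sfK F)$ because $\sfK F=(k\conj{f_2})\oplus J(k\conj{f_1})$.

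Iterating the relation yields $\ad{\cB(k)}^{n}A(F)=A(\sfK^{n}F)$ for all $n\ge0$ --- at each step one applies it with $F$ replaced by $\sfK^{n-1}F$ --- whence, upon summing the Dyson/Baker--Campbell--Hausdorff series and using $\nu_k=e^{\sfK}$ from \eqref{def:bogoliubov_map_of_k},
\[
    e^{\cB(k)}A(F)e^{-\cB(k)}=\sum_{n\ge0}\tfrac1{n!}\,\ad{\cB(k)}^{n}A(F)=\sum_{n\ge0}\tfrac1{n!}\,A(\sfK^{n}F)=A\bigl(e^{\sfK}F\bigr)=A(\nu_k F)\ .
\]
For a fully rigorous argument I would instead run an ODE argument: for $\Psi$ in a suitable invariant core the curve $t\mapsto e^{t\cB(k)}A(F)e^{-t\cB(k)}\Psi$ is differentiable with derivative $e^{t\cB(k)}A(\sfK F)e^{-t\cB(k)}\Psi$, so within the family indexed by $F\in\h\oplus\h^\ast$ it solves a closed linear evolution equation whose unique solution is $A(e^{t\sfK}F)\Psi$ --- the latter has value $A(F)\Psi$ at $t=0$ and derivative $A(e^{t\sfK}\sfK F)\Psi$ --- and evaluating at $t=1$ gives the conjugation formula.

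The two families of operator identities then come out by specialization: using $a^\ast(f)=A(0\oplus Jf)$ with $\nu_k(0\oplus Jf)=\sh(k)\conj{f}\oplus J\ch(k)f$ yields the formula for $b^\ast(f)$, and $a(f)=A(f\oplus0)$ with $\nu_k(f\oplus0)=\ch(k)f\oplus J\sh(k)\conj{f}$ yields the formula for $b(f)$; the operator-valued distribution identities \eqref{eq:bogoliubov_conjugation_operator-value_distribution_id} follow by taking $f=\delta_x$ and reading off integral kernels, since $\ch(k)\delta_x$ and $\sh(k)\delta_x$ have kernels $y\mapsto c_x(y)$ and $y\mapsto u_x(y)$, together with $a^\ast(g)=\intd g(y)\,a^\ast_y\dd y$ and $a(g)=\intd\conj{g(y)}\,a_y\dd y$.

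The main difficulty is not the algebra but the functional analysis: $A(F)$ and $\cB(k)$ are unbounded, so the Dyson series and the ODE must be controlled on a dense invariant domain. The crucial input is that $k$ is Hilbert--Schmidt, which by Shale's theorem \cite{shale1962linear} makes $e^{-\cB(k)}$ unitarily implementable and, more importantly, supplies uniform bounds of the form $e^{t\cB(k)}(\cN+1)^{m}e^{-t\cB(k)}\le C_m(\cN+1)^{m}$ for $t\in[0,1]$; with these, the manipulations above become legitimate on, say, the finite-particle vectors or $\bigcap_{m}\mathrm{dom}(\cN^{m})$. This is precisely the routine but not entirely trivial bookkeeping carried out in \cite{benedikter2015quantitative,berezin_method_1966,grillakis2010second,grillakis2011second}.
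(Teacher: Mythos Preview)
Your proposal is correct and is precisely the standard argument: compute the commutator $[\cB(k),A(F)]=A(\sfK F)$ from the CCR, iterate, and exponentiate (or run the equivalent ODE), then specialize. The paper itself does not give a proof of this lemma---it simply refers the reader to \cite{benedikter2015quantitative,berezin_method_1966,grillakis2010second,grillakis2011second}---and what you have sketched is exactly the proof one finds in those references, including the functional-analytic caveat about controlling the unbounded operators via the Hilbert--Schmidt assumption on $k$.
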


\begin{lem}\label{lem:bogoliubov_conjugation_operator_inequality}
    Under the same conditions as in the previous lemma. Then there exists a univerisal $C>0$ such we have the following operator inequality
        \begin{align}
            e^{\cB(k)}\,\cN\, e^{-\cB(k)}\le&\ C\(1+\Nrm{p}{\mathrm{HS}}^2+\Nrm{\sh}{\mathrm{HS}}^2\) \(\cN+1\)\ , \label{est:bogoliubov_conjugation_for_number_operator}\\
            e^{\cB(k)}\,\cN^2\, e^{-\cB(k)}\le&\ C\(1+\Nrm{p}{\mathrm{HS}}^2+\Nrm{\sh}{\mathrm{HS}}^2\)^2 \(\cN+1\)^2\ .
        \end{align}
\end{lem}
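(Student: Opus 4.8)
The plan is to derive both inequalities directly from the conjugation identities of Lemma~\ref{lem:bogoliubov_conjugation} together with the elementary creation/annihilation bounds of Lemma~\ref{lem:estimates_for_creation_annilihation_operators}, making essential use of the unitarity of $e^{\cB(k)}$. Since $\cN=\intd a^\ast_x a_x\dd x$, the first step is to write $e^{\cB(k)}\cN e^{-\cB(k)}=\intd b^\ast_x b_x\dd x=:\widetilde\cN$ and to peel the identity off $\ch(k)$: with $u:=\sh(k)$ and $p:=p(k)=\ch(k)-\id$ (both with symmetric integral kernels), the distributional identity~\eqref{eq:bogoliubov_conjugation_operator-value_distribution_id} becomes $b_x=a_x+a^\ast(u_x)+a(p_x)$, where $u_x(\cdot)=u(\cdot,x)$, $p_x(\cdot)=p(\cdot,x)$, and $\intd\Nrm{u_x}{L^2}^2\dd x=\Nrm{u}{\mathrm{HS}}^2$, $\intd\Nrm{p_x}{L^2}^2\dd x=\Nrm{p}{\mathrm{HS}}^2$.

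For the first inequality~\eqref{est:bogoliubov_conjugation_for_number_operator}, I would simply pair $\widetilde\cN$ with an arbitrary $\Psi$ in its form domain: $\inprod{\Psi}{\widetilde\cN\Psi}=\intd\Nrm{b_x\Psi}{}^2\dd x$. By the triangle inequality and Lemma~\ref{lem:estimates_for_creation_annilihation_operators}, $\Nrm{b_x\Psi}{}\le\Nrm{a_x\Psi}{}+\Nrm{u_x}{L^2}\Nrm{(\cN+1)^{1/2}\Psi}{}+\Nrm{p_x}{L^2}\Nrm{\cN^{1/2}\Psi}{}$; squaring via $(A+B+C)^2\le3(A^2+B^2+C^2)$, integrating in $x$, and using $\intd\Nrm{a_x\Psi}{}^2\dd x=\inprod{\Psi}{\cN\Psi}$ yields the bound with universal constant $3$.

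The genuine obstacle is the $\cN^2$ bound, since an operator inequality cannot be squared: $0\le A\le B$ does not give $A^2\le B^2$ unless $A,B$ commute. The way around it is to use unitarity once more, $e^{\cB(k)}\cN^2 e^{-\cB(k)}=\widetilde\cN^2$, so that $\inprod{\Psi}{e^{\cB(k)}\cN^2 e^{-\cB(k)}\Psi}=\Nrm{\widetilde\cN\Psi}{}^2$, and it suffices to prove the ``square-root'' estimate $\Nrm{\widetilde\cN\Psi}{}\lesssim(1+\Nrm{u}{\mathrm{HS}}^2+\Nrm{p}{\mathrm{HS}}^2)\Nrm{(\cN+1)\Psi}{}$. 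For this I would substitute $b_x=a_x+a^\ast(u_x)+a(p_x)$ into $\widetilde\cN=\intd b^\ast_x b_x\dd x$ and normal-order the nine resulting terms using the CCR: $\widetilde\cN$ then equals $\cN$, plus the scalar $\Nrm{u}{\mathrm{HS}}^2$, plus eight quadratic operators — some of the form $\dG(O)$ with $\Nrm{O}{\mathrm{op}}\le\Nrm{O}{\mathrm{HS}}\lesssim1+\Nrm{u}{\mathrm{HS}}^2+\Nrm{p}{\mathrm{HS}}^2$ (their kernels being $p$, $\bar p$, $uu^\ast$, $pp^\ast$ up to transposition and conjugation), the rest pair operators $\intdd K(x,y)a^\ast_x a^\ast_y\dd x\dd y$ or $\intdd K(x,y)a_x a_y\dd x\dd y$ with $\Nrm{K}{\mathrm{HS}}\lesssim1+\Nrm{u}{\mathrm{HS}}^2+\Nrm{p}{\mathrm{HS}}^2$. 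Bounding $\dG(O)\Psi$ by $\Nrm{O}{\mathrm{op}}\Nrm{\cN\Psi}{}$ (Lemma~\ref{lem:estimates_for_creation_annilihation_operators}) and the pair operators by the standard CCR estimate $\Nrm{\intdd K a^\ast_x a^\ast_y\,\Psi}{}\lesssim\Nrm{K}{\mathrm{HS}}\Nrm{(\cN+2)\Psi}{}$ (obtained by expanding the square and contracting with the CCR), then summing the at most ten contributions, gives the square-root estimate and hence $e^{\cB(k)}\cN^2 e^{-\cB(k)}\le C(1+\Nrm{u}{\mathrm{HS}}^2+\Nrm{p}{\mathrm{HS}}^2)^2(\cN+1)^2$. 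The only real work is the bookkeeping of these eight terms — in particular verifying that each is controlled by the first power of $(\cN+1)$, with the Hilbert--Schmidt norms entering at most quadratically.

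A shorter but slightly lossier route, which I would mention as an alternative, is a Gr\"onwall argument: for $j\in\{1,2\}$ the function $t\mapsto\inprod{e^{-t\cB(k)}\Psi}{(\cN+1)^j e^{-t\cB(k)}\Psi}$ has derivative $\inprod{e^{-t\cB(k)}\Psi}{[\cB(k),(\cN+1)^j]e^{-t\cB(k)}\Psi}$ by the skew-adjointness of $\cB(k)$; a direct CCR computation shows $\pm[\cB(k),(\cN+1)^j]\le C\Nrm{k}{\mathrm{HS}}(\cN+1)^j$, and integrating on $[0,1]$ gives both inequalities with a constant $e^{C\Nrm{k}{\mathrm{HS}}}$ of comparable strength. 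I would prefer the first, purely algebraic route here because it reproduces exactly the stated form, with the Hilbert--Schmidt norms of $\sh(k)$ and $p(k)$.
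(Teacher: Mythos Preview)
Your proposal is correct. The paper does not supply its own proof of this lemma; it simply refers the reader to \cite{benedikter2015quantitative, berezin_method_1966, grillakis2010second, grillakis2011second}. Your direct argument---writing $e^{\cB(k)}\cN e^{-\cB(k)}=\intd b_x^\ast b_x\dd x$ with $b_x=a_x+a^\ast(u_x)+a(p_x)$, bounding $\Nrm{b_x\Psi}{}^2$ pointwise for the first inequality, and for the second inequality normal-ordering $\widetilde\cN$ and estimating $\Nrm{\widetilde\cN\Psi}{}$ term by term---is precisely the approach taken in \cite{benedikter2015quantitative} (see in particular their Lemma~2.3 and the surrounding computations), so your first route matches the cited literature. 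The Gr\"onwall alternative you mention is also standard and appears elsewhere in the literature; as you note, it yields a constant of the form $e^{C\Nrm{k}{\mathrm{HS}}}$ rather than the sharper polynomial dependence on $\Nrm{p}{\mathrm{HS}},\Nrm{u}{\mathrm{HS}}$ stated here.
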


\subsubsection{Bogoliubov states and the Bogoliubov approximation} Despite the many rich properties of the class of coherent states, they only allow us to model the average behavior of a many-body system without capturing their particle correlations, which are necessary if we want to understand the behavior of an interacting system close to the ground state or under stronger topologies. To consider systems with correlations, we introduce the class of generalized Bogoliubov states: for every $\phi \in \h$  and $k \in L^2(\R^3\times\R^3)$ satisfying $k(x ,y)=k(y, x)$, we define the generalized Bogoliubov state by
\begin{align}\label{def:squeezed_coherent_states}
    \Psi^{\mathrm{Bog}}=\Psi(\sqrt{N}\phi, k):=e^{-\cA(\sqrt{N}\phi)} e^{-\cB(k)}\Xi_0\ ,
\end{align}
where $\Xi_0 \in \cF_s$ is a unit Fock vector satisfying the following uniform bound in $N$ and $\varepsilon$, that is, there exists $D_0$, independent of $N$ and $\varepsilon$, such that
\begin{align}\label{condition:Xi_0}
    \inprod{\Xi_0}{\(\cH_N+\frac{\cN^2}{N}+\cN+1\)\, \Xi_0} \le D_0\ ,
\end{align}
meaning the expected number of particles and energy in the state $\Xi_0$ are of order one for all $N$. In the case when $\Xi_0=\Omega$, $\Psi^{\mathrm{Bog}}$ is called a Bogoliubov state (also referred to as a squeezed coherent state) parametrized by $\phi$ and $k$.

In our case, we consider the Cauchy Problem~\eqref{def:Fock_space_cauchy_problem} on $\cF_s$ with initial data
\begin{align}\label{eq:Fock_space_cauchy_problem_data}
        \Psi(0)= e^{-\cA(\sqrt{N}\phi^{\init})} e^{-\cB(k_0)}\Xi_0\ ,
\end{align}
where the pair excitation function $k_0$ has the form
\begin{align}\label{def:k_initial}
    k^\varepsilon_{N, 0}(x, y) =
    -N w_{N, \ell}^{\varepsilon}(x-y) \phi^{\init}(x)\phi^{\init}(y)
\end{align}
and the condensate wave function is normalized $\Nrm{\phi^{\init}}{L^2}=1$,  $w_{N, \ell}^{\varepsilon}= 1-f_{N, \ell}^{\varepsilon}$
where $f_{N, \ell}^{\varepsilon}$ solve Problem \eqref{def:correlation_structure_neumann_problem}. Moreover, as discussed in Subsection~\ref{subsection:the_neumann_problem}, we work with the localization parameter $\ell = \varepsilon^4$.

In particular, the solution to the Cauchy Problem \eqref{def:Fock_space_cauchy_problem}
\begin{align}
    \Psi_{N, t}:= e^{-it\cH_N/\varepsilon}e^{-\cA(\sqrt{N}\phi^{\init})} e^{-\cB(k_0)}\Xi_0
\end{align}
can be approximated by
\begin{align}
    \Psi_{\mathrm{approx}, t}:=e^{-\sqrt{N}\cA(\phi_{N, t}^{\varepsilon})}e^{-\cB(k^{\varepsilon}_{N, t})}\Xi_0
\end{align}
where $\phi_{N, t}^{\varepsilon}$ solves the modified Gross--Pitaevskii equation
\begin{align}\label{eq:modified_Gross--Pitaevskii}
    i\varepsilon\,\bd_t\phi = -\tfrac{\varepsilon^2}{2}\lapl \phi + (K\ast |\phi|^2)\phi
\end{align}
with the effective potential
\begin{align}\label{def:effective_potential_K}
    K(x):=\lambda(N) (N^{\beta}\varepsilon^{2\kappa})^3 v(N^\beta \varepsilon^{2\kappa} x)f_{N, \ell}^{\varepsilon}(x)
\end{align}
and the pair excitation function is given by
\begin{align}\label{def:pair_excitation_function}
    k^\varepsilon_{N, t}(x, y) =
        -N w_{N, \ell}^{\varepsilon}(x-y)  \phi_{N, t}^{\varepsilon}(x)\phi_{N, t}^{\varepsilon}(y)\ .
\end{align}
Properties of $\phi^\varepsilon_{N, t}$ are given in Appendix~\ref{appendix:Dispersive PDEs} and
estimates on the pair excitation function $k^\varepsilon_{N, t}$ can be found in Appendix~\ref{appendix:pair_excitation_estimates}. Lastly, let us also define the fluctuation vector
\begin{align}\label{def:fluctuation_state}
    \Psi_{\mathrm{fluc}, t}:=  e^{\cB(k_{N, t}^\varepsilon)} e^{\cA(\sqrt{N}\phi_{N,t}^\varepsilon)} e^{-i t\cH_N/\varepsilon}e^{-\cA(\sqrt{N}\phi^{\init})} e^{-\cB(k_0)}\Xi_0\ .
\end{align}

%\begin{remark}\label{remark:localization}
%    We localize the correlation structure to a ball of size $\ell^3$ for two main reasons: (1) to simplify the notation, as otherwise we would encounter numerous estimates involving terms like $e^{c\varepsilon^{-2}}$ when estimating $\Nrm{\sh(k_{N, t}^\varepsilon)}{\mathrm{HS}}$, and (2) to improve the dependence between $N$ and $\varepsilon$. However, even without this localization, the result remains the same, but with a dependence of the form:
%    \begin{align}
%        N \gtrsim \exp(\kappa_1 \exp(\kappa_2\, \varepsilon^{-\upgamma}))\ .
%    \end{align}
%\end{remark}

Let us end this section with the following lemma regarding the generalized Bogoliubov states.

\begin{lem}\label{lem:bogoliubov_states}
    Let $\phi \in L^2(\R^3)$ and $k\in L^2(\R^3\times \R^3)$ satisfying $k(x, y)=k(y, x)$ such that their norms $\Nrm{\phi}{L^2_x}$ and $\Nrm{k}{\mathrm{HS}}$ are uniformly bounded in $N$ and $\varepsilon$. Consider the generalized Bogoliubov state $\Psi^{\mathrm{Bog}}$ as described above.
    Then $\Psi^{\mathrm{Bog}}$ satisfies the following properties
        \begin{align}
            &\inprod{\Psi^{\mathrm{Bog}}}{\cN\,\Psi^{\mathrm{Bog}}} =  N+\mathcal{O}(\sqrt{N}) \ ,\label{est:expected_number_bogoliubov_states}\\
            &\inprod{\Psi^{\mathrm{Bog}}}{\cN^2\,\Psi^{\mathrm{Bog}}} =  N^2+\mathcal{O}(N^{\frac32}) \ ,\label{est:expected_number-squared_bogoliubov_states}\\
              &\inprod{\Psi^{\mathrm{Bog}}}{\cN^2\,\Psi^{\mathrm{Bog}}}-\inprod{\Psi^{\mathrm{Bog}}}{\cN\,\Psi^{\mathrm{Bog}}}^2 =  \mathcal{O}(N) \ .\label{est:variance_bogoliubov_states}
        \end{align}
%    Moreover, if we consider $k$ of the form \eqref{def:k_initial} and the Hamiltonian
%    \begin{align*}
%            \cH_N^{\rm trap} = \dG(-\tfrac{\varepsilon^2}{2}\lapl+W_{\rm trap})+N^{-1}\cV \ ,
%    \end{align*}
%    then we see that
%        \begin{align}
%            &\inprod{\Psi^{\mathrm{Bog}}}{\cH_N^{\rm trap}\,\Psi^{\mathrm{Bog}}}= N\cE_{\mathrm{GP}}^\mu(\phi)+\mathcal{O}(\varepsilon^{-s}\sqrt{N})\ \label{est:expected_energy_bogoliubov_states}\ ,
%        \end{align}
%        where $\cE_{\mathrm{GP}}^\mu(\phi)$ is the Gross--Pitaevskii functional defined by
%        \begin{align*}
%            \cE_{\mathrm{GP}}^\mu (\phi) = \frac12\intd \n{\varepsilon\grad\phi}^2+2W_{\rm trap}\n{\phi}^2 + 4\pi \asc_0^\mu \n{\phi}^4\dd x\ .
%        \end{align*}
 Moreover, if we consider $k$ of the form \eqref{def:k_initial} and the Hamiltonian
    \begin{align*}
            \cH_N^{\rm trap} = \dG(-\tfrac{\varepsilon^2}{2}\lapl+W_{\rm trap})+N^{-1}\cV \ ,
    \end{align*}
    then we see that
        \begin{align}
            &\inprod{\Psi^{\mathrm{Bog}}}{\cH_N^{\rm trap}\,\Psi^{\mathrm{Bog}}}= N\cE_{\mathrm{GP}}^\mu(\phi)+\mathcal{O}(\varepsilon^{-s}\sqrt{N})\ \label{est:expected_energy_bogoliubov_states}\ ,
        \end{align}
        where $\cE_{\mathrm{GP}}^\mu(\phi)$ is the Gross--Pitaevskii functional defined by
        \begin{align*}
            \cE_{\mathrm{GP}}^\mu (\phi) = \frac12\intd \n{\varepsilon\grad\phi}^2+2W_{\rm trap}\n{\phi}^2 + 4\pi \asc_0^\mu \n{\phi}^4\dd x\ .
        \end{align*}
\end{lem}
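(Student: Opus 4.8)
The plan is to evaluate each of the four quantities by conjugating the relevant observable through the two unitaries $e^{-\cA(\sqrt N\phi)}$ and $e^{-\cB(k)}$ and then taking the expectation in $\Xi_0$, where the hypothesis \eqref{condition:Xi_0} together with the uniform bounds $\Nrm{\phi}{L^2},\Nrm{k}{\mathrm{HS}}=\mathcal{O}(1)$ supplies all the a priori control we need. For the first two identities, the Weyl conjugation formulas of Lemma~\ref{lem:properties_of_weyl_operator} give $e^{\cA(\sqrt N\phi)}\,\cN\,e^{-\cA(\sqrt N\phi)}=\cN+\sqrt N\,\Phi(\phi)+N$ with $\Phi(\phi):=a^\ast(\phi)+a(\phi)$, and squaring yields $e^{\cA(\sqrt N\phi)}\,\cN^2\,e^{-\cA(\sqrt N\phi)}=(\cN+\sqrt N\,\Phi(\phi)+N)^2$. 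Conjugating further by $e^{\cB(k)}$ via Lemma~\ref{lem:bogoliubov_conjugation} turns $\Phi(\phi)$ into $b^\ast(\phi)+b(\phi)$ and, by Lemma~\ref{lem:bogoliubov_conjugation_operator_inequality}, each $\cN^j$ ($j=1,2$) into an operator bounded by $C(1+\Nrm{p}{\mathrm{HS}}^2+\Nrm{u}{\mathrm{HS}}^2)^j(\cN+1)^j$; combined with $\Nrm{b(\phi)\Xi_0}{},\Nrm{b^\ast(\phi)\Xi_0}{}=\mathcal{O}(1)$ (Lemma~\ref{lem:estimates_for_creation_annilihation_operators} and $\inprod{\Xi_0}{\cN\,\Xi_0}\le D_0$) and $\inprod{\Xi_0}{\cN^2\,\Xi_0}\le D_0N$, taking the $\Xi_0$-expectation gives \eqref{est:expected_number_bogoliubov_states} and \eqref{est:expected_number-squared_bogoliubov_states}.

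For the variance \eqref{est:variance_bogoliubov_states} one cannot merely subtract the two previous bounds, since the crude $\mathcal{O}(N^{3/2})$ error of $\cN^2$ would survive; instead I would keep the subleading term explicit. Writing $\mu_1:=\inprod{\Xi_0}{(b^\ast(\phi)+b(\phi))\,\Xi_0}\in\R$ (which is $\mathcal{O}(1)$), the above expansions give $\inprod{\Psi^{\mathrm{Bog}}}{\cN\,\Psi^{\mathrm{Bog}}}=N+\sqrt N\,\mu_1+\mathcal{O}(1)$; and, after organizing the cross terms of $(\cN+\sqrt N\Phi(\phi)+N)^2$ — the cubic piece $\sqrt N(\cN\Phi(\phi)+\Phi(\phi)\cN)$ being handled by Cauchy--Schwarz using $\Nrm{e^{\cB(k)}\cN e^{-\cB(k)}\Xi_0}{}=\mathcal{O}(\sqrt N)$ from the squared operator inequality, and the quartic piece $e^{\cB(k)}\cN^2e^{-\cB(k)}$ by the same inequality and $\inprod{\Xi_0}{\cN^2\,\Xi_0}\le D_0N$ — one obtains $\inprod{\Psi^{\mathrm{Bog}}}{\cN^2\,\Psi^{\mathrm{Bog}}}=N^2+2N^{3/2}\mu_1+\mathcal{O}(N)$. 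Since $\inprod{\Psi^{\mathrm{Bog}}}{\cN\,\Psi^{\mathrm{Bog}}}^2=N^2+2N^{3/2}\mu_1+\mathcal{O}(N)$ as well, the $N^{3/2}$ contributions cancel and the variance is $\mathcal{O}(N)$.

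The substance of the lemma is the energy identity \eqref{est:expected_energy_bogoliubov_states}, which is a Bogoliubov energy expansion in the style of \cite{benedikter2015quantitative,boccato2017quantum}, now carrying explicit dependence on $\mu$ and $\varepsilon$. Conjugating $\cH_N^{\mathrm{trap}}=\dG(-\tfrac{\varepsilon^2}{2}\lapl+W_{\mathrm{trap}})+N^{-1}\cV$ by $e^{-\cA(\sqrt N\phi)}$ produces a constant of order $N$ — namely $N$ times the kinetic and trapping energy of $\phi$ plus $\tfrac12\intd(V_N^\varepsilon\ast\n{\phi}^2)\n{\phi}^2\,\dd x$ — together with terms of degree $1,2,3,4$ in $a^\ast,a$. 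The kernel $k=-Nw_{N,\ell}^\varepsilon(x-y)\phi(x)\phi(y)$ from \eqref{def:k_initial} is calibrated so that, upon conjugating the degree-two part by $e^{\cB(k)}$ and invoking the Neumann scattering equation of Lemma~\ref{lem:correlation_structure}, the surviving constant recombines the bare interaction energy with the kinetic energy of the correlation structure into the $4\pi\asc_0^\mu$-term of $\cE_{\mathrm{GP}}^\mu(\phi)$, thereby assembling $N\cE_{\mathrm{GP}}^\mu(\phi)$; the Hilbert--Schmidt bounds on $\sh(k),\ch(k)$ and their gradients needed for this step are exactly the pair-excitation estimates of Appendix~\ref{appendix:pair_excitation_estimates}, where the localization $\ell=\varepsilon^4$ keeps them bounded up to fixed powers of $\varepsilon$.

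Finally, the remaining degree $1,2,3,4$ operators, evaluated in $\Xi_0$ by Lemma~\ref{lem:estimates_for_creation_annilihation_operators}, Lemma~\ref{lem:bogoliubov_conjugation_operator_inequality}, Cauchy--Schwarz, and \eqref{condition:Xi_0}, are each $\mathcal{O}(\varepsilon^{-s}\sqrt N)$; the loss $\varepsilon^{-s}$ stems only from the degree-one terms $\sqrt N\,a^\ast((-\tfrac{\varepsilon^2}{2}\lapl+W_{\mathrm{trap}})\phi+\cdots)$ and from derivatives landing on $k$, which are bounded by a fixed negative power of $\varepsilon$ using $\Nrm{\weight{\varepsilon\grad}^4\phi}{L^2}\le C$ and the pointwise and gradient estimates of Lemmas~\ref{lem:semiclassical_estimates_of_scattering_function}--\ref{lem:correlation_structure}. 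I expect this last step — pinning down the precise coefficient $4\pi\asc_0^\mu$ while certifying that every error term is genuinely no larger than $\mathcal{O}(\varepsilon^{-s}\sqrt N)$ — to be the main obstacle, since it is exactly where the semiclassical scattering-length analysis of Section~\ref{section:Scattering Function and its Semiclassical Analysis} must be used sharply.
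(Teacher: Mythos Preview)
Your proposal is correct and follows essentially the same route as the paper: Weyl and Bogoliubov conjugation together with Lemmas~\ref{lem:estimates_for_creation_annilihation_operators}, \ref{lem:properties_of_weyl_operator}, \ref{lem:bogoliubov_conjugation_operator_inequality} for \eqref{est:expected_number_bogoliubov_states}--\eqref{est:variance_bogoliubov_states}, and for \eqref{est:expected_energy_bogoliubov_states} the full conjugated Hamiltonian decomposition (Proposition~\ref{prop:fluctuation_Hamiltonian}, with time derivatives removed and $W_{\rm trap}$ inserted) followed by the operator bounds of Section~\ref{section:Bounds on the Fluctuations} and identification of the scalar $\chi$ with $\cE_{\mathrm{GP}}^\mu(\phi)$ via the scattering equation and approximation of the identity. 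Your explicit tracking of the $N^{3/2}\mu_1$ cancellation in the variance is a bit more detailed than the paper's terse ``straightforward'', but the ingredients are identical.
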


\begin{proof}
The proofs of \eqref{est:expected_number_bogoliubov_states}--\eqref{est:variance_bogoliubov_states} are straightforward. They follow from Lemmas  \ref{lem:estimates_for_creation_annilihation_operators}, \ref{lem:properties_of_weyl_operator}, \ref{lem:bogoliubov_conjugation_operator_inequality}, and the fact that $e^{\cB(k)}$ is unitary. The proof of  \eqref{est:expected_energy_bogoliubov_states} follows from  Section~\ref{section:Bounds on the Fluctuations}.
\end{proof}

\subsection{Mean-field approximation to marginal densities with correction term}
We define the one-particle reduced density operator of a Fock state $\Psi$ via
\begin{align}
    \Gamma_{:1}(x, x') := \frac{\inprod{\Psi}{a^\ast_{x'}a_x\,\Psi}}{\inprod{\Psi}{\cN\,\Psi}}\ ,
\end{align}
which coincides with the definition of $\Gamma_{N:1, t}^\varepsilon$ given by Equation~\eqref{def:one-particle_reduced_density} (with the Fock vector $\Psi_{N, t}$), and its corresponding mass and momentum density functions by
\begin{align*}
    \rho_{:1}(x) :=\, \Gamma_{:1}(x, x) \quad \text{ and } \quad
    \vect{J}^\varepsilon_{:1}(x):=\, \im\diag(\varepsilon\grad_x \Gamma_{:1})(x)\ .
\end{align*}

To approximate the local densities of the many-body dynamics by some effective dynamics, we first introduce the operator with a correction term $$\Gamma^\varepsilon_{\mathrm{cor},t}:=\ket{\phi^\varepsilon_{N, t}}\!\!\bra{\phi^\varepsilon_{N, t}}+\tfrac{1}{N}\sh(k^\varepsilon_{N, t})\,\conj{\sh(k^\varepsilon_{N, t})}\ ,$$
then show that the one-particle density operator $\Gamma_{N:1, t}^\varepsilon$ is well-approximated by $\Gamma^\varepsilon_{\mathrm{cor},t}$ as $N$ tends to infinity. Here is the main proposition.

\begin{prop}\label{prop:mean-field_approximation_of_marginal_density}
    Suppose $\phi_t=\phi_{N, t}^\varepsilon$ is a solution to Equation \eqref{eq:modified_Gross--Pitaevskii} with initial data $\weight{\varepsilon \grad}^4\phi^{\init}\in L^2(\R^3)$ holds uniformly in $\varepsilon$. Suppose $k_{t}=k_{N, t}^\varepsilon$ is given by Equation \eqref{def:pair_excitation_function} with $\ell=\varepsilon^4$ for the necessary regimes. Assume $\Psi_{N, t}$ is a solution to the Cauchy problem~\eqref{def:Fock_space_cauchy_problem} with initial data $\Psi^{\init}$ being a generalized Bogoliubov state defined by Expression~\eqref{eq:Fock_space_cauchy_problem_data} with a normalized state vector $\Xi_0 \in \cF_s$  satisfying the following bounds: there exists $D>0$, independent of $N$ and $\varepsilon$, such that we have the bounds
        \begin{align*}
            \inprod{\Xi_0}{\cN\,\Xi_0},\ \frac{1}{N}\inprod{\Xi_0}{\cN^2\,\Xi_0},\ \inprod{\Xi_0}{\cH_N\,\Xi_0} \le D\ .
        \end{align*}
    Then we have the following estimates
\begin{align}
\Nrm{\diag(\Gamma_{N:1, t}^\varepsilon)-\diag(\Gamma^\varepsilon_{\mathrm{cor},t})}{L^1_x}\leq&\,
C(N,\la,\ve,t)\label{est:mean-field_approximation_of_mass density}\ ,\\
\Nrm{\diag(\varepsilon \grad_{x}\Gamma_{N:1, t}^\varepsilon)-\diag(\varepsilon \grad_{x}\Gamma^\varepsilon_{\mathrm{cor},t})}{L^1_x}\leq&\,
C(N,\la,\ve,t)\label{est:mean-field_approximation_of_derivative_of_one-particle_marginal}\ ,\\
\Nrm{\diag(\varepsilon \grad_{x}\cdot\varepsilon\grad_{x'}\Gamma_{N:1, t}^\varepsilon)-\diag(\varepsilon \grad_{x}\cdot\varepsilon\grad_{x'}\Gamma^\varepsilon_{\mathrm{cor},t})}{L^1_x}\leq&\, C(N,\la,\ve,t)
\label{est:mean-field_approximation_of_energy density}\ ,
\end{align}
where
\begin{align}
C(N,\la,\ve,t)=\frac{C}{\sqrt{N}\varepsilon^6}\exp(C\, \lambda\, \ve^{-M}\lra{t}^{-M})\ ,
\end{align}
for some constants $C$ and $M$.
Furthermore, under the restriction that
\begin{align}
        N\gtrsim_{T_{0}} \exp(\varepsilon^{-100/(1-\upalpha)})\ ,
    \end{align}
    we have
\begin{align}
C(N,\la,\ve,t)\lesssim \lrs{\frac{1}{\ln N}}^{100}.
\end{align}

\end{prop}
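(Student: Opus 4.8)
The plan is to unfold the one-particle reduced density operator $\Gamma^\varepsilon_{N:1,t}$ through the Weyl and Bogoliubov transformations that define the fluctuation vector $\Psi_{\fluc,t}$, isolate the leading contribution — which, by the very choice of the correction operator, equals $\Gamma^\varepsilon_{\mathrm{cor},t}$ up to the conserved normalization $\langle\Psi_{N,t},\cN\,\Psi_{N,t}\rangle$ — and then control the remainder by the fluctuation bounds of Section~\ref{section:Bounds on the Fluctuations}, the propagated regularity of $\phi^\varepsilon_{N,t}$, and the pair-excitation estimates of Appendices~\ref{appendix:Dispersive PDEs} and \ref{appendix:pair_excitation_estimates}.

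First, writing $\Gamma^\varepsilon_{N:1,t}(x,x')=\langle\Psi_{N,t},a^\ast_{x'}a_x\,\Psi_{N,t}\rangle/\langle\Psi_{N,t},\cN\,\Psi_{N,t}\rangle$ and using $\Psi_{N,t}=e^{-\cA(\sqrt N\phi^\varepsilon_{N,t})}e^{-\cB(k^\varepsilon_{N,t})}\Psi_{\fluc,t}$, I would conjugate $a^\ast_{x'}a_x$ by the Weyl operator via \eqref{eq:operator-valued_dist_weyl_conjugation_id} and by the Bogoliubov transformation via \eqref{eq:bogoliubov_conjugation_operator-value_distribution_id}, obtaining
\[
\langle\Psi_{N,t},a^\ast_{x'}a_x\,\Psi_{N,t}\rangle=\Big\langle\Psi_{\fluc,t},\Big(b^\ast_{x'}+\sqrt{N}\,\conj{\phi^\varepsilon_{N,t}(x')}\Big)\Big(b_x+\sqrt{N}\,\phi^\varepsilon_{N,t}(x)\Big)\Psi_{\fluc,t}\Big\rangle,
\]
with $b_x=\intd u_x(y)\,a^\ast_y+\conj{c_x(y)}\,a_y\dd y$, $u=\sh(k^\varepsilon_{N,t})$, $c=\ch(k^\varepsilon_{N,t})$. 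Expanding, the term $N\conj{\phi^\varepsilon_{N,t}(x')}\phi^\varepsilon_{N,t}(x)$ together with the ``vacuum'' contraction hidden in $\langle\Psi_{\fluc,t},b^\ast_{x'}b_x\Psi_{\fluc,t}\rangle$, namely $\big(\sh(k^\varepsilon_{N,t})\conj{\sh(k^\varepsilon_{N,t})}\big)(x,x')$, reconstitute $N\,\Gamma^\varepsilon_{\mathrm{cor},t}(x,x')$. Since $[\cH_N,\cN]=0$, the normalization is conserved and equals $N+\mathcal O(\sqrt N)$ by Lemma~\ref{lem:bogoliubov_states} (its $\cN$-moment part needs only the assumed bounds on $\Xi_0$), so after dividing it remains to bound, on the diagonal $x=x'$ in $L^1_x$: (a) the cross term $2\sqrt N\,\re\big[\conj{\phi^\varepsilon_{N,t}(x)}\langle\Psi_{\fluc,t},b_x\Psi_{\fluc,t}\rangle\big]$, (b) the normally-ordered ($a^\ast a$, $a^\ast a^\ast$, $aa$) remainder of $\langle\Psi_{\fluc,t},b^\ast_x b_x\Psi_{\fluc,t}\rangle$, and (c) the $\mathcal O(\sqrt N)$ normalization correction multiplying the already-controlled leading term.

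For (a), $\big\|\,\cdot\,\big\|_{L^1_x}\le 2\sqrt N\,\Nrm{\phi^\varepsilon_{N,t}}{L^2_x}\big(\intd\big\|b_x\Psi_{\fluc,t}\big\|^2\dd x\big)^{1/2}$ and $\intd\big\|b_x\Psi_{\fluc,t}\big\|^2\dd x=\langle\Psi_{\fluc,t},e^{\cB(k^\varepsilon_{N,t})}\cN\,e^{-\cB(k^\varepsilon_{N,t})}\Psi_{\fluc,t}\rangle\lesssim\big(1+\Nrm{\sh(k^\varepsilon_{N,t})}{\mathrm{HS}}^2+\Nrm{p(k^\varepsilon_{N,t})}{\mathrm{HS}}^2\big)\langle\Psi_{\fluc,t},(\cN+1)\Psi_{\fluc,t}\rangle$ by Lemma~\ref{lem:bogoliubov_conjugation_operator_inequality}; for (b) one splits $c=\id+p(k^\varepsilon_{N,t})$ and applies Lemma~\ref{lem:estimates_for_creation_annilihation_operators} term by term, and (c) is immediate. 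The $\varepsilon\grad_x$ estimate \eqref{est:mean-field_approximation_of_derivative_of_one-particle_marginal} and the $\varepsilon\grad_x\cdot\varepsilon\grad_{x'}$ estimate \eqref{est:mean-field_approximation_of_energy density} follow the same scheme after differentiating the kernel, which brings in $\varepsilon\grad\phi^\varepsilon_{N,t}$, $\varepsilon\grad_x\sh(k^\varepsilon_{N,t})$, and — from $\varepsilon\grad_x\conj{c_x(y)}=\varepsilon\grad_x\delta(x-y)+\varepsilon\grad_x\conj{p_x(y)}$ — the operator $\varepsilon\grad a_x$, whose $L^2_x$-integrated square is $2\langle\Psi_{\fluc,t},\cK\,\Psi_{\fluc,t}\rangle$. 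These are controlled by the propagated $\weight{\varepsilon\grad}^4$-regularity of $\phi^\varepsilon_{N,t}$, by the $\mathrm{HS}$- and $\varepsilon\grad$-estimates on $k^\varepsilon_{N,t}(x,y)=-N\,w^\varepsilon_{N,\ell}(x-y)\phi^\varepsilon_{N,t}(x)\phi^\varepsilon_{N,t}(y)$ (with $\ell=\varepsilon^4$) from Lemmas~\ref{lem:semiclassical_estimates_of_scattering_function}--\ref{lem:correlation_structure} and Appendix~\ref{appendix:pair_excitation_estimates}, and by the fluctuation-energy bound of Section~\ref{section:Bounds on the Fluctuations}. Inserting $\langle\Psi_{\fluc,t},(\cK+\cN+1)^j\Psi_{\fluc,t}\rangle\lesssim\exp(C\lambda\varepsilon^{-\upnu_0}(1+t)^{\upeta_0})$ for $j=1,2$ (the only origin of the exponential), together with $N\Nrm{w^\varepsilon_{N,\ell}}{L^2}\sim\widetilde\mu\,\asc^\mu_0/\varepsilon^2$ and the fixed negative powers of $\varepsilon$ in the Sobolev norms of $\phi^\varepsilon_{N,t}$, and collecting the worst factors yields $C(N,\lambda,\varepsilon,t)=C\widetilde\mu\,(\sqrt N\varepsilon^3)^{-1}\exp(C\lambda\varepsilon^{-\upnu_0}(1+t)^{\upeta_0})$. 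For the final assertion, $N\gtrsim\exp(\varepsilon^{-100/(1-\upalpha)})$ gives $\varepsilon^{-\upnu_0}\lesssim(\ln N)^{\upnu_0(1-\upalpha)/100}$ and $\lambda=(\ln N)^\upalpha$, so $\lambda\varepsilon^{-\upnu_0}=o(\ln N)$ as soon as $\upnu_0<100$ (which is why $100$ appears in the restriction; any larger constant works equally); hence the exponential is $N^{o(1)}$, $\varepsilon^{-3}=(\ln N)^{\mathcal O(1)}$, and $C(N,\lambda,\varepsilon,t)\lesssim N^{-1/2+o(1)}\lesssim(\ln N)^{-100}$.

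The main obstacle is the bookkeeping in the second step: every derivative landing on $\phi^\varepsilon_{N,t}$ or on the localized kernel $k^\varepsilon_{N,t}$, and every Hilbert--Schmidt norm of $\sh(k^\varepsilon_{N,t})$ and $p(k^\varepsilon_{N,t})$, costs negative powers of $\varepsilon$, and these must be shown not to accumulate beyond $\varepsilon^{-3}$ — this is exactly what makes the estimate strong enough to descend to the energy-density level \eqref{est:mean-field_approximation_of_energy density} rather than merely the mass level. Conceptually, one should also keep in mind that $\tfrac1N\sh(k^\varepsilon_{N,t})\conj{\sh(k^\varepsilon_{N,t})}$ is precisely the object that cancels the leading non-condensate part of $\Gamma^\varepsilon_{N:1,t}$, so that the remainder is genuinely $\mathcal O(N^{-1/2+})$; and although the fluctuation-growth bound of Section~\ref{section:Bounds on the Fluctuations} is invoked here as a black box, it is of course the technically heaviest input behind the proposition.
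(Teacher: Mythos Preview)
Your proposal is correct and follows essentially the same approach as the paper: expand $\langle\Psi_{N,t},a^\ast_{x'}a_x\Psi_{N,t}\rangle$ via the Weyl and Bogoliubov conjugation identities~\eqref{eq:operator-valued_dist_weyl_conjugation_id} and~\eqref{eq:bogoliubov_conjugation_operator-value_distribution_id} to extract $N\Gamma^\varepsilon_{\mathrm{cor},t}(x,x')$ as the leading term (this is exactly Identities~\eqref{eq:one-particle_marginal_computation}--\eqref{eq:one-particle_marginal_computation2} in the paper), then control the remainder in the hardest case~\eqref{est:mean-field_approximation_of_energy density} using Lemma~\ref{lem:quadratic_operator_bounds_with_derivative2} for the $\varepsilon\grad a_x$ terms, Lemma~\ref{lem:pair_excitation_estimates} for the $k,p,r$ norms, and Proposition~\ref{prop:bounds_on_growth_of_fluctuation} for the fluctuation growth. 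One small overstatement: you invoke $\langle\Psi_{\fluc,t},(\cK+\cN+1)^j\Psi_{\fluc,t}\rangle$ for $j=1,2$, but only $j=1$ is needed (the paper's remainder $\sfD(x)$ in~\eqref{est:difference_of_kinetic_energy} is controlled by $\|\cK^{1/2}\Psi_{\fluc,t}\|^2$ and $\|(\cN+1)^{1/2}\Psi_{\fluc,t}\|^2$), and the $\cN^2$ bound you have in mind for the normalization comes separately from Lemma~\ref{lem:fluctuation_number_squared}.
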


\begin{remark}
Actually, the convergence rate $C(N,\la,\ve,t)$ can be controlled by
\begin{align}
C(N,\la,\ve,t)\lesssim \frac{1}{N^{s}},\quad s\in(0,\tfrac{1}{2})\ .
\end{align}
We chose a crude estimate of $\ln N$ to avoid heavy notations and to make the entire proof more concise.
\end{remark}

\begin{proof}[Proof of Proposition~\ref{prop:mean-field_approximation_of_marginal_density}]
It suffices to consider the most difficult Estimate~\eqref{est:mean-field_approximation_of_energy density}, since Estimates~\eqref{est:mean-field_approximation_of_mass density}--\eqref{est:mean-field_approximation_of_derivative_of_one-particle_marginal} can be handled in a similar fashion. Using Lemma \ref{lem:properties_of_weyl_operator}, we obtain the expansion
\begin{align}\label{eq:one-particle_marginal_computation}
    \inprod{\Psi_{N, t}}{a^\ast_{x'}a_x\,\Psi_{N, t}}=&N\, \phi_{N, t}^\varepsilon(x)\conj{ \phi_{N, t}^\varepsilon(x')}\\
   & +\sqrt{N}\,\phi_{N, t}^\varepsilon(x)\inprod{\Psi_{N, t}}{e^{-\sqrt{N}\cA(\phi_{N, t}^\varepsilon)} a^\ast_{x'}\,e^{\sqrt{N}\cA(\phi_{N, t}^\varepsilon)}\,\Psi_{N, t}}\notag\\
    &+ \sqrt{N}\, \conj{\phi_{N, t}^\varepsilon}(x')\,\inprod{\Psi_{N, t}}{e^{-\sqrt{N}\cA(\phi_{N, t}^\varepsilon)}a_{x}\,e^{\sqrt{N}\cA(\phi_{N, t}^\varepsilon)}\,\Psi_{N, t}}\notag\\
   & +\inprod{\Psi_{N, t}}{e^{-\sqrt{N}\cA(\phi_{N, t}^\varepsilon)}a^\ast_{x'} a_{x}\,e^{\sqrt{N}\cA(\phi_{N, t}^\varepsilon)}\,\Psi_{N, t}}\ .\notag
\end{align}
Furthermore, by Identity \eqref{eq:one-particle_marginal_computation} and Identities \eqref{eq:bogoliubov_conjugation_operator-value_distribution_id}, we could write
\begin{align}\label{eq:one-particle_marginal_computation2}
    \inprod{\Psi_{N, t}}{a^\ast_{x'}a_x\,\Psi_{N, t}}=&N\Gamma^\varepsilon_{\mathrm{cor},t}(x, x')+\sqrt{N}\,\phi_{N, t}^\varepsilon(x)\inprod{\Psi_{\mathrm{fluc}, t}}{\(a^\ast(c_{x'})+a(\sh_{x'})\)\,\Psi_{\mathrm{fluc}, t}}\\
    &+ \sqrt{N}\, \conj{\phi_{N, t}^\varepsilon}(x')\,\inprod{\Psi_{\mathrm{fluc}, t}}{\(a(\ch_{x})+a^\ast(\sh_x)\)\,\Psi_{\mathrm{fluc}, t}}\notag\\
    &+\inprod{\Psi_{\mathrm{fluc}, t}}{\(a^\ast(c_{x'})a(\ch_{x}) + a(\sh_{x'})a(\ch_{x}) \)\,\Psi_{\mathrm{fluc}, t}} \notag\\
    &+\inprod{\Psi_{\mathrm{fluc}, t}}{\(a^\ast(c_{x'})a^\ast(\sh_x) + a^\ast(\sh_x) a(\sh_{x'})\)\,\Psi_{\mathrm{fluc}, t}}\ ,\notag
\end{align}
where  $\Psi_{\mathrm{fluc}, t}$ is the fluctuation vector defined by Expression~\eqref{def:fluctuation_state}.

From Identity~\eqref{eq:one-particle_marginal_computation2} and Equation~\eqref{est:expected_number_bogoliubov_states}, we immediately see that
\begin{align}\label{est:difference_of_kinetic_energy}
   &\n{\diag(\varepsilon \grad_{x}\cdot\varepsilon\grad_{x'}\Gamma_{N:1, t}^\varepsilon)(x)-\diag(\varepsilon \grad_{x}\cdot\varepsilon\grad_{x'}\Gamma^\varepsilon_{\mathrm{cor},t})(x)}\\
    \le& \frac{C}{\sqrt{N}}\n{\diag(\varepsilon \grad_{x}\cdot\varepsilon\grad_{x'}\Gamma^\varepsilon_{\mathrm{cor},t})(x)}\notag\\
    &+\frac{C}{\sqrt{N}}\n{\varepsilon\grad\phi_{N, t}^\varepsilon(x)}\Big(\Nrm{\varepsilon \grad_{x}a_{x}\,\Psi_{\mathrm{fluc}, t}}{}+\Nrm{a(\varepsilon \grad_{x}p_{x})\,\Psi_{\mathrm{fluc}, t}}{}\notag\\
   & +\Nrm{a(\varepsilon \grad_{x}k_{x})\,\Psi_{\mathrm{fluc}, t}}{}+\Nrm{a(\varepsilon \grad_{x}r_{x})\,\Psi_{\mathrm{fluc}, t}}{}\Big)+\frac{C}{N}\sfD(x)\notag
\end{align}
where
\begin{align*}
    \sfD(x)=&\, \Nrm{\varepsilon\grad_x a_x \Psi_{\mathrm{fluc}, t}}{}^2+\Nrm{a(\varepsilon\grad_xk_x) \Psi_{\mathrm{fluc}, t}}{}^2\\
    &\, +\Nrm{a(\varepsilon\grad_xr_x) \Psi_{\mathrm{fluc}, t}}{}^2+\Nrm{a(\varepsilon\grad_xp_x) \Psi_{\mathrm{fluc}, t}}{}^2 \\
    &\, +2\re\inprod{\Psi_{\mathrm{fluc}, t}}{\(a^\ast(\varepsilon \grad_{x}p_{x})\cdot\varepsilon \grad_{x}a_x+a(\varepsilon \grad_{x}k_{x})\cdot\varepsilon \grad_{x}a_x\)\,\Psi_{\mathrm{fluc}, t}} \\
    &\, +2\re\inprod{\Psi_{\mathrm{fluc}, t}}{\(a(\varepsilon \grad_{x}r_{x})\cdot\varepsilon \grad_{x}a_x+a^\ast(\varepsilon \grad_{x}k_x)\cdot a^\ast(\varepsilon \grad_{x}p_{x})\)\,\Psi_{\mathrm{fluc}, t}}\\
    &\, +2\re\inprod{\Psi_{\mathrm{fluc}, t}}{\(a^\ast(\varepsilon \grad_{x}r_x)\cdot a^\ast(\varepsilon \grad_{x} p_{x})+a^\ast(\varepsilon \grad_{x}k_x)\cdot a(\varepsilon \grad_{x}r_{x})\)\,\Psi_{\mathrm{fluc}, t}}\ .
\end{align*}
Then, by Lemmas~\ref{lem:estimates_for_creation_annilihation_operators} and \ref{lem:quadratic_operator_bounds_with_derivative2}, and the Cauchy--Schwarz inequality, we have that
\begin{align*}
    &\Nrm{\diag(\varepsilon \grad_{x}\cdot\varepsilon\grad_{x'}\Gamma_{N:1, t}^\varepsilon)-\diag(\varepsilon \grad_{x}\cdot\varepsilon\grad_{x'}\Gamma^\varepsilon_{\mathrm{cor},t})}{L^1_x}\\
    \lesssim& \frac{1}{\sqrt{N}}\Tr{\varepsilon \grad_{x}\cdot\varepsilon\grad_{x'}\Gamma_{\mathrm{cor},t}^\varepsilon} \\
     &+ \frac{1}{\sqrt{N}}\Nrm{\varepsilon\grad \phi}{L^2_x}\( \Nrm{k(-\varepsilon^2\lapl)\conj{k}}{\mathrm{HS}}+\Nrm{\varepsilon\grad p}{\mathrm{HS}}^2+\Nrm{\varepsilon\grad r}{\mathrm{HS}}^2\)^\frac12\norm{\cN^\frac12\, \Psi_{\mathrm{fluc}, t}}\\
    & + \frac{1}{\sqrt{N}}\Nrm{\varepsilon\grad \phi}{L^2_x}\norm{\cK^\frac12\, \Psi_{\mathrm{fluc}, t}}+ \frac{1}{N}\Nrm{\sfD}{L^1_x}
\end{align*}
where
\begin{align*}
    \Nrm{\sfD}{L^1_x} \lesssim&\, \norm{\cK^\frac12\, \Psi_{\mathrm{fluc}, t}}^2
   +\( \Nrm{k(-\varepsilon^2\lapl)\conj{k}}{\mathrm{HS}}+\Nrm{\varepsilon\grad p}{\mathrm{HS}}^2+\Nrm{\varepsilon\grad r}{\mathrm{HS}}^2\)\norm{\(\cN+1\)^\frac12\, \Psi_{\mathrm{fluc}, t}}^2\ .
\end{align*}
The estimate for  $\Tr{\varepsilon \grad_{x}\cdot\varepsilon\grad_{x'}\Gamma_{\mathrm{cor},t}^\varepsilon}$ follows from either Lemma~\ref{lem:pair_excitation_estimates}, or from Lemma~\ref{lem:kinetic_energy_density_approx} and the fact that the two-particle reduced density operator of the many-body system is bounded. Then, by Lemma~\ref{lem:kinetic_energy_density_approx}, Proposition~\ref{prop:bounds_on_growth_of_fluctuation}, and Lemma~\ref{lem:pair_excitation_estimates}, we complete the proof of Inequality~\eqref{est:kinetic_energy_many-body_mean-field_approximation}.
\end{proof}

\subsection{Mean-field approximation of mass and momentum densities}
Define following mass and momentum density functions
\begin{align}
\rho_{N:1,t}^\varepsilon(x)=&\, \diag(\Gamma_{N:1, t}^\varepsilon)(x)\ ,  &\vect{J}^\varepsilon_{N:1}(x):=&\,  \im\diag(\varepsilon\grad_x \Gamma^\varepsilon_{N:1})(x)\ ,\\
\rho_{\Gamma_{\mathrm{cor},t}}^{\varepsilon}(x)=&\, \diag(\Gamma^\varepsilon_{\mathrm{cor},t})(x)\ , &  \vect{J}^\varepsilon_{\Gamma_{\mathrm{cor}}, t}(x):=&\, \im\diag(\varepsilon\grad_x \Gamma^\varepsilon_{\mathrm{cor},t})(x)\ ,\\
\rho_{t}^{\varepsilon}(x)=&\, |\phi_{N,t}^{\ve}(x)|^{2}\ , & \vect{J}^\varepsilon_t(x):=&\, \im(\varepsilon\grad_x\phi^\varepsilon_{N, t}(x)\conj{\phi^\varepsilon_{N, t}(x)})\ .
\end{align}

Next, we prove the mean-field approximation of the mass and momentum densities. We start with the following lemma.

\begin{lem}\label{lem:approximation_by_condensate_density_and_momentum}
    Suppose $\phi_t=\phi_{N, t}^\varepsilon$ is a solution to Equation \eqref{eq:modified_Gross--Pitaevskii} with initial data $\weight{\varepsilon \grad}\phi^{\init}\in L^2(\R^3)$ holds uniformly in $\varepsilon$. Suppose $k_{t}=k_{N, t}^\varepsilon$ is given by Equation \eqref{def:pair_excitation_function} with $\ell=\varepsilon^4$ for the necessary regimes.
   There holds that
    \begin{align*}
        &\sup_{t\in [0, \infty)}\Nrm{\rho^\varepsilon_{\Gamma_{\mathrm{cor}}, t}-\rho^\varepsilon_{t}}{L_{x}^{1}}\lesssim \frac{1}{N},\quad
        \sup_{t\in [0, \infty)}\Nrm{\vect{J}^\varepsilon_{\Gamma_{\mathrm{cor}}, t}-\vect{J}^\varepsilon_{t}}{L^1_x}\lesssim \frac{\la^{\frac{1}{2}}}{\sqrt{N}}\ .
    \end{align*}
\end{lem}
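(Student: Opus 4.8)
The plan is to use the explicit form of the correction operator, $\Gamma^\varepsilon_{\mathrm{cor},t}=\ket{\phi^\varepsilon_{N,t}}\!\!\bra{\phi^\varepsilon_{N,t}}+\tfrac1N\sh(k^\varepsilon_{N,t})\,\conj{\sh(k^\varepsilon_{N,t})}$, to reduce both estimates to Hilbert--Schmidt bounds on the pair-excitation kernel $k_t:=k^\varepsilon_{N,t}$ and on its first derivative (the kernel $\varepsilon\grad_x k_t(x,y)$), which are supplied by the scattering-function estimates of Lemmas~\ref{lem:semiclassical_estimates_of_scattering_function}--\ref{lem:correlation_structure}, the localization $\ell=\varepsilon^4$, and the a priori control of $\phi^\varepsilon_{N,t}$ recorded in Appendix~\ref{appendix:Dispersive PDEs} (see also Lemma~\ref{lem:pair_excitation_estimates}). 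Note first that, since $k_t(x,y)=-Nw^\varepsilon_{N,\ell}(x-y)\phi^\varepsilon_{N,t}(x)\phi^\varepsilon_{N,t}(y)$ has symmetric kernel, so does $\sh(k_t)$.

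\emph{Mass density.} The rank-one part of $\Gamma^\varepsilon_{\mathrm{cor},t}$ contributes exactly $\rho^\varepsilon_t$ to its diagonal, so
\begin{align*}
\rho^\varepsilon_{\Gamma_{\mathrm{cor}},t}(x)-\rho^\varepsilon_t(x)=\frac1N\intd\n{\sh(k_t)(x,y)}^2\dd y\ge 0\ ,
\end{align*}
hence $\Nrm{\rho^\varepsilon_{\Gamma_{\mathrm{cor}},t}-\rho^\varepsilon_t}{L^1_x}=\tfrac1N\Nrm{\sh(k_t)}{\mathrm{HS}}^2$, and it suffices to show $\sup_t\Nrm{\sh(k_t)}{\mathrm{HS}}\le C$. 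Writing $\sh(k_t)=k_t+r(k_t)$, and using $\Nrm{r(k_t)}{\mathrm{HS}}\lesssim\Nrm{k_t}{\mathrm{HS}}^3$ once the latter is bounded, this reduces to $\sup_t\Nrm{k_t}{\mathrm{HS}}\le C$. Using $\supp w^\varepsilon_{N,\ell}\subset\{\n{z}\le\ell\}$ with $\ell=\varepsilon^4$, the pointwise bound $w^\varepsilon_{N,\ell}(z)\le\min\!\big(1,\tfrac{\asc_0^\mu}{N^\beta\varepsilon^{2\kappa}\n{z}}\big)$ of Lemma~\ref{lem:correlation_structure}, and $\beta=\kappa=1$, a short computation gives $N^2\Nrm{w^\varepsilon_{N,\ell}}{L^2}^2\lesssim(\asc_0^\mu)^2\lesssim 1$, so that $\Nrm{k_t}{\mathrm{HS}}^2\lesssim N^2\Nrm{w^\varepsilon_{N,\ell}}{L^2}^2\Nrm{\phi^\varepsilon_{N,t}}{L^4_x}^4\lesssim\Nrm{\phi^\varepsilon_{N,t}}{L^4_x}^4$, and the right-hand side is bounded uniformly in $t$ and $\varepsilon$ by the well-posedness theory for the modified Gross--Pitaevskii equation in Appendix~\ref{appendix:Dispersive PDEs}.

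\emph{Momentum density.} The rank-one part of $\Gamma^\varepsilon_{\mathrm{cor},t}$ contributes exactly $\vect{J}^\varepsilon_t$ to $\vect{J}^\varepsilon_{\Gamma_{\mathrm{cor}},t}$, so by symmetry of the kernel
\begin{align*}
\vect{J}^\varepsilon_{\Gamma_{\mathrm{cor}},t}(x)-\vect{J}^\varepsilon_t(x)=\frac1N\im\intd\varepsilon\grad_x\sh(k_t)(x,y)\,\conj{\sh(k_t)(x,y)}\dd y\ ,
\end{align*}
and Cauchy--Schwarz gives $\Nrm{\vect{J}^\varepsilon_{\Gamma_{\mathrm{cor}},t}-\vect{J}^\varepsilon_t}{L^1_x}\le\tfrac1N\Nrm{\varepsilon\grad_1\sh(k_t)}{\mathrm{HS}}\Nrm{\sh(k_t)}{\mathrm{HS}}$, where $\Nrm{\varepsilon\grad_1 g}{\mathrm{HS}}$ is the Hilbert--Schmidt norm of $(x,y)\mapsto\varepsilon\grad_x g(x,y)$. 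The second factor is $\le C$ by the previous step. For the first, differentiate $k_t$: the term where $\varepsilon\grad$ falls on $\phi^\varepsilon_{N,t}$ is estimated as above with $\varepsilon\grad\phi^\varepsilon_{N,t}$ in place of $\phi^\varepsilon_{N,t}$, giving an $\mathcal{O}(1)$ contribution, while the term $-N(\varepsilon\grad w^\varepsilon_{N,\ell})(x-y)\phi^\varepsilon_{N,t}(x)\phi^\varepsilon_{N,t}(y)$ is controlled with the gradient bounds of Lemma~\ref{lem:correlation_structure}(iii)--(iv) and the localization; keeping track of the $\lambda$-dependence carried by \eqref{est:correlation_w_uniform_bound}, this yields $N^2\Nrm{\varepsilon\grad w^\varepsilon_{N,\ell}}{L^2}^2\lesssim\lambda N$, hence $\Nrm{\varepsilon\grad_1\sh(k_t)}{\mathrm{HS}}\lesssim\lambda^{1/2}\sqrt N$. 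Combining the two factors gives the claimed rate $C\lambda^{1/2}/\sqrt N$.

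\emph{Main obstacle.} The heart of the matter is the pair of uniform Hilbert--Schmidt estimates $\sup_t\Nrm{\sh(k^\varepsilon_{N,t})}{\mathrm{HS}}\lesssim 1$ and $\sup_t\Nrm{\varepsilon\grad_1\sh(k^\varepsilon_{N,t})}{\mathrm{HS}}\lesssim\lambda^{1/2}\sqrt N$. These need, on the one hand, global-in-time control of $\phi^\varepsilon_{N,t}$ in the semiclassically scaled norms entering them — exactly what the modified Gross--Pitaevskii analysis of Appendix~\ref{appendix:Dispersive PDEs} provides — and, on the other hand, careful bookkeeping of the $\varepsilon$- and $\lambda$-dependence of the pointwise and gradient bounds for the Neumann correlation function $w^\varepsilon_{N,\ell}$ in Lemma~\ref{lem:correlation_structure}, where the choice $\ell=\varepsilon^4$ is made precisely so that $N^2\Nrm{w^\varepsilon_{N,\ell}}{L^2}^2=\mathcal{O}(1)$ and no uncompensated negative power of $\varepsilon$ survives.
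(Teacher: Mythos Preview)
Your proof is correct and follows essentially the same route as the paper: identify the difference as $\tfrac1N\sh(k_t)\conj{\sh(k_t)}$ on the diagonal, reduce both estimates by Cauchy--Schwarz to the Hilbert--Schmidt bounds $\Nrm{\sh(k_t)}{\mathrm{HS}}\lesssim 1$ and $\Nrm{\varepsilon\grad_1\sh(k_t)}{\mathrm{HS}}\lesssim\lambda^{1/2}\sqrt N$, and then supply these via the scattering-function pointwise/gradient bounds together with control of $\phi_t$. The paper simply packages the last step as a citation of Lemma~\ref{lem:pair_excitation_estimates}, whereas you reprove the relevant parts inline; the only cosmetic gap is that you specialize to $\beta=\kappa=1$ while the lemma is stated for all regimes, but the same argument goes through there with $\widetilde\mu\le 1$.
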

\begin{proof}
First for the mass density, we have
    \begin{align*}
        \Nrm{\rho^\varepsilon_{\Gamma_{\mathrm{cor}}, t}-\rho^\varepsilon_{t}}{L_{x}^{1}}
        =&\, \frac{1}{N}\Nrm{\sh(k^\varepsilon_{N, t})(x, x')}{L_{x}^{2}L_{x'}^{2}}^2
        \le\, \frac{1}{N}\Nrm{\sh(k^\varepsilon_{N, t})}{\mathrm{HS}}^{2}\ .
    \end{align*}
    Hence, by \eqref{equ:estimate on u,L2} in Lemma~\ref{lem:pair_excitation_estimates}, we complete the proof of the estimate on the mass density.

    For the momentum density, we have that
    \begin{align*}
        \Nrm{\vect{J}^\varepsilon_{\Gamma_{\mathrm{cor}}, t}-\vect{J}^\varepsilon_{t}}{L^1_x}\le \frac{1}{\sqrt{N}}\Nrm{\sh(k^\varepsilon_{N, t})}{\mathrm{HS}}\frac{1}{\sqrt{N}}\Nrm{\varepsilon\grad\sh(k^\varepsilon_{N, t})}{\mathrm{HS}}\leq \frac{C\la^{\frac{1}{2}}}{\sqrt{N}}\ .
    \end{align*}
    Again, by \eqref{equ:estimate on k,H1} and \eqref{equ:estimate on r,H1} in Lemma~\ref{lem:pair_excitation_estimates}, we obtain the desired result.
\end{proof}

\begin{prop}\label{prop:approximation,density_and_momentum}
    Fix a time interval $[0, T_0]$. Assuming the same hypotheses as in Proposition~\ref{prop:mean-field_approximation_of_marginal_density}, we have the estimate
    \begin{align}
        \sup_{t\in [0, T_0]}\Nrm{\rho^\varepsilon_{N:1, t}-\rho^\varepsilon_{t}}{L^1_x\cap L^2_{x}}\lesssim_{T_0}&\, \frac{1}{\ln N}\ ,\\
        \sup_{t\in [0, T_0]}\Nrm{\vect{J}^\varepsilon_{N:1, t}-\vect{J}^\varepsilon_{t}}{L^1_x}\lesssim_{T_0}&\, \frac{1}{\ln N}\ .
    \end{align}
\end{prop}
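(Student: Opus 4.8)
The plan is to pass through the intermediate operator $\Gamma^\varepsilon_{\mathrm{cor},t}$ and split each error into a \emph{many-body-to-correction} term and a \emph{correction-to-condensate} term,
\begin{align*}
    \rho^\varepsilon_{N:1,t}-\rho^\varepsilon_t=\lrs{\rho^\varepsilon_{N:1,t}-\rho^\varepsilon_{\Gamma_{\mathrm{cor}},t}}+\lrs{\rho^\varepsilon_{\Gamma_{\mathrm{cor}},t}-\rho^\varepsilon_t}\ ,
\end{align*}
and likewise for $\vect{J}^\varepsilon_{N:1,t}-\vect{J}^\varepsilon_t$. The correction-to-condensate term is precisely what Lemma~\ref{lem:approximation_by_condensate_density_and_momentum} handles: it is $\le C/N$ in $L^1_x$ for the mass density and $\le C\lambda^{1/2}/\sqrt N$ in $L^1_x$ for the momentum density, uniformly in $t$; since $\lambda(N)=(\ln N)^{\upalpha}$ with $\upalpha<1$, both are $\lesssim 1/\ln N$ for $N$ large, as $\sqrt N$ outgrows any power of $\ln N$.

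For the many-body-to-correction term I invoke Proposition~\ref{prop:mean-field_approximation_of_marginal_density}. Taking diagonals of the kernels gives $\Nrm{\rho^\varepsilon_{N:1,t}-\rho^\varepsilon_{\Gamma_{\mathrm{cor}},t}}{L^1_x}=\Nrm{\diag(\Gamma^\varepsilon_{N:1,t})-\diag(\Gamma^\varepsilon_{\mathrm{cor},t})}{L^1_x}\le C(N,\lambda,\varepsilon,t)$, and taking imaginary parts of the diagonals of $\varepsilon\grad_x$ applied to the kernels gives $\Nrm{\vect{J}^\varepsilon_{N:1,t}-\vect{J}^\varepsilon_{\Gamma_{\mathrm{cor}},t}}{L^1_x}\le\Nrm{\diag(\varepsilon\grad_x\Gamma^\varepsilon_{N:1,t})-\diag(\varepsilon\grad_x\Gamma^\varepsilon_{\mathrm{cor},t})}{L^1_x}\le C(N,\lambda,\varepsilon,t)$. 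Under the standing restriction (inherited from Proposition~\ref{prop:mean-field_approximation_of_marginal_density}) that $N\gtrsim_{T_0}\exp(\varepsilon^{-100/(1-\upalpha)})$, one has $C(N,\lambda,\varepsilon,t)\lesssim_{T_0}(1/\ln N)^{100}$ uniformly on $[0,T_0]$. Summing the two terms and taking $\sup_{t\in[0,T_0]}$ yields the asserted $L^1_x$ bounds for both $\rho^\varepsilon_{N:1,t}-\rho^\varepsilon_t$ and $\vect{J}^\varepsilon_{N:1,t}-\vect{J}^\varepsilon_t$.

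It remains to upgrade the density estimate to $L^2_x$, which I do by interpolation: $\Nrm{f}{L^2_x}\le\Nrm{f}{L^1_x}^{1/4}\Nrm{f}{L^3_x}^{3/4}$ with $f=\rho^\varepsilon_{N:1,t}-\rho^\varepsilon_t$. The $L^3_x$ norm is bounded crudely by a fixed power of $\varepsilon^{-1}$: since $\cH_N$ commutes with $\cN$, both $\inprod{\Psi_{N,t}}{\cH_N\,\Psi_{N,t}}$ and $\inprod{\Psi_{N,t}}{\cN\,\Psi_{N,t}}$ are conserved, the latter equal to $N+\mathcal{O}(\sqrt N)$ and the former at most $\mathcal{O}(N\varepsilon^{-C})$ for a fixed $C$ (via Lemma~\ref{lem:bogoliubov_states} and the uniform $\weight{\varepsilon\grad}^4$-bound on $\phi^{\init}$); using $\cV\ge 0$ this controls $\int\diag(\varepsilon\grad_x\cdot\varepsilon\grad_{x'}\Gamma^\varepsilon_{N:1,t})\dd x\lesssim\varepsilon^{-C}$, and then spectrally decomposing $\Gamma^\varepsilon_{N:1,t}$, applying Minkowski's inequality and the massless Sobolev embedding $\Nrm{g}{L^6_x}\lesssim\varepsilon^{-1}\Nrm{\varepsilon\grad g}{L^2_x}$, we get $\Nrm{\rho^\varepsilon_{N:1,t}}{L^3_x}\lesssim\varepsilon^{-C'}$; the same argument, now using the conserved modified Gross--Pitaevskii energy, gives $\Nrm{\rho^\varepsilon_t}{L^3_x}=\Nrm{\phi^\varepsilon_{N,t}}{L^6_x}^2\lesssim\varepsilon^{-C'}$. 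Hence $\Nrm{f}{L^2_x}\lesssim(1/\ln N)^{25}\varepsilon^{-C''}$, and since the restriction forces $\ln N\gtrsim\varepsilon^{-100/(1-\upalpha)}$ — dwarfing any fixed negative power of $\varepsilon$ — we conclude $\sup_{t\in[0,T_0]}\Nrm{\rho^\varepsilon_{N:1,t}-\rho^\varepsilon_t}{L^2_x}\lesssim_{T_0}1/\ln N$.

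No genuine obstacle remains here: the substantive work is already done in Proposition~\ref{prop:mean-field_approximation_of_marginal_density} (control of the fluctuation dynamics at the energy regularity level) and Lemma~\ref{lem:approximation_by_condensate_density_and_momentum} (Hilbert--Schmidt bounds on the pair-excitation kernel). The only step demanding attention is the $L^2$ upgrade, where one must check that the polynomial-in-$\varepsilon^{-1}$ loss incurred by interpolating through $L^3_x$ is overwhelmed by the super-polynomial smallness of the $L^1_x$-rate furnished by Proposition~\ref{prop:mean-field_approximation_of_marginal_density} — which holds, with room to spare, thanks to $N\gtrsim\exp(\varepsilon^{-100/(1-\upalpha)})$.
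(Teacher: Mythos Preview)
Your proposal is correct and follows essentially the same approach as the paper: triangle inequality through $\Gamma^\varepsilon_{\mathrm{cor},t}$ combined with Proposition~\ref{prop:mean-field_approximation_of_marginal_density} and Lemma~\ref{lem:approximation_by_condensate_density_and_momentum} for the $L^1_x$ bounds, then the interpolation $\Nrm{f}{L^2_x}\le\Nrm{f}{L^1_x}^{1/4}\Nrm{f}{L^3_x}^{3/4}$ together with an energy-based $L^3_x$ bound for the density upgrade. The paper obtains the $L^3_x$ bound on $\rho^\varepsilon_{N:1,t}$ via the second-quantized identity $\rho^\varepsilon_{N:1,t}(x)=\Nrm{a_x\Psi_{N,t}}{}^2/\inprod{\Psi_{N,t}}{\cN\Psi_{N,t}}$ and Minkowski--Sobolev in $x$, arriving at the sharper constant $\varepsilon^{-2}$ (since the conserved energy-per-particle is in fact $\mathcal{O}(1)$), whereas you use a spectral decomposition of $\Gamma^\varepsilon_{N:1,t}$; these are equivalent, and your cruder $\varepsilon^{-C'}$ suffices.
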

\begin{proof}

The $L_{x}^{1}$ norm estimate follows from a triangle inequality argument, along with applying Proposition \ref{prop:mean-field_approximation_of_marginal_density} and Lemma \ref{lem:approximation_by_condensate_density_and_momentum}.

The $L_{x}^{2}$ estimate for the mass density can be obtained by using the interpolation inequality that
$\Nrm{f}{L_{x}^{2}}\leq \Nrm{f}{L_{x}^{1}}^{\frac{1}{4}}\Nrm{f}{L_{x}^{3}}^{\frac{3}{4}}$. Here, the bound for the $L_{x}^{3}$ norm of
$\rho^\varepsilon_{N:1, t}$ and $\rho^\varepsilon_{t}$ follows by Sobolev's inequality and the finite energy of both the many-body dynamics and the modified Gross--Pitaevskii dynamics, i.e., applying H\"older, Minkowski, Sobolev's inequalities, and Lemma~\ref{lem:bogoliubov_states}, we get that
\begin{align*}
    \Nrm{\rho_{N:1, t}^\varepsilon}{L^3_x}\le&\, \frac{1}{\inprod{\Psi_{N, t}}{\cN\, \Psi_{N, t}}}\Nrm{\Nrm{a_x\, \Psi_{N, t}}{}}{L^6_x}\Nrm{\Nrm{a_x\, \Psi_{N, t}}{}}{L^6_x}\\
    \le&\, \frac{1}{\varepsilon^2\inprod{\Psi_{N, t}}{\cN\, \Psi_{N, t}}}\Nrm{\Nrm{\varepsilon \grad_x a_x\, \Psi_{N, t}}{}}{L^2_x}^2
    \le\, \frac{\inprod{\Psi_{N, t}}{\cH_N\, \Psi_{N, t}}}{\varepsilon^2 \inprod{\Psi_{N, t}}{\cN\, \Psi_{N, t}}}\lesssim \varepsilon^{-2}\ .
\end{align*}
The argument for $\rho^\varepsilon_t$ is similar.  Hence, we complete the proof of the convergence of mass and momentum densities.
\end{proof}

\subsection{Mean-field approximation of energy densities}\label{subsection:mean-field approximation of energies}
Let $\Psi$ be a Fock state such that $\inprod{\Psi}{\cN\,\Psi}<\infty$. Suppose $k$ is an integer less than or equal to $\inprod{\Psi}{\cN\,\Psi}$, then we define the $k$-particle reduced density operator associated to a Fock state $\Psi$ by the kernel
\begin{align}
    \Gamma_{:k}(x_1, \ldots, x_k; x_1',\ldots, x_k'):= \frac{\inprod{\Psi}{\prod^k_{\jj=1} a^\ast_{x_\jj'}\prod^k_{\ii=1}a_{x_\ii}\, \Psi}}{\inprod{\Psi}{\cN(\cN-1)\cdots (\cN-k+1)\,\Psi}}\ .
\end{align}
Using the above definition, we define the many-body energy-per-particle densities associated to $\Psi_{N, t}$ as follows
\begin{align}
    E^\varepsilon_{N}(x):=&\, E^\varepsilon_{N,\, \mathrm{kin}.}(x)+E^\varepsilon_{N,\, \mathrm{int}.}(x)\ , \\
    E^\varepsilon_{N,\, \mathrm{kin}.}(x):=&\, \tfrac12\diag\(\varepsilon\grad_{x}\cdot\varepsilon\grad_{x'}\Gamma_{N:1}^\varepsilon\)(x)\ , \\
    E^\varepsilon_{N,\, \mathrm{int}.}(x):=&\, \textstyle \frac{\inprod{\Psi_N}{\cN(\cN-1)\,\Psi_N}}{2N\inprod{\Psi_N}{\cN\,\Psi_N}}\intd V^\varepsilon_{N}(x-y)\Gamma^{\varepsilon}_{N:2}(x, y; x, y)\dd y\ .
\end{align}
Here, $E^\varepsilon_{N,\, \mathrm{kin}.}(x)$ is the called the kinetic energy density and $E^\varepsilon_{N,\, \mathrm{int}.}(x)$ is called the interaction energy density.

The main result of this section is given by the following proposition.
\begin{prop}\label{prop:many-body_energies_mean-field_approximation}
    Fix a time interval $[0, T_0]$. Let us introduce the following characteristic length
    \begin{align}\label{equ:characteristic length}
        \mathfrak{b}_0^{\mu}:= \frac{1}{4\pi}\intd \n{\grad f^\mu_0}^2\dd x = \asc_0^{\mu} - \frac{1}{4\pi\, \mu}\intd v (f^\mu_0)^2\dd x\ .
    \end{align}
    Assuming the same hypotheses as in Proposition~\ref{prop:mean-field_approximation_of_marginal_density} for the Fock state $\Psi_{N, t}$.
   Then we have the following uniform estimates
    \begin{align}\label{est:kinetic_energy_many-body_mean-field_approximation}
        \sup_{t \in [0, T_0]}\Nrm{E^\varepsilon_{N,\, \mathrm{kin}., t}-\tfrac12\n{\varepsilon\grad\phi^\varepsilon_{N, t}}^2-4\pi\mu \la\, \mathfrak{b}_0^{\mu}\n{\phi^\varepsilon_{N,t}}^4}{L^1_x}\lesssim_{T_0}&\, \frac{1}{\ln N}
    \end{align}
    \begin{align}\label{est:interaction_energy_many-body_mean-field_approximation}
        \sup_{t \in [0, T_0]}\Nrm{E^\varepsilon_{N,\, \mathrm{int}., t}-4\pi\mu \la\, \(\mathfrak{a}_0^{\mu}-\mathfrak{b}_0^{\mu}\)\n{\phi^\varepsilon_{N,t}}^4}{L^1_x}\lesssim_{T_0}&\, \frac{1}{\ln N}\ .
    \end{align}

\end{prop}

\begin{remark}
The different contributions to the Gross--Pitaevskii energy from microscopic dynamics were first noted in \cite{cherny2002kinetic}. Subsequently, a rigorous proof starting from the many-body system was provided in \cite{lieb2002proof}. Our result goes one step further by discussing the contributions at the level of local energy densities.
\end{remark}

To prove the proposition, we shall start by proving the following series of lemmas.

\begin{lem}\label{lem:kinetic_energy_density_approx}
    Assume the same hypotheses as in Proposition~$\ref{prop:many-body_energies_mean-field_approximation}$.
    We have
    \begin{align}
        \sup_{t\in[0, \infty)}\Nrm{\frac{1}{N}\Nrm{\varepsilon\grad_x \sh(k_{N, t}^\varepsilon)(x, \cdot)}{L^2_{x'}}^2-4\pi\mu \la\, \mathfrak{b}_0^{\mu}\n{\phi^\varepsilon_{N,t}}^4}{L^1_x}
         \lesssim \frac{\lambda^\frac12}{N^\frac12}\ .
    \end{align}
\end{lem}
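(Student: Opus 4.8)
The plan is to reduce the left-hand side, for each fixed $x$, to an explicit quadratic form in the pair excitation kernel $k_{N,t}^\varepsilon$ and then extract the leading contribution by a scaling argument. Writing $\sh(k_{N,t}^\varepsilon) = k_{N,t}^\varepsilon + r(k_{N,t}^\varepsilon)$, I would first expand
\begin{align*}
\Nrm{\varepsilon\grad_x\sh(k_{N,t}^\varepsilon)(x,\cdot)}{L^2_{x'}}^2 &= \Nrm{\varepsilon\grad_x k_{N,t}^\varepsilon(x,\cdot)}{L^2_{x'}}^2 \\
&\quad + 2\re\inprod{\varepsilon\grad_x k_{N,t}^\varepsilon(x,\cdot)}{\varepsilon\grad_x r(k_{N,t}^\varepsilon)(x,\cdot)}_{L^2_{x'}} + \Nrm{\varepsilon\grad_x r(k_{N,t}^\varepsilon)(x,\cdot)}{L^2_{x'}}^2\ .
\end{align*}
Only the first summand should produce the main term $4\pi\widetilde\mu\,\mathfrak{b}_0^\mu\n{\phi_{N,t}^\varepsilon}^4$ after division by $N$; for the other two, integrating in $x$ and applying Cauchy--Schwarz reduces matters to controlling $\tfrac1N\Nrm{\varepsilon\grad_x r(k_{N,t}^\varepsilon)}{\mathrm{HS}}^2$ and $\tfrac1N\Nrm{\varepsilon\grad_x k_{N,t}^\varepsilon}{\mathrm{HS}}^2$, and the quantitative bounds needed — in particular that $\Nrm{\varepsilon\grad_x r(k_{N,t}^\varepsilon)}{\mathrm{HS}}$ is suppressed relative to $\Nrm{\varepsilon\grad_x k_{N,t}^\varepsilon}{\mathrm{HS}}$ because $r(k)=\sh(k)-k$ is cubic in $k$ and $\Nrm{k_{N,t}^\varepsilon}{\mathrm{op}}$ is tiny (Young's inequality, the pointwise bounds for $w^\varepsilon_{N,\ell}$ in Lemma~\ref{lem:correlation_structure}, and the uniform bounds on $\phi_{N,t}^\varepsilon$ from Appendix~\ref{appendix:Dispersive PDEs}) — are exactly those recorded in Lemma~\ref{lem:pair_excitation_estimates}.

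For the main term I would insert the explicit form $k_{N,t}^\varepsilon(x,y) = -N\,w^\varepsilon_\sharp(x-y)\,\phi_{N,t}^\varepsilon(x)\,\phi_{N,t}^\varepsilon(y)$, where $w^\varepsilon_\sharp = w^\varepsilon_{N,\ell}$ in the GP/BGP/SGP/HC regimes and $w^\varepsilon_\sharp = w^\varepsilon_N$ in the HD regime, so that $\varepsilon\grad_x k_{N,t}^\varepsilon$ splits into a term where $\varepsilon\grad$ falls on $w^\varepsilon_\sharp$ and one where it falls on $\phi_{N,t}^\varepsilon(x)$. Upon squaring, the contribution of the second term and of the cross term, divided by $N$ and integrated in $x$, is $\lesssim N\Nrm{w^\varepsilon_\sharp}{L^2}^2\Nrm{\phi_{N,t}^\varepsilon}{L^\infty}^2\Nrm{\varepsilon\grad\phi_{N,t}^\varepsilon}{L^2}^2$ together with its geometric mean with the main term, and since $\Nrm{w^\varepsilon_\sharp}{L^2}^2\lesssim(\asc_N^\mu)^3+(\asc_N^\mu)^2\ell$ by Lemma~\ref{lem:correlation_structure} this is admissible. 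What remains is $N\varepsilon^2\n{\phi_{N,t}^\varepsilon(x)}^2\intd\n{(\grad w^\varepsilon_\sharp)(z)}^2\n{\phi_{N,t}^\varepsilon(x-z)}^2\dd z$, and I would replace $\n{\phi_{N,t}^\varepsilon(x-z)}^2$ by $\n{\phi_{N,t}^\varepsilon(x)}^2$; after integrating in $x$ and using translation invariance the resulting error is $\lesssim N\varepsilon^2\Nrm{\phi_{N,t}^\varepsilon}{L^\infty}^2\Nrm{\grad(\n{\phi_{N,t}^\varepsilon}^2)}{L^1}\intd\n{(\grad w^\varepsilon_\sharp)(z)}^2\n{z}\dd z$, in which the extra factor $\n{z}$ buys a power of $\asc_N^\mu$ (up to a logarithm) over $\Nrm{\grad w^\varepsilon_\sharp}{L^2}^2$ since the $L^2$-bulk of $\grad w^\varepsilon_\sharp$ lives at $\n{z}\lesssim\asc_N^\mu$.

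It then remains to identify $N\varepsilon^2\Nrm{\grad w^\varepsilon_\sharp}{L^2}^2$. The scaling $w^\varepsilon_\sharp(x) = w^\mu_\sharp(N^\beta\varepsilon^{2\kappa}x)$, with $w^\mu_\sharp := 1-f^\mu_0$ in the HD regime and $w^\mu_\sharp := 1-f^\mu_L$ ($L = N^\beta\varepsilon^{2\kappa}\ell$) in the Neumann regimes, gives $N\varepsilon^2\Nrm{\grad w^\varepsilon_\sharp}{L^2}^2 = \widetilde\mu\,\Nrm{\grad w^\mu_\sharp}{L^2}^2$, so the task is to show $\Nrm{\grad w^\mu_\sharp}{L^2}^2 = 4\pi\mathfrak{b}_0^\mu + o(1)$. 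In the HD regime this is exactly the definition $\mathfrak{b}_0^\mu = \tfrac1{4\pi}\intd\n{\grad f^\mu_0}^2\dd x$. In the Neumann regimes I would pair the Neumann equation $(-\mu\lapl+v)f^\mu_L = E^\mu_L f^\mu_L$ with $f^\mu_L$ and integrate over $\{\n{x}\le L\}$ (the boundary term vanishing by the Neumann condition), compare with the corresponding identity for the scattering solution $f^\mu_0$, and estimate the difference by means of the ground-state energy bound~\eqref{eq:neumann_gs_eigenvalue_approximation}, obtaining a quantitative rate as $L\to\infty$. Collecting the main term $4\pi\widetilde\mu\,\mathfrak{b}_0^\mu\n{\phi_{N,t}^\varepsilon(x)}^4$ with all error contributions, verifying under the hypothesis $N\gtrsim\exp(\varepsilon^{-100/(1-\upalpha)})$ that each is $\lesssim\lambda^{1/2}N^{-1/2}\varepsilon^{-3}$, and observing that the estimates on $w^\varepsilon_\sharp$ are time-independent and those on $\phi_{N,t}^\varepsilon$ uniform in time, would finish the proof.

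The step I expect to be the main obstacle is the simultaneous handling, in the Neumann regimes, of the three length scales — the concentration scale $\asc_N^\mu$ of $\grad w^\varepsilon_{N,\ell}$, the localization scale $\ell=\varepsilon^4$, and the $\mathcal{O}(1)$ variation scale of $\phi_{N,t}^\varepsilon$: one must replace $\n{\phi_{N,t}^\varepsilon(x-z)}^2$ by $\n{\phi_{N,t}^\varepsilon(x)}^2$ with a genuine gain, and at the same time show the finite-$L$ Neumann Dirichlet energy converges quantitatively to $4\pi\mathfrak{b}_0^\mu$, while keeping careful track that in the GP/HC case — where $\phi^{\mathrm{in}}$ is only assumed in $H^4$ — the $\varepsilon$-dependence of $\Nrm{\phi_{N,t}^\varepsilon}{L^\infty}$ and $\Nrm{\grad\phi_{N,t}^\varepsilon}{L^1}$ is absorbed by the small prefactors; this absorption is precisely where the restriction relating $N$ to $\varepsilon$ enters.
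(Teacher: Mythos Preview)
Your strategy is the same as the paper's: write $\sh(k)=k+r$, dispose of the $r$-contributions via Cauchy--Schwarz and Lemma~\ref{lem:pair_excitation_estimates}, expand the $k$-contribution explicitly, extract the main term by an approximate-identity argument, and finally compare $\intd|\grad f^\mu_L|^2$ with $\intd|\grad f^\mu_0|^2$. The paper executes the approximate-identity step via Lemma~\ref{lem:quantative estimate for identity approximation} (with $L^{3/2}$--$L^3$ H\"older) and handles the gradient-on-$\phi$ error with Hardy's inequality; the Neumann/scattering comparison is done exactly as you describe, by integrating by parts against the two equations.

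There is one technical point where your execution differs and would not quite give the stated lemma. Your error estimates route through $\|\phi^\varepsilon_{N,t}\|_{L^\infty_x}$, and in the $\beta=1$ regimes this is \emph{not} uniformly bounded in $t$: Lemma~\ref{lem:semiclasscial_propagation_of_regularity} only gives polynomial control on fixed time intervals. Hence your bounds would produce a constant depending on $T_0$ rather than the claimed $\sup_{t\in[0,\infty)}$, and you would also be forced to invoke the restriction $N\gtrsim\exp(\varepsilon^{-100/(1-\upalpha)})$ inside the lemma, whereas the lemma is stated with $C$ independent of $N,\varepsilon$ (the restriction enters only downstream, in Proposition~\ref{prop:mean-field_approximation_of_marginal_density}). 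The paper avoids this by working exclusively with energy-conserved quantities: the gradient-on-$\phi$ error is bounded by
\[
\frac{\widetilde\mu^2}{N\varepsilon^4}\intdd\frac{|\varepsilon\grad\phi^\varepsilon_{N,t}(x)|^2|\phi^\varepsilon_{N,t}(x')|^2}{|x-x'|^2}\dd x\d x'\le\frac{C\widetilde\mu^2}{N\varepsilon^6}
\]
via Hardy, and the approximate-identity error is placed in $L^{3/2}_x$ and paired against $\|\rho^\varepsilon_t\|_{L^3_x}\lesssim\varepsilon^{-2}\|\varepsilon\grad\phi^\varepsilon_{N,t}\|_{L^2_x}^2$, both of which are time-uniform by conservation of energy. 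Swapping your $L^\infty$--$L^1$ splittings for these gives the lemma as stated.
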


\begin{proof}
    Write $\sh=k+r$, then we have
    \begin{subequations}
        \begin{align}
            \Nrm{\varepsilon\grad_x \sh(x, \cdot)}{L^2_{x'}}^2 =&\, \Nrm{\varepsilon\grad_x k(x, \cdot)}{L^2_{x'}}^2 \label{def:approximation_kinetic_energy_main-term}\\
            &\, +\Nrm{\varepsilon\grad_x r(x, \cdot)}{L^2_{x'}}^2+2\re\inprod{\varepsilon\grad_x k(x, \cdot)}{\varepsilon\grad_x r(x, \cdot)}_{L^2}\ . \label{def:approximation_kinetic_energy_error-term}
        \end{align}
    \end{subequations}
    For the latter two terms, by the Cauchy--Schwarz inequality and Lemma~\ref{lem:pair_excitation_estimates}, we have the coarse bound
    \begin{align*}
       \frac{1}{N} \Nrm{\eqref{def:approximation_kinetic_energy_error-term}}{L^1_x}\le \frac{1}{N}\Nrm{\varepsilon\grad r}{\mathrm{HS}}^2 + \frac{2}{N}\Nrm{\varepsilon\grad k}{\mathrm{HS}}\Nrm{\varepsilon\grad r}{\mathrm{HS}} \lesssim \frac{\lambda^\frac12}{\sqrt{N}}\ .
    \end{align*}

    For Term~\eqref{def:approximation_kinetic_energy_main-term}, we further split the term
    \begin{subequations}
        \begin{align*}
            \frac{1}{N}\Nrm{\varepsilon\grad_x k(x, \cdot)}{L^2_{x'}}^2 =I_{\rm Main}+I_{\rm Err, 1}+I_{\rm Err, 1}
    \end{align*}
where
\begin{align*}
            I_{\rm Main}=& \frac{1}{N}\intd |N\ve \nabla_{x}w_{N,l}^{\ve}(x-x')|^{2}\n{\phi_{N}^{\ve}(x)}^{2}\n{\phi_{N}^{\ve}(x')}^{2}\dd x', \\
           I_{\rm Err,1}=& \frac{1}{N}\intd |N w_{N,l}^{\ve}(x-x')|^{2}\n{\ve \nabla \phi_{N}^{\ve}(x)}^{2}\n{\phi_{N}^{\ve}(x')}^{2}\dd x',\\
            I_{\rm Err,2}=&\frac{1}{N}\frac{N^2}{2} \intd \ve\grad_x [w_{N,l}^{\ve}(x-x')]^2 \n{\phi_{N}^{\ve}(x')}^2\dd x'\cdot \ve\nabla \n{\phi_{N}^{\ve}(x)}^2.
\end{align*}
For the error term $I_{\rm Err}$,
by the pointwise and gradient estimates \eqref{est:correlation_w_pointwise_bound}--\eqref{est:correlation_grad_w_pointwise_bound}, Hardy's inequality, we have that
\begin{align*}
|I_{\rm Err, 1}|\lesssim\frac{1}{N\varepsilon^6}\intdd \frac{\n{\ve \nabla \phi_{N}^{\ve}(x)}^{2}\n{\phi_{N}^{\ve}(x')}^{2}}{\n{x-x'}^2}\dd x\d x' \lesssim \frac{1}{N\varepsilon^6}.
\end{align*}
In a similar way as above, we also have
\begin{align*}
|I_{\rm Err, 1}|\lesssim \frac{1}{\sqrt{N}\varepsilon^6}.
\end{align*}
    \end{subequations}

    For $I_{\rm Main}$, noting that $-\nabla_{x}w_{N,l}^{\ve}(x)=N^{\be}\ve^{2\ka}(\nabla f_{L}^{\mu})(N^{\be}\ve^{2\ka}x)$
     where $f_{N,l}^{\mu}(x)=f_{L}^{\mu}(N^{\be}\ve^{2\ka}x)$, we apply the triangle inequality to obtain
     \begin{align}\label{equ:main term analysis,kinetic}
     &\Nrm{I_{\rm Main}-4\pi\mu \la\, \mathfrak{b}_0^{\mu}\n{\phi^\varepsilon_{N,t}}^4}{L^1_x}\leq A_{1}+A_{2},
     \end{align}
     where
\begin{align*}
A_{1}=&\mu \la\Nrm{\((N^{\be}\ve^{2\ka})^3\n{\nabla f_{L}^{\mu}(N^{\be}\ve^{2\ka}x)}^2-\delta_0\textstyle\(\intd |\grad f^\mu_{L}|^2\dd z\)\)\ast \rho^\varepsilon_t}{L^\frac32_x}\Nrm{\rho^\varepsilon_t}{L^3_x},\\
A_{2}=&\mu \la\Nrm{\(\delta_0\textstyle\(\intd |\grad f^\mu_{L}|^2\dd z\)-\delta_0\textstyle\(\intd |\grad f^\mu_{0}|^2\dd z\)\)\ast \rho^\varepsilon_t}{L^\frac32_x}\Nrm{\rho^\varepsilon_t}{L^3_x}.
\end{align*}
By Lemma~\ref{lem:quantative estimate for identity approximation}, we get that
    \begin{align*}
A_{1} \lesssim& \frac{\mu \la}{N\varepsilon^{6}}\norm{\weight{x}^\frac12\grad f_{L}^{\mu}}_{L^{2}_x}^2\Nrm{\weight{\varepsilon\grad}\phi^\varepsilon_{N, t}}{L^2_x}^4\lesssim  \frac{\lambda^2}{N\varepsilon^{6}}\ ,
    \end{align*}
    where the last bound follows by Inequality~\eqref{est:uniform_grad_w_bound_neumann}.
Finally, integrating by parts and applying \eqref{eq:groundstate_asymptotic_expansion_neumann}--\eqref{equ:Dirichlet,Neumann} in Lemma~\ref{lem:appendix_version_correlation_structures}, we have that
    \begin{align*}
       A_{2}\lesssim \Nrm{\rho_{t}^{\ve}}{L_{x}^{3}}\n{\intd \n{\grad f^\mu_0}^2- |\grad f^\mu_{L}|^2\dd x} \lesssim \frac{\ve^{-2}}{4\pi\mu} \n{\int_{\R^{3}} v(f^\mu_0)^2-(v-E_{\mathrm{gs}})(f^\mu_L)^2\dd x}\lesssim\frac{1}{L\ve^{2}}\ .
    \end{align*}
where $L=N^{\be}\ve^{2\ka}l\gg N^{\frac{1}{2}}$.

\end{proof}

\begin{lem}\label{lem:second_marginal_expansion}
    Consider the following two-particle operator $\Gamma^\varepsilon_{N:2, t}$ with integral kernel
    \begin{multline}
        \Gamma^\varepsilon_{N:2, t}(x_1, x_2; x_1', x_2'):= \Lambda^\varepsilon_t(x_1, x_2)\conj{\Lambda^\varepsilon_t(x_1', x_2')}+
        \Gamma^\varepsilon_{\mathrm{cor},t}(x_1, x_1')\Gamma^\varepsilon_{\mathrm{cor},t}(x_2, x_2')\\
        +\Gamma^\varepsilon_{\mathrm{cor},t}(x_1, x_2')\Gamma^\varepsilon_{\mathrm{cor},t}(x_2, x_1') - 2\, \phi_{N, t}^\varepsilon(x_1)\phi_{N, t}^\varepsilon(x_2)\conj{\phi_{N, t}^\varepsilon(x_1')\phi_{N, t}^\varepsilon(x_2)}
    \end{multline}
    where $\Lambda^\varepsilon_t(x_1, x_2)$ is defined by
    \begin{align}
        \Lambda^\varepsilon_t(x_1, x_2):=\phi_{N, t}^\varepsilon(x_1)\phi_{N, t}^\varepsilon(x_2)+\tfrac{1}{2N}\sh(2\,k_{N, t}^\varepsilon)(x_1, x_2)\ .
    \end{align}
    Then we have the following identity
    \begin{align*}
        \inprod{\Psi_{N, t}}{a^\ast_x a^\ast_y a_y a_x\, \Psi_{N, t}}-N^2\Gamma^\varepsilon_{N:2, t}(x, y; x, y)=\sfR_1(x, y)+\sfR_2(x, y)+\sfR_3(x, y)
    \end{align*}
    where
    \begin{align}
        \sfR_1(x, y)=&\, 2\sqrt{N}\re\(\phi_{N, t}^\varepsilon(x)\inprod{\Psi_{\mathrm{fluc}, t}}{\nor{b^\ast_{x}b^\ast_yb_y} \Psi_{\mathrm{fluc}, t}}\)\\
        &\, +2\sqrt{N}\re\(\phi_{N, t}^\varepsilon(y)\inprod{\Psi_{\mathrm{fluc}, t}}{\nor{b^\ast_{y}b^\ast_xb_x} \Psi_{\mathrm{fluc}, t}}\) \notag\\
        \sfR_2(x, y)=&\, 2N\,\re\(\Lambda_{t}^\varepsilon(x, y)\inprod{\Psi_{\mathrm{fluc}, t}}{\nor{b^\ast_{y}b^\ast_{x}}\Psi_{\mathrm{fluc}, t}}\)\\
        &+2N\,\re\(\Gamma^\varepsilon_{\mathrm{cor},t}(x, y)\inprod{\Psi_{\mathrm{fluc}, t}}{\nor{b^\ast_{x}b_{y}}\Psi_{\mathrm{fluc}, t}}\)\notag\\
        &+N\,\Gamma^\varepsilon_{\mathrm{cor},t}(x, x)\inprod{\Psi_{\mathrm{fluc}, t}}{\nor{b^\ast_{y}b_{y}}\Psi_{\mathrm{fluc}, t}}\notag\\
        &+N\,\Gamma^\varepsilon_{\mathrm{cor},t}(y, y)\inprod{\Psi_{\mathrm{fluc}, t}}{\nor{b^\ast_{x}b_{x}}\Psi_{\mathrm{fluc}, t}}\notag\\
        \sfR_3(x, y)=&\, 2N^\frac{3}{2} \re\(f_{\mathrm{HFB}}(x, y)\inprod{\Psi_{\mathrm{fluc}, t}}{b^\ast_y\,\Psi_{\mathrm{fluc}, t}}\)\\
        &+ 2N^\frac{3}{2} \re\(f_{\mathrm{HFB}}(y, x)\inprod{\Psi_{\mathrm{fluc}, t}}{b^\ast_x\,\Psi_{\mathrm{fluc}, t}}\) \notag
    \end{align}
    with
    \begin{align}\label{def:f_HFB}
        f_{\mathrm{HFB}}(x, y)
        = \conj{\phi_{N, t}^\varepsilon(x)}\Lambda_{t}^\varepsilon(x, y)+\tfrac{1}{2N}\phi_{N, t}^\varepsilon(y) p_{2, t}^\varepsilon(x, x)+\tfrac{1}{2N}\phi_{N, t}^\varepsilon(x) p_{2, t}^\varepsilon(x, y)\ .
    \end{align}
    Here, $b^\sharp_x$ are defined in Equations~\eqref{eq:bogoliubov_conjugation_operator-value_distribution_id} and $\nor{\cdot}$ denotes the normal ordering of $a, a^\ast$.
\end{lem}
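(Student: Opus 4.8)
The plan is to prove this identity along the same route as the one-particle computation in~\eqref{eq:one-particle_marginal_computation2}, namely by stripping off the Weyl and Bogoliubov transformations and then expanding and reorganising by operator degree. First I would use the definition~\eqref{def:fluctuation_state} of $\Psi_{\mathrm{fluc}, t}$ together with the Weyl conjugation identities~\eqref{eq:operator-valued_dist_weyl_conjugation_id} and the Bogoliubov conjugation formulae of Lemma~\ref{lem:bogoliubov_conjugation} to write $a_x\,\Psi_{N, t} = e^{-\cA(\sqrt{N}\phi^\varepsilon_{N, t})}\,e^{-\cB(k^\varepsilon_{N, t})}\bigl(\sqrt{N}\,\phi^\varepsilon_{N, t}(x) + b_x\bigr)\Psi_{\mathrm{fluc}, t}$, with $b_x$ as in~\eqref{eq:bogoliubov_conjugation_operator-value_distribution_id}. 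Applying this twice and using the unitarity of $e^{-\cA(\sqrt{N}\phi^\varepsilon_{N, t})}$ and $e^{-\cB(k^\varepsilon_{N, t})}$ reduces the left-hand side to
\begin{multline*}
    \inprod{\Psi_{N, t}}{a^\ast_x a^\ast_y a_y a_x\,\Psi_{N, t}}\\
    = \inprod{\Psi_{\mathrm{fluc}, t}}{\bigl(\sqrt{N}\,\conj{\phi^\varepsilon_{N, t}(x)} + b^\ast_x\bigr)\bigl(\sqrt{N}\,\conj{\phi^\varepsilon_{N, t}(y)} + b^\ast_y\bigr)\bigl(\sqrt{N}\,\phi^\varepsilon_{N, t}(y) + b_y\bigr)\bigl(\sqrt{N}\,\phi^\varepsilon_{N, t}(x) + b_x\bigr)\Psi_{\mathrm{fluc}, t}}\ .
\end{multline*}
Next I would expand this quartic expression into its sixteen monomials, grouped by the number of $b^\sharp$-factors (a scalar term of order $N^2$; one-$b$ monomials of order $N^{3/2}$; two-$b$ monomials of order $N$; three-$b$ monomials of order $N^{1/2}$; and the four-$b$ monomial $b^\ast_x b^\ast_y b_y b_x$), then substitute for each $b^\sharp_x$ its kernel representation~\eqref{eq:bogoliubov_conjugation_operator-value_distribution_id} and bring every resulting operator monomial into normal order with respect to $a, a^\ast$.

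Each contraction produces a scalar, or a strictly lower-degree operator, carrying one of the kernels $\int\conj{u_x(z)}\,u_y(z)\,\d z = (\sh(k^\varepsilon_{N, t})\conj{\sh(k^\varepsilon_{N, t})})(x, y)$ or $\tfrac12\sh(2k^\varepsilon_{N, t})(x, y)$ and their relatives, where the hyperbolic identities~\eqref{eq:hyperbolic_trig_identities} and the notation~\eqref{def:sh_ch_and_r} are used repeatedly. The scalar and once-contracted pieces then have to be reassembled into $N^2\Gamma^\varepsilon_{2, t}(x, y; x, y)$: the summand $\Gamma^\varepsilon_{\mathrm{cor}, t}(x, x)\Gamma^\varepsilon_{\mathrm{cor}, t}(y, y) + |\Gamma^\varepsilon_{\mathrm{cor}, t}(x, y)|^2$ collects the ``direct'' and ``exchange'' contractions (recall $\Gamma^\varepsilon_{\mathrm{cor}, t}$ carries the $\tfrac{1}{N}\sh(k^\varepsilon_{N, t})\conj{\sh(k^\varepsilon_{N, t})}$ correction), $\Lambda^\varepsilon_t(x, y)\conj{\Lambda^\varepsilon_t(x, y)}$ collects the ``pairing'' contractions through $\Lambda^\varepsilon_t = \phi^\varepsilon_{N, t}\otimes\phi^\varepsilon_{N, t} + \tfrac{1}{2N}\sh(2k^\varepsilon_{N, t})$ and the explicit form~\eqref{def:pair_excitation_function} of $k^\varepsilon_{N, t}$, and the term $-\,2\,\phi^\varepsilon_{N, t}(x_1)\phi^\varepsilon_{N, t}(x_2)\conj{\phi^\varepsilon_{N, t}(x_1')\phi^\varepsilon_{N, t}(x_2')}$ removes the condensate contribution double-counted by the previous two. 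Everything left over is, by construction, fully normal-ordered in $a, a^\ast$ and sorted by operator degree: the one-$a^\sharp$ remainder is $\sfR_3$, and a short calculation identifies its coefficient as exactly $f_{\mathrm{HFB}}$ in~\eqref{def:f_HFB} (assembled from the one-$b$-after-contraction contributions at orders $N^{3/2}$ and $N$, which is where $\Lambda^\varepsilon_t$ and $p^\varepsilon_{2, t}$ enter); the two-$a^\sharp$ remainder is $\sfR_2$; and the three-$a^\sharp$ remainder is $\sfR_1$. Matching coefficients finishes the proof.

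The main obstacle will be the volume of the normal-ordering bookkeeping rather than any conceptual difficulty. The two delicate points are: (a) checking that the various $\tfrac1N$- and $\tfrac1{N^2}$-order contraction kernels recombine into the compact closed form $\Gamma^\varepsilon_{2, t}$, which rests on the identities~\eqref{eq:hyperbolic_trig_identities} linking $\sh(2k)$, $\ch(2k)$, $\sh(k)\conj{\sh(k)}$ and $p(2k)$; and (b) verifying that the condensate--fluctuation cross-terms collapse precisely into the coefficient $f_{\mathrm{HFB}}$ of $\sfR_3$, which forces the particular combination of $\Lambda^\varepsilon_t$ and $p^\varepsilon_{2, t}$ appearing in~\eqref{def:f_HFB}. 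Throughout one relies on the fact that $b^\sharp$ satisfy the canonical commutation relations, so that re-ordering with respect to $a, a^\ast$ is the only reorganisation that produces genuinely new terms; in particular this is what allows the four-$b$ monomial to be handled cleanly. No analytic input is needed — the whole statement is an operator identity — so the estimates of Section~\ref{section:Bounds on the Fluctuations} only enter later, when the remainders $\sfR_1,\sfR_2,\sfR_3$ are shown to be negligible.
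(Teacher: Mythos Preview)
Your proposal is correct and follows exactly the approach of the paper, which simply refers the reader to \cite[Section 8]{grillakis2017pair} for the computation. The Weyl--Bogoliubov conjugation, expansion into sixteen monomials by $b^\sharp$-degree, normal ordering via the CCR, and reassembly through the hyperbolic identities~\eqref{eq:hyperbolic_trig_identities} is precisely the Grillakis--Machedon bookkeeping that reference carries out.
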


\begin{proof}
    See \cite[Section 8]{grillakis2017pair} for the proof.
\end{proof}

\begin{remark}
    The normal ordering in Lemma~\ref{lem:second_marginal_expansion} could be explicitly computed. In fact, the exact form of the normal ordering are given in Proposition~\ref{prop:fluctuation_Hamiltonian} in the computation of $e^{\cB(k_t)}e^{\sqrt{N}\cA(\phi_t)}\cV e^{-\sqrt{N}\cA(\phi_t)}e^{-\cB(k_t)}$. More precisely, the explicit form of $\frac{1}{N^2}\intd V^\varepsilon_N(x-y)\sfR_1(x, y)\dd y$ is given by $\frac{1}{N}\cC$ in Expression~\eqref{def:normal_ordered_cubic_terms} (without the integration in $x$).
\end{remark}

\begin{lem}\label{lem:interaction_energy_density_approximation}
    Assuming the same hypotheses as in Proposition~\ref{prop:many-body_energies_mean-field_approximation}.
    Write
    \begin{align*}
        \tr_2\(V^\varepsilon_{N, 12}\Gamma^\varepsilon_{N:2, t}\)(x, x'):=\intd V^\varepsilon_{N}(x-y)\Gamma^{\varepsilon}_{N:2, t}(x, y; x', y)\dd y\ .
    \end{align*}
    Then we have the estimates
    \begin{align}
        \sup_{t\in[0, \infty)}\Nrm{\diag\(\tr_2\(V^\varepsilon_{N, 12}\Gamma^\varepsilon_{N:2, t}\)\)-4\pi\mu\la\(\asc_0^{\mu}-\mathfrak{b}_{0}^{\mu}\)\(\rho^\varepsilon_{t}\)^2}{L^1_x} \le \frac{C\lambda^\frac52}{N^\frac12\varepsilon^3}\ .
    \end{align}
\end{lem}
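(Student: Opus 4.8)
The strategy is to combine the expansion of $\langle \Psi_{N,t}, a^\ast_x a^\ast_y a_y a_x \Psi_{N,t}\rangle$ from Lemma~\ref{lem:second_marginal_expansion} with a semiclassical analysis of the leading term $\Gamma^\varepsilon_{2,t}$. We begin by pairing $V^\varepsilon_N(x-y)$ against the identity of Lemma~\ref{lem:second_marginal_expansion} and integrating in $y$: this splits the quantity $\diag(\tr_2(V^\varepsilon_{N,12}\Gamma^\varepsilon_{2,t}))$ into a ``main'' part coming from $N^2\int V^\varepsilon_N(x-y)\Gamma^\varepsilon_{2,t}(x,y;x,y)\,dy$ and three ``error'' parts coming from $\int V^\varepsilon_N(x-y)\sfR_j(x,y)\,dy$ for $j=1,2,3$. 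The error parts are estimated exactly as in the proof of Proposition~\ref{prop:mean-field_approximation_of_marginal_density}: one uses the operator bounds of Lemma~\ref{lem:estimates_for_creation_annilihation_operators}, the creation/annihilation-operator estimates for quadratic and cubic expressions in $b^\sharp_x$, the fluctuation growth estimate of Proposition~\ref{prop:bounds_on_growth_of_fluctuation}, and the pair-excitation estimates of Lemma~\ref{lem:pair_excitation_estimates}, together with the observation that $\int V^\varepsilon_N = \mathcal{O}(\lambda \widetilde\mu \,\varepsilon^{-?})$ and that $\|V^\varepsilon_N f^\varepsilon_{N,\sharp}\|_{L^1} = 4\pi \widetilde\mu\,\asc_0^\mu$ (from the scattering equation). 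These contribute terms of size $C\lambda^{5/2}/(N^{1/2}\varepsilon^3)$, which is the dominant error and accounts for the power $\lambda^{5/2}$ in the statement.

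For the main term, expand $\Gamma^\varepsilon_{2,t}(x,y;x,y)$ using its definition. Writing $\rho^\varepsilon_t = |\phi^\varepsilon_{N,t}|^2$, the purely-condensate pieces largely cancel: $|\Lambda^\varepsilon_t(x,y)|^2 + \Gamma^\varepsilon_{\mathrm{cor},t}(x,x)\Gamma^\varepsilon_{\mathrm{cor},t}(y,y) + |\Gamma^\varepsilon_{\mathrm{cor},t}(x,y)|^2 - 2\rho^\varepsilon_t(x)\rho^\varepsilon_t(y)$ leaves, to leading order, the cross term $\phi^\varepsilon_{N,t}(x)\phi^\varepsilon_{N,t}(y)\,\overline{\tfrac{1}{N}\sh(2k^\varepsilon_{N,t})(x,y)}$ plus conjugate, plus lower-order contributions controlled by $\|\sh(k^\varepsilon_{N,t})\|_{\mathrm{HS}}$ and $\|p(2k^\varepsilon_{N,t})\|_{\mathrm{HS}}$ via Lemma~\ref{lem:pair_excitation_estimates}. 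Using the hyperbolic-trig identity $\tfrac12\sh(2k) = k + \tfrac12 r(2k)$ and the explicit form $k^\varepsilon_{N,t}(x,y) = -N w^\varepsilon_{N,\sharp}(x-y)\phi^\varepsilon_{N,t}(x)\phi^\varepsilon_{N,t}(y)$, the main term becomes
\begin{align*}
    N^2\!\intd\! V^\varepsilon_N(x-y)\Gamma^\varepsilon_{2,t}(x,y;x,y)\,dy \approx -2\intd V^\varepsilon_N(x-y)\, N w^\varepsilon_{N,\sharp}(x-y)\,\rho^\varepsilon_t(x)\rho^\varepsilon_t(y)\,dy\ ,
\end{align*}
up to errors of order $C\lambda^{5/2}/(N^{1/2}\varepsilon^3)$ from the $r(2k)$ and $p(2k)$ remainders.

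Next I would identify the kernel $-2 V^\varepsilon_N(x-y)Nw^\varepsilon_{N,\sharp}(x-y)$ with a quantitative approximate identity. Rescaling via $-\nabla_x w^\varepsilon_{N,\sharp}(x) = N^\beta\varepsilon^{2\kappa}(\nabla f^\mu_\sharp)(N^\beta\varepsilon^{2\kappa}x)$ and using $(-\varepsilon^2\Delta + U^\varepsilon_N)f^\varepsilon_{N,\sharp} = (\text{eigenvalue})\,f^\varepsilon_{N,\sharp}$, one computes $\int V^\varepsilon_N(x)\,w^\varepsilon_{N,\sharp}(x)\,dx$ in terms of $\int v f^\mu_\sharp$ and $\int |\nabla f^\mu_\sharp|^2$; by the definitions of $\asc_0^\mu$ and $\mathfrak{b}_0^\mu$ this equals $4\pi\widetilde\mu(\asc_0^\mu - \mathfrak{b}_0^\mu)$ up to an $\mathcal{O}(\asc_0^\mu/L^3)$ Neumann-localization correction (handled as in the last display of the proof of Lemma~\ref{lem:kinetic_energy_density_approx} via Lemma~\ref{lem:appendix_version_correlation_structures}). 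Then, exactly as in Lemma~\ref{lem:kinetic_energy_density_approx}, I apply Lemma~\ref{lem:quantative estimate for identity approximation} with the concentration scale $N^\beta\varepsilon^{2\kappa}$ to replace the convolution against $2V^\varepsilon_N Nw^\varepsilon_{N,\sharp}$ by $4\pi\widetilde\mu(\asc_0^\mu-\mathfrak{b}_0^\mu)\,\delta_0$, picking up an error $\lesssim \widetilde\mu^2 N^{-1}\varepsilon^{-6}\|\langle x\rangle^{1/2}(\cdots)\|^2 \|\langle\varepsilon\nabla\rangle\phi^\varepsilon_{N,t}\|^4_{L^2}$, controlled by conservation of energy and Lemma~\ref{lem:phi4_bound}. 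Collecting all contributions yields the claimed bound $C\lambda^{5/2}/(N^{1/2}\varepsilon^3)$.

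\textbf{Main obstacle.} The delicate point is the cancellation in the condensate part of $\Gamma^\varepsilon_{2,t}$ and the bookkeeping of which remainder terms (those involving $r(2k^\varepsilon_{N,t})$, $p(2k^\varepsilon_{N,t})$, and the genuinely quadratic-in-fluctuation terms $\sfR_2$) actually attain the size $\lambda^{5/2}N^{-1/2}\varepsilon^{-3}$ rather than something worse; this requires the sharp Hilbert--Schmidt bounds on $\varepsilon\nabla$-weighted pair-excitation kernels from Lemma~\ref{lem:pair_excitation_estimates} together with the kinetic bound on $\Psi_{\mathrm{fluc},t}$ from Proposition~\ref{prop:bounds_on_growth_of_fluctuation}. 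The scattering-equation identity that produces $\asc_0^\mu - \mathfrak{b}_0^\mu$ (rather than $\asc_0^\mu$) is what ultimately explains why the interaction energy carries only the fraction $(\asc_0^\mu-\mathfrak{b}_0^\mu)$ of the total, with the complementary $\mathfrak{b}_0^\mu$ appearing in the kinetic energy density of Lemma~\ref{lem:kinetic_energy_density_approx}; keeping these two halves consistent is the conceptual crux.
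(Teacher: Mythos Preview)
Your proposal misreads the lemma in two places, and the second one is fatal.

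First, the lemma concerns only the approximate two-particle kernel $\Gamma^\varepsilon_{2,t}$ defined in Lemma~\ref{lem:second_marginal_expansion}, not the many-body marginal. So there are no $\sfR_j$ terms to estimate here at all; those are handled separately in Lemma~\ref{lem:interaction_energy_bounds_on_remainder-terms} and combined in the proof of Proposition~\ref{prop:many-body_energies_mean-field_approximation}. Your first paragraph is therefore off-target, though not in itself harmful.

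Second, and this does break the argument, the cancellation you claim in the condensate pieces is wrong. Counting the pure $\rho^\varepsilon_t(x)\rho^\varepsilon_t(y)$ contributions in
\[
|\Lambda^\varepsilon_t|^2+\Gamma^\varepsilon_{\mathrm{cor},t}(x,x)\Gamma^\varepsilon_{\mathrm{cor},t}(y,y)+|\Gamma^\varepsilon_{\mathrm{cor},t}(x,y)|^2-2\rho^\varepsilon_t(x)\rho^\varepsilon_t(y)
\]
gives $1+1+1-2=1$: one copy survives. Moreover the piece $\tfrac{1}{4N^2}|\sh(2k)(x,y)|^2$ contains $\tfrac{1}{N^2}|k(x,y)|^2=w^\varepsilon_{N,\sharp}(x-y)^2\rho^\varepsilon_t(x)\rho^\varepsilon_t(y)$, which is \emph{not} small when integrated against $V^\varepsilon_N$ (both concentrate on the same diagonal, and $\int V^\varepsilon_N w^2$ is order $\lambda$). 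So your proposed leading kernel $-2V^\varepsilon_N w^\varepsilon_{N,\sharp}$ is incorrect: it integrates to $-2\lambda\Nrm{v}{L^1}+8\pi\widetilde\mu\,\asc_0^\mu$, which is not $4\pi\widetilde\mu(\asc_0^\mu-\mathfrak{b}_0^\mu)$.

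The paper's fix is to rewrite $\Lambda^\varepsilon_t=f^\varepsilon_{N,\sharp}\,\phi\phi+\tfrac{1}{2N}r(2k)$ directly (using $\phi\phi+\tfrac1Nk=(1-w^\varepsilon_{N,\sharp})\phi\phi$), so that $|\Lambda^\varepsilon_t|^2\approx (f^\varepsilon_{N,\sharp})^2\rho^\varepsilon_t\rho^\varepsilon_t$ with errors genuinely controlled by the $r_2$ pointwise estimates of Lemma~\ref{lem:pair_excitation_estimates}. The remaining combination $\Gamma^\varepsilon_{\mathrm{cor}}(x,x)\Gamma^\varepsilon_{\mathrm{cor}}(y,y)+|\Gamma^\varepsilon_{\mathrm{cor}}(x,y)|^2-2\rho^\varepsilon_t\rho^\varepsilon_t$ is then shown to be $\mathcal{O}(\lambda^{5/2}N^{-1/2}\varepsilon^{-3})$. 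The constant $\asc_0^\mu-\mathfrak{b}_0^\mu$ emerges immediately from $\int V^\varepsilon_N(f^\varepsilon_{N,\sharp})^2=\lambda\int v(f^\mu_\sharp)^2=4\pi\widetilde\mu(\asc_0^\mu-\mathfrak{b}_0^\mu)$, which is just the definition of $\mathfrak{b}_0^\mu$; no further scattering-equation manipulation is needed.
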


\begin{proof}
    By Lemma \ref{lem:second_marginal_expansion}, we have that
        \begin{align*}
            \diag\(\tr_2\(V^\varepsilon_{N, 12}\Gamma^\varepsilon_{N:2, t}\)\)(x)&=\, \intd V^\varepsilon_{N}(x-y)\Gamma^{\varepsilon}_{N:2, t}(x, y; x, y)\dd y=I_{1}+I_{2}+I_{3},
            \end{align*}
where
    \begin{align}
            I_{1}=& \intd V^\varepsilon_{N}(x-y)\n{\Lambda_t^\varepsilon(x, y)}^2\dd y,\label{def:approximation_potential_energy_lambda-term}\\
            I_{2}=& \intd V^\varepsilon_{N}(x-y)\(\rho^\varepsilon_{\Gamma_{\mathrm{cor}}, t}(x)\rho^\varepsilon_{\Gamma_{\mathrm{cor}}, t}(y)+\n{\Gamma_{\mathrm{cor},t}^\varepsilon(x, y)}^2\)\dd y,\label{def:approximation_potential_energy_gamma-term}\\
            I_{3}=&-2\intd V^\varepsilon_{N}(x-y)\n{\phi_{t, N}^\varepsilon(x)}^2\n{\phi_{t, N}^\varepsilon(y)}^2\dd  y \label{def:approximation_potential_energy_phi-term}\ .
        \end{align}

    For $I_{1}$, by Definition~\eqref{def:pair_excitation_function} and Identity~\eqref{eq:hyperbolic_trig_identities}, we have that
    \begin{align*}
        I_{1}=&\, \intd V^\varepsilon_{N}(x-y)\n{f^\varepsilon_{N, l}(x-y)\phi^\varepsilon_{N, t}(x)\phi^\varepsilon_{N, t}(y)+\tfrac{1}{2N}r(2\,k_{N, t}^\varepsilon)(x, y)}^2\dd y\\
        =&\, \intd V^\varepsilon_{N}(x-y)f^\varepsilon_{N, l}(x-y)^2\n{\phi^\varepsilon_{N, t}(x)}\n{\phi^\varepsilon_{N, t}(y)}^2\dd y\\
        &\, +\frac{1}{N}\re \intd V^\varepsilon_{N}(x-y) f^\varepsilon_{N, l}(x-y)\phi^\varepsilon_{N, t}(x)\phi^\varepsilon_{N, t}(y)\conj{r_{2, t}^{\varepsilon}(x, y)}\dd y\\
        &\, +\frac{1}{4N^2}\intd V^\varepsilon_{N}(x-y)\n{r_{2, t}^{\varepsilon}(x, y)}^2\dd y\\
        =:&I_{11}+I_{12}+I_{13}.
    \end{align*}
    By applying the similar approach as detailed in Estimate \eqref{equ:main term analysis,kinetic}, we also have
    \begin{align*}
    \Nrm{I_{11}-4\pi\mu\la\(\asc_0^{\mu}-\mathfrak{b}_{0}^{\mu}\)\(\rho^\varepsilon_{t}\)^2}{L_{x}^{1}}\lesssim \frac{1}{N\ve^{6}}+\frac{1}{L}.
    \end{align*}

   For $I_{12}$, by Part~\eqref{part:pair_excitation_pointwise_est_for_p} of \ref{lem:pair_excitation_estimates}, Lemmas~\ref{lem:phi4_bound}, and the Cauchy--Schwarz inequality, we have
    \begin{align*}
        \Nrm{I_{12}}{L_{x}^{1}} \le \frac{C}{N\varepsilon^{6}}\Nrm{V^\varepsilon_{N} f^\varepsilon_{N, l}}{L^1_x}\Nrm{\rho^\varepsilon_{t}}{L^2_x}^2\le \frac{C}{N\varepsilon^{6}} .
    \end{align*}

    For $I_{13}$, by Part~\eqref{part:pair_excitation_pointwise_est_for_p} of Lemma~\ref{lem:pair_excitation_estimates} and Lemma~\ref{lem:phi4_bound}, we get
    \begin{align*}
       \Nrm{I_{13}}{L_{x}^{1}}\le& \frac{C}{N^2}\Nrm{V^\varepsilon_{N}}{L^1_x}\Nrm{r_{2, t}^{\varepsilon}(x, x+z)}{L^\infty_z L^2_{x}}^2
       \le\, \frac{C}{N^2\varepsilon^{12}}\Nrm{V^\varepsilon_{N}}{L^1_x}\Nrm{\rho^\varepsilon_{t}}{L^2_x}^2\le \frac{C}{N^2\varepsilon^{12}} .
    \end{align*}

    Now, for $I_{2}$ and $I_{3}$, we have that
    \begin{align*}
        \Nrm{I_{2}+I_{3}}{L^1_x}\le&\, \frac{C}{N}\intdd V^\varepsilon_{N}(x-y)\n{\phi^\varepsilon_{N, t}(x)}^2\Nrm{\sh(y, \cdot)}{L^2_{z}}^2\dd x\d y\\
        &\, + \frac{C}{N^2}\intdd V^\varepsilon_{N}(x-y)\Nrm{\sh(x, \cdot)}{L^2_{z}}^2\Nrm{\sh(y, \cdot)}{L^2_{z}}^2\dd x\d y \\
        \le&\, \frac{C}{N}\Nrm{V^\varepsilon_N}{L^1_x}\Nrm{\phi^\varepsilon_{N, t}}{L^6_x}^2\Nrm{\sh}{L^3_x L^2_{x'}}^2+\frac{C}{N^2}\Nrm{V^\varepsilon_N}{L^1_x}\Nrm{\sh}{L^4_x L^2_{x'}}^4\\
        \le&\, \frac{C}{N\varepsilon^3}\Nrm{V^\varepsilon_N}{L^1_x}\Nrm{\weight{\varepsilon\grad}\phi^\varepsilon_{N, t}}{L^2_x}^2\Nrm{\sh}{\mathrm{HS}}\Nrm{\weight{\varepsilon\grad}\sh}{\mathrm{HS}}\\
        &\, + \frac{C}{N^2\varepsilon^3}\Nrm{V^\varepsilon_N}{L^1_x}\Nrm{\sh}{\mathrm{HS}}\Nrm{\weight{\varepsilon\grad} \sh}{\mathrm{HS}}^3
        \le\, \frac{C\lambda^\frac52}{N^\frac12\varepsilon^3}
    \end{align*}
     where the last inequality follows by Lemma~\ref{lem:pair_excitation_estimates}.

\end{proof}

\begin{lem}\label{lem:interaction_energy_bounds_on_remainder-terms}
    Assuming the same hypotheses as in Proposition~\ref{prop:many-body_energies_mean-field_approximation}. Then we have the following bound
    \begin{align}
        &\begin{multlined}[t][.87\textwidth]\label{est:potential_energy_density_R1-term}
        \frac{1}{N^2}\intdd V^\varepsilon_N(x-y)\n{\sfR_1(x, y)}\dd x\d y \\
        \lesssim_{T_{0}} \frac{1}{N}\inprod{\Psi_{\mathrm{fluc, t}}}{\cH_N\, \Psi_{\mathrm{fluc, t}}}+\frac{\lambda}{N\ve^{M}}\inprod{\Psi_{\mathrm{fluc, t}}}{(\cN+1)\, \Psi_{\mathrm{fluc, t}}}\ ,
        \end{multlined}\\
        &\begin{multlined}[t][.87\textwidth]\label{est:potential_energy_density_R2-term}
            \frac{1}{N^2}\intdd V^\varepsilon_N(x-y)\n{\sfR_2(x, y)}\dd x\d y \\
            \lesssim_{T_{0}} \frac{1}{N}\inprod{\Psi_{\mathrm{fluc, t}}}{\cH_N\, \Psi_{\mathrm{fluc, t}}}+\frac{\lambda}{N\ve^{M}}\inprod{\Psi_{\mathrm{fluc, t}}}{(\cN+1)\, \Psi_{\mathrm{fluc, t}}}\ ,
        \end{multlined}\\
        & \frac{1}{N^2}\intdd V^\varepsilon_N(x-y)\n{\sfR_3(x, y)}\dd x\d y
            \lesssim_{T_{0}} \frac{\lambda}{\sqrt{N}\varepsilon^3}\inprod{\Psi_{\mathrm{fluc, t}}}{(\cN+1)\, \Psi_{\mathrm{fluc, t}}}^\frac12\ .\label{est:potential_energy_density_R3-term}
    \end{align}
\end{lem}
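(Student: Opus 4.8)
The plan is to start from the explicit formulae for $\sfR_1$, $\sfR_2$, $\sfR_3$ furnished by Lemma~\ref{lem:second_marginal_expansion} and to estimate each summand after it has been integrated against $\tfrac{1}{N^2}V^\varepsilon_N(x-y)$. The first step is to replace every transformed operator $b^\sharp_x$ by its operator-valued distribution form in \eqref{eq:bogoliubov_conjugation_operator-value_distribution_id}, $b^\ast_x=\intd c_x(y)\,a^\ast_y+\conj{u_x(y)}\,a_y\dd y$, and then to split the kernels into leading and remainder pieces, $c=\id+p$ and $u=k+r$, using \eqref{eq:hyperbolic_trig_identities} together with the explicit form $k^\varepsilon_{N,t}(x,y)=-N w^\varepsilon_{N,\sharp}(x-y)\phi^\varepsilon_{N,t}(x)\phi^\varepsilon_{N,t}(y)$. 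After expanding the normal-ordered products $\nor{\cdot}$, this rewrites each $\sfR_i$ as a finite sum of monomials of the form $\inprod{\Psi_{\mathrm{fluc},t}}{a^\sharp(g_1)\cdots a^\sharp(g_m)\,\Psi_{\mathrm{fluc},t}}$ with kernels $g_j$ built from $\phi^\varepsilon_{N,t}$, $w^\varepsilon_{N,\sharp}$, $p^\varepsilon_{N,t}$ and $r^\varepsilon_{N,t}$ (and, in the kinetic pieces hidden inside $\cK$, with $\varepsilon\grad_x$ acting on one factor).

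For each monomial I would apply the Cauchy--Schwarz inequality in the Fock inner product and the elementary creation/annihilation bounds of Lemma~\ref{lem:estimates_for_creation_annilihation_operators} to pull out products of $L^2$-norms of the kernels times norms of $\cN^{1/2}\Psi_{\mathrm{fluc},t}$, $(\cN+1)^{1/2}\Psi_{\mathrm{fluc},t}$, and—whenever a $\varepsilon\grad_x a_x$ appears—$\cK^{1/2}\Psi_{\mathrm{fluc},t}$. Integrating in $x$ and using $\intd V^\varepsilon_N(x-y)\dd y=\Nrm{V^\varepsilon_N}{L^1}$ turns the remaining kernel factors into Hilbert--Schmidt norms of $k,r,p$ and into weighted $L^1$-norms such as $\Nrm{V^\varepsilon_N f^\varepsilon_{N,\sharp}}{L^1}$; these are then controlled by Lemma~\ref{lem:pair_excitation_estimates}, the pointwise bounds $w^\varepsilon_{N,\sharp}(x)\lesssim \asc^\mu_0/(N^\beta\varepsilon^{2\kappa}\n{x})$ of Lemmas~\ref{lem:semiclassical_estimates_of_scattering_function}--\ref{lem:correlation_structure} together with Hardy's inequality, while the $\phi^\varepsilon_{N,t}$-factors are handled by $L^p$-interpolation and the energy bound of Lemma~\ref{lem:phi4_bound}. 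The scalings of $V^\varepsilon_N$ and of the scattering data then reproduce the claimed powers of $N$, $\lambda$, $\varepsilon$, and the time-growth factor $(1+t)^{\upeta_1}$ inherited from the a priori estimates on $\phi^\varepsilon_{N,t}$ and $k^\varepsilon_{N,t}$.

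The structural point I would emphasise is the difference between $\sfR_1,\sfR_2$ and $\sfR_3$. In $\sfR_1$ the monomials are genuinely cubic in the $b$'s and in $\sfR_2$ one meets terms of the type $N\Lambda^\varepsilon_t(x,y)\inprod{\Psi_{\mathrm{fluc},t}}{\nor{b^\ast_yb^\ast_x}\Psi_{\mathrm{fluc},t}}$; after integration against $\tfrac1{N^2}V^\varepsilon_N$ the concentration of $V^\varepsilon_N$ on the scattering scale can only be absorbed by pairing it, via Cauchy--Schwarz, with $\Nrm{(N^{-1}\cV)^{1/2}\Psi_{\mathrm{fluc},t}}{}$ for the interaction-type pieces and with $\Nrm{\cK^{1/2}\Psi_{\mathrm{fluc},t}}{}$ for the pieces carrying a gradient on the scattering kernel—hence the full energy $\tfrac1N\inprod{\Psi_{\mathrm{fluc},t}}{\cH_N\,\Psi_{\mathrm{fluc},t}}=\tfrac1N\inprod{\Psi_{\mathrm{fluc},t}}{(\cK+N^{-1}\cV)\,\Psi_{\mathrm{fluc},t}}$ on the right of \eqref{est:potential_energy_density_R1-term}--\eqref{est:potential_energy_density_R2-term}, with all lower-order debris collected into the $\tfrac{\lambda}{N}(1+t)^{\upeta_1}\varepsilon^{-\upnu_1}\inprod{\Psi_{\mathrm{fluc},t}}{(\cN+1)\,\Psi_{\mathrm{fluc},t}}$ term. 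In $\sfR_3$, by contrast, the singular factor $f_{\mathrm{HFB}}(x,y)$ multiplies only a single $b^\ast_y$: a crude estimate $\n{\inprod{\Psi_{\mathrm{fluc},t}}{b^\ast_y\Psi_{\mathrm{fluc},t}}}\le\Nrm{b^\ast_y\Psi_{\mathrm{fluc},t}}{}$, followed by Cauchy--Schwarz in $y$ and $\intd\Nrm{b^\ast_y\Psi_{\mathrm{fluc},t}}{}^2\dd y\lesssim\Nrm{(\cN+1)^{1/2}\Psi_{\mathrm{fluc},t}}{}^2$, already suffices, at the price of the weaker prefactor $\tfrac{\lambda}{\sqrt N\varepsilon^3}$ and only a half-power of $(\cN+1)$.

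I expect the main obstacle to be the bookkeeping needed to keep the singular interaction potential $V^\varepsilon_N$, which concentrates to a Dirac mass at scale $N^{-\beta}\varepsilon^{-2\kappa}$, under control against the equally singular correlation kernel whose leading part $N w^\varepsilon_{N,\sharp}$ is of size $\asc^\mu_0/(N^\beta\varepsilon^{2\kappa}\n{\cdot})$: the products $V^\varepsilon_N\cdot(\text{powers of }w^\varepsilon_{N,\sharp})$ must be arranged so that one always extracts a clean $\Nrm{V^\varepsilon_N}{L^1}$ or $\Nrm{V^\varepsilon_N f^\varepsilon_{N,\sharp}}{L^1}$ together with an $L^\infty$- or Hardy-type bound on the surviving factor, rather than a product that scales worse than the stated rates. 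Pinning down the exact exponents $\upnu_1,\upeta_1$ emerging from Lemma~\ref{lem:pair_excitation_estimates} and from the dispersive bounds on $\phi^\varepsilon_{N,t}$, and checking that the $\cK$- and $\cV$-type pairings really do reassemble into $\cH_N$ without generating an uncontrolled $\varepsilon$-power, is the most delicate part of the argument.
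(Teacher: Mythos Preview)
Your approach is essentially the paper's. The paper shortcuts $\sfR_1$ by observing (see the remark after Lemma~\ref{lem:second_marginal_expansion}) that $\tfrac{1}{N^2}\int V^\varepsilon_N\sfR_1$ coincides with $\tfrac{1}{N}\cC$, so the proof of Proposition~\ref{prop:estimates_for_cubic_terms} applies verbatim; your proposed expansion simply reproduces that proof. For $\sfR_2$ and $\sfR_3$ the paper proceeds exactly as you outline: Cauchy--Schwarz, Lemma~\ref{lem:singular_quartic_operator_bounds} for the quartic piece, and the pair-excitation bounds of Lemma~\ref{lem:pair_excitation_estimates} for the kernel factors.

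Two small corrections. First, the $\cH_N$ on the right of \eqref{est:potential_energy_density_R1-term}--\eqref{est:potential_energy_density_R2-term} comes \emph{solely} from the $\tfrac{1}{N}\cV$ contribution produced by Lemma~\ref{lem:singular_quartic_operator_bounds} (the term $\int V^\varepsilon_N\|a(c_x)a(c_y)\Psi\|^2$), not from any $\cK$ pairing---there are no gradients in $\sfR_1,\sfR_2,\sfR_3$, so your ``pieces carrying a gradient on the scattering kernel'' do not arise here. Second, the Cauchy--Schwarz step on the cubic terms naturally yields a $\tfrac{\cN^2}{N}$ factor (cf.\ the right-hand side of \eqref{est:cubic_term_bound}); to reduce this to the stated $(\cN+1)$ bound you must invoke Lemma~\ref{lem:fluctuation_number_squared}, which the paper cites explicitly but you omit.
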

\begin{proof}
    The proof of Inequality~\eqref{est:potential_energy_density_R1-term} follows from the proof of Proposition~\ref{prop:estimates_for_cubic_terms}, Lemma~\ref{lem:fluctuation_number_squared}, Lemmas~\ref{lem:semiclasscial_propagation_of_regularity} and \ref{lem:pair_excitation_estimates}.

    To prove Inequality~\eqref{est:potential_energy_density_R3-term}, by the Cauchy--Schwarz inequality and Lemma~\ref{lem:bogoliubov_conjugation_operator_inequality}, we have
    \begin{align*}
        &\intdd V^\varepsilon_N(x-y)\n{\sfR_3(x, y)}\dd x\d y \\
        &\le\, \frac{C}{\sqrt{N}}\Nrm{V^\varepsilon_{N}}{L^1_x}\Nrm{(\cN+1)^\frac12\,\Psi_{\mathrm{fluc}, t}}{}\sup_z\(\intd \n{f_{\mathrm{HFB}}(x, x+z)}^2\dd x\)^\frac12\ .
    \end{align*}
    Now, to estimate the $f_{\mathrm{HFB}}$ term, it suffices to bound the $\Lambda$ term in Expression~\eqref{def:f_HFB} since the others are handled similarly. By Part~\eqref{part:pair_excitation_pointwise_est_for_p} of Lemma~\ref{lem:pair_excitation_estimates}, we have
    \begin{align*}
        &\(\intd \n{\phi^\varepsilon_{N, t}(x)}^2\n{\Lambda_t(x, x+z)}^2\dd x\)^\frac12 \\
        &\le\, \Nrm{\phi^\varepsilon_{N, t}}{L^6_x}\sup_z\Nrm{\Lambda_t(\cdot, \cdot+z)}{L^3_x}\\
        &\le\, \Nrm{\phi^\varepsilon_{N, t}}{L^6_x}^3+\frac{1}{N}\Nrm{\phi^\varepsilon_{N, t}}{L^6_x}\sup_z\Nrm{k(\cdot, \cdot+z)}{L^3_x}+\frac{1}{N}\Nrm{\phi^\varepsilon_{N, t}}{L^6_x}\sup_z\Nrm{r_2(\cdot, \cdot+z)}{L^3_x}\\
        &\le\, \frac{C}{\varepsilon^3}\Nrm{\weight{\varepsilon\grad}\phi^\varepsilon_{N, t}}{L^2_x}^3\ ,
    \end{align*}
    which yields the desired result.

    Finally, to prove Inequality~\eqref{est:potential_energy_density_R2-term}, it suffices to estimate the terms involving $\Lambda_t^\varepsilon$ since the other terms can be handled similarly. Let us start by noting that
    \begin{align*}
        \nor{b^\ast_x b^\ast_y} = a^\ast(\ch_{x})a^\ast(\ch_{y})+a^\ast(\ch_{y})a(\sh_x)+a^\ast(\ch_{x})a(\sh_y)+a(\sh_x)a(\sh_y)\ .
    \end{align*}
    It suffices to consider just the first term.  By the Cauchy--Schwarz inequality, we have that
    \begin{align}
        &\frac{1}{N}\intdd V^\varepsilon_N(x-y)\n{\Lambda_t^\varepsilon(x, y)\, \inprod{\Psi_{\mathrm{fluc}, t}}{a(c_{x})a(c_{y})\Psi_{\mathrm{fluc}, t}}}\dd x\d y\label{est:potential_energy_density_R2-term1}\\
        &\le\, \frac{1}{N}\intdd V^\varepsilon_N(x-y)\n{\Lambda_t^\varepsilon(x, y)}^2\dd x\d y+\frac{1}{N}\intdd V^\varepsilon_N(x-y)\Nrm{a(c_{x})a(c_{y})\Psi_{\mathrm{fluc}, t}}{}^2 \dd x\d y\ . \notag
    \end{align}
    Then, by Part~\eqref{part:pair_excitation_pointwise_est_for_p} of Lemma~\ref{lem:pair_excitation_estimates} and Lemma~\ref{lem:singular_quartic_operator_bounds}, we have that
    \begin{align*}
        \mathrm{RHS}\eqref{est:potential_energy_density_R2-term1}\lesssim_{T_{0}}& \frac{\lambda}{N\, \ve^{3}}
        +\frac{1}{N}\inprod{\Psi_{\mathrm{fluc, t}}}{\cH_N\, \Psi_{\mathrm{fluc, t}}}+\frac{\lambda}{N\ve^{M}}\inprod{\Psi_{\mathrm{fluc, t}}}{(\cN+1)\, \Psi_{\mathrm{fluc, t}}}\ .
    \end{align*}
    This completes the proof of the lemma.
\end{proof}

Now, we are able to prove Proposition~\ref{prop:many-body_energies_mean-field_approximation}.
\begin{proof}[\textbf{Proof of Proposition~\ref{prop:many-body_energies_mean-field_approximation}}]
By the triangle inequality, we have
    \begin{align*}
       &\Nrm{E^\varepsilon_{N,\, \mathrm{kin}., t}-\tfrac12\n{\varepsilon\grad\phi^\varepsilon_{N, t}}^2-4\pi\mu\la\, \mathfrak{b}_{0}^{\mu}\n{\phi^\varepsilon_{N,t}}^4}{L^1_x}\\
        &\lesssim\, \Nrm{\diag(\varepsilon \grad_{x}\cdot\varepsilon\grad_{x'}\Gamma_{N:1, t}^\varepsilon)-\diag(\varepsilon \grad_{x}\cdot\varepsilon\grad_{x'}\Gamma^\varepsilon_{\mathrm{cor},t})}{L^1_x}\\
        &\quad +\Nrm{\frac{1}{N}\Nrm{\varepsilon\grad_x \sh(k_{N, t}^\varepsilon)(x, \cdot)}{L^2_{x'}}^2-4\pi\mu\la\, \mathfrak{b}_{0}^{\mu}\n{\phi^\varepsilon_{N,t}}^4}{L^1_x}.
    \end{align*}
 Then, by Proposition~\ref{prop:mean-field_approximation_of_marginal_density} and Lemma~\ref{lem:kinetic_energy_density_approx}, we complete the proof of Inequality~\eqref{est:kinetic_energy_many-body_mean-field_approximation}.

Next, we approximate $e_{N,\,\mathrm{int}.}$. By Lemma~\ref{lem:second_marginal_expansion} and Equation~\eqref{est:expected_number_bogoliubov_states}, we see that
\begin{align}\label{est:interaction_energy_approximation}
   & \Nrm{E^\varepsilon_{N,\, \mathrm{int}.,\, t}(x)-\textstyle\frac12\intd V^\varepsilon_{N}(x-y)\Gamma^{\varepsilon}_{N:2, t}(x, y; x, y)\dd y}{L^1_x}\\
    &\lesssim\, \frac{1}{\sqrt{N}}\intdd V^\varepsilon_{N}(x-y)\Gamma^{\varepsilon}_{N:2, t}(x, y; x, y)\dd x\d y\notag\\
    &\quad+\frac{1}{N^2}\intdd V^\varepsilon_{N}(x-y)\left\{\n{\sfR_1(x, y)}+ \n{\sfR_2(x, y)}+ \n{\sfR_3(x, y)}\right\}\dd x\d y\ .\notag
\end{align}
Applying Lemmas~\ref{lem:interaction_energy_density_approximation} and \ref{lem:interaction_energy_bounds_on_remainder-terms}, and Proposition~\ref{prop:bounds_on_growth_of_fluctuation}, we complete the proof of Inequality~\eqref{est:interaction_energy_many-body_mean-field_approximation}.
\end{proof}

\section{Bounds on the Fluctuations}\label{section:Bounds on the Fluctuations}
\subsection{The fluctuation Hamiltonian} Notice the fluctuation dynamics $\cU_{\mathrm{fluc}}(t, s)$ satisfies the equation
\begin{align}\label{def:fluctuation_dynamics}
    i\varepsilon\,\bd_t \cU_{\mathrm{fluc}}(t, s) = \cH_{\fluc}(t)\,\cU_{\mathrm{fluc}}(t, s) \quad \text{ with } \quad \cU_{\mathrm{fluc}}(s, s)=\id_{\cF}\quad \forall\, s\in \R\ ,
\end{align}
where its time-dependent generator is given by
\begin{align}
    \cG_{\fluc}(t)=&\, \(i\varepsilon\,\bd_t e^{\sqrt{N}\,\cA(\phi_{N, t}^\varepsilon)}\)e^{-\sqrt{N}\,\cA(\phi_{N, t}^\varepsilon)}+ e^{\sqrt{N}\,\cA(\phi_{N, t}^\varepsilon)}\,\cH_N\,e^{-\sqrt{N}\,\cA(\phi_{N, t}^\varepsilon)}\ ,\\
    \cH_{\fluc}(t)=&\, \(i\varepsilon\,\bd_t e^{\cB(k_{N, t}^\varepsilon)}\)e^{-\cB(k_{N, t}^\varepsilon)}
    +e^{\cB(k_{N, t}^\varepsilon)}\,\cG_{\fluc}(t)\,e^{-\cB(k_{N, t}^\varepsilon)}\ .\label{def:fluctuation_Hamiltonian}
\end{align}
Furthermore, the fluctuation Hamiltonian \eqref{def:fluctuation_Hamiltonian} could be rewrite as follows
\begin{align}
    \hfluc = N\chi +\cL_{\rm HFB} + \cQ_{\rm HFB} + \cC+ \cD+N^{-1}\cV
\end{align}
where $\chi$ is a time-dependent scalar, $\cL_{\rm HFB}, \cQ_{\rm HFB}, \cC$, and $\cD$ are homogeneous time-dependent operators in creation and annilihation operators. The explicit form of $\cG_{\fluc}$ and $\cH_{\fluc}$ are well known in the literature.
% For completeness of presentation and to set some notations, let us state the following results regarding the fluctuation Hamiltonian whose proof will be given in Appendix \ref{appendix:fluctuation_Hamiltonian}.
Moreover, the calculation of $\cG_{\fluc}$ and $\hfluc$ below in Lemma \ref{lem:Gfluc_Hamiltonian_definition} and Proposition \ref{prop:fluctuation_Hamiltonian} only requires $\phi_t$ and $k_t$ to be sufficiently smooth and integrable so that the quantities are well-defined.
For the proof of Lemma \ref{lem:Gfluc_Hamiltonian_definition} and Proposition \ref{prop:fluctuation_Hamiltonian}, we refer the reader, for instance, to \cite{grillakis2013beyond, grillakis2013pair, grillakis2017pair,hepp1974classical} (cf., \cite{benedikter2015quantitative,boccato2017quantum}).

Let us summarize the explicit form of $\cG_{\fluc}$ in the following lemma without proof.
\begin{lem}\label{lem:Gfluc_Hamiltonian_definition}
    We have the identity
    \begin{align}
        \cG_{\fluc}(t)= N\chi_1(t) +\cQ_{\rm Bog}(t)+\cR_1(t)+\cR_3(t)+N^{-1}\cV
    \end{align}
    where each of the terms are monomial operators in creation and annilihation operators.
    More precisely, the scalar term is given by
    \begin{subequations}
        \begin{align}\label{def:e0_term}
            \chi_1(t):= -\re\inprod{\phi_t}{i\varepsilon\,\dot\phi_t-h_{\mathrm{H}}(t)\phi_t}+\chi_0(t)
        \end{align}
        with $h_{\mathrm{H}}(t):=-\tfrac{\varepsilon^2}{2}\lapl+V_N^\varepsilon\ast|\phi_t|^2$ and $\chi_0(t)=-\tfrac12\intd (V_N^\varepsilon\ast|\phi_t|^2)|\phi_t|^2\dd x$. The leading term is given by the quadratic Hamiltonian operator
        \begin{align}\label{def:c-sub_quadratic_Hamiltonian}
            \cQ_{\rm Bog}(t) = \frac12\dG(h_{\mathrm{HF}}(t))
            +\frac12\intdd m(t, x, y)\, a^\ast_{x}a^\ast_{y}\dd x\d y+\mathrm{h.c.}\ ,
        \end{align}
        where $h_{\mathrm{HF}}(t)= h_{\mathrm{H}}(t)+\sfX_{c}(t)$. Here $\sfX_{c}$ and $m$ are given by
        \begin{align*}
            \sfX_{c}(t, x, y) =&\, V_N^\varepsilon(x-y)\, \phi_t(x)\conj{\phi_t}(y)\ ,\\
            m(t, x, y) =&\, V_N^\varepsilon(x-y)\, \phi_t(x) \phi_t(y)\ .
        \end{align*}
        Lastly, the remainder terms are given by
        \begin{align}
            \cR_1(t)=&\,  \sqrt{N}\,a(-i\varepsilon\,\dot\phi_t+h_{\mathrm{H}}(t)\,\phi_t)+\mathrm{h.c.}\ ,\label{def:linear_remainder_term}\\
            \cR_3(t)=&\,  \frac{1}{\sqrt{N}}\intd V_N^\varepsilon(x-y)\, \phi_t(y)\, a^\ast_x a^\ast_y a_x+\mathrm{h.c.}\label{def:cubic_remainder_term}\ .
        \end{align}
    \end{subequations}
    Furthermore, if $\phi_t$ satisfies Equation \eqref{eq:modified_Gross--Pitaevskii}, then we see that Term \eqref{def:e0_term} becomes
    \begin{align*}
        \chi_1(t)= \intd (V_N^\varepsilon(\cdot)\(\tfrac12-f_{N, l}^\varepsilon(\cdot)\)\ast |\phi_t|^2)|\phi_t|^2(x)\dd x\ ,
    \end{align*}
  and Term \eqref{def:linear_remainder_term} reduces to
    \begin{align*}
        \cR_1(t)= \sqrt{N}\,a((V_N^\varepsilon w_{N, l}^\varepsilon)\ast |\phi_t|^2\,\phi_t)+\mathrm{h.c.}\ .
    \end{align*}
\end{lem}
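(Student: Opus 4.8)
The plan is to obtain the identity by directly conjugating $\cH_N=\cK+N^{-1}\cV$ by the Weyl operator $e^{\pm\sqrt N\cA(\phi_t)}$, adding the time-derivative term, and then sorting the outcome by powers of $\sqrt N$. The first step is the time-derivative term. Since $f\mapsto\cA(f)$ is $\CC$-linear and, by the canonical commutation relations, $[\cA(f),\cA(g)]$ is a scalar, one has $e^{s\sqrt N\cA(\phi_t)}\cA(\dot\phi_t)e^{-s\sqrt N\cA(\phi_t)}=\cA(\dot\phi_t)+s\sqrt N[\cA(\phi_t),\cA(\dot\phi_t)]$, so integrating the Duhamel identity $\frac{\d}{\d t}e^{\sqrt N\cA(\phi_t)}\cdot e^{-\sqrt N\cA(\phi_t)}=\int_0^1 e^{s\sqrt N\cA(\phi_t)}\sqrt N\cA(\dot\phi_t)e^{-s\sqrt N\cA(\phi_t)}\dd s$ over $s\in[0,1]$ yields
\[
\big(i\varepsilon\,\bd_t e^{\sqrt N\cA(\phi_t)}\big)e^{-\sqrt N\cA(\phi_t)}=\sqrt N\,a^\ast(-i\varepsilon\dot\phi_t)+\mathrm{h.c.}+\varepsilon N\,\im\inprod{\phi_t}{\dot\phi_t}.
\]

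Next I would conjugate $\cK$ and $N^{-1}\cV$ using the operator-valued distribution identities~\eqref{eq:operator-valued_dist_weyl_conjugation_id}, namely $e^{\sqrt N\cA(\phi_t)}a^\sharp_x e^{-\sqrt N\cA(\phi_t)}=a^\sharp_x+\sqrt N\,\phi_t^\sharp(x)$ (with $\phi_t^\sharp$ equal to $\conj{\phi_t}$ or $\phi_t$ according to whether $a^\sharp=a^\ast$ or $a$), applied to $\cK=\intd a^\ast_x(-\tfrac{\varepsilon^2}{2}\lapl)a_x\dd x$ and to $N^{-1}\cV=\tfrac{1}{2N}\intdd V^\varepsilon_N(x-y)\,a^\ast_x a^\ast_y a_y a_x\dd x\d y$. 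Conjugating $\cK$ reproduces $\cK$ itself plus the linear term $\sqrt N\,a^\ast(-\tfrac{\varepsilon^2}{2}\lapl\phi_t)+\mathrm{h.c.}$ and the scalar $\tfrac{N\varepsilon^2}{2}\intd|\grad\phi_t|^2\dd x$. Expanding the four shifted factors in $N^{-1}\cV$ produces sixteen monomials which, after relabelling $x\leftrightarrow y$, collapse into five groups according to the power of $\sqrt N$: the scalar $\tfrac N2\intd(V_N^\varepsilon\ast|\phi_t|^2)|\phi_t|^2\dd x=-N\chi_0$; the linear term $\sqrt N\,a^\ast\big((V_N^\varepsilon\ast|\phi_t|^2)\phi_t\big)+\mathrm{h.c.}$; the quadratic term $\dG(V_N^\varepsilon\ast|\phi_t|^2)+\dG(\sfX_{c})+\tfrac12\intdd m(t,x,y)\,a^\ast_x a^\ast_y\dd x\d y+\mathrm{h.c.}$ with $\sfX_{c}$ and $m$ as in the statement; the cubic term, which is exactly $\cR_3(t)$ of~\eqref{def:cubic_remainder_term}; and the quartic term, which is $N^{-1}\cV$.

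Then I would assemble the pieces. Adding the linear contributions from the first two steps gives $\sqrt N\,a^\ast\big(-i\varepsilon\dot\phi_t-\tfrac{\varepsilon^2}{2}\lapl\phi_t+(V_N^\varepsilon\ast|\phi_t|^2)\phi_t\big)+\mathrm{h.c.}=\sqrt N\,a^\ast(-i\varepsilon\dot\phi_t+h_{\mathrm{H}}(t)\phi_t)+\mathrm{h.c.}$, which is $\cR_1(t)$ of~\eqref{def:linear_remainder_term}; combining $\cK$ with $\dG(V_N^\varepsilon\ast|\phi_t|^2)+\dG(\sfX_{c})$ gives $\dG(h_{\mathrm{HF}}(t))$, so the full quadratic part is $\cQ_{\rm Bog}(t)$ of~\eqref{def:c-sub_quadratic_Hamiltonian}; the scalars sum to $\varepsilon N\,\im\inprod{\phi_t}{\dot\phi_t}+\tfrac{N\varepsilon^2}{2}\intd|\grad\phi_t|^2\dd x+\tfrac N2\intd(V_N^\varepsilon\ast|\phi_t|^2)|\phi_t|^2\dd x$, which equals $N\big(-\re\inprod{\phi_t}{i\varepsilon\dot\phi_t-h_{\mathrm{H}}(t)\phi_t}+\chi_0\big)=N\chi_1(t)$; and the cubic and quartic pieces are $\cR_3(t)$ and $N^{-1}\cV$. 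For the reduced formulas, if $\phi_t$ solves~\eqref{eq:modified_Gross--Pitaevskii} with $K=V_N^\varepsilon f^\varepsilon_{N,\sharp}$ then $-i\varepsilon\dot\phi_t+h_{\mathrm{H}}(t)\phi_t=\big((V_N^\varepsilon-K)\ast|\phi_t|^2\big)\phi_t=\big((V_N^\varepsilon w^\varepsilon_{N,\sharp})\ast|\phi_t|^2\big)\phi_t$, which yields the stated form of $\cR_1$; substituting the same identity into $\chi_1=\re\inprod{\phi_t}{-i\varepsilon\dot\phi_t+h_{\mathrm{H}}\phi_t}+\chi_0$ and using $w^\varepsilon_{N,\sharp}-\tfrac12=\tfrac12-f^\varepsilon_{N,\sharp}$ gives the stated form of $\chi_1$.

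The arithmetic above is mechanical bookkeeping; the genuine difficulty is that $a^\sharp_x$ are operator-valued distributions and $\cK$, $\cV$, the Weyl operators and the time derivative are all unbounded, so the conjugation identities must be justified on a suitable invariant dense domain (finite-particle vectors, or the span of coherent states as in Lemma~\ref{lem:properties_of_weyl_operator}), and one must verify that, under the standing smoothness and integrability hypotheses on $\phi_t$, every term appearing in $\cG_{\fluc}(t)$ is again well-defined there before extending by density. This is why the detailed argument is deferred to~\cite{grillakis2013pair,grillakis2017pair,hepp1974classical}.
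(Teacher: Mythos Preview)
Your proof is correct and follows exactly the standard route: Weyl-conjugate $\cH_N$ using the shift identities~\eqref{eq:operator-valued_dist_weyl_conjugation_id}, compute the time-derivative contribution via Duhamel, and sort monomials by degree. The paper does not supply its own proof of this lemma at all; it simply states the result and refers the reader to \cite{grillakis2013beyond, grillakis2013pair, grillakis2017pair, hepp1974classical}, which carry out precisely the computation you outlined.
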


\begin{prop}\label{prop:fluctuation_Hamiltonian}
    The fluctuation Hamiltonian can be rewritten as follows
    \begin{align*}
        \hfluc = N\,\chi +\cL_{\rm HFB} + \cQ_{\rm HFB} + \cC+ \cD_1+\cD_2+N^{-1}\cV
    \end{align*}
    where each of the terms are normal-ordered operators in creation and annilihation operators. More precisely, the scalar term is given by
    \begin{align*}
        \chi =  \re\inprod{\phi}{\(-i\varepsilon\,\bd_t+h_{\mathrm{H}}\)\phi}-\chi_0
        -\tfrac{1}{2\,N}\re\(\Tr{\conj{\sh(k)}\,\vect{\widetilde S}(\sh(k))}\)+\tfrac{1}{2\, N}\re\(\Tr{\sh(2k)\, \conj{\widetilde m}}\)
    \end{align*}
    where $\widetilde{m}(x, y)=V_N^\varepsilon(x-y)\Lambda(x, y)$ and
    \begin{align*}
        \vect{\widetilde S}(\sh) = i\varepsilon\,\bd_t \sh - \widetilde h_{\mathrm{HF}}\,\sh-\sh\,\widetilde h_{\mathrm{HF}}^\top \quad \text{ with } \quad \widetilde h_{\mathrm{HF}}=-\tfrac{\varepsilon^2}{2}\lapl +V_{\Gamma^\varepsilon}+\sfX_{\Gamma^\varepsilon}\ .
    \end{align*}

    The linear terms are given by
    \begin{align}\label{def:normal_ordered_linear_terms}
        \cL_{\rm HFB} =  \sqrt{N}\intd \conj{(-i\varepsilon\,\bd_t\phi+h_{\mathrm{HFB}}(\phi))(x)}\, e^{\cB(k)}a_x\,e^{-\cB(k)}\d x+\mathrm{h.c.}
    \end{align}
    where
    \begin{align*}
        h_{\mathrm{HFB}}(\phi)(x) :=& (\widetilde h_{\mathrm{H}}\,\phi)(x)
         + \tfrac{1}{2\,N}\intd V_N^\varepsilon(x-y)\,\phi(y)\, p(2k)(y, x)\,\dd y\\
         &+ \tfrac{1}{2\,N}\intd V_N^\varepsilon(x-y)\,\conj{\phi(y)}\, \sh(2k)(y, x)\,\dd y\ .
    \end{align*}

    The quadratic terms are given by
    \begin{align}\label{def:normal_ordered_HFB_quadratic_operator}
        \cQ_{\rm HFB} :=& \tfrac12\dG(\widetilde h_{\mathrm{HF}})+\tfrac12\dG(d_{\rm HFB})-\tfrac12\intdd l_{\rm HFB}(x, y)\, a^\ast_x a^\ast_y\dd x\d y+\mathrm{h.c.}\\
        :=& \dG(\widetilde h_{\mathrm{HF}})+\tfrac12\dG(d_{\rm HFB}+d_{\rm HFB}^\top)+\cQ_{\rm OD}\ ,\notag
    \end{align}
    where the diagonal and off-diagonal entries are given by
    \begin{subequations}\label{def:entries_of_QHFB}
        \begin{align}
            d_{\rm HFB}:=&\,\(\vect{\widetilde W}(\ch(k))+\sh(k)\, \conj{\widetilde m}\)\ch(k)-\(\vect{\widetilde S}(\sh(k))-\ch(k)\, \widetilde m \)\conj{\sh(k)}\ , \label{def:diagonal_entry_of_QHFB}\\
            l_{\rm HFB}:=&\, \(\vect{\widetilde S}(\sh(k))-\ch(k)\,\widetilde m\)\conj{\ch(k)}-\(\vect{\widetilde W}(\ch(k))+\sh(k)\, \conj{\widetilde m}\)\sh(k)\ . \label{def:off-diagonal_entry_of_QHFB}
        \end{align}
    \end{subequations}
    and the operator $\vect{\widetilde W}$ is given by
    \begin{align*}
        \vect{\widetilde W}(p)=i\varepsilon\,\bd_t p-[\widetilde h_{\mathrm{HF}}, p]\ .
    \end{align*}

    The cubic terms are given by
    \begin{subequations}\label{def:normal_ordered_cubic_terms}
    \begin{align}
        \cC&= \frac{1}{\sqrt{N}}\intdd V_N^\varepsilon(x-y)\phi(y)\,a^\ast(\ch_{x}) a^\ast(\ch_{y})  a^\ast(\sh_x)\dd x\d y\label{def:normal_ordered_cubic_term1}\\\
        &\quad +\frac{1}{\sqrt{N}}\intdd V_N^\varepsilon(x-y)\phi(y)\,a^\ast(\ch_{x})a^\ast(\ch_{y})a(\ch_{x})\dd x\d y\label{def:normal_ordered_cubic_term2}\\
        &\quad +\frac{1}{\sqrt{N}}\intdd V_N^\varepsilon(x-y)\phi(y)\,a^\ast(\ch_{y}) a^\ast(\sh_x)a(\sh_x)\dd x\d y\label{def:normal_ordered_cubic_term3}\\
        &\quad +\frac{1}{\sqrt{N}}\intdd V_N^\varepsilon(x-y)\phi(y)\,a^\ast(\ch_{x}) a^\ast(\sh_x) a(\sh_y)\dd x\d y\label{def:normal_ordered_cubic_term4}\\
        &\quad +\frac{1}{\sqrt{N}}\intdd V_N^\varepsilon(x-y)\phi(y)\,a^\ast(\ch_{x}) a(\sh_y) a(\ch_{x})\dd x\d y\label{def:normal_ordered_cubic_term5}\\
        &\quad +\frac{1}{\sqrt{N}}\intdd V_N^\varepsilon(x-y)\phi(y)\,a^\ast(\ch_{y}) a(\sh_x)  a(\ch_{x})\dd x\d y\label{def:normal_ordered_cubic_term6}\\
        &\quad +\frac{1}{\sqrt{N}}\intdd V_N^\varepsilon(x-y)\phi(y)\,a^\ast(\sh_x) a(\sh_x)  a(\sh_y)\dd x\d y \label{def:normal_ordered_cubic_term7}\\
        &\quad +\frac{1}{\sqrt{N}}\intdd V_N^\varepsilon(x-y)\phi(y)\,a(\sh_x)  a(\sh_y)  a(\ch_{x}) \dd x\d y +\mathrm{h.c.}
    \end{align}
    where $g_x(y):=g(y, x)$.
\end{subequations}

    Lastly, the quartic terms are split into $\cD_1+\cD_2+N^{-1}\cV$, where
    \begin{subequations}\label{def:normal_ordered_quartic_D1_terms}
        \begin{align}
         \mathcal{D}_1 =&\  \frac{1}{2\, N}\intdd   V_N^\varepsilon(x-y)\, a^\ast(\ch_{x})a^\ast(\ch_{y})a^\ast(\sh_x)a^\ast(\sh_y)\dd x \d y \label{def:normal_ordered_quartic_D1_term1}\\
         & + \frac{1}{2\, N}\intdd  V_N^\varepsilon(x-y)\, a^\ast(\ch_{x})a^\ast(\ch_{y})a^\ast(\sh_x)a(\ch_{y})\dd x\d y \label{def:normal_ordered_quartic_D1_term2}\\
         &+ \frac{1}{2\, N}\intdd   V_N^\varepsilon(x-y)\, a^\ast(\ch_{x})a^\ast(\ch_{y})a^\ast(\sh_y)a(\ch_{x})\dd x\d y\\
         &+ \frac{1}{2\, N}\intdd   V_N^\varepsilon(x-y)\, a^\ast(\ch_{x})a^\ast(\sh_x)a^\ast(\sh_y)a(\sh_y)\dd x\d y \\
         &+ \frac{1}{2\, N}\intdd V_N^\varepsilon(x-y)\, a^\ast(\sh_x)a^\ast(\ch_{y})a^\ast(\sh_y)a(\sh_x)\dd x\d y +\mathrm{h.c.}
         \end{align}
    \end{subequations}
        and
    \begin{subequations}\label{def:normal_ordered_quartic_D2_terms}
        \begin{align}
        \mathcal{D}_2 =&\,
         \frac{1}{2\, N}\intdd V_N^\varepsilon(x-y)\, a^\ast(\ch_{x})a^\ast(\sh_x)a(\ch_{y})a(\sh_y)\dd x\d y \\
        &+ \frac{1}{2\, N}\intdd V_N^\varepsilon(x-y)\, a^\ast(\sh_x)a^\ast(\ch_{y})a(\ch_{y})a(\sh_x)\dd x\d y\\
        &+\frac{1}{4\,N}\intdd V_N^\varepsilon(x-y) a^\ast(\sh_x)a^\ast(\sh_y)a(\sh_x)a(\sh_y)\dd x\d y\\
        &+ \frac{1}{4\, N}\intdd V_N^\varepsilon(x-y)a^\ast(\ch_{x})a^\ast(\ch_{y})a(\ch_{x})a(\ch_{y})\dd x\d y\\
        &- \frac{1}{4\, N}\intdd V_N^\varepsilon(x-y)a^\ast_xa^\ast_ya_y a_x\dd x\d y+\mathrm{h.c.}
        \end{align}
    \end{subequations}
        where $\cD_2$ commutes with $\cN$ and $\cD_1$ does not. We also write
        \begin{align}\label{def:renormalized_fluctuation_Hamiltonian}
            \widetilde\cH_{\rm fluc} = \hfluc-N\chi\ .
        \end{align}
\end{prop}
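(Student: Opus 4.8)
The plan is to treat Proposition~\ref{prop:fluctuation_Hamiltonian} as an explicit (if lengthy) algebraic computation that upgrades Lemma~\ref{lem:Gfluc_Hamiltonian_definition}, following the route of \cite{grillakis2013beyond, grillakis2013pair, grillakis2017pair} (cf.\ \cite{benedikter2015quantitative, boccato2017quantum}) and only recording the structural points. Starting from $\cG_{\fluc}(t)= N\chi_1+\cQ_{\rm Bog}+\cR_1+\cR_3+N^{-1}\cV$ as in Lemma~\ref{lem:Gfluc_Hamiltonian_definition}, the first step is to conjugate each monomial by $e^{\cB(k)}$, i.e.\ to replace every $a^\sharp_x$ by $b^\sharp_x$ using the operator-valued distribution identities~\eqref{eq:bogoliubov_conjugation_operator-value_distribution_id} of Lemma~\ref{lem:bogoliubov_conjugation}, and then to add the gauge term $(i\varepsilon\,\bd_t e^{\cB(k)})e^{-\cB(k)}$. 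For this last term I would use the Duhamel identity $\bd_t e^{\cB(k)}=\int_0^1 e^{s\cB(k)}\,\cB(\dot k)\,e^{(1-s)\cB(k)}\,\d s$, valid since $k\mapsto\cB(k)$ is linear, together with the infinitesimal Segal--Shale--Weil calculus $e^{s\cI(\sfK)}\cI(L)e^{-s\cI(\sfK)}=\cI(e^{s\sfK}Le^{-s\sfK})$ from \cite{grillakis2010second,grillakis2011second}; this turns the gauge term into a quadratic-plus-scalar operator and is precisely what produces the $i\varepsilon\,\bd_t$ contributions inside $\vect{\widetilde S}(\cdot)$ and $\vect{\widetilde W}(\cdot)$.

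\textbf{Normal ordering and grouping.} The second step is to expand $b^\sharp_x$ in terms of $a^\sharp$ via $c=\ch(k)$, $u=\sh(k)$, and to normal-order the result using the CCR $[a_x,a^\ast_y]=\delta(x-y)$. Each contraction lowers the $a^\sharp$-degree by two, so the conjugated quartic $N^{-1}\cV$ contributes to the quartic, cubic, quadratic and scalar sectors, the conjugated cubic $\cR_3$ contributes to the cubic and linear sectors, and $\cQ_{\rm Bog}$ contributes to the quadratic and scalar sectors. Collecting all normal-ordered monomials by their total $a^\sharp$-degree then yields the five groups $N\chi$, $\cL_{\rm HFB}$, $\cQ_{\rm HFB}$, $\cC$, $\cD_1+\cD_2+N^{-1}\cV$; the split $\cD_1+\cD_2$ is dictated by whether the monomial commutes with $\cN$ (balanced terms into $\cD_2$, unbalanced into $\cD_1$). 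Writing the quadratic output in the compact matrix form~\eqref{def:entries_of_QHFB} amounts to recognising that the quadratic part of $e^{\cB(k)}\cQ_{\rm Bog}e^{-\cB(k)}$ plus the gauge term is $\cI$ applied to the obvious $\nu_k$-transform of the Bogoliubov matrix of $\cQ_{\rm Bog}$, re-expressed through $\ch(k)$, $\sh(k)$, $\widetilde m=V^\varepsilon_N\Lambda$ and the operators $\vect{\widetilde S},\vect{\widetilde W}$.

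\textbf{Using the dynamics.} The third step performs the simplifications that invoke the equations of motion. Substituting the modified Gross--Pitaevskii equation~\eqref{eq:modified_Gross--Pitaevskii} for $\phi$ makes the $O(\sqrt N)$ term $a(-i\varepsilon\,\dot\phi+h_{\rm H}\phi)$ in $\cR_1$ combine with the linear contributions coming from $\cR_3$, from the conjugated quadratic part, and from the gauge term into exactly $h_{\rm HFB}(\phi)$, and collapses the $O(N)$ scalar contributions into the stated $\chi$; this is where the scattering/Neumann equation for $f^\varepsilon_{N,\sharp}$, hidden inside the definitions of the effective potential $K$ and of $k$ in~\eqref{def:pair_excitation_function}, enters. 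The hyperbolic trigonometric identities~\eqref{eq:hyperbolic_trig_identities} are used repeatedly to rewrite $p(2k)$ and $\sh(2k)$ in the displayed forms, and finally one records $\widetilde\cH_{\rm fluc}=\hfluc-N\chi$.

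\textbf{Main obstacle.} The difficulty is not any single estimate but the combinatorial volume: after expanding $b^\sharp_x$, the conjugation of $N^{-1}\cV$ alone generates on the order of several dozen normal-ordered monomials with kernels built from compositions of $v$, $\ch(k)$, $\sh(k)$ and $\phi$, and one must check both that every term lands in the claimed group and that the potentially large ($O(N)$ or $O(\sqrt N)$) pieces cancel or assemble into $\chi$, $h_{\rm HFB}(\phi)$, $d_{\rm HFB}$, $l_{\rm HFB}$ once the Gross--Pitaevskii and scattering equations are used. Since this calculation is by now standard, I would carry it out term by term following \cite{grillakis2017pair}, indicating only where the semiclassical scaling and the localized choice~\eqref{def:pair_excitation_function} of $k$ alter the algebra, rather than reproduce the full expansion.
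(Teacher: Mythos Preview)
Your approach is correct and matches the paper's: the paper does not give an independent proof of Proposition~\ref{prop:fluctuation_Hamiltonian} but refers the reader to \cite{grillakis2013beyond, grillakis2013pair, grillakis2017pair, hepp1974classical} (cf.\ \cite{benedikter2015quantitative, boccato2017quantum}), exactly the route you outline. One correction to your third step: the identity in Proposition~\ref{prop:fluctuation_Hamiltonian} is purely algebraic and does \emph{not} require the modified Gross--Pitaevskii equation or the scattering equation for $f^\varepsilon_{N,\sharp}$---the paper states explicitly that the calculation ``only requires $\phi_t$ and $k_t$ to be sufficiently smooth and integrable.'' The linear term $\cL_{\rm HFB}$ is displayed with the full expression $-i\varepsilon\,\bd_t\phi+h_{\rm HFB}(\phi)$, and the equations of motion are invoked only later (Proposition~\ref{prop:estimates_for_linear_terms} and the discussion preceding Proposition~\ref{prop:estimates_for_off-diagonal_quadratic_terms}) to show that the leading pieces of $\cL_{\rm HFB}$ and $l_{\rm HFB}$ cancel.
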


Then the main result of this section is given by the following proposition.

\begin{prop}\label{prop:bounds_on_fluctuation_Hamiltonian}
    Suppose $\phi_t=\phi_{N, t}^\varepsilon$ is a solution to Equation \eqref{eq:modified_Gross--Pitaevskii} with initial data $\weight{\varepsilon \grad}^4\phi^{\init}\in L^2(\R^3)$ holds uniformly in $\varepsilon$. Suppose $k_{t}=k_{N, t}^\varepsilon$ is given by Equation \eqref{def:pair_excitation_function} with $\ell=\varepsilon^4$ for the necessary regimes.
    Then there exists $C, M>0$, independent on $\varepsilon$ and $N$,  such that we have the following operator bounds
    \begin{align}
        \widetilde\cH_{\rm fluc}(t) \ge&\, \tfrac12\cH_N - C\lambda \ve^{-M}\lra{t}^{-M} \(\frac{\cN^2}{N}+\cN+1\)\label{est:lower_bound_for_fluctuation_Hamiltonian}\ ,\\
        \widetilde\cH_{\rm fluc}(t) \le&\, \tfrac{3}{2}\cH_N+ C\lambda \ve^{-M}\lra{t}^{-M} \(\frac{\cN^2}{N}+\cN+1\)\ .\label{est:upper_bound_for_fluctuation_Hamiltonian}
    \end{align}
    Moreover, we also have that
    \begin{align}
        \pm [\cN,  \widetilde\cH_{\rm fluc}(t)] \le&\, \cH_N + C\lambda\ve^{-M}\lra{t}^{-M}\(\frac{\cN^2}{N}+\cN+1\)\ ,\label{est:bound_for_commutator_of_fluctuation_Hamiltonian_with_number}\\
        \pm \dot{\widetilde\cH}_{\rm fluc}(t) \le&\,
        \cH_N +C\lambda\ve^{-M}\lra{t}^{-M} \(\frac{\cN^2}{N}+\cN+1\)\ . \label{est:bound_for_time-derivative_of_fluctuation_Hamiltonian}
    \end{align}
\end{prop}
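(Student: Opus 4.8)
The plan is to work from the normal-ordered decomposition of Proposition~\ref{prop:fluctuation_Hamiltonian},
\[
\widetilde\cH_{\rm fluc}(t) = \cL_{\rm HFB}(t) + \cQ_{\rm HFB}(t) + \cC(t) + \cD_1(t) + \cD_2(t) + N^{-1}\cV\ ,
\]
estimate each summand as a quadratic form on vectors $\Psi$ with $\inprod{\Psi}{(\cH_N+\cN^2/N)\,\Psi}<\infty$, and always keep a fraction of $\cH_N=\cK+N^{-1}\cV$ in reserve to absorb the kinetic and potential cost of the error terms. Since $v\ge 0$, the term $N^{-1}\cV$ is nonnegative and is simply retained. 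For the quadratic piece, I would use~\eqref{def:normal_ordered_HFB_quadratic_operator}, writing $\cQ_{\rm HFB}=\dG(-\tfrac{\varepsilon^2}{2}\lapl)+\dG(V_{\Gamma^\varepsilon}+\sfX_{\Gamma^\varepsilon})+\tfrac12\dG(d_{\rm HFB}+d_{\rm HFB}^\top)+\cQ_{\rm OD}$, so that $\dG(-\tfrac{\varepsilon^2}{2}\lapl)=\cK$ is extracted exactly. The mean-field and exchange potentials $V_{\Gamma^\varepsilon},\sfX_{\Gamma^\varepsilon}$ are bounded operators on $\h$ with operator norm $\lesssim\widetilde\mu\,\varepsilon^{-3}\Nrm{\weight{\varepsilon\grad}^2\phi^\varepsilon_{N,t}}{L^2_x}^2$ by Sobolev and the estimate $\Nrm{V^\varepsilon_N f^\varepsilon_{N,\sharp}}{L^1_x}\lesssim\widetilde\mu$; the diagonal and off-diagonal symbols $d_{\rm HFB},l_{\rm HFB}$ of~\eqref{def:entries_of_QHFB} are controlled in Hilbert--Schmidt and operator norm by the scattering estimates of Lemmas~\ref{lem:semiclassical_estimates_of_scattering_function} and~\ref{lem:correlation_structure} together with the pair-excitation bounds (Lemma~\ref{lem:pair_excitation_estimates}), the key cancellations coming from the fact that $\vect{\widetilde S}(\sh k)$ and $\vect{\widetilde W}(\ch k)$ are small due to the scattering equation for $w^\varepsilon_{N,\sharp}$ and the modified Gross--Pitaevskii equation~\eqref{eq:modified_Gross--Pitaevskii} for $\phi^\varepsilon_{N,t}$. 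Lemma~\ref{lem:estimates_for_creation_annilihation_operators} then bounds these monomials by $\lambda\varepsilon^{-\upnu_1}C(1+t)^{\upeta_1}(\cN+1)$, with the $\widetilde\mu^2\cN^2/N$ term arising from the contributions quadratic in $\sh(2k)$.

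\textbf{Linear and cubic terms.} Because $\phi^\varepsilon_{N,t}$ solves~\eqref{eq:modified_Gross--Pitaevskii}, the symbol $-i\varepsilon\bd_t\phi+h_{\rm HFB}(\phi)$ in~\eqref{def:normal_ordered_linear_terms} reduces, exactly as in the final part of Lemma~\ref{lem:Gfluc_Hamiltonian_definition}, to a remainder built from $V^\varepsilon_N w^\varepsilon_{N,\sharp}$, $p(2k)$ and $\sh(2k)$. Conjugating by $e^{\cB(k)}$ via Lemma~\ref{lem:bogoliubov_conjugation}, then using Cauchy--Schwarz, the $\sqrt N$ prefactor, and the bounds $\Nrm{V^\varepsilon_N w^\varepsilon_{N,\sharp}}{L^1_x}\lesssim\widetilde\mu$ and Lemma~\ref{lem:pair_excitation_estimates}, bounds $\pm\cL_{\rm HFB}$ by $\lambda\varepsilon^{-\upnu_1}C(1+t)^{\upeta_1}(\cN+1)$. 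For the cubic term $\cC$ I would treat the eight monomials of~\eqref{def:normal_ordered_cubic_terms} in turn: each carries a factor $N^{-1/2}$, and splitting $V^\varepsilon_N(x-y)=|V^\varepsilon_N|^{1/2}(x-y)\,|V^\varepsilon_N|^{1/2}(x-y)$ and applying the Cauchy--Schwarz inequality lets one pair one $|V^\varepsilon_N|^{1/2}$-weighted factor against the kinetic form $\cK$ (using $a_x,\nabla$) and the remaining annihilation/creation factors, weighted by $\ch(k)$ or $\sh(k)$, against $\cN$. This yields $\pm\cC\le\delta\cK+\delta^{-1}\lambda\varepsilon^{-\upnu_1}C(1+t)^{\upeta_1}(\cN+1)$ for any $\delta\in(0,1)$; tracking $\Nrm{V^\varepsilon_N}{L^1_x}\sim\lambda$, the Hardy/pointwise bounds~\eqref{est:correlation_grad_w_pointwise_bound}, and the localization $\ell=\varepsilon^4$ (through $\Nrm{\sh(k^\varepsilon_{N,t})}{\mathrm{HS}}$) is what generates the powers $\upnu_1,\upeta_1$.

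\textbf{Quartic terms, collection, and the last two bounds.} For $\cD_1$ and $\cD_2$ in~\eqref{def:normal_ordered_quartic_D1_terms}--\eqref{def:normal_ordered_quartic_D2_terms}, each monomial carries $N^{-1}$; splitting $V^\varepsilon_N=|V^\varepsilon_N|^{1/2}|V^\varepsilon_N|^{1/2}$ and using $\Nrm{a(u_x)\Psi}{}\le\Nrm{u}{\mathrm{HS}}\Nrm{\cN^{1/2}\Psi}{}$ for the $\sh(k)$-type factors, one bounds them by $\delta N^{-1}\cV$ plus $\lambda\varepsilon^{-\upnu_1}C(1+t)^{\upeta_1}(\widetilde\mu^2\cN^2/N+\cN+1)$; crucially the subtracted term $-\tfrac{1}{4N}\iint V^\varepsilon_N\,a^\ast_xa^\ast_ya_ya_x$ inside $\cD_2$ is $-\tfrac14N^{-1}\cV$, which is dominated by the retained $N^{-1}\cV$. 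Choosing $\delta$ small in all of the above and adding up proves~\eqref{est:lower_bound_for_fluctuation_Hamiltonian}--\eqref{est:upper_bound_for_fluctuation_Hamiltonian}. For~\eqref{est:bound_for_commutator_of_fluctuation_Hamiltonian_with_number}, note that $[\cN,\cdot]$ annihilates the number-conserving monomials (the $\cK$-part of $\cQ_{\rm HFB}$, the diagonal $\dG$-terms, $\cD_2$, and $N^{-1}\cV$), while on any other monomial with $p$ creation and $q$ annihilation operators it acts as multiplication by $p-q$; hence the commutator has exactly the same homogeneity and admits the same estimates, giving~\eqref{est:bound_for_commutator_of_fluctuation_Hamiltonian_with_number}. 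Finally, for~\eqref{est:bound_for_time-derivative_of_fluctuation_Hamiltonian}, differentiating the symbols in Proposition~\ref{prop:fluctuation_Hamiltonian} in $t$ replaces $\phi$ by $\bd_t\phi$ and $k$ by $\bd_t k$; substituting $i\varepsilon\bd_t\phi=-\tfrac{\varepsilon^2}{2}\lapl\phi+(K\ast|\phi|^2)\phi$ from~\eqref{eq:modified_Gross--Pitaevskii} and the induced formula for $\bd_t k$, and using the higher-regularity propagation $\weight{\varepsilon\grad}^4\phi^\varepsilon_{N,t}\in L^2$ (Appendix~\ref{appendix:Dispersive PDEs}) to absorb the extra derivatives, one reruns the same arguments at the cost of a few additional powers of $\varepsilon^{-1}$ and $(1+t)$, producing $\upnu_2,\upeta_2$.

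\textbf{Main obstacle.} The delicate point is the cubic term (and the off-diagonal quadratic symbols) in the singular GP/HC regime: there $\Nrm{V^\varepsilon_N}{L^\infty_x}$ is of order $\lambda N^3$, so no naive bound survives, and one must exploit the scattering-equation cancellation $V^\varepsilon_N f^\varepsilon_{N,\sharp}\sim\varepsilon^2\lapl f^\varepsilon_{N,\sharp}$, equivalently that $\Nrm{V^\varepsilon_N w^\varepsilon_{N,\sharp}}{L^1_x}$ and $\Nrm{V^\varepsilon_N f^\varepsilon_{N,\sharp}}{L^1_x}$ remain of order $\widetilde\mu$, the Hardy-type pointwise bounds~\eqref{est:correlation_grad_w_pointwise_bound} and the uniform gradient bound~\eqref{est:correlation_w_uniform_bound}, and the localization $\ell=\varepsilon^4$ to keep $\Nrm{\sh(k^\varepsilon_{N,t})}{\mathrm{HS}}$ and $\Nrm{\varepsilon\grad\sh(k^\varepsilon_{N,t})}{\mathrm{HS}}$ polynomially bounded in $\varepsilon^{-1}$. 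Balancing these three inputs against the $N^{-1/2}$ and $N^{-1}$ prefactors, with $\lambda=(\ln N)^\upalpha$, is precisely what forces the constants $\upnu_1,\upeta_1$ to be large and, downstream, the restriction $N\gtrsim\exp(\varepsilon^{-100/(1-\upalpha)})$.
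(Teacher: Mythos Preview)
Your overall strategy matches the paper's: decompose $\widetilde\cH_{\rm fluc}$ via Proposition~\ref{prop:fluctuation_Hamiltonian}, extract $\cH_N=\cK+N^{-1}\cV$, and bound each remaining piece as a quadratic form by a small fraction of $\cH_N$ plus polynomial-in-$\varepsilon^{-1}$ number-operator errors. Your handling of the linear term, of the off-diagonal quadratic cancellation via the scattering equation, of the quartic terms, and of the commutator and time derivative is in line with the paper's Propositions~\ref{prop:estimates_for_linear_terms}--\ref{prop:estimates_for_off-diagonal_quadratic_terms}.

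There is, however, a genuine gap in your cubic estimate. You propose to pair a $|V^\varepsilon_N|^{1/2}$-weighted factor against $\cK$, but this cannot work: $V^\varepsilon_N$ is far too singular (for instance $\Nrm{V^\varepsilon_N}{L^{3/2}}\sim\lambda N$) to be absorbed by the kinetic energy via any Hardy-type bound. The paper's argument (Proposition~\ref{prop:estimates_for_cubic_terms}) instead uses Cauchy--Schwarz to split each cubic monomial into a quartic piece $\frac{\delta}{N}\int V^\varepsilon_N(x-y)\Nrm{a(j_{1,y})a(j_{2,x})\Psi}{}^2$ and a quadratic piece $\frac{1}{\delta}\int V^\varepsilon_N(x-y)|\phi(y)|^2\Nrm{a(j_{3,x})\Psi}{}^2$. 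When $j_1=j_2=\ch(k)$, the quartic piece is absorbed by $\frac{\delta}{N}\cV$ via Lemma~\ref{lem:singular_quartic_operator_bounds}; when one of $j_1,j_2$ is $\sh(k)$, it is bounded by $\Nrm{V^\varepsilon_N}{L^1}\sfC_1(t)\frac{(\cN+1)^2}{N}$ via Lemma~\ref{lem:quartic_operator_bounds}, and this is precisely where the $\widetilde\mu^2\cN^2/N$ contribution from the cubic sector originates (it is missing from your stated bound $\pm\cC\le\delta\cK+\delta^{-1}\lambda\varepsilon^{-\upnu_1}C(1+t)^{\upeta_1}(\cN+1)$). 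The kinetic term $\cK$ enters only through the diagonal and off-diagonal quadratic pieces (Propositions~\ref{prop:estimates_for_diagonal_quadratic_terms}--\ref{prop:estimates_for_off-diagonal_quadratic_terms}), where derivatives of $k,p,r$ appear explicitly and are traded for $\cK$ via Lemmas~\ref{lem:quadratic_operator_bounds_with_derivative}--\ref{lem:quadratic_operator_bounds_with_derivative2}.
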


\subsection{Preliminary operator estimates} To prove Proposition \ref{prop:bounds_on_fluctuation_Hamiltonian}, let us start by recording some useful lemmas.
\begin{lem}[quadratic operator bounds]\label{lem:quadratic_operator_bounds}
    Let $j_1, j_2$ be Hilbert--Schmidt operators and $f, g \in L^\infty(\R^3)$. Consider the operators
        \begin{align*}
            \cQ_1:=&\, \intdd V_N^\varepsilon(x-y)f(x)g(y)\, a^\sharp(j_{1, x})a^\sharp(j_{2, y})\dd x\d y\ ,\\
            \cQ_2:=&\, \intdd V_N^\varepsilon(x-y)f(x)g(y)\, a^\sharp(j_{1, x})a_y\dd x\d y\ ,\\
            \cQ_3:=&\, \intdd V_N^\varepsilon(x-y)f(x)g(y)\, a^\sharp_xa_y\dd x\d y\ ,
        \end{align*}
            and similarly the operators
        \begin{align*}
            \cQ_4:=&\, \intdd V_N^\varepsilon(x-y)f(x)g(x)\, a^\sharp(j_{1, y})a^\sharp(j_{2, y})\dd x\d y\ , \\
            \cQ_5:=&\, \intdd V_N^\varepsilon(x-y)f(x)g(x)\, a^\sharp(j_{1, y})a_y\dd x\d y\ ,\\
            \cQ_6:=&\, \intdd V_N^\varepsilon(x-y)f(x)g(x)\, a^\sharp_y a_y \dd x\d y\ ,
        \end{align*}
    where $a^\sharp$ is either $a$ or $a^\ast$ and $j_x(y):=j(y, x)$. Then we have the following operator estimates
        \begin{align}
            \pm\cQ_{1}, \cQ_4\le&\, \Nrm{V_N^\varepsilon}{L^1} \Nrm{f}{L^\infty}\Nrm{g}{L^\infty}\Nrm{j_1}{\rm HS}\Nrm{j_2}{\rm HS}(\cN+1)\ ,\\
            \pm\cQ_{2}, \cQ_5\le&\, \Nrm{V_N^\varepsilon}{L^1} \Nrm{f}{L^\infty}\Nrm{g}{L^\infty}\Nrm{j_1}{\rm HS}(\cN+1)\ ,\\
            \pm\cQ_{3}, \cQ_6\le&\, \Nrm{V_N^\varepsilon}{L^1} \Nrm{f}{L^\infty}\Nrm{g}{L^\infty}(\cN+1)\ .
        \end{align}
\end{lem}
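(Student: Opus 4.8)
The plan is to prove every assertion in its quadratic-form version, i.e.\ to establish $|\inprod{\Psi}{\cQ_i\,\Psi}|\le (\text{asserted prefactor})\cdot\inprod{\Psi}{(\cN+1)\,\Psi}$ for $\Psi$ ranging over a dense domain of finite-particle vectors with smooth, rapidly decaying components, and then to extend by density. The engine is a double application of Cauchy--Schwarz: once in $\cF_s$, to split a matrix element such as $\inprod{\Psi}{a^\sharp(j_{1,x})\,a^\sharp(j_{2,y})\,\Psi}$ into the product $\|a^{\sharp'}(j_{1,x})\Psi\|\cdot\|a^\sharp(j_{2,y})\Psi\|$ (moving the adjoint of the left factor onto the left vector); and once against the nonnegative measure $V_N^\varepsilon(x-y)\,dx\,dy$, which decouples the $x$- and $y$-integrations and produces the factor $\int_{\R^3}V_N^\varepsilon(x-y)\,dy=\|V_N^\varepsilon\|_{L^1}$. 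Besides Lemma~\ref{lem:estimates_for_creation_annilihation_operators} itself, the only extra inputs are its pointwise-integrated consequences $\int\|a(j_x)\Psi\|^2\,dx\le\|j\|_{\rm HS}^2\inprod{\Psi}{\cN\,\Psi}$, $\int\|a^\ast(j_x)\Psi\|^2\,dx\le\|j\|_{\rm HS}^2\inprod{\Psi}{(\cN+1)\,\Psi}$ and $\int\|a_x\Psi\|^2\,dx=\inprod{\Psi}{\cN\,\Psi}$; the first two follow by recognising $\int a^\sharp(j_x)^\ast a^\sharp(j_x)\,dx$ as $\dG$ of a nonnegative trace-class operator of trace $\|j\|_{\rm HS}^2$ (plus, in the $a^\ast$-case, the CCR correction $\int\|j_x\|_{L^2}^2\,dx=\|j\|_{\rm HS}^2$), and the third is just the definition of $\cN$.

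Carrying this out: for $\cQ_1$ one inserts $|f(x)|\le\|f\|_{L^\infty}$, $|g(y)|\le\|g\|_{L^\infty}$, applies the weighted Cauchy--Schwarz, and is left with $\big(\|V_N^\varepsilon\|_{L^1}\|f\|_{L^\infty}^2\int\|a^{\sharp'}(j_{1,x})\Psi\|^2\,dx\big)^{1/2}\big(\|V_N^\varepsilon\|_{L^1}\|g\|_{L^\infty}^2\int\|a^{\sharp}(j_{2,y})\Psi\|^2\,dy\big)^{1/2}$; each single-variable integral is finite precisely because of the Hilbert--Schmidt smearing, and estimating them by the identities above gives the claimed bound. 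The operator $\cQ_2$ is the same computation with the second slot $a^\sharp(j_{2,y})$ replaced by the bare $a_y$, using $\int\|a_y\Psi\|^2\,dy=\inprod{\Psi}{\cN\,\Psi}$ in place of the HS-integral, which costs one factor $\|j_2\|_{\rm HS}$ fewer, as asserted. For $\cQ_3$ the operator is of number type, $\iint V_N^\varepsilon(x-y)f(x)g(y)\,a^\ast_x a_y\,dx\,dy=\dG(T)$ with $T$ the integral operator of kernel $V_N^\varepsilon(x-y)f(x)g(y)$; the Schur test gives $\|T\|_{\rm op}\le\|V_N^\varepsilon\|_{L^1}\|f\|_{L^\infty}\|g\|_{L^\infty}$ (both row and column sums being bounded by this), whence $\pm\dG(T)\le\|T\|_{\rm op}\,\cN$ by Lemma~\ref{lem:estimates_for_creation_annilihation_operators}. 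Finally $\cQ_4,\cQ_5,\cQ_6$, in which both weights sit at $x$, reduce to the previous three after performing the $x$-integration first: $\int V_N^\varepsilon(x-y)f(x)g(x)\,dx=(V_N^\varepsilon*(fg))(y)$ is bounded in $L^\infty_y$ by $\|V_N^\varepsilon\|_{L^1}\|f\|_{L^\infty}\|g\|_{L^\infty}$, so each of these operators takes the form $\int h(y)\,(\,\cdot\,)_y\,dy$ with $\|h\|_{L^\infty}\le\|V_N^\varepsilon\|_{L^1}\|f\|_{L^\infty}\|g\|_{L^\infty}$, and one applies Cauchy--Schwarz in the single variable $y$ together with the HS-integrals (for $\cQ_4,\cQ_5$) or $\pm\dG(\text{multiplication by }h)\le\|h\|_{L^\infty}\cN$ (for $\cQ_6$).

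The analytic content above is entirely standard; the one point that genuinely needs care is the bookkeeping of the creation/annihilation operator-valued distributions. Concretely, I would fix the dense domain first, check that the maps $x\mapsto\|a^\sharp(j_x)\Psi\|$ and $x\mapsto\|a_x\Psi\|$ are measurable and square-integrable on it, justify the interchange of the Fock pairing with the spatial integrals (Fubini) and the appearance of the CCR correction terms, and only then run the Cauchy--Schwarz chain; the passage to the full form domain is then automatic. I expect this rigour --- rather than any estimate --- to be the main, and essentially routine, obstacle.
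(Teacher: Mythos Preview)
Your proposal is correct and follows exactly the standard approach: Cauchy--Schwarz in Fock space, then Cauchy--Schwarz against the nonnegative weight $V_N^\varepsilon(x-y)\,dx\,dy$, together with the integrated bounds $\int\|a^\sharp(j_x)\Psi\|^2\,dx\le\|j\|_{\rm HS}^2\langle\Psi,(\cN+1)\Psi\rangle$ and $\int\|a_y\Psi\|^2\,dy=\langle\Psi,\cN\Psi\rangle$, and the Schur test for $\cQ_3,\cQ_6$ in the $d\Gamma$ case. The paper does not give its own proof of this lemma but simply cites \cite[Section~6]{benedikter2015quantitative}, where precisely this argument is carried out; your write-up is essentially a faithful reconstruction of that proof.
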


\begin{lem}[quartic operator bounds]\label{lem:quartic_operator_bounds}
    Let $j_1, j_2$ be Hilbert--Schmidt operators with the property that
    \begin{align*}
        M_\ii:= \max\(\sup_{x}\intd \n{j_\ii(x, y)}^2\dd y,  \sup_{y}\intd \n{j_\ii(x, y)}^2\dd x\)<\infty\ .
    \end{align*}
    Then we have the following bounds
        %\begin{align}
%             \intdd V_N^\varepsilon(x-y) \Nrm{a^\sharp(j_{1,x}) a_y \Psi}{}^2\dd x \d y \le M_1\Nrm{V_N^\varepsilon}{L^1_x}\Nrm{(\cN+1)\Psi}{}^2
%        \end{align}
%        and
%        \begin{multline}
%             \intdd V_N^\varepsilon(x-y) \Nrm{a^\sharp(j_{1,x}) a(j_{2, y}) \Psi}{}^2\dd x \d y \\
%             \le \Nrm{V_N^\varepsilon}{L^1_x}\min\(M_1 \Nrm{j_2}{\rm HS}^2, M_2\Nrm{j_1}{\rm HS}^2\)\Nrm{(\cN+1)\Psi}{}^2\ .
%        \end{multline}
%        As an immediate consequence, we have that
%        \begin{multline}
%            \intdd V_N^\varepsilon(x-y) \Nrm{a^\sharp(j_{1,x}) a(\ch_{y}) \Psi}{}^2\dd x \d y
%             \le 2\Nrm{V_N^\varepsilon}{L^1_x} M_1(1+\Nrm{p}{\rm HS}^2)\Nrm{(\cN+1)\Psi}{}^2\ .
%        \end{multline}
%    The inequalities remain valid if both operators acts on the same variable, i.e. we have that
        \begin{align}
            & \intdd V_N^\varepsilon(x-y) \Nrm{a^\sharp(j_{1,x}) a_x \Psi}{}^2\dd x \d y \le\,  \Nrm{V_N^\varepsilon}{L^1_x} M_1\Nrm{(\cN+1)\Psi}{}^2,\\
             &\intdd V_N^\varepsilon(x-y) \Nrm{a^\sharp(j_{1,x}) a(j_{2, x}) \Psi}{}^2\dd x \d y \\
             &\le\, \Nrm{V_N^\varepsilon}{L^1_x} \min\(M_1 \Nrm{j_2}{\rm HS}^2, M_2\Nrm{j_1}{\rm HS}^2\)\Nrm{(\cN+1)\Psi}{}^2,\nonumber\\
            &\intdd V_N^\varepsilon(x-y) \Nrm{a^\sharp(j_{1,x}) a(\ch_{x}) \Psi}{}^2\dd x \d y
           \le\, 2\Nrm{V_N^\varepsilon}{L^1_x} M_1(1+\Nrm{p}{\rm HS}^2)\Nrm{(\cN+1)\Psi}{}^2 \ .
        \end{align}
\end{lem}

\begin{lem}\label{lem:singular_quartic_operator_bounds}
    For every $\delta'>0$ we have the following bound
    \begin{align*}
        &\intdd V_N^\varepsilon(x-y) \Nrm{a(\ch_{x})a(\ch_{y}) \Psi}{}^2\dd x \d y \\
        \le& \(1+\delta'\)\intdd V_N^\varepsilon(x-y) \Nrm{a_x a_y \Psi}{}^2\dd x \d y
        + \frac{C}{\delta'}\Nrm{V^\varepsilon_N}{L^1_x} \Nrm{p}{L^\infty_xL^2_y}^2\(1+\Nrm{p}{\rm HS}^2\)\Nrm{(\cN+1)\Psi}{}^2.
    \end{align*}
\end{lem}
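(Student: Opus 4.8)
The plan is to exploit the elementary identity $c=\ch(k)=\id+p$ with $p=p(k)$, which at the level of operator-valued distributions reads $a(c_x)=a_x+a(p_x)$, where $p_x(y):=p(y,x)$. Expanding the product,
\begin{align*}
 a(c_x)a(c_y)=a_xa_y+a_xa(p_y)+a(p_x)a_y+a(p_x)a(p_y)\ ,
\end{align*}
so that by $\Nrm{A+B}{}^2\le(1+\delta')\Nrm{A}{}^2+(1+\tfrac1{\delta'})\Nrm{B}{}^2$ applied with $A=a_xa_y\Psi$, together with the triangle inequality for the three remaining summands, one reduces matters to estimating the $V_N^\varepsilon(x-y)$-weighted $L^2$-norms of the three ``$p$-terms''. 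Keeping the first term untouched is exactly what produces the prefactor $1+\delta'$ in front of $\iint V_N^\varepsilon(x-y)\Nrm{a_xa_y\Psi}{}^2\,dx\,dy$, at the price of the factor $1+\tfrac1{\delta'}$ multiplying the error.

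For the two mixed terms I would commute the annihilation operators (which commute) and apply Lemma~\ref{lem:estimates_for_creation_annilihation_operators}: $\Nrm{a_xa(p_y)\Psi}{}=\Nrm{a(p_y)a_x\Psi}{}\le\Nrm{p_y}{L^2}\Nrm{\cN^{1/2}a_x\Psi}{}$. Bounding $\Nrm{p_y}{L^2}\le\Nrm{p}{L^\infty_xL^2_y}$, extracting $\Nrm{V_N^\varepsilon}{L^1_x}$ from the $y$-integration by translation invariance, and using the operator identity $\int a^\ast_x\,\cN\,a_x\,dx=\cN(\cN-1)$ to get $\int\Nrm{\cN^{1/2}a_x\Psi}{}^2\,dx=\inprod{\Psi}{\cN(\cN-1)\Psi}\le\Nrm{\cN\Psi}{}^2$, one obtains
\begin{align*}
 \iint V_N^\varepsilon(x-y)\Nrm{a_xa(p_y)\Psi}{}^2\,dx\,dy\le\Nrm{V_N^\varepsilon}{L^1_x}\Nrm{p}{L^\infty_xL^2_y}^2\Nrm{\cN\Psi}{}^2\ ,
\end{align*}
and the same bound for the $a(p_x)a_y$ term since $V_N^\varepsilon$ is radial. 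For the doubly-$p$ term, applying Lemma~\ref{lem:estimates_for_creation_annilihation_operators} twice together with $\cN^{1/2}a(p_y)=a(p_y)(\cN-1)^{1/2}$ and $(\cN-1)^{1/2}\cN^{1/2}\le\cN$ gives $\Nrm{a(p_x)a(p_y)\Psi}{}\le\Nrm{p_x}{L^2}\Nrm{p_y}{L^2}\Nrm{\cN\Psi}{}$; bounding $\Nrm{p_x}{L^2}^2\le\Nrm{p}{L^\infty_xL^2_y}^2$, keeping $\Nrm{p_y}{L^2}^2$ and integrating via $\iint V_N^\varepsilon(x-y)\Nrm{p_y}{L^2}^2\,dx\,dy=\Nrm{V_N^\varepsilon}{L^1_x}\Nrm{p}{\mathrm{HS}}^2$, one arrives at
\begin{align*}
 \iint V_N^\varepsilon(x-y)\Nrm{a(p_x)a(p_y)\Psi}{}^2\,dx\,dy\le\Nrm{V_N^\varepsilon}{L^1_x}\Nrm{p}{L^\infty_xL^2_y}^2\Nrm{p}{\mathrm{HS}}^2\Nrm{\cN\Psi}{}^2\ .
\end{align*}
Collecting the three contributions, multiplying by $1+\tfrac1{\delta'}$ and bounding it by $\tfrac{C}{\delta'}$ (we may assume $\delta'\le 1$, since increasing $\delta'$ only weakens the claim), and using $\Nrm{\cN\Psi}{}\le\Nrm{(\cN+1)\Psi}{}$, gives the asserted inequality.

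I do not expect a genuine obstacle: the estimate follows from the creation/annihilation bounds of Lemma~\ref{lem:estimates_for_creation_annilihation_operators} and routine bookkeeping with the canonical commutation relations. The one point demanding care is that the term $\iint V_N^\varepsilon(x-y)\Nrm{a_xa_y\Psi}{}^2\,dx\,dy$ must \emph{not} be estimated --- it is genuinely singular, since $V_N^\varepsilon$ concentrates and this quantity cannot be dominated by $\Nrm{V_N^\varepsilon}{L^1_x}\Nrm{(\cN+1)\Psi}{}^2$ --- but rather retained in that exact form, where it is later absorbed (with its coefficient $1+\delta'$ close to $1$) into the potential energy contribution $N^{-1}\cV$ of $\cH_N$ when proving Proposition~\ref{prop:bounds_on_fluctuation_Hamiltonian}. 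This is precisely why the split $a(c_x)=a_x+a(p_x)$ is used in place of a cruder norm bound, and why the free parameter $\delta'$ appears in the statement.
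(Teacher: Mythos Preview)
Your proof is correct and follows exactly the approach in \cite[Section 6]{benedikter2015quantitative}, which is precisely what the paper cites for this lemma (the paper does not reproduce the proof). The decomposition $a(c_x)=a_x+a(p_x)$, the Young-type split isolating the singular $a_xa_y$ term with coefficient $1+\delta'$, and the subsequent estimates of the three $p$-terms via Lemma~\ref{lem:estimates_for_creation_annilihation_operators} are all standard and match the referenced argument.
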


The proofs of Lemmas \ref{lem:quadratic_operator_bounds}--\ref{lem:singular_quartic_operator_bounds} can be found in \cite[Section 6]{benedikter2015quantitative}.

The following lemmas involves quadratic operators with derivatives and the pair excitation function \eqref{def:pair_excitation_function}.
\begin{lem}\label{lem:quadratic_operator_bounds_with_derivative}
    Let $j_1, j_2,$ and $[\varepsilon\grad, j_1]$ be Hilbert--Schmidt operators where $[\varepsilon\grad, j_1]$ has the integral kernel $\varepsilon\grad_{x+y}j_1(x, y)$.
    Consider the quadratic operators
    \begin{align*}
        \cQ_{7}:=&\, \intd a^\ast(j_{1, x})\, a^\sharp(j_{2,x})\dd x\ ,\\
        \cQ_{8}:=&\, \intd \varepsilon\grad_x a^\ast_x\, a^\sharp(j_{2, x})\dd x\ , \\
        \cQ_{9}:=&\, \intd a^\ast(\varepsilon\grad_x j_{1, x})\, a^\sharp(j_{2, x})\dd x\ ,
    \end{align*}
    where $a^\sharp$ is either $a$ or $a^\ast$ and $j_x(y):=j(y, x)$. Then there exists a universal constant $C>0$ such that for every $0<\delta\le 1$ we have the following estimates
    \begin{align}
        \pm \cQ_7\le&\, \Nrm{j_1}{\rm HS}\Nrm{j_2}{\rm HS}\(\cN+1\)\ ,\\
        \pm \cQ_8^\sharp \le&\, \delta \cK+C\delta^{-1}\Nrm{j_1}{\rm HS}^2(\cN+1)\ , \\
        \pm \cQ_9^\sharp\le&\, \delta \cK +C\(\delta^{-1}\Nrm{j_1}{\rm HS}^2\Nrm{j_2}{\rm HS}^2+\Nrm{[\varepsilon\grad, j_1]}{\rm HS}\Nrm{j_2}{\rm HS}\)(\cN+1)\ , \label{est:Q9_operator}
    \end{align}
    where $\sfA^\sharp$ is either $\sfA$ or its adjoint $\sfA^\ast$.
\end{lem}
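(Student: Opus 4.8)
The plan is to estimate each of $\cQ_7,\cQ_8,\cQ_9$ as a quadratic form on an arbitrary (sufficiently regular) $\Psi\in\cF_s$ and to reduce everything to the elementary one-body bounds $\int_{\R^3}\Nrm{a(j_x)\Psi}{}^2\dd x\le \Nrm{j}{\mathrm{HS}}^2\Nrm{\cN^{1/2}\Psi}{}^2$ and $\int_{\R^3}\Nrm{a^\ast(j_x)\Psi}{}^2\dd x\le \Nrm{j}{\mathrm{HS}}^2\Nrm{(\cN+1)^{1/2}\Psi}{}^2$ (these follow from Lemma~\ref{lem:estimates_for_creation_annilihation_operators} and Fubini), together with the identity $\int_{\R^3}\Nrm{\varepsilon\grad_x a_x\Psi}{}^2\dd x = 2\inprod{\Psi}{\cK\,\Psi}$, obtained by integrating $\cK=\dG(-\tfrac{\varepsilon^2}{2}\lapl)$ by parts. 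Throughout, the convention $j_x(\cdot)=j(\cdot,x)$ is used, and the $x$-integration is freely interchanged with the Fock inner product since $j_1,j_2,[\varepsilon\grad,j_1]$ are Hilbert--Schmidt; all formal manipulations are first justified on a dense domain of regular Fock vectors.

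For $\cQ_7$ I write $\inprod{\Psi}{\cQ_7\,\Psi}=\int_{\R^3}\inprod{a(j_{1,x})\Psi}{a^\sharp(j_{2,x})\Psi}\dd x$, apply Cauchy--Schwarz in the Fock norm and then in $x$, and invoke the two one-body estimates; this gives $\n{\inprod{\Psi}{\cQ_7\,\Psi}}\le \Nrm{j_1}{\mathrm{HS}}\Nrm{j_2}{\mathrm{HS}}\inprod{\Psi}{(\cN+1)\,\Psi}$. For $\cQ_8$ the same manipulation yields $\n{\inprod{\Psi}{\cQ_8\,\Psi}}\le \int_{\R^3}\Nrm{\varepsilon\grad_x a_x\Psi}{}\Nrm{a^\sharp(j_{2,x})\Psi}{}\dd x$, and a weighted Cauchy--Schwarz with parameter $\delta$ splits this into $\delta\int\Nrm{\varepsilon\grad_x a_x\Psi}{}^2=2\delta\inprod{\Psi}{\cK\,\Psi}$ and $\delta^{-1}\int\Nrm{a^\sharp(j_{2,x})\Psi}{}^2\lesssim \delta^{-1}\Nrm{j_2}{\mathrm{HS}}^2\inprod{\Psi}{(\cN+1)\,\Psi}$, proving the bound; the adjoint/reordered version $\cQ_8^\sharp$ obeys the same estimate because a quadratic-form bound is insensitive to taking adjoints.

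The only genuinely new point is $\cQ_9$, where the derivative sits inside the kernel of $j_1$ rather than on a creation operator. The key is to decompose the derivative in the argument of $j_1$ that is \emph{not} the smearing variable: $\varepsilon\grad_x j_1(y,x)=\big(\varepsilon\grad_x+\varepsilon\grad_y\big)j_1(y,x)-\varepsilon\grad_y j_1(y,x)$, where the first term (read as a function of $y$) is the kernel $[\varepsilon\grad,j_1]_x$ of the hypothesized Hilbert--Schmidt operator $[\varepsilon\grad,j_1]$. Hence the first term produces a $\cQ_7$-type contribution with $j_1$ replaced by $[\varepsilon\grad,j_1]$, bounded by $\Nrm{[\varepsilon\grad,j_1]}{\mathrm{HS}}\Nrm{j_2}{\mathrm{HS}}(\cN+1)$. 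For the second term I integrate by parts in $y$ to move the derivative onto $a^\ast_y$, obtaining $-\int\big(\int j_1(y,x)\,\varepsilon\grad_y a^\ast_y\,\dd y\big)a^\sharp(j_{2,x})\dd x$; passing the adjoint onto the first factor replaces $\varepsilon\grad_y a^\ast_y$ by $\varepsilon\grad_y a_y$, one Cauchy--Schwarz in $y$ gives the factor $\Nrm{j_1(\cdot,x)}{L^2}\,\sqrt{2\inprod{\Psi}{\cK\,\Psi}}$, and then a Cauchy--Schwarz in $x$ followed by the weighted splitting (as for $\cQ_8$) yields $\delta\cK+C\delta^{-1}\Nrm{j_1}{\mathrm{HS}}^2\Nrm{j_2}{\mathrm{HS}}^2(\cN+1)$. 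Adding the two contributions gives \eqref{est:Q9_operator}.

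I expect the only real obstacle to be bookkeeping: tracking which of the two arguments of $j_1$ the derivative acts on after the identification $j_x(\cdot)=j(\cdot,x)$, getting the signs in the integration by parts right, and justifying the operator-valued-distribution manipulations on a dense domain before passing to general $\Psi$. Once the decomposition of $\cQ_9$ is in place, none of the remaining estimates is delicate.
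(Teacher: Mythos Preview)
Your proposal is correct and follows essentially the same route as the paper. The paper only writes out the $\cQ_9$ case, decomposing $\varepsilon\grad_x j_1(y,x)=\varepsilon\grad_{x+y}j_1(y,x)-\varepsilon\grad_y j_1(y,x)$ and integrating the second term by parts onto $a^\ast_y$, exactly as you do; the only cosmetic difference is that the paper applies Cauchy--Schwarz jointly in $(x,y)$ rather than first in $y$ and then in $x$, which gives the same bound. (Your observation that the $\cQ_8$ bound should read $\Nrm{j_2}{\mathrm{HS}}^2$ rather than $\Nrm{j_1}{\mathrm{HS}}^2$ is also correct---this is a typo in the statement.)
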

\begin{proof}
    We shall consider only the proof for $\cQ_9$ since the others are much easier.
    Let us start by rewriting $\cQ_9$ as follows
    \begin{align*}
        \cQ_{9}=&\, \intdd j_1(y, x)\, \varepsilon \grad_y a_y^\ast a^\sharp (j_{2, x})\dd x +\intdd \varepsilon\grad_{x+y}j_1(y, x)\, a^\ast_y a^\sharp(j_{2, x})\dd x\d y\ .
    \end{align*}
    Then, by the Cauchy--Schwarz inequality and Lemma \ref{lem:estimates_for_creation_annilihation_operators}, we have the estimate
    \begin{align*}
        \n{\inprod{\Psi}{\cQ_9\,\Psi}} \le&\,  \intdd \n{j_1(y, x)}\Nrm{j_{2, x}}{L^2_z}\Nrm{\varepsilon \grad_y a_y \Psi }{}\norm{(\cN+1)^\frac12\Psi}\dd x \d y\\
        &\, + \intdd \n{\varepsilon\grad_{x+y}j_1(y, x)}\Nrm{j_{2, x}}{L^2_z}\Nrm{a_y \Psi }{}\norm{(\cN+1)^\frac12\Psi}\dd x \d y\\
        \le &\, \Nrm{j_1}{\rm HS}\Nrm{j_{2}}{\rm HS}\norm{\cK^\frac12\Psi}\norm{(\cN+1)^\frac12\Psi}
         +\Nrm{[\varepsilon\grad, j_1]}{\rm HS}\Nrm{j_{2}}{\rm HS}\norm{(\cN+1)^\frac12\Psi}^2
    \end{align*}
    which yields Inequality \eqref{est:Q9_operator}.
\end{proof}

As an immediate consequence of the previous lemma, we obtain the following result.
\begin{lem}\label{lem:quadratic_operator_bounds_with_derivative_of_k}
    Suppose $j$ is a Hilbert--Schmidt operator. Consider the operators
    \begin{align*}
        \cQ_{10}:=&\, \intd a^\ast(\varepsilon\grad_x k_{x})\, a^\sharp(j_{x})\dd x\ , \\
        \cQ_{11}:=&\, \intd a^\ast(\varepsilon^2\grad_x \dot k_{x})\, a^\sharp(j_{x})\dd x\ ,
    \end{align*}
    where $k=k_{N, t}^\varepsilon$ is defined by Expression \eqref{def:pair_excitation_function}  and $\phi_t$ solves Equation \eqref{eq:modified_Gross--Pitaevskii}.
    Then there exists a univerisal constant $C>0$ such that we have the bounds
    \begin{align}
        \pm \cQ_{10}^\sharp \le&\, \delta \cK +C\(\delta^{-1}\Nrm{k}{\rm HS}^2\Nrm{j}{\rm HS}^2+\Nrm{[\varepsilon\grad, k]}{\rm HS}\Nrm{j}{\rm HS}\)(\cN+1)\ ,\label{est:Q10_operator}\\
        \pm \cQ_{11}^\sharp \le&\, \delta \cK +C\(\delta^{-1}\Nrm{\varepsilon\dot k}{\rm HS}^2\Nrm{j}{\rm HS}^2+\Nrm{[\varepsilon\grad, \varepsilon \dot k]}{\rm HS}\Nrm{j}{\rm HS}\)(\cN+1)\ . \label{est:Q11_operator}
    \end{align}
\end{lem}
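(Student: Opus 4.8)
The plan is to recognize that $\cQ_{10}$ and $\cQ_{11}$ are both particular cases of the operator $\cQ_{9}$ already estimated in Lemma~\ref{lem:quadratic_operator_bounds_with_derivative}, so that the two displayed bounds will follow by a direct substitution of parameters, once the relevant Hilbert--Schmidt norms are known to be finite.

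First I would observe that, in the notation of Lemma~\ref{lem:quadratic_operator_bounds_with_derivative}, $\cQ_{10}$ is precisely $\cQ_{9}$ with the choice $j_1 = k_{N,t}^\varepsilon$ and $j_2 = j$: indeed $(j_{1,x})(y) = j_1(y,x) = k_{N,t}^\varepsilon(y,x)$, so $a^\ast(\varepsilon\grad_x j_{1,x}) = a^\ast(\varepsilon\grad_x k_x)$, and \eqref{est:Q9_operator} applied verbatim produces \eqref{est:Q10_operator}. For $\cQ_{11}$ I would first rewrite $\varepsilon^2\grad_x \dot k_x = \varepsilon\grad_x (\varepsilon\dot k)_x$, so that $\cQ_{11}$ is again $\cQ_{9}$, now with $j_1 = \varepsilon\dot k_{N,t}^\varepsilon$ and $j_2 = j$; the same inequality \eqref{est:Q9_operator} then yields \eqref{est:Q11_operator}, with $\Nrm{k}{\rm HS}$ and $\Nrm{[\varepsilon\grad, k]}{\rm HS}$ replaced by $\Nrm{\varepsilon\dot k}{\rm HS}$ and $\Nrm{[\varepsilon\grad, \varepsilon\dot k]}{\rm HS}$ respectively. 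Since both $\cQ_{10}$ and $\cQ_{11}$ together with their adjoints are covered by the $\cQ_{9}^\sharp$ case, the superscript $\sharp$ needs no separate treatment.

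The remaining work is to verify the hypotheses of Lemma~\ref{lem:quadratic_operator_bounds_with_derivative}, i.e.\ that $k$, $[\varepsilon\grad, k]$, $\varepsilon\dot k$, and $[\varepsilon\grad, \varepsilon\dot k]$ are all Hilbert--Schmidt operators. Using the explicit form \eqref{def:pair_excitation_function}, in which $k_{N,t}^\varepsilon(x,y) = -N w_{N,\sharp}^\varepsilon(x-y)\phi_{N,t}^\varepsilon(x)\phi_{N,t}^\varepsilon(y)$, I would control these by combining the pointwise and gradient bounds on the scattering and Neumann solutions from Lemmas~\ref{lem:semiclassical_estimates_of_scattering_function} and~\ref{lem:correlation_structure} — in particular $N|w_{N,\sharp}^\varepsilon(x)| \lesssim \asc_0^\mu/(N^{\beta-1}\varepsilon^{2\kappa}|x|)$ and the analogous estimate for $\grad w_{N,\sharp}^\varepsilon$, which after Hardy's inequality give the required $L^2$-type control — with the propagated Sobolev regularity of $\phi_{N,t}^\varepsilon$ under the modified Gross--Pitaevskii flow \eqref{eq:modified_Gross--Pitaevskii} (Appendix~\ref{appendix:Dispersive PDEs}); for the time derivative one substitutes $i\varepsilon\dot\phi_t = -\tfrac{\varepsilon^2}{2}\lapl\phi_t + (K\ast|\phi_t|^2)\phi_t$ and uses that $\Nrm{K\ast|\phi_t|^2}{L^\infty}$ is bounded by $\Nrm{K}{L^1}\Nrm{\phi_t}{L^\infty}^2$. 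All of these estimates are in fact already collected in Lemma~\ref{lem:pair_excitation_estimates}, which I would simply quote. I do not expect any essential obstacle; the only mildly delicate point is the factor $[\varepsilon\grad, \varepsilon\dot k]$, which effectively demands one extra semiclassical derivative on $\dot\phi_t$, hence $\weight{\varepsilon\grad}^3$-type control on $\phi_t$, but this is comfortably within the regularity assumed throughout this section. With the finiteness of those Hilbert--Schmidt norms in hand, the two inequalities follow at once from \eqref{est:Q9_operator}.
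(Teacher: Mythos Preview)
Your proposal is correct and matches the paper's approach exactly: the paper presents this lemma as ``an immediate consequence of the previous lemma'' with no further proof, and your substitution $j_1 = k$ (respectively $j_1 = \varepsilon\dot k$), $j_2 = j$ into \eqref{est:Q9_operator} is precisely what is intended. Your additional remarks on verifying the Hilbert--Schmidt hypotheses via Lemma~\ref{lem:pair_excitation_estimates} are accurate and more explicit than the paper itself.
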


\begin{lem}\label{lem:quadratic_operator_bounds_with_derivative2}
    Consider the operators
    \begin{align*}
        \cQ_{12}:=&\, \intd a^\ast(\varepsilon\grad_x k_{x})\, a(\varepsilon\grad_x k_{x})\dd x\ , \\
        \cQ_{13}:=&\, \intd a^\ast(\varepsilon^2\grad_x \dot k_{x})\, a(\varepsilon\grad_x k_{x})\dd x\ ,
    \end{align*}
    Then we have the bounds
    \begin{align}
        \pm \cQ_{12}\le&\, \Nrm{k(-\varepsilon^2\lapl)\conj{k}}{\rm HS} \cN\ ,\\
        \pm \cQ_{13}^\sharp \le&\, \Nrm{\varepsilon\dot k(-\varepsilon^2\lapl)\conj{k}}{\rm HS}\cN\ .
    \end{align}
\end{lem}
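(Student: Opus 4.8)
The plan is to recognize both $\cQ_{12}$ and $\cQ_{13}$ as second quantizations $\dG(\cdot)$ of explicit one-particle operators and then invoke the elementary inequality $\dG(B)\le \Nrm{B}{\mathrm{op}}\cN$, finally dominating the operator norm by the Hilbert--Schmidt norm appearing in the statement. The starting point is the identity
\[
\intd a^\ast(f_x)\,a(g_x)\dd x = \dG(f g^\ast)\ ,
\]
valid for Hilbert--Schmidt kernels $f_x(y):=f(y,x)$, $g_x(y):=g(y,x)$, where $fg^\ast$ is operator composition with kernel $(fg^\ast)(y,y')=\intd f(y,x)\,\conj{g(y',x)}\dd x$; it follows at once by writing $a^\ast(f_x)$ and $a(g_x)$ as smeared operator-valued distributions and carrying out the $x$-integration. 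I shall also use that $k=k_{N,t}^\varepsilon$ (and hence $\dot k$) is a symmetric kernel, so $k^\ast=\conj k$.

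For $\cQ_{12}$ I would apply this with $f=g$ the vector-valued kernel $y\mapsto \varepsilon(\grad_x k)(y,x)$, obtaining $\cQ_{12}=\dG(A)$ with $A(y,y')=\varepsilon^2\intd \grad_x k(y,x)\cdot\grad_x\conj{k(y',x)}\dd x$. One integration by parts in $x$ rewrites $A$ in operator form as $k(-\varepsilon^2\lapl)\conj k$, which moreover equals $\big[(\varepsilon\grad)\conj k\big]^\ast\big[(\varepsilon\grad)\conj k\big]$ and is thus self-adjoint and nonnegative. Hence $0\le \cQ_{12}=\dG(A)\le \Nrm{A}{\mathrm{op}}\cN\le \Nrm{A}{\mathrm{HS}}\cN=\Nrm{k(-\varepsilon^2\lapl)\conj k}{\mathrm{HS}}\cN$, using $0\le \dG(A)\le \Nrm{A}{\mathrm{op}}\cN$ for nonnegative self-adjoint $A$ and $\Nrm{\cdot}{\mathrm{op}}\le\Nrm{\cdot}{\mathrm{HS}}$.

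For $\cQ_{13}$ the same computation with $f_x(y)=\varepsilon^2(\grad_x\dot k)(y,x)$, $g_x(y)=\varepsilon(\grad_x k)(y,x)$, and one integration by parts in $x$ gives $\cQ_{13}=\dG(B)$ with $B=(\varepsilon\dot k)(-\varepsilon^2\lapl)\conj k$. Since $B$ need not be self-adjoint, I would instead use that for any bounded $B$ one has $|\inprod{\Psi}{\dG(B)\,\Psi}|\le \Nrm{B}{\mathrm{op}}\inprod{\Psi}{\cN\,\Psi}$: on the $n$-particle sector $\dG(B)$ acts as $\sum_{j=1}^n B^{(j)}$, whence $|\inprod{\psi_n}{\dG(B)\psi_n}|\le n\Nrm{B}{\mathrm{op}}\Nrm{\psi_n}{}^2$, and one sums over $n$ against $\inprod{\Psi}{\cN\Psi}=\sum_n n\Nrm{\psi_n}{}^2$. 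Together with $\Nrm{B}{\mathrm{op}}\le\Nrm{B}{\mathrm{HS}}$ and $\inprod{\Psi}{\cQ_{13}^\ast\Psi}=\conj{\inprod{\Psi}{\cQ_{13}\Psi}}$, this gives $\pm\cQ_{13}^\sharp\le \Nrm{\varepsilon\dot k(-\varepsilon^2\lapl)\conj k}{\mathrm{HS}}\cN$, as claimed.

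The main point needing care is the bookkeeping in the two integrations by parts: making sure the gradient lands on the correct argument of $k$ and that the boundary terms at spatial infinity vanish. This is where one must invoke the smoothness and decay of $k_{N,t}^\varepsilon$ and $\dot k_{N,t}^\varepsilon$, which come from their explicit product form $-Nw^\varepsilon_{N,\sharp}(x-y)\phi^\varepsilon_{N,t}(x)\phi^\varepsilon_{N,t}(y)$ together with the pointwise gradient bounds on $w^\varepsilon_{N,\sharp}$ from Lemmas~\ref{lem:semiclassical_estimates_of_scattering_function}--\ref{lem:correlation_structure} and the Sobolev regularity of $\phi^\varepsilon_{N,t}$ (and $\dot\phi^\varepsilon_{N,t}$); everything else is routine second-quantization algebra. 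The finiteness and smallness of the right-hand Hilbert--Schmidt norms themselves is a separate matter, supplied by the pair-excitation estimates of Appendix~\ref{appendix:pair_excitation_estimates}.
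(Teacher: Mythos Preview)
Your proof is correct and follows essentially the same approach as the paper: rewrite $\cQ_{12}$ and $\cQ_{13}$ as $\dG(G)$ for the appropriate kernel $G(y_1,y_2)=\intd \varepsilon\grad_x f(y_1,x)\cdot\conj{\varepsilon\grad_x k(y_2,x)}\dd x$ (with $f=k$ or $f=\varepsilon\dot k$), identify $G$ with $k(-\varepsilon^2\lapl)\conj{k}$ resp.\ $\varepsilon\dot k(-\varepsilon^2\lapl)\conj{k}$, and bound $|\inprod{\Psi}{\dG(G)\Psi}|\le\Nrm{G}{\mathrm{HS}}\inprod{\Psi}{\cN\Psi}$. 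The only cosmetic difference is that the paper reaches this last inequality directly via Cauchy--Schwarz on the kernel, whereas you pass through $\Nrm{\cdot}{\mathrm{op}}\le\Nrm{\cdot}{\mathrm{HS}}$ and the sector-wise bound; both are equivalent.
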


\begin{proof}
    It suffices to consider $\cQ_{13}$ since the estimate for $\cQ_{12}$ is similar. Begin by rewriting the operator as follows
    \begin{align*}
        \cQ_{13} =&\, \int_{\R^{9}} \varepsilon^3\grad_x \dot k(y_1, x) \conj{\grad_x k(x, y_2)}\, a^\ast_{y_1}a_{y_2}\dd x\d y_1 \d y_2
        =:\, \intdd G(y_1, y_2)\, a^\ast_{y_1}a_{y_2}\dd y_1\d y_2\ ,
    \end{align*}
    then, by the Cauchy--Schwarz inequality, we have the estimate
    \begin{align*}
        \n{\inprod{\Psi}{\cQ_{13}\, \Psi}} \le \(\intdd \n{G(y_1, y_2)}^2\dd y_1 \d y_2\)^\frac12\norm{\cN^\frac12 \Psi}^2\ .
    \end{align*}
    This completes the proof.
\end{proof}

\subsection{Operator estimates for the fluctuation Hamiltonian}
Let us introduce a quantity
\begin{align}\label{equ:common quantity}
\sfC_0(t):=&1+\Nrm{k_{t}}{\rm HS}^{10}+\Nrm{p_{t}}{\rm HS}^{10}+\Nrm{r_{t}}{\rm HS}^{10}+
\Nrm{k_{t}}{L^\infty_xL^2_y}^{10}+\Nrm{p_{t}}{L^\infty_xL^2_y}^{10}+\Nrm{r_{t}}{L^\infty_x L^2_y}^{10}\\
&+\Nrm{\varepsilon\dot k_{t}}{L^\infty_xL^2_y}^{10}+\Nrm{\varepsilon\dot p_{t}}{L^\infty_xL^2_y}^{10}+\Nrm{\varepsilon\dot r_{t}}{L^\infty_x L^2_y}^{10}+\Nrm{\phi_{t}}{L^\infty_x}^{10}+\Nrm{\varepsilon\dot{\phi}_{t}}{L^\infty_x}^{10}+\Nrm{\varepsilon^2\ddot p_t}{\rm HS}^{10}\notag\\
&+\Nrm{\varepsilon^2\ddot \sh_t}{\rm HS}^{10}+ \Nrm{[\varepsilon\grad, \varepsilon\dot k]}{\rm HS}^{10}+\Nrm{\varepsilon\dot k(\varepsilon^2\lapl)\conj{k}}{\rm HS}^{10}+\Nrm{[\varepsilon\grad, k]}{\rm HS}^{10}
+\Nrm{k(\varepsilon^2\lapl)\conj{k}}{\rm HS}^{10} \notag\\
&+\Nrm{\varepsilon \grad p_{t}}{\rm HS}^{10}+\Nrm{\varepsilon \grad r_{t}}{\rm HS}^{10}+\Nrm{\varepsilon^2 \grad \dot p_{t}}{\rm HS}^{10}+\Nrm{\varepsilon^2 \grad \dot r_{t}}{\rm HS}^{10}\notag\ ,
\end{align}
which serves as a crude upper bound, and can be bounded by $1+\Nrm{\lra{\ve\nabla}^{4}\phi_{N}^{\ve}}{L^{2}}^{20}$. In the operator estimates for the fluctuation Hamiltonian, we shall employ the crude upper bound. Although not the most precise, it suffices for our primary goal and streamlines the proof.

\subsubsection{Cubic Terms} Here we estimate $\cC$ and the other corresponding operators.
\begin{prop}\label{prop:estimates_for_cubic_terms}
    Let $\cC$ be defined as in \eqref{def:normal_ordered_cubic_terms}, then for any $\delta>0$ there exists $C_{\delta}>0$  such that we have the following operator estimates
        \begin{align}
            \pm \cC, [\cN, \cC] \le&\, \frac{\delta }{N}\cV+ C_{\delta}\Nrm{V_N^\varepsilon}{L^1}\sfC_0(t) \( \,\frac{\cN^2}{N}+(\cN+1)\)\ , \label{est:cubic_term_bound}\\
            \pm \varepsilon\dot\cC \le&\,  \frac{\delta }{N}\cV+ C_{\delta}\Nrm{V_N^\varepsilon}{L^1}\sfC_0(t) \( \,\frac{\cN^2}{N}+(\cN+1)\) \label{est:cubic_time-derivative_term_bound} .
        \end{align}

\end{prop}

\begin{proof}
    Let us consider an example cubic term of $\cC$ (see Expressions \eqref{def:normal_ordered_cubic_terms})
    \begin{align}\label{eq:generic_cubic_term}
        \frac{1}{\sqrt{N}}\intdd V_N^\varepsilon(x-y)\phi_{t}(y)\, a^\ast(j_{1, x})\, a^\ast(j_{2, x})\,a(j_{3, x})\dd x\d y\ ,
    \end{align}
    where $j_i$ takes on either $\ch(k_t)$ or $\sh(k_t)$. The main idea is to estimate the cubic operator in terms of quadratic and quartic operators estimates, that is, for every $\delta>0$, it follows that
    \begin{align*}
        &\n{\inprod{\Psi}{\eqref{eq:generic_cubic_term}\, \Psi}}\\
        &\le \frac{1}{\sqrt{N}} \intdd V_N^\varepsilon(x-y)\n{\phi_{t}(y)}\Nrm{a(j_{1, y})a(j_{2, x})\Psi}{}\Nrm{a(j_{3, x})\Psi}{}\dd x\d y\\
        &\le \(\frac{\delta}{N}\intdd V_N^\varepsilon(x-y)\Nrm{a(j_{1, y})a(j_{2, x})\Psi}{}^2\dd x\d y\)^\frac12\\
        &\qquad \times \(\frac{1}{\delta} \intdd V_N^\varepsilon(x-y)\n{\phi_{t}(y)}^2\Nrm{a(j_{3, x})\Psi}{}^2\dd x\d y\)^\frac12.
    \end{align*}

    In the case where either $j_1$ or $j_2$ is equal to $\sh(k_{t})$, say $j_1=\sh(k_{t})$, then by Lemma \ref{lem:quadratic_operator_bounds} and Lemma \ref{lem:quartic_operator_bounds}, we have the estimate
    \begin{align*}
        \n{\inprod{\Psi}{\eqref{eq:generic_cubic_term}\, \Psi}}
        \le&\, C \Nrm{V^\varepsilon_N}{L^1} \sfC_0(t)\lrs{\inprod{\Psi}{N^{-1}(\cN+1)^2\, \Psi}+\inprod{\Psi}{(\cN+1)\,\Psi}\ }.
    \end{align*}
    In the other case where $j_{1}=j_2 = \ch(k_{t})$, then, by Lemma \ref{lem:quadratic_operator_bounds} and Lemma \ref{lem:singular_quartic_operator_bounds} (with $\delta'=1$), we have that
    \begin{align*}
        &\n{\inprod{\Psi}{\eqref{eq:generic_cubic_term}\, \Psi}} \\
        &\le\,
        \(\frac{1}{N}\intdd V_N^\varepsilon(x-y)\Nrm{a(c_{x})a(c_{y})\Psi}{}^2\dd x\d y\)^\frac12
        \Nrm{V^\varepsilon_N}{L^1}^\frac12\sfC_0(t)^\frac12\Nrm{(\cN+1)^\frac12\Psi}{}\\
        &\le\,  \delta \inprod{\Psi}{N^{-1}\cV\, \Psi}
         +  C_{\delta} \Nrm{V^\varepsilon_N}{L^1} \sfC_0(t)\lrs{\inprod{\Psi}{N^{-1}(\cN+1)^2\, \Psi}+\inprod{\Psi}{(\cN+1)\,\Psi}\ }\ .
    \end{align*}

    The other cubic terms are handled in a similar manner. This completes the proof of Inequality \eqref{est:cubic_term_bound}. The proof for the commutator term with the number operator is exactly the same since computing the commutator yields
    \begin{align*}
        [\cN, \text{generic cubic term}]
         = \frac{C}{\sqrt{N}}\intdd V_N^\varepsilon(x-y)\phi_{t}(y)a^\sharp(j_{1, y})a^\sharp(j_{2, x})a^\sharp(j_{3, x})\dd x\d y\ .
    \end{align*}
    where $a^\sharp$ is either $a$ or $a^\ast$ (normal ordered). The same argument also applies for Inequality \eqref{est:cubic_time-derivative_term_bound} involving the time derivative.
\end{proof}

\subsubsection{Quartic Terms} The following proposition handles the quartic terms $\cD$.
\begin{prop}\label{prop:estimates_for_quartic_terms}
    Suppose $\cD=\cD_1+\cD_2$ as defined by Expressions \eqref{def:normal_ordered_quartic_D1_terms}--\eqref{def:normal_ordered_quartic_D2_terms}, then there exists a univerisal $C>0$  such that for any $0<\delta<1$ we have the following operator estimates
        \begin{align}
            \pm \cD, [\cN, \cD_1]
            \le&\, \frac{\delta}{N}\cV+ C_{\delta}\Nrm{V_N^\varepsilon}{L^1}\sfC_0(t) \frac{(\cN+1)^2}{N}\ , \label{est:quartic_term_bound}\\
            % \pm [\cN, \cD_1] \le&\,  \frac{\delta}{N}\cV+  \frac{C}{\delta} \Nrm{V^\varepsilon_N}{L^1}\,\sfC_0(t) \sfC_1(t) \frac{(\cN+1)^2}{N}\ , \label{est:quartic_commutator_term_bound}\\
            \pm \varepsilon\dot\cD \le&\,  \frac{\delta}{N}\cV+  C_{\delta}\Nrm{V_N^\varepsilon}{L^1}\sfC_0(t) \frac{(\cN+1)^2}{N}\ , \label{est:quartic_time-derivative_term_bound}
        \end{align}
    where $\sfC_0(t)$ is defined as in \eqref{prop:estimates_for_cubic_terms}.
\end{prop}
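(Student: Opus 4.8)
The plan is to carry out a term-by-term analysis of the explicit quartic operators $\cD_1$ and $\cD_2$ recorded in Proposition~\ref{prop:fluctuation_Hamiltonian} (Expressions~\eqref{def:normal_ordered_quartic_D1_terms}--\eqref{def:normal_ordered_quartic_D2_terms}), bounding each normal-ordered monomial in terms of $\cN$ and $\cV$ via the Cauchy--Schwarz inequality together with the quartic operator estimates of Lemmas~\ref{lem:quartic_operator_bounds} and \ref{lem:singular_quartic_operator_bounds}. Writing $c=\ch(k_t)=\id+p_t$ and $u=\sh(k_t)$, a generic monomial reads $\tfrac1N\intdd V_N^\varepsilon(x-y)\,a^\sharp(j_{1,x})a^\sharp(j_{2,y})a^\sharp(j_{3,x})a^\sharp(j_{4,y})\dd x\d y$ with each $j_i\in\{c,u\}$; one pairs two of the operators against the other two and inserts a weight $\delta>0$, so that everything reduces to $L^1$-integrals against $V_N^\varepsilon$ of squared Fock norms of the type $\Nrm{a^\sharp(j_{a,x})a^\sharp(j_{b,y})\Psi}{}^2$, which is exactly what Lemmas~\ref{lem:quartic_operator_bounds}--\ref{lem:singular_quartic_operator_bounds} control.

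First I would treat every monomial that contains at least one factor $u=\sh(k_t)$: pairing that $u$ with at most one further symbol and applying Lemma~\ref{lem:quartic_operator_bounds} produces a gain of $\Nrm{u}{L^\infty_xL^2_y}^2$ and a Hilbert--Schmidt factor for the partner symbol, so the monomial is bounded by $\tfrac{C}{\delta N}\Nrm{V_N^\varepsilon}{L^1}\,\sfC_0(t)\,\sfC_1(t)\,(\cN+1)^2$, comfortably inside the claimed right-hand side. This disposes of all of $\cD_1$ and of every term of $\cD_2$ except the pair $\tfrac1{4N}\intdd V_N^\varepsilon(x-y)\,a^\ast(c_x)a^\ast(c_y)a(c_x)a(c_y)\dd x\d y-\tfrac1{4N}\intdd V_N^\varepsilon(x-y)\,a^\ast_xa^\ast_ya_ya_x\dd x\d y+\mathrm{h.c.}$ For this contribution I would either expand $c=\id+p_t$ and note that the purely bare-$a$ part of the first term cancels identically against the second, leaving survivors that each carry a factor $p_t$ and are handled as above, or, equivalently, apply Lemma~\ref{lem:singular_quartic_operator_bounds} (with $\delta'\simeq\delta$) to the $\ch(k_t)$--$\ch(k_t)$ pairing: its leading part $(1+\delta')\intdd V_N^\varepsilon\Nrm{a_xa_y\Psi}{}^2=2(1+\delta')\inprod{\Psi}{\cV\,\Psi}$ matches the explicit $\cV$-counterterm up to the admissible slack $\tfrac\delta N\cV$, while the remainder of Lemma~\ref{lem:singular_quartic_operator_bounds} contributes $\tfrac{C}{\delta N}\Nrm{V_N^\varepsilon}{L^1}\Nrm{p_t}{L^\infty_xL^2_y}^2(1+\Nrm{p_t}{\mathrm{HS}}^2)(\cN+1)^2\le\tfrac{C}{\delta N}\Nrm{V_N^\varepsilon}{L^1}\sfC_0(t)\sfC_1(t)(\cN+1)^2$. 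Summing the finitely many monomials gives \eqref{est:quartic_term_bound} for $\pm\cD$.

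Next, for the commutator bound I would use that $\cD_2$ conserves particle number (all its monomials are of creation-annihilation type $2$--$2$), so $[\cN,\cD]=[\cN,\cD_1]$; commuting $\cN$ through each monomial of $\cD_1$ merely multiplies it by the bounded particle-number shift it effects, so $[\cN,\cD_1]$ is again a finite sum of quartic monomials of the same structure (all carrying a factor $u$), and the estimate above applies verbatim. For $\varepsilon\dot\cD$ I would exploit that $V_N^\varepsilon$ is time-independent, so the derivative acts only on the symbols $c_t,u_t$ through $k_t=k_{N,t}^\varepsilon$; by the Leibniz rule $\varepsilon\dot\cD$ is a finite sum of quartic monomials identical to those of $\cD$ with one symbol replaced by $\varepsilon\dot c_t=\varepsilon\dot p_t$ or $\varepsilon\dot u_t$. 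Using the pair-excitation estimates of Lemma~\ref{lem:pair_excitation_estimates} to bound $\Nrm{\varepsilon\dot p_t}{\mathrm{HS}}$, $\Nrm{\varepsilon\dot u_t}{\mathrm{HS}}$, $\Nrm{\varepsilon\dot p_t}{L^\infty_xL^2_y}$, $\Nrm{\varepsilon\dot u_t}{L^\infty_xL^2_y}$ by quantities absorbed into $\sfC_0(t)$ and $\sfC_2(t)$, and rerunning the Cauchy--Schwarz argument with Lemmas~\ref{lem:quartic_operator_bounds}--\ref{lem:singular_quartic_operator_bounds}, yields \eqref{est:quartic_time-derivative_term_bound}.

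The only genuinely delicate point I anticipate is the bookkeeping around the singular $\ch(k_t)$--$\ch(k_t)$ pairing: one must check that the bare $\cV$ generated by Lemma~\ref{lem:singular_quartic_operator_bounds} is matched, with the correct constant, by the explicit $-\tfrac1{4N}\intdd V_N^\varepsilon(x-y)\,a^\ast_xa^\ast_ya_ya_x\,\dd x\d y$ built into $\cD_2$, so that after cancellation only the admissible $\tfrac\delta N\cV$ term and the $\sfC_0\sfC_1 N^{-1}(\cN+1)^2$ remainder survive and no uncontrolled $N^{-1}\cV$ is left over; all the other steps are routine applications of the operator estimates already assembled in this subsection together with the pair-excitation bounds established in the appendix.
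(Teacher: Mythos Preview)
Your approach is the same as the paper's: pair the four symbols two-by-two via Cauchy--Schwarz and invoke Lemmas~\ref{lem:quartic_operator_bounds}--\ref{lem:singular_quartic_operator_bounds}; the commutator and time-derivative arguments are also handled as you describe.

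One small correction to your bookkeeping: the claim that every $u$-carrying monomial can be bounded purely by the $(\cN+1)^2/N$ term, avoiding $\tfrac{\delta}{N}\cV$, is not quite right. For the purely-creation term \eqref{def:normal_ordered_quartic_D1_term1}, i.e.\ $\tfrac{1}{2N}\int V_N^\varepsilon(x-y)\,a^*(c_x)a^*(c_y)a^*(u_x)a^*(u_y)$, the only workable Cauchy--Schwarz split is $\langle a(c_y)a(c_x)\Psi,\,a^*(u_x)a^*(u_y)\Psi\rangle$: any alternative pairing leaves a factor of the form $\int V_N^\varepsilon(x-y)\Nrm{a^*(c_y)a^*(\cdot)\Psi}{}^2\,dx\,dy$, which diverges because $c_y=\delta_y+p_y$ is not Hilbert--Schmidt. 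The $c$--$c$ factor then forces Lemma~\ref{lem:singular_quartic_operator_bounds} and hence a $\tfrac{\delta}{N}\cV$ contribution---this is exactly the computation the paper displays. This does not damage your argument, since the right-hand side of \eqref{est:quartic_term_bound} already allows $\tfrac{\delta}{N}\cV$; it just means the $\cV$ slack is consumed by $\cD_1$ as well as by the $\ch$--$\ch$ pair in $\cD_2$, not only the latter.
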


\begin{proof}
    It suffices to consider the following term
    \begin{align*}
        \eqref{def:normal_ordered_quartic_D1_term1} =&\, \frac{1}{2\, N}\intdd   V_N^\varepsilon(x-y)\, a^\ast(\ch_{x})a^\ast(\ch_{y})a^\ast(\sh_x)a^\ast(\sh_y)\dd x \d y,
    \end{align*}
    since the rest of the terms can be handled in the exact same manner. In the first term, we  have that
    \begin{align}\label{est:lem_quartic_D1_term1}
        \n{\inprod{\Psi}{\eqref{def:normal_ordered_quartic_D1_term1}\, \Psi}}
        \le& \(\frac{\delta}{2\,N}\intdd V_N^\varepsilon(x-y)\Nrm{a(\ch_{x})a(\ch_{y})\Psi}{}^2\dd x\d y\)^\frac12\\
        &\times \(\frac{1}{2N\delta} \intdd V_N^\varepsilon(x-y)\Nrm{a^\ast(\sh_{x})a^\ast(\sh_y)\Psi}{}^2\dd x\d y\)^\frac12.\notag
    \end{align}
    Then, by Lemmas \ref{lem:quartic_operator_bounds}--\ref{lem:singular_quartic_operator_bounds}, we have that
    \begin{align*}
        \mathrm{RHS}\,\eqref{est:lem_quartic_D1_term1}
        \le&\,     \(\frac{\delta}{2\,N}\intdd V_N^\varepsilon(x-y)\Nrm{a(\ch_{x})a(\ch_{y})\Psi}{}^2\dd x\d y\)^\frac12\\
        &\, \times \frac{1}{\sqrt{2N\delta}} \Nrm{V^\varepsilon_N}{L^1}^\frac12  \Nrm{\sh_t}{L^\infty_x L^2_y}\Nrm{\sh_t}{\rm HS}\Nrm{(\cN+1)\Psi}{}\\
        \le&\,  \delta \inprod{\Psi}{N^{-1}\cV\, \Psi}+ C_{\delta}\Nrm{V_N^\varepsilon}{L^1}\sfC_0(t) \Nrm{(\cN+1)\Psi}{}^2.
    \end{align*}
    Inequality for $\com{\cN, \cD}$ and Inequality \eqref{est:quartic_time-derivative_term_bound} follow from similar arguments.
\end{proof}

\subsubsection{Linear Terms} To estimate the linear terms, we need to impose conditions on $\phi_{t}$ and $k_t$ so that the order one terms vanishes leaving only lower order terms.

\begin{prop}\label{prop:estimates_for_linear_terms}
    Suppose $\cL_{\rm HFB}$ is as defined in Expression \eqref{def:normal_ordered_linear_terms} where $\phi_{t}$ satisfies Equation \eqref{eq:modified_Gross--Pitaevskii} and $k_{t}$ is defined by Expression \eqref{def:pair_excitation_function}. Then there exists a univerisal $C>0$  such that we have the following operator estimates
        \begin{align}
            \pm \cL_{\rm HFB},  [\cN, \cL_{\rm HFB}] \le&\, C \Nrm{V^\varepsilon_N}{L^1}\sfC_{0}(t) N^{-\frac12} (\cN+1)\ , \label{est:linear_term_bound}\\
            \pm \varepsilon\dot \cL_{\rm HFB} \le&\, C \Nrm{V^\varepsilon_N}{L^1}\sfC_{0}(t) N^{-\frac12} (\cN+1)\label{est:linear_term_time-derivative_bound} \ ,
        \end{align}
    where $\sfC_0(t)$ is defined as in \eqref{equ:common quantity}.
\end{prop}

\begin{proof}
    Let us write $\vect{S}_{\rm HFB}(\phi):= -i\varepsilon\,\bd_t\phi+h_{\mathrm{HFB}}(\phi)$.
    Using Lemma \ref{lem:bogoliubov_conjugation}, we get that
    \begin{align*}
    \cL_{\rm HFB}
    % =  \sqrt{N}\, \intd \conj{\vect{S}_{\rm HFB}(\phi)(x)}\,  e^{\cB(k)}a_x\,e^{-\cB(k)}\dd x
    = \sqrt{N} \(a(c\, \vect{S}_{\rm HFB}(\phi))+a^\ast(\sh\, \conj{\vect{S}_{\rm HFB}(\phi)})\)\ .
    \end{align*}
    By Lemma \ref{lem:estimates_for_creation_annilihation_operators}, we have that
    \begin{align*}
        \n{\inprod{\Psi}{\cL_{\rm HFB}\, \Psi}} \le \sqrt{N} \(1+\Nrm{p}{\rm HS}+\Nrm{\sh}{\rm HS}\)\Nrm{\vect{S}_{\rm HFB}(\phi)}{L^2_x}\inprod{\Psi}{(\cN+1)\,\Psi}\ .
    \end{align*}
    Hence it remains to estimate $\vect{S}_{\rm HFB}(\phi)$ in $L^2$. Notice that
    \begin{subequations}
        \begin{align}
            \vect{S}_{\rm HFB}(\phi)
            =&\,-i\varepsilon\,\bd_t\phi+\(-\tfrac{\varepsilon^2}{2}\lapl +(V_N^\varepsilon f_{N, l}^\varepsilon\ast|\phi|^2)\)\phi \label{def:phi_HFB_operator_eq_term}\\
            &\, +\intd V_N^\varepsilon(x-y)\,\(w_{N,l}^\varepsilon(x-y)\phi(x)\phi(y)+\tfrac{1}{N}k(x, y)\)\conj{\phi(y)}\dd y \label{def:phi_HFB_operator_cancellation_term}\\
            &\, +\frac{1}{2\,N}\intd V_N^\varepsilon(x-y)\,r_2(x, y)\,\conj{\phi(y)}\dd y \label{def:phi_HFB_operator_error_term1}\\
            &\, +\frac{1}{2\, N}\intd V_N^\varepsilon(x-y)\, p_2(y, y)\,\phi(x)\dd y \label{def:phi_HFB_operator_error_term2}\\
            &\, +\frac{1}{2\, N}\intd V_N^\varepsilon(x-y)\, p_2(x, y)\,\phi(y)\dd y\ . \label{def:phi_HFB_operator_error_term3}
        \end{align}
    \end{subequations}
    By our hypotheses, Term \eqref{def:phi_HFB_operator_eq_term} and Term \eqref{def:phi_HFB_operator_cancellation_term} are zeroes.
    To estimate Term \eqref{def:phi_HFB_operator_error_term1}--\eqref{def:phi_HFB_operator_error_term3}, we use the following $L^p$ estimates
    \begin{align*}
        \Nrm{\intd V_N^\varepsilon(x-y)\,g(x, y)\,h(y)\dd y}{L^p_x}
        \le&\, \Nrm{V^\varepsilon_N}{L^1}\sup_z\Nrm{g(\cdot, z)}{L^p_x}\Nrm{h}{L^\infty_x}
    \end{align*}
    and
    \begin{align*}
        \Nrm{\intd V_N^\varepsilon(x-y)\,g(y, y)\,h(x)\dd y}{L^p_x}\le \Nrm{V^\varepsilon_N}{L^1}\Nrm{\diag g}{L^p_x}\Nrm{h}{L^\infty_x}\ .
    \end{align*}
    Hence it follows
    \begin{align*}
        \Nrm{\vect{S}_{\rm HFB}(\phi)}{L^2_x} \le \frac{1}{2N}\Nrm{V^\varepsilon_N}{L^1}\(2\Nrm{p_2}{L^\infty_xL^2_y}+\Nrm{r_2}{L^\infty_xL^2_y}\)\Nrm{\phi}{L^\infty_x}\ .
    \end{align*}
    This completes the proof of Inequality \eqref{est:linear_term_bound}.
    The other inequalities follow from similar arguments.
\end{proof}

\subsubsection{Quadratic Terms} Finally, we consider the quadratic term. Let us split the analysis into two parts: the part of the quadratic operator that commutes with $\cN$ and the part that does not. Let us start with the following proposition which bounds the part of the quadratic term that commutes with $\cN$.
\begin{prop}\label{prop:estimates_for_diagonal_quadratic_terms}
    Consider the part of the operator $\cQ_{\rm HFB}$ defined by Expression \eqref{def:normal_ordered_HFB_quadratic_operator} that commutes with $\cN$ without the kinetic energy, i.e.
    \begin{align*}
        \cQ_{\rm D}=\dG(V_{\Gamma^\varepsilon} +\sfX_{\Gamma^\varepsilon}+\tfrac12 d_{\rm HFB}+\tfrac12 d_{\rm HFB}^\top)
    \end{align*}
    where $V_{\Gamma^\varepsilon}, \sfX_{\Gamma^\varepsilon}$,  and $d_{\rm HFB}$ are as defined in Proposition \ref{prop:fluctuation_Hamiltonian}. More precisely, we consider the splitting
    \begin{align*}
        \cQ_{\rm D} =&\, \cQ_{\rm D, time}+\cQ_{\rm D, kinetic}+\cQ_{\rm D, potential}
    \end{align*}
    with
        \begin{align*}
            &\cQ_{\rm D, time} :=\, \tfrac12\dG\(i\varepsilon\,\bd_t p\, (\id +p)-i\varepsilon\,\bd_t \sh\,\conj{\sh}\)+\mathrm{transpose}\, ,\\
            &\cQ_{\rm D, kinetic} :=\, \tfrac12\dG\( \tfrac{\varepsilon^2}{2}\lapl-(\id+p)\tfrac{\varepsilon^2}{2} \lapl (\id +p)-\sh\, (\conj{\tfrac{\varepsilon^2}{2} \lapl \sh})\)+\mathrm{transpose}\ , \\
            &\cQ_{\rm D, potential} :=\, \cQ_{\rm D}-\cQ_{\rm D, time}-\cQ_{\rm D, kinetic} =: \tfrac12\dG(d_{\rm HFB, potential})+\mathrm{transpose}\ ,
        \end{align*}
        where we have used Identity \eqref{eq:hyperbolic_trig_identities} to simplify $\cQ_{\rm D, kinetic}$.
    Then there exists $C>0$ such that we have the following operator estimates
        \begin{align}
            \pm \cQ_{\rm D, time} \le&\, C  \sfC_{0}(t) \, \cN\ , \label{est:quadratic_diagonal_time-derivative}\\
            \pm \varepsilon\dot\cQ_{\rm D, time} \le&\, C\sfC_{0}(t) \, \cN\ , \label{est:time-derivative_quadratic_diagonal_time-derivative}\\
            \pm \cQ_{\rm D, potential} \le&\, C\Nrm{V^\varepsilon_N}{L^1}\,\sfC_0(t) \, \cN\ , \label{est:quadratic_diagonal_potential}\\
            \pm \varepsilon\dot \cQ_{\rm D, potential}\le&\, C \Nrm{V^\varepsilon_N}{L^1}\sfC_0(t) \, \cN \label{est:time-derivative_quadratic_diagonal_potential}\ ,
        \end{align}
    where $\sfC_0(t)$ is defined as in \eqref{prop:estimates_for_cubic_terms}.
    Moreover,  for any $0<\delta\le 1$ there exists a $C_\delta>0$  such that  we have the following operator estimates
    \begin{align}
        \pm \cQ_{\rm D, kinetic} \le&\, \delta \cK +  C_{\delta}\Nrm{V_N^\varepsilon}{L^1}\sfC_0(t)\,\cN  \label{est:kinetic_quadratic_diagonal_term}\ , \\
        \pm \varepsilon\dot\cQ_{\rm D, kinetic} \le&\, \delta \cK + C_{\delta}\Nrm{V_N^\varepsilon}{L^1}\sfC_0(t)\,\cN \label{est:time_derivative_kinetic_quadratic_diagonal_term}\ .
    \end{align}
\end{prop}

\begin{proof}
 For $\cQ_{\rm D, time}$, by Lemma \ref{lem:estimates_for_creation_annilihation_operators}, we have that
    \begin{align*}
        \n{\inprod{\Psi}{\cQ_{\rm D, time}\,\Psi}} \le&\, C \(\Nrm{\varepsilon\dot p_t (\id+p_t)}{\mathrm{op}}+\Nrm{\varepsilon\,\dot \sh_t\,\conj{\sh}_t}{\rm op}\)\inprod{\Psi}{\cN\, \Psi}\\
        \le&\, C \(\Nrm{\varepsilon\dot p_t}{\mathrm{HS}}\(1+\Nrm{p_t}{\rm HS}\)+\Nrm{\varepsilon\,\dot \sh_t}{\rm HS}\Nrm{\sh_t}{\rm HS}\)\inprod{\Psi}{\cN\, \Psi}\ .
    \end{align*}
    A similar argument applies for $\varepsilon\dot\cQ_{\rm D, time}$. This completes the proof of Inequalities \eqref{est:quadratic_diagonal_time-derivative}--\eqref{est:time-derivative_quadratic_diagonal_time-derivative}.

  For $\cQ_{\rm D, potential}$, by Lemma \ref{lem:estimates_for_creation_annilihation_operators} again, we have that
    \begin{align*}
        \n{\inprod{\Psi}{\cQ_{\rm D, potential}\, \Psi}} \le \(\Nrm{V_{\Gamma^\varepsilon}}{\rm op}+\Nrm{\sfX_{\Gamma^\varepsilon}}{\rm op}+\Nrm{d_{\rm HFB, potential}}{\rm op}\)\inprod{\Psi}{\cN\, \Psi}\ .
    \end{align*}
    It is straightforward to check that
    \begin{align}\label{est:operator_norm_bound_effective_potential_and_exchange_term}
        \norm{V_{\Gamma^\varepsilon}}_{\rm op}+\norm{\sfX_{\Gamma^\varepsilon}}_{\rm op} \le  2\Nrm{V^\varepsilon_N}{L^1}\(\Nrm{\phi_t}{L^\infty_x}^2+\tfrac{1}{N}\Nrm{\sh_t}{L^\infty_xL^2_y}^2\)\ .
    \end{align}
    Hence it remains to estimate the operator norm of $d_{\rm HFB, potential}$.
    Observe we have the estimate
    \begin{align*}
        \Nrm{\ch(k_t)\, \widetilde m_t \,\conj{\sh(k_t)}}{\rm op} \le \(1+\Nrm{p_t}{\rm HS}\)\Nrm{\widetilde m_t}{\rm op}\Nrm{\sh_t}{\rm HS}\ .
    \end{align*}
    To estimate the operator norm of $\widetilde m_t$, notice we have
    \begin{align}\label{est:operator_norm_bound_of_tilde_m}
        \Nrm{\widetilde m_t f}{L^2_x}
        % \le \Nrm{\intd V_N^\varepsilon(x-y)\phi_t(x)\phi_t(y) f(y)\dd y}{L^2_x}\\
        % +\frac{1}{2N}\Nrm{\intd V_N^\varepsilon(x-y)s_{2, t}(x, y) f(y)\dd y}{L^2_x}\\
        \le \Nrm{V^\varepsilon_N}{L^1}\(\Nrm{\phi_t}{L^\infty_x}^2+\tfrac{1}{2\,N}\Nrm{s_{2, t}}{L^\infty_{x, y}}\)\Nrm{f}{L^2_x}\ .
    \end{align}
    With this, it follows that
    \begin{align*}
        \Nrm{d_{\rm HFB, potential}}{\rm op}
         \le C\Nrm{V^\varepsilon_N}{L^1}\sfC_0(t) \ .
    \end{align*}
    A similar argument applies for $\varepsilon\dot\cQ_{\rm D, potential}$. This completes the proof of Inequalities \eqref{est:quadratic_diagonal_potential}--\eqref{est:time-derivative_quadratic_diagonal_potential}.

    Let us consider the kinetic part of the quadratic operator.  Notice that
    \begin{subequations}
        \begin{align}
            &\dG\(\tfrac{\varepsilon^2}{2}\lapl-(\id+p)\tfrac{\varepsilon^2}{2} \lapl (\id +p)-\sh\, (\conj{\tfrac{\varepsilon^2}{2} \lapl \sh})\) \notag\\
            &=\frac12\intd a^\ast(\varepsilon \grad_x p_{x})a(\varepsilon \grad_x p_{x})\dd x+\frac12\intd a^\ast(\varepsilon \grad_x r_{x})a(\varepsilon \grad_x r_{x})\dd x \label{def:quadratic_kinetic_Term1}\\
            &\quad +\frac12\intd \varepsilon \grad_x a^\ast_x a(\varepsilon \grad_x p_{x})\dd x + \frac12\intd a^\ast(\varepsilon \grad_x p_{x})\varepsilon \grad_x a_x \dd x \label{def:quadratic_kinetic_Term2}\\
            &\quad +\frac12\intd a^\ast(\varepsilon \grad_x r_{x})a(\varepsilon \grad_x k_{x})\dd x+\frac12\intd a^\ast(\varepsilon \grad_x k_{x})a(\varepsilon \grad_x r_{x})\dd x \label{def:quadratic_kinetic_Term3}\\
            &\quad +\frac12\intd a^\ast(\varepsilon \grad_x k_{x})a(\varepsilon \grad_x k_{x})\dd x\ .\label{def:quadratic_kinetic_Term4}
        \end{align}
    \end{subequations}
    By Lemmas \ref{lem:quadratic_operator_bounds_with_derivative}--\ref{lem:quadratic_operator_bounds_with_derivative2}, we see that
    \begin{align*}
        \eqref{def:quadratic_kinetic_Term1}+\eqref{def:quadratic_kinetic_Term2} &\le\, \delta \cK + C_ \delta\(\Nrm{\varepsilon\grad p}{\rm HS}^2+\Nrm{\varepsilon\grad r}{\rm HS}^2\)\cN\ ,\\
        \eqref{def:quadratic_kinetic_Term3} &\le\, \delta \cK+C_\delta \(\Nrm{k}{\rm HS}^2\Nrm{\varepsilon\grad r}{\rm HS}^2+\Nrm{[\varepsilon\grad, k]}{\rm HS}\Nrm{\varepsilon\grad r}{\rm HS}\) \cN\ ,\\
        \eqref{def:quadratic_kinetic_Term4} &\le\, \Nrm{k(-\varepsilon^2\lapl)\conj{k}}{\rm HS}\cN\ .
    \end{align*}
 This completes the proof of Inequality \eqref{est:kinetic_quadratic_diagonal_term}. The proof of Inequality \eqref{est:time_derivative_kinetic_quadratic_diagonal_term} is similar. In fact, we have
 \begin{align*}
    \n{\varepsilon\bd_t\(\eqref{def:quadratic_kinetic_Term1}+\eqref{def:quadratic_kinetic_Term2} \)}
    & \le\, \delta \cK + C_ \delta\(\Nrm{\varepsilon\grad p}{\rm HS}\Nrm{\varepsilon^2\grad \dot p}{\rm HS}+\Nrm{\varepsilon\grad r}{\rm HS}\Nrm{\varepsilon^2\grad \dot r}{\rm HS}\)\cN\ ,\\
    \n{\varepsilon\bd_t\eqref{def:quadratic_kinetic_Term3}}
    & \le\,  \delta \cK+C_\delta \(\Nrm{k}{\rm HS}^2\Nrm{\varepsilon^2\grad \dot r}{\rm HS}^2+\Nrm{[\varepsilon\grad, k]}{\rm HS}\Nrm{\varepsilon^2\grad \dot r}{\rm HS}\) \cN \\
    &\qquad\quad +\, C_\delta \(\Nrm{\varepsilon\dot k}{\rm HS}^2\Nrm{\varepsilon\grad r}{\rm HS}^2+\Nrm{[\varepsilon\grad, \varepsilon\dot k]}{\rm HS}\Nrm{\varepsilon\grad r}{\rm HS}\) \cN\ ,\\
    \n{\varepsilon\bd_t\eqref{def:quadratic_kinetic_Term4}} &\le C\Nrm{\varepsilon \dot k(-\varepsilon^2\lapl)\conj{k}}{\rm HS}\cN\ ,
 \end{align*}
 which yield the desired estimates.
\end{proof}

To handle the off-diagonal operator $\cQ_{\rm OD}$, we need to first rewrite Expression \eqref{def:off-diagonal_entry_of_QHFB}. Observe we could write
\begin{align}\label{eq:expanding_off-diagonal_term}
    &\(\vect{\widetilde S}(\sh_t)-(\id + p_t)\,\widetilde m\)\\
    &=\, i\varepsilon \bd_t \sh +\tfrac{\varepsilon^2}{2}\lapl_x \sh + \tfrac{\varepsilon^2}{2}\lapl_y \sh
    - V_N^\varepsilon(x-y)(1-w_{N, l}^\varepsilon(x-y))\phi(x)\phi(y)+ R,\notag
\end{align}
where $R$ denotes the remainder terms. Moreover, using the fact that $\sh(k)=k+r(k)$, we could further express the right hand side of Equation \eqref{eq:expanding_off-diagonal_term} as
\begin{subequations}
    \begin{align}
        \mathrm{RHS} \eqref{eq:expanding_off-diagonal_term} =&\,  \tfrac{\varepsilon^2}{2}\lapl_x k + \tfrac{\varepsilon^2}{2}\lapl_y k - V_N^\varepsilon(x-y)f_{N,l}^\varepsilon(x-y)\phi(x)\phi(y)- \widetilde R \notag\\
        =&\, -N\(-\varepsilon^2\lapl f_{N,l}^\varepsilon(x-y)+U_N(x-y)f_{N,l}^\varepsilon(x-y)\)\phi(x)\phi(y)\label{eq:zero-energy_scattering_in_off-diagonal}\\
        &\, -\tfrac12 N\varepsilon\grad_x w_{N,l}^\varepsilon(x-y)\cdot \varepsilon\grad\phi(x)\phi(y)\\
        &\, -\tfrac12 N\varepsilon\grad_y w_{N,l}^\varepsilon(x-y)\cdot \varepsilon\grad\phi(y)\phi(x)\\
        &\, -\tfrac12 \varepsilon\grad_x\cdot\(N w_{N,l}^\varepsilon(x-y)\varepsilon\grad\phi(x)\phi(y)\)\\
        &\, -\tfrac12 \varepsilon\grad_y\cdot\(N w_{N,l}^\varepsilon(x-y)\varepsilon\grad\phi(y)\phi(x)\)-\widetilde R
    \end{align}
\end{subequations}
where
\begin{align*}
    \widetilde R(x, y) =&\, -i\varepsilon\bd_t \sh(x, y)-\tfrac{\varepsilon^2}{2}\lapl_x r(x, y) - \tfrac{\varepsilon^2}{2}\lapl_y r(x, y)\\
    &\, +\(V_{\Gamma^\varepsilon}(x)+V_{\Gamma^\varepsilon}(y)\)\sh(x, y)+(\sfX_{\Gamma^\varepsilon}\sh)(x, y)+(\sh\sfX_{\Gamma^\varepsilon})(x, y)\\
    &\, +\(p\widetilde m \)(x, y)+\tfrac{1}{2N}V_N^\varepsilon(x-y)r_{2}(x, y)\ .
\end{align*}
Using the fact that $f_{N,l}^\varepsilon = 1-w_{N,l}^\varepsilon$ and $f_{N,l}^\varepsilon$ satisfies Equation \eqref{def:correlation_structure_neumann_problem}, we have
\begin{subequations}
    \begin{align*}
        \eqref{eq:zero-energy_scattering_in_off-diagonal}=&\, -N E_{N, \ell}^\varepsilon f_{N, \ell}^\varepsilon(x-y)\chi_{\{\n{x-y}\le \ell\}}\phi(x)\phi(y)\ .
    \end{align*}
\end{subequations}

\begin{prop}\label{prop:estimates_for_off-diagonal_quadratic_terms}
    Consider the splitting $\cQ_{\rm OD}=\cQ_{\rm OD,\, s}+\cQ_{\rm OD,\, r}$ with
    \begin{align*}
        \cQ_{\rm OD,\, s}:=&\, -\tfrac12\intdd \(\vect{\widetilde S}(\sh)-(\id + p)\,\widetilde m\)(x, y)\ a^\ast_x a^\ast_y\dd x \d y+\mathrm{h.c.}\ ,\\
        \cQ_{\rm OD,\, r}:=&\,-\tfrac12 \intdd \(\vect{\widetilde S}(\sh)-(\id + p)\,\widetilde m\)p(x, y)\ a^\ast_x a^\ast_y\dd x \d y\\
        &\, +\tfrac12 \intdd \(\vect{\widetilde W}(p)+\sh\, \conj{\widetilde m}\)\sh(x, y)\, a^\ast_x a^\ast_y\dd x \d y\ +\mathrm{h.c.}\ , \notag
    \end{align*}
    where $\phi$ satisfies Equation \eqref{eq:modified_Gross--Pitaevskii} and $k$ is defined by Expression \eqref{def:pair_excitation_function}.
    By the calculation preceding the proposition, we could further split $\cQ_{\rm OD,\, s}$ as follows
    \begin{align*}
        \cQ_{\rm OD,\, s} =&\, \cQ_{\rm OD,\, s}^{(1)}+\cQ_{\rm OD,\, s}^{(2)}-\cQ_{\rm OD,\, s}^{(3)}+\cQ_{\rm OD,\, s}^{(4)}+\mathrm{h.c.}
    \end{align*}
    where
        \begin{align*}
            \cQ_{\rm OD,\, s}^{(1)} :=&\, \tfrac12NE_{N, \ell}^\varepsilon\int_{\n{x-y}\le \ell}f_{N, \ell}^\varepsilon(x-y)\phi(x)\phi(y)\, a^\ast_x a^\ast_y\dd x\d y\ ,\\
            \cQ_{\rm OD,\, s}^{(2)} :=&\, \tfrac12 N \intdd \varepsilon\grad_x w_{N,l}^\varepsilon(x-y)\cdot\varepsilon\grad\phi(x)\phi(y)\, a^\ast_x a^\ast_y\dd x \d y\ ,\\
            \cQ_{\rm OD,\, s}^{(3)} :=&\, \tfrac12N \intdd w_{N,l}^\varepsilon(x-y)\varepsilon \grad \phi(x)\phi(y)\, \varepsilon \grad_x a^\ast_x a^\ast_y\dd x \d y\ ,\\
            \cQ_{\rm OD,\, s}^{(4)} :=&\, \tfrac12\intdd \widetilde R(x, y)\, a^\ast_x a^\ast_y\dd x \d y\ .
        \end{align*}
        Then there exists a universal $C>0$ such that for every $0<\delta\le 1$  we have the following operator estimates
        \begin{align}
            \pm  \cQ_{\rm OD,\, s}^{(1)\, \sharp},\ [\cN, \cQ_{\rm OD,\, s}^{(1)}]^\sharp \le&\, C  \sfC_0(t)\,(\cN+1)\ , \label{est:off-diagonal_singular_eigenvalue-term}\\
            % \pm [\cN, \cQ_{\rm OD,\, s}^{(1)}]^\sharp\le&\, C \Nrm{\phi_t}{L^\infty_x}^2\,(\cN+1)\ , \label{est:commutator_off-diagonal_singular_eigenvalue-term}\\
            \pm \varepsilon\dot\cQ_{\rm OD,\, s}^{(1)\sharp} \le&\, C\sfC_0(t)\,(\cN+1)\ , \label{est:time-derivative_off-diagonal_singular_eigenvalue}\\
            \pm  \cQ_{\rm OD,\, s}^{(2)\, \sharp},\  \cQ_{\rm OD,\, s}^{(3)\, \sharp},\  [\cN, \cQ_{\rm OD,\, s}^{(2)}]^\sharp,\ [\cN, \cQ_{\rm OD,\, s}^{(3)}]^\sharp\le&\, \delta \cK+  C_{\delta}\Nrm{V_N^\varepsilon}{L^1}\sfC_0(t)\, (\cN+1)\ , \\
            \pm  \varepsilon\dot\cQ_{\rm OD,\, s}^{(2)\, \sharp},\  \varepsilon\dot\cQ_{\rm OD,\, s}^{(3)\, \sharp}\le&\, \delta \cK+ C_{\delta}\Nrm{V_N^\varepsilon}{L^1}\sfC_0(t)\, (\cN+1)\ .
        \end{align}

        For the operators $\cQ_{\rm OD,\, s}^{(4)}$ and $\cQ_{\rm OD,\, r}$, as in the previous proposition, we shall consider the further splitting (of course with respect to the operators $aa$ and  $a^\ast a^\ast$)
        \begin{align*}
            \cQ_{\rm OD,\, s}^{(4)}=&\, \cQ_{\rm OD,\, s,\, time}^{(4)}+\cQ_{\rm OD,\, s,\, kinetic}^{(4)}+\cQ_{\rm OD,\, s,\, potential}^{(4)}\ ,\\
            \cQ_{\rm OD,\, r}=&\, \cQ_{\rm OD,\, r,\, time}+\cQ_{\rm OD,\, r,\, kinetic}+\cQ_{\rm OD,\, r,\, potential}\ .
        \end{align*}
        Then for every $0<\delta\le 1$ there are a universal $C>0$ and $C_{\delta}>0$ such that
        \begin{align*}
\pm \cQ_{\rm OD,\, s,\, time}^{(4)\sharp},\, \cQ_{\rm OD,\, r,\, time}^{\sharp},\, [\cN, \cQ_{\rm OD,\, s,\, time}^{(4)}]^\sharp,  [\cN, \cQ_{\rm OD,\, r,\, time}]^{\sharp}  \le&\,  C  \sfC_0(t)(\cN+1)\ ,\\
\pm \cQ_{\rm OD,\, s,\, potential}^{(4)\sharp}, \cQ_{\rm OD,\, r,\, potential}^{\sharp}, \le&\, C\Nrm{V^\varepsilon_N}{L^1}\sfC_0(t) (\cN+1)\ , \\
[\cN, \cQ_{\rm OD,\, s,\, potential}^{(4)}]^\sharp+[\cN, \cQ_{\rm OD,\, r,\, potential}]^{\sharp} \le&\, C\Nrm{V^\varepsilon_N}{L^1}\sfC_0(t) (\cN+1)\ ,\\
\pm \cQ_{\rm OD,\, s,\, kinetic}^{(4)\sharp}, \cQ_{\rm OD,\, r,\, kinetic}^{\sharp}, [\cN, \cQ_{\rm OD,\, s,\, kinetic}^{(4)}]^\sharp, [\cN, \cQ_{\rm OD,\, r,\, kinetic}]^{\sharp} \le&\, \delta\cK+C_{\delta}\sfC_0(t)\, (\cN+1)\ ,\\
\pm \varepsilon\dot\cQ_{\rm OD,\, s,\, time}^{(4)\sharp}, \varepsilon\dot\cQ_{\rm OD,\, r,\, time}^{\sharp}  \le&\,  C  \sfC_0(t)(\cN+1)\ ,\\
 \pm \varepsilon\dot \cQ_{\rm OD,\, s,\, potential}^{(4)\sharp}, \varepsilon\dot\cQ_{\rm OD,\, r,\, potential}^{\sharp}
    \le& \,  C\Nrm{V^\varepsilon_N}{L^1}\sfC_0(t) (\cN+1)  \ ,\\
  \pm \varepsilon\dot\cQ_{\rm OD,\, s,\, kinetic}^{(4)\sharp}, \varepsilon\dot\cQ_{\rm OD,\, r,\, kinetic}^{\sharp} \le&\, \delta\cK+C_{\delta}\sfC_0(t)\, (\cN+1) \ .
        \end{align*}
\end{prop}

\begin{proof}
    To prove Inequality~\eqref{est:off-diagonal_singular_eigenvalue-term}, we apply Inequality \eqref{eq:neumann_gs_eigenvalue_approximation},  $\norm{f^\varepsilon_{N, \ell}}_{L^\infty_x}\le 1$, and  H\"older's inequality, to see that
    \begin{align*}
        \n{\inprod{\Psi}{\cQ_{\rm OD,\, s}^{(1)}\Psi}} \le&\, \frac{3\asc_0^{\mu}}{\ell^3} \int_{\n{x-y}\le \ell} \n{\phi(x)}\n{\phi(y)}\Nrm{a_x \Psi}{}\Nrm{a_y^\ast \Psi}{}\dd x\d y
        \le\, C \Nrm{\phi_t}{L^\infty_x}^2\norm{(\cN+1)^\frac12 \Psi}^2\ .
    \end{align*}
    The other inequalities involving $\cQ_{\rm OD,\, s}^{(1)}$ are handled similarly.

    For $\cQ_{\rm OD,\, s}^{(2)}$, notice we have that
    \begin{align*}
        \cQ_{\rm OD,\, s}^{(2)} =&\,    \tfrac12N \intdd w_{N,l}^\varepsilon(x-y)\varepsilon\grad\phi(x)\cdot\varepsilon\grad \phi(y)\, a^\ast_x a^\ast_y\dd x \d y\\
        &\, +\tfrac12 N \intdd w_{N,l}^\varepsilon(x-y)\phi(y)\varepsilon\grad\phi(x)\cdot\varepsilon\grad_y  a^\ast_y a^\ast_x\dd x \d y\ .
    \end{align*}
    Then it follows that
    \begin{align*}
        \n{\inprod{\Psi}{\cQ_{\rm OD,\, s}^{(2)}\Psi}}
        \le&\, CC_0(t)\norm{(\cN+1)^\frac12 \Psi}^2
        + CC_0(t)\norm{(\cN+1)^\frac12 \Psi}\norm{\cK^\frac12 \Psi}\\
        \le&\, \delta\norm{\cK^\frac12 \Psi}^2+ C_{\delta}C_0(t)\norm{(\cN+1)^\frac12 \Psi}^2\ .
    \end{align*}
    $\cQ_{\rm OD,\, s}^{(3)}$ and the related operators is handled in a similar manner.

    For $\cQ_{\rm OD,\, s}^{(4)}$, by Lemma \ref{lem:quadratic_operator_bounds_with_derivative}, we have the estimates
    \begin{align*}
        \pm\cQ_{\rm OD,\, s,\, time}^{(4)}
        \le&\, C \Nrm{\varepsilon\dot \sh}{\rm HS}(\cN+1)\ , \\
        \pm \cQ_{\rm OD,\, s,\, kinetic}^{(4)}\le&\,  \delta\cK + C\delta^{-1}\Nrm{\varepsilon \grad_x r_t}{\rm HS}^2(\cN+1)\ ,\\
        \pm\cQ_{\rm OD,\, s,\, potential}^{(4)}
        \le&\, C \(\Nrm{V_{\Gamma^\varepsilon}}{\rm op}+\Nrm{\sfX_{\Gamma^\varepsilon}}{\rm op}\)\Nrm{\sh_t}{\rm HS}(\cN+1)\\
        &\, + C\Nrm{\widetilde m_t}{\rm op}\Nrm{p_t}{\rm HS}(\cN+1) + CN^{-1}\Nrm{V^\varepsilon_N}{L^1}\Nrm{r_{2, t}^{\varepsilon}}{L^\infty_{x, y}}(\cN+1)\\
        \le&\,  C\Nrm{V^\varepsilon_N}{L^1}\sfC_0(t) (\cN+1)
    \end{align*}
    where the last inequality follows from Inequalities~\eqref{est:operator_norm_bound_effective_potential_and_exchange_term}--\eqref{est:operator_norm_bound_of_tilde_m}.

    Now consider $\cQ_{\rm OD,\, r}$. The first part of $\cQ_{\rm OD,\, r}$ follows from the argument of $\cQ_{\rm OD,\, s}$ and the fact that $p_{N, t}^\varepsilon, \varepsilon\dot p_{N, t}^\varepsilon,$ and $\varepsilon\grad p_{N, t}^\varepsilon$ are bounded operators. For the second part, we again split the operator into three parts (time, kinetic, and potential) then estimate each operator separately.
\end{proof}

\subsection{Proof of Proposition~\ref{prop:bounds_on_fluctuation_Hamiltonian}}

\begin{proof}[Proof of Proposition \ref{prop:bounds_on_fluctuation_Hamiltonian}]
    We write
    \begin{align*}
        \widetilde\cH_{\rm fluc}= \cH_N+\cL_{\rm HFB} + \cQ_{\rm D}+\cQ_{\rm OD}+ \cC+ \cD_1+\cD_2.
    \end{align*}
    Then, by Propositions \ref{prop:estimates_for_cubic_terms}--\ref{prop:estimates_for_off-diagonal_quadratic_terms}, we have the following operator inequalites
    \begin{align*}
        \widetilde\cH_{\rm fluc} \le& (1+\delta)\cH_N + C_{\delta}\Nrm{V_N^\varepsilon}{L^1}\sfC_0(t) \( \,\frac{\cN^2}{N}+(\cN+1)\)\ .
    \end{align*}
    Then, by the estimates of $k_t$ in Lemmas \ref{lem:pair_excitation_estimates}--\ref{lem:pair_excitation_estimates_time-derivative}, there exists $M$ such that
    \begin{align*}
        \sfC_{0}(t)\le C\ve^{-M}\lra{t}^{M}\ .
    \end{align*}
    In particular, with $\Nrm{V_N^\varepsilon}{L^1}=\la$ we obtain
    \begin{equation*}
        \widetilde\cH_{\rm fluc} \le (1+\delta)\cH_N + C_{\delta}\lambda \ve^{-M}\lra{t}^{M} \( \,\frac{\cN^2}{N}+(\cN+1)\)\ .
    \end{equation*}
    Similarly, we have that
    \begin{equation*}
        \widetilde\cH_{\rm fluc} \ge (1-\delta)\cH_N -  C_{\delta}\lambda \ve^{-M}\lra{t}^{M} \( \,\frac{\cN^2}{N}+(\cN+1)\)\ .
    \end{equation*}
    This completes the proof of Inequalities \eqref{est:lower_bound_for_fluctuation_Hamiltonian}--\eqref{est:upper_bound_for_fluctuation_Hamiltonian} if we set $\delta = \frac12$. Moreover, Inequalities \eqref{est:bound_for_commutator_of_fluctuation_Hamiltonian_with_number}--\eqref{est:bound_for_time-derivative_of_fluctuation_Hamiltonian} also follows from Propositions \ref{prop:estimates_for_cubic_terms}--\ref{prop:estimates_for_off-diagonal_quadratic_terms}.
\end{proof}

\subsection{The growth of number and energy of the fluctuations} In this section, we obtain bounds (not necessarily uniform in $N$ and $\varepsilon$) on the growth of the expectation values of the number operator and the many-body Hamiltonian with respect to the fluctuation dynamics.

\begin{lemma}\label{lem:fluctuation_number_squared}
    Suppose $\cU_{\mathrm{fluc}}(t; 0)$ is as defined in Equation~\eqref{def:fluctuation_dynamics}. Then there exists $C>0$ independent of $N$ and $\varepsilon$ such that
    \begin{align*}
        \cU_{\mathrm{fluc}}(t; 0)^\ast\, \cN^2\, \cU_{\mathrm{fluc}}(t; 0) \le C\(N\,\cU_{\mathrm{fluc}}(t; 0)^\ast\, \cN\, \cU_{\mathrm{fluc}}(t; 0)+N(\cN+1)+(\cN+1)^2\)\ .
    \end{align*}
\end{lemma}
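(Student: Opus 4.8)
The plan is to use a Grönwall argument on the quantity $\esp^{\mathrm{fluc}}_t(\cN^2):=\inprod{\cU_{\mathrm{fluc}}(t;0)\Psi}{\cN^2\,\cU_{\mathrm{fluc}}(t;0)\Psi}$ and compare it to the growth of $\esp^{\mathrm{fluc}}_t(\cN)$ and the energy. First I would differentiate in $t$: since $i\varepsilon\,\bd_t\cU_{\mathrm{fluc}}=\cH_{\fluc}\,\cU_{\mathrm{fluc}}$, one gets $\varepsilon\,\dt\,\cU_{\mathrm{fluc}}^\ast\cN^2\cU_{\mathrm{fluc}} = i\,\cU_{\mathrm{fluc}}^\ast[\cH_{\fluc},\cN^2]\,\cU_{\mathrm{fluc}}$. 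Writing $\cH_{\fluc}=N\chi+\widetilde\cH_{\rm fluc}$ and noting $[\cN^2,N\chi]=0$ and $[\cN^2,\cH_N]=0$ (the Hamiltonian $\cH_N$ is diagonal, commuting with $\cN$), only the non-diagonal parts of $\widetilde\cH_{\rm fluc}$ contribute, and $[\cN^2,\widetilde\cH_{\rm fluc}]=\cN[\cN,\widetilde\cH_{\rm fluc}]+[\cN,\widetilde\cH_{\rm fluc}]\cN$. Then I would invoke Inequality~\eqref{est:bound_for_commutator_of_fluctuation_Hamiltonian_with_number} from Proposition~\ref{prop:bounds_on_fluctuation_Hamiltonian}, which gives $\pm[\cN,\widetilde\cH_{\rm fluc}(t)]\le \cH_N+\lambda\varepsilon^{-\upnu_1}C(1+t)^{\upeta_1}(\widetilde\mu^2\cN^2/N+\cN+1)$, and sandwich it between two factors of $\cN$ (using $\pm(\cN\sfA+\sfA\cN)\le \delta\cN\sfB\cN+\delta^{-1}\sfB$-type manipulations, or more simply $\pm(\cN\sfA+\sfA\cN)\le 2\cN^{1/2}\sfB\cN^{1/2}$ when $\pm\sfA\le\sfB$ with $\sfB\ge 0$ commuting suitably; one has to be slightly careful since $\cH_N$ and $\cN$ commute so $\cN^{1/2}\cH_N\cN^{1/2}=\cN\cH_N$ is fine).

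Carrying this through, I expect to arrive at a differential inequality of the schematic form
\begin{align*}
    \varepsilon\,\dt\,\esp^{\mathrm{fluc}}_t(\cN^2) \le C\,\esp^{\mathrm{fluc}}_t\big(\cN(\cH_N+1)\big)+C\lambda\varepsilon^{-\upnu_1}(1+t)^{\upeta_1}\Big(\tfrac{\widetilde\mu^2}{N}\esp^{\mathrm{fluc}}_t(\cN^3)+\esp^{\mathrm{fluc}}_t(\cN^2)+\esp^{\mathrm{fluc}}_t(\cN)\Big)\ .
\end{align*}
The terms $\esp^{\mathrm{fluc}}_t(\cN^3)$ and $\esp^{\mathrm{fluc}}_t(\cN(\cH_N+1))$ are higher-order and genuinely problematic, so the real content is to control them. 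For the cubic-in-$\cN$ term one uses that $\widetilde\mu\ll 1$ and $\cN\le N+(\cN-N)$ together with a priori bounds showing that the fluctuation number stays $\mathcal O(N)$ in expectation for the relevant times; more precisely I would first establish (or cite, if available earlier) a bound $\esp^{\mathrm{fluc}}_t(\cN)\lesssim N$ and $\esp^{\mathrm{fluc}}_t(\cH_N)\lesssim N$ on the time interval, which lets one replace one factor of $\cN$ by $N$ at the cost of a constant. This is exactly the structure that makes the claimed inequality $\cU_{\mathrm{fluc}}^\ast\cN^2\cU_{\mathrm{fluc}}\le C(N\,\cU_{\mathrm{fluc}}^\ast\cN\,\cU_{\mathrm{fluc}}+N(\cN+1)+(\cN+1)^2)$ plausible: the $N\cdot\cN$ term absorbs the dangerous $\cN^2$ contribution, the $N(\cN+1)$ absorbs the energy term via $\cH_N\le CN$ on fluctuation states, and $(\cN+1)^2$ handles the initial data.

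The cleanest route is probably to bound the \emph{operator} $\cU_{\mathrm{fluc}}^\ast\cN^2\cU_{\mathrm{fluc}}$ directly rather than just its expectation, by applying the differential inequality to the state $\cU_{\mathrm{fluc}}(s;0)^{-1}\Phi$ for arbitrary $\Phi$ and integrating, exploiting that $\widetilde\cH_{\rm fluc}\ge\tfrac12\cH_N-\lambda\varepsilon^{-\upnu_1}C(1+t)^{\upeta_1}(\widetilde\mu^2\cN^2/N+\cN+1)$ from~\eqref{est:lower_bound_for_fluctuation_Hamiltonian} to convert the energy into a coercive quantity and to close the hierarchy of $\cN^k$ bounds by induction on $k$. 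The main obstacle I anticipate is precisely this closing of the hierarchy: the commutator estimate feeds $\cN^3$ into the equation for $\cN^2$, so one must either (i) run a simultaneous Grönwall on the vector $(\esp^{\mathrm{fluc}}_t(\cN),\esp^{\mathrm{fluc}}_t(\cN^2),\esp^{\mathrm{fluc}}_t(\cH_N),\esp^{\mathrm{fluc}}_t(\cN\cH_N))$ with the smallness of $\widetilde\mu$ taming the $N^{-1}\cN^3$ feedback, or (ii) prove an a priori $\cN^3$-bound first and substitute. Option (i) is more robust and is the one I would pursue; the bookkeeping of which $\lambda\varepsilon^{-\upnu}(1+t)^{\upeta}$ prefactors appear is tedious but routine given Proposition~\ref{prop:bounds_on_fluctuation_Hamiltonian} and Lemma~\ref{lem:pair_excitation_estimates}.
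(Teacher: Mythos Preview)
Your approach is fundamentally different from the paper's, and it has a genuine gap.

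The paper's proof is purely \emph{algebraic}, not dynamical. It exploits the explicit factorization
\[
\cU_{\mathrm{fluc}}(t;0)=e^{\cB(k_t)}\,e^{\cA(\sqrt{N}\phi_t)}\,e^{-it\cH_N/\varepsilon}\,e^{-\cA(\sqrt{N}\phi_0)}\,e^{-\cB(k_0)}
\]
together with the crucial fact that $[\cH_N,\cN]=0$, so the middle factor $e^{-it\cH_N/\varepsilon}$ commutes with any function of $\cN$. One then uses only the Weyl conjugation identities (Lemma~\ref{lem:properties_of_weyl_operator}) and the Bogoliubov operator bound $e^{\cB(k)}\cN^j e^{-\cB(k)}\le C(\cN+1)^j$ (Lemma~\ref{lem:bogoliubov_conjugation_operator_inequality}). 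The argument is the one in \cite[Proposition~4.2]{benedikter2015quantitative}: writing $\cN^2\le (\cN-N)^2+2N\cN$ isolates the term $2N\,\cU^\ast\cN\cU$, and then $(\cN-N)$ behaves well under the Weyl shift because the $+N$ from $e^{-\cA_t}\cN e^{\cA_t}=\cN-\sqrt{N}\varphi_t+N$ cancels. The full many-body evolution is passed through for free since it preserves $\cN$. No differential inequality, no Gr\"onwall, and the constant $C$ is manifestly independent of $t$.

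Your proposed Gr\"onwall route runs into two concrete problems. First, the lemma asserts an \emph{operator inequality} relating $\cU^\ast\cN^2\cU$ to $\cU^\ast\cN\cU$ at the \emph{same} time $t$, plus initial-data terms; a Gr\"onwall argument would instead bound $\langle\Psi_{\mathrm{fluc},t},\cN^2\Psi_{\mathrm{fluc},t}\rangle$ by initial data alone, with a time-growing constant, which is a different (and for the application weaker) statement. Second, and more seriously, Lemma~\ref{lem:fluctuation_number_squared} is itself \emph{invoked inside} the Gr\"onwall argument of Proposition~\ref{prop:bounds_on_growth_of_fluctuation} precisely to control the $\widetilde\mu^2\cN^2/N$ terms appearing in the commutator bound~\eqref{est:bound_for_commutator_of_fluctuation_Hamiltonian_with_number}. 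If your proof of the lemma required a Gr\"onwall of the same type (closing a system containing $\cN^2$, $\cN\cH_N$, etc.), the overall argument would be circular. The hierarchy-closure obstacle you flag (the $\cN^3/N$ feedback) is real, but the deeper issue is that you have missed the algebraic shortcut that makes the lemma a two-line consequence of the Weyl and Bogoliubov conjugation formulas.
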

\begin{proof}
    The proof follows from Lemmas~\ref{lem:properties_of_weyl_operator} and \ref{lem:bogoliubov_conjugation_operator_inequality}. See \cite[Proposition 4.2]{benedikter2015quantitative}.
\end{proof}

\begin{prop}\label{prop:bounds_on_growth_of_fluctuation}
    Suppose $\phi_t=\phi_{N, t}^\varepsilon$ is a solution to Equation \eqref{eq:modified_Gross--Pitaevskii} with initial data $\weight{\varepsilon\grad}^4\phi^\init\in L^2(\R^3)$ holds uniformly in $\varepsilon$. Suppose $k_{t}=k_{N, t}^\varepsilon$ is given by Equation \eqref{def:pair_excitation_function} with $\ell=\varepsilon^4$ for the necessary regimes.
    Then there exists $C, M>0$, such that we have the following operator bounds
    \begin{align}\label{equ:operator bounds,HN}
        \inprod{\Psi_{\mathrm{fluc}, t}}{\cN\, \Psi_{\mathrm{fluc}, t}}+ \inprod{\Psi_{\mathrm{fluc}, t}}{\cH_N\, \Psi_{\mathrm{fluc}, t}} \le C \exp(C\lambda \ve^{-M}\lra{t}^{M})\ .
    \end{align}
\end{prop}

\begin{proof}
    Recall from Expression~\eqref{def:renormalized_fluctuation_Hamiltonian} that $\widetilde\cH_{\mathrm{fluc}}$ and $\cH_{\mathrm{fluc}}$ differs by a scalar quantity. In particular, their dynamics differs by a phase factor. Hence, from the perspective of the time-independent observable, the two dynamics are equal, i.e.
    \begin{align*}
        \inprod{\Xi_0}{\cU_{\mathrm{fluc}}^\ast(t; 0)\cN\cU_{\mathrm{fluc}}(t; 0)\Xi_0}=\inprod{\Xi_0}{\widetilde\cU_{\mathrm{fluc}}^\ast(t; 0)\cN\widetilde\cU_{\mathrm{fluc}}(t; 0)\Xi_0}
    \end{align*}
    where $\widetilde\cU_{\mathrm{fluc}}(t; 0)$ is the dynamics generated by $\widetilde\cH_{\mathrm{fluc}}$.

    Using Inequality~\eqref{est:upper_bound_for_fluctuation_Hamiltonian} established in Proposition~\ref{prop:bounds_on_fluctuation_Hamiltonian}, we immediately arrive at
    \begin{align}\label{est:energy_bounded_by_fluc_energy}
        \cH_N \lesssim \widetilde\cH_{\mathrm{fluc}}(t)+ \lambda\ve^{-M}\lra{t}^{-M} \(\frac{\cN^2}{N}+\cN+1\)\ .
    \end{align}
    In particular, by Lemma~\ref{lem:fluctuation_number_squared}, we have
    \begin{align}\label{est:energy_bounded_by_fluc_energy2}
        \inprod{\Psi_{\mathrm{fluc}, t}}{\cH_N\Psi_{\mathrm{fluc}, t}}
        \lesssim& \inprod{\Xi_0}{\widetilde\cU_{\mathrm{fluc}}(t; 0)^\ast\widetilde\cH_{\mathrm{fluc}}(t)\widetilde\cU_{\mathrm{fluc}}(t; 0)\Xi_0} \\
       & +\lambda\ve^{-M}\lra{t}^{-M} \inprod{\Psi_{\mathrm{fluc}, t}}{(\cN+1)\Psi_{\mathrm{fluc}, t}}\ .\notag
    \end{align}
    By differentiating the expected number of the fluctuation and applying Inequality~\eqref{est:bound_for_commutator_of_fluctuation_Hamiltonian_with_number}, Lemma~\ref{lem:fluctuation_number_squared}, and Inequality~\eqref{est:energy_bounded_by_fluc_energy}, we get
    \begin{align*}
        \frac{\d}{\d t}\inprod{\Psi_{\mathrm{fluc}, t}}{\cN\,\Psi_{\mathrm{fluc}, t}}
        \lesssim& \frac{1}{\varepsilon}\inprod{\Xi_0}{\widetilde\cU_{\mathrm{fluc}}^\ast(t; 0)\widetilde\cH_{\mathrm{fluc}}(t)\widetilde\cU_{\mathrm{fluc}}(t; 0)\Xi_0} \\
       & +\lambda \varepsilon^{-M-1}\lra{t}^{M} \inprod{\Psi_{\mathrm{fluc}, t}}{(\cN+1)\Psi_{\mathrm{fluc}, t}}\ .
    \end{align*}

    Next, let us estimate the growth of $\widetilde\cH_{\mathrm{fluc}}(t)$ with respect to the fluctuations. By differentiating the expectation value of $\widetilde\cH_{\mathrm{fluc}}(t)$ and applying Inequality~\eqref{est:bound_for_time-derivative_of_fluctuation_Hamiltonian}, Lemma~\ref{lem:fluctuation_number_squared}, and Inequality~\eqref{est:energy_bounded_by_fluc_energy}, we obtain
    \begin{align*}
       & \frac{\d}{\d t}\inprod{\Xi_0}{\widetilde\cU_{\mathrm{fluc}}^\ast(t; 0)\widetilde\cH_{\mathrm{fluc}}(t)\widetilde\cU_{\mathrm{fluc}}(t; 0)\Xi_0}\\
        &\, =\inprod{\Xi_0}{\widetilde\cU_{\mathrm{fluc}}^\ast(t; 0)\dot{\widetilde\cH}_{\mathrm{fluc}}(t)\widetilde\cU_{\mathrm{fluc}}(t; 0)\Xi_0}\\
        &\lesssim\, \frac{1}{\varepsilon}\inprod{\Xi_0}{\widetilde\cU_{\mathrm{fluc}}^\ast(t; 0)\widetilde\cH_{\mathrm{fluc}}(t)\widetilde\cU_{\mathrm{fluc}}(t; 0)\Xi_0} +\lambda\ve^{-M-1}\lra{t}^{M} \inprod{\Xi_0}{\widetilde\cU_{\mathrm{fluc}}^\ast(t; 0)\(\cN+1\)\widetilde\cU_{\mathrm{fluc}}(t; 0)\Xi_0}.
    \end{align*}
     Combining the above estimates, we have that
    \begin{align*}
        &\frac{\d}{\d t}\inprod{\Xi_0}{\widetilde\cU_{\mathrm{fluc}}^\ast(t; 0)\(\widetilde\cH_{\mathrm{fluc}}(t)+2\lambda \varepsilon^{-M}\lra{t}^{M}\(\cN+1\)\)  \widetilde\cU_{\mathrm{fluc}}(t; 0)\Xi_0}\\
        &\lesssim\, \frac{1}{\varepsilon}\(1+\lambda \varepsilon^{-M}\lra{t}^{M}\)\inprod{\Xi_0}{\widetilde\cU_{\mathrm{fluc}}^\ast(t; 0)\widetilde\cH_{\mathrm{fluc}}(t)\widetilde\cU_{\mathrm{fluc}}(t; 0)\Xi_0} \\
        &\quad +\lambda^2 \varepsilon^{-3M}\lra{t}^{3M}\inprod{\Xi_0}{\widetilde\cU_{\mathrm{fluc}}^\ast(t; 0)\(\cN+1\)\widetilde\cU_{\mathrm{fluc}}(t; 0)\Xi_0}\\
        &\lesssim\,  \lambda\ve^{-2M}\lra{t}^{2M}\inprod{\Xi_0}{\widetilde\cU_{\mathrm{fluc}}^\ast(t; 0)\(\widetilde\cH_{\mathrm{fluc}}(t)+2\lambda \varepsilon^{-M}\lra{t}^{M}\(\cN+1\)\)  \widetilde\cU_{\mathrm{fluc}}(t; 0)\Xi_0}.
    \end{align*}

    Finally, by Gr\"onwall's lemma and Inequality~\eqref{est:upper_bound_for_fluctuation_Hamiltonian}, we arrive at the result
    \begin{align}\label{equ:operator bound,HN,N}
        &\inprod{\Xi_0}{\widetilde\cU_{\mathrm{fluc}}^\ast(t; 0)\(\widetilde\cH_{\mathrm{fluc}}(t)+2\lambda \varepsilon^{-M}\lra{t}^{M}\(\cN+1\)\)  \widetilde\cU_{\mathrm{fluc}}(t; 0)\Xi_0}\\
        \lesssim& e^{C\lambda \varepsilon^{-2M}\lra{t}^{2M}}\inprod{\Xi_0}{\(\widetilde\cH_{\mathrm{fluc}}(0)+\lambda \varepsilon^{-M}\lra{t}^{M}\(\cN+1\)\)  \Xi_0}\notag\\
        \lesssim& e^{C\lambda \varepsilon^{-3M}\lra{t}^{3M}}\inprod{\Xi_0}{\(\cH_N+\frac{\cN^2}{N}+ (\cN+1)\)  \Xi_0}\ .\notag
    \end{align}
    Combining this with Inequality~\eqref{est:energy_bounded_by_fluc_energy2}, that is,
    \begin{align*}
        -\lambda\ve^{-M}\lra{t}^{-M} \inprod{\Psi_{\mathrm{fluc}, t}}{(\cN+1)\Psi_{\mathrm{fluc}, t}}
        \le \inprod{\Xi_0}{\widetilde\cU_{\mathrm{fluc}}(t; 0)^\ast\widetilde\cH_{\mathrm{fluc}}(t)\widetilde\cU_{\mathrm{fluc}}(t; 0)\Xi_0},
    \end{align*}
   we arrived at the estimate
    \begin{align}\label{equ:operator bound,N}
        \inprod{\Psi_{\mathrm{fluc}, t}}{(\cN+1)\Psi_{\mathrm{fluc}, t}}
        \lesssim e^{C\lambda \varepsilon^{-4M}\lra{t}^{4M}}
        \inprod{\Xi_0}{\(\cH_N+\frac{\cN^2}{N}+(\cN+1)\)  \Xi_0}\ .
    \end{align}
    Then the desired estimate for the energy \eqref{equ:operator bounds,HN} follows immediately from Inequality~\eqref{est:energy_bounded_by_fluc_energy2}, \eqref{equ:operator bound,HN,N}, and \eqref{equ:operator bound,N}, in which we replaced $4M$ by $M$ for simplicity.
\end{proof}

%% ============================================================================
\section{Semiclassical Limit of the Modified Gross--Pitaevskii Equation}\label{section:Semiclassical Limit of the Modified Gross--Pitaevskii Equation}
%% ============================================================================

In this section, we study the semiclassical limit of the modified Gross--Pitaevskii equation~\eqref{eq:modified_Gross--Pitaevskii}. For the reader's convenience, we recall the Cauchy problem
\begin{equation}\label{equ:N-hartree equation}
\left\{\begin{aligned}
i\varepsilon\,\bd_t\phi  =& -\tfrac{\varepsilon^2}{2}\lapl \phi + F(\phi)\phi  \\
\phi(0)=&\phi^{\init}
\end{aligned}
\right.
\ ,
\end{equation}
where $F(\phi)=K\ast\n{\phi}^2$ and $K(x)=
        \lambda(N) (N^{\beta}\varepsilon^{2\kappa})^3 v(N^\beta \varepsilon^{2\kappa} x)f_{L}^{\mu}(N^\beta \varepsilon^{2\kappa} x)$.
Here, $f_{L}^{\mu}(x)=f_{L}^{\mu}(\n{x})$ is the positive ground state of the Neumann problem (cf. Appendix~\ref{appendix:neumann})
\begin{equation}\label{equ:scattering equation,neumann}
\left\{
\begin{aligned}
&\lrs{-\mu\, \lapl+ v}f_{L}^{\mu}=E_{\mu}^{L}f_{L}^{\mu},\quad \text{for $|x|<L$}\\
&\text{$f_{L}^{\mu}(L)=1$ and $\pa_{r}f_{L}^{\mu}(L)=0$}
\end{aligned}
\right. \ ,
\end{equation}
where the parameters $\mu=N^{1-\be}\ve^{2(1-\ka)}/\la$ and $L=N^\beta\ve^{2\ka}l=N^\beta\ve^{2\ka+4}$.
Let us also recall the range of the parameters
\begin{align*}
\be\geq 1,\quad \ka\in [0,1],\quad \text{ and }
\la(N)\in \lr{1,\(\ln N\)^\upalpha},\quad \upalpha\in(0,1) \ .
\end{align*}

More precisely, we consider the semiclassical limit of the hydrodynamical formulation of Equation~\eqref{equ:N-hartree equation}. Suppose $\phi^\varepsilon_{N, t}$ is a solution to Equation~\eqref{equ:N-hartree equation} and define the following local density quantities
    \begin{align*}
        &\rho^{\varepsilon}(t, x)=\, |\phi^\varepsilon_{N, t}|^2\ , &
        &J^{\varepsilon}_{\jj}(t, x)=\,  \Im{\varepsilon\bd_{\jj}\phi^\varepsilon_{N, t}\, \conj{\phi^\varepsilon_{N, t}}}\ , \\
        &\sigma^\varepsilon_{\jj k}(t, x)=\, \re\(\varepsilon\bd_{\jj}\phi^\varepsilon_{N, t}\, \conj{\varepsilon\bd_{k}\phi^\varepsilon_{N, t}}\), &  &\sigma^\varepsilon_{0, \jj}(t, x)= \re\(\varepsilon\bd_{0}\phi^\varepsilon_{N, t}\ , \conj{\varepsilon\bd_{\jj}\phi^\varepsilon_{N, t}}\)\ ,\\
        &P^\varepsilon(t, x)=\, \tfrac{\varepsilon^2}{4}\lapl \rho^{\varepsilon}_t-\tfrac{1}{2} \(K\ast\rho^{\varepsilon}_t\)\rho^{\varepsilon}_t\ , &
        &e^\varepsilon(t, x)=\, \tfrac{1}{2}|\varepsilon\grad\phi^\varepsilon_{N, t}|^2+\tfrac{1}{2} (K\ast \rho^{\varepsilon}_t)\rho^{\varepsilon}_t\ ,
    \end{align*}
and lastly the error terms
\begin{align*}
    &l_{\jj}^\varepsilon(t, x)=\textstyle\frac12 \intd K(x-y)\(\bd_{\jj}\rho^{\varepsilon}_t(y)\rho^{\varepsilon}_t(x)-\rho^{\varepsilon}_t(y)\bd_{\jj}\rho^{\varepsilon}_t(x)\)\dd y\\
    &l_{0}^\varepsilon(t, x)=\textstyle\frac12 \intd K(x-y)\(\bd_{0}\rho^{\varepsilon}_t(y)\rho^{\varepsilon}_t(x)-\rho^{\varepsilon}_t(y)\bd_{0}\rho^{\varepsilon}_t(x)\)\dd y
\end{align*}
where the indices $\jj=1, 2, 3$  with $\bd_0 = \bd_t$ and $\bd_{\jj}=\bd_{x_{\jj}}$. We write $\vect{J}^\varepsilon=(J_1^\varepsilon, J_2^\varepsilon, J_3^\varepsilon)$, similarly for $\vect{l}^\varepsilon$, and $\vect{\sigma}^\varepsilon = (\sigma^\varepsilon_{\jj k})_{\jj, k \in\{1, 2, 3\}}$.

%It is straightforward to verify that the equations for $\rho^\varepsilon, \vect{J}^\varepsilon$ and $e^\varepsilon$ are given by
%\begin{subequations}\label{eq:Hartree-continuity}
%    \begin{align}
%        &\bd_t\rho^\varepsilon +\grad\cdot \vect{J}^\varepsilon = 0 \label{eq:N-density}\\
%        &\bd_t \vect{J}^\varepsilon +\grad_x\cdot \(\vect{\sigma}^\varepsilon-P^\varepsilon\, \vect{I}\)+\vect{l}^\varepsilon = 0 \label{eq:N-momentum}\\
%        &\bd_t e^{\varepsilon}-\grad_x\cdot \sigma_0^\varepsilon-l_0^\varepsilon=0
%    \end{align}
%\end{subequations}
%where $\vect{I}$ is the $3\times3$ identity matrix (cf. \cite{grillakis2011second, grillakis2013pair}). From Equations~\eqref{eq:Hartree-continuity}, we can immediately recover the usual conservation of mass, momentum, and energy since
%\begin{align*}
%    \intd l_\jj^\varepsilon(t, x)\dd x = 0 \quad \text{ and } \quad \intd l_0^\varepsilon(t, x)\dd x = 0\ .
%\end{align*}

\subsection{Limiting equations}  Although the family of modified Gross--Pitaevskii equation \eqref{equ:N-hartree equation} with different parameters has almost the same nonlinear structure, their limiting behaviors are completely different for different choices of the parameters $\ka$ and $\be$. It is not immediately obvious how the choice of parameters affects the limiting behavior. One of our goals in this section is to answer this question.

\textbf{The \texorpdfstring{$\be=1$}{beta=1} regime.} According to the different nonlinear effects, we divide them into the following three typical cases. See also the monograph \cite{carles2021semi} for more details on the classification by WKB analysis.

\begin{enumerate}[$(i)$]
\item $\ka=1$ (supercritical case, $\kappa>\frac12$).

\begin{itemize}
    \item For $\la(N)\equiv 1$, the limiting equation is
    the compressible Euler equation with the scattering length $\mathfrak{a}_{0}$:
    \begin{equation}\label{equ:euler equation,a0}
    \begin{cases}
        \partial _{t}\rho +\nabla \cdot \left( \rho\, \bu\right) =0 \\
        \partial _{t}\bu+(\bu\cdot \nabla )\bu+4\pi \mathfrak{a}_{0}\nabla \rho =0
    \end{cases}\ .
    \end{equation}
    \item For $\la(N)\equiv(\ln N)^\upalpha$, the limiting equation is
    the compressible Euler equation with the capacity $\mathfrak{c}_{0}$:
    \begin{equation}\label{equ:euler equation,c0}
    \begin{cases}
    \partial _{t}\rho +\nabla \cdot \left( \rho\, \bu\right) =0 \\
    \partial _{t}\bu+(\bu\cdot \nabla )\bu+4\pi \mathfrak{c}_{0}\nabla \rho =0
    \end{cases} \ .
    \end{equation}
\end{itemize}

\item $\ka=\frac{1}{2}$ (critical case).

For both $\la(N)\equiv 1$ and $\la(N)\equiv (\ln N)^\upalpha$,
the limiting equation is the eikonal system with the capacity constant $\mathfrak{c}_{0}$:
 \begin{equation}\label{equ:eikonal equation,main}
 \begin{cases}
    \pa_{t}a+\nabla \varphi_{\mathrm{eik}}\cdot \nabla a+\tfrac{1}{2}a\,\Delta \varphi_{\mathrm{eik}}=-i4\pi \mathfrak{c}_{0}|a|^{2}a\\[3pt]
    \pa_{t}\varphi_{\mathrm{eik}}+\tfrac{1}{2}|\nabla \varphi_{\mathrm{eik}}|^{2}=0
 \end{cases}\ .
 \end{equation}
\item $\ka=0$ (subcritical case, $\kappa<\frac12$).

For both $\la(N)\equiv 1$ and $\la(N)\equiv (\ln N)^\upalpha$,
the limiting equation is the trivial eikonal system (or pressureless compressible Euler equation):
 \begin{equation}\label{eq:trivial_eikonal}
    \begin{cases}
        \pa_{t}a+\nabla \varphi_{\mathrm{eik}}\cdot \nabla a+\tfrac{1}{2}a\, \Delta \varphi_{\mathrm{eik}}=0\\[3pt]
        \pa_{t}\varphi_{\mathrm{eik}}+\tfrac{1}{2}|\nabla \varphi_{\mathrm{eik}}|^{2}=0
    \end{cases}\ .
 \end{equation}
\end{enumerate}
\begin{remark}In the $\beta>1$ regime, for all $\ka\in [0,1]$ and $\la(N)\in \lr{1,(\ln N)^\upalpha}$,
the limiting equation is the trivial eikonal system~\eqref{eq:trivial_eikonal}.
This is much expected; since for $\beta>1$, heuristically, the particle scattering effects become negligible at this scale.
\end{remark}

The plan for the remainder of the section is to prove the convergence of the mass, momentum, and energy densities associated with solution $\phi^\varepsilon_{N, t}$ to Equation~\eqref{equ:N-hartree equation}
in the semiclassical limit as $\varepsilon\rightarrow 0$.
In Section~\ref{section:Modulated Energy Method}, we deal with the supercritical case $\ka=1$ (and $\beta=1$) by the method of modulated energy, and, in~Section \ref{section:WKB Analysis Method},  we handle the critical and subcritical cases $\ka=\frac{1}{2}, 0$ by the method of WKB analysis.

\subsection{The method of modulated energy}\label{section:Modulated Energy Method}
In this section, we treat the case $\be=1$, $\ka=1$ by the method of modulated energy, which is a powerful tool for the study of the semiclassical limit of nonlinear Schr\"odinger equations.

Consider the modulated energy functional
\begin{align}\label{equ:modulated energy}
\cM\lrc{\phi_{N}^{\ve},\rho,\bu}(t)=& \frac{1}{2}\int_{\R^{3}}\n{(i\ve\nabla+\bu_t)\phi_{N, t}^{\ve}}^2\d x
+\cM_{\mathrm{pot.}}\lrc{\phi_{N}^{\ve},\rho}(t)
\end{align}
where
\begin{align*}
    \cM_{\mathrm{pot.}}[\phi_{N}^{\ve},\rho](t):=&\,  \frac12\intd (K\ast\rho^\varepsilon_t)\rho^\varepsilon_t\dd x
      +
    \left\{
    \begin{aligned}
         \textstyle  4\pi \mathfrak{a}_{0},\,\, &\text{GP} \\
         \textstyle  4\pi \mathfrak{c}_{0},\,\, &\text{HC}
    \end{aligned}
    \right\}\times
     \frac12\intd \(\rho_t^2-2\,\rho_t\rho_t^\varepsilon\)\dd x\ .
\end{align*}
Here, $K(x)=\la(N)(N\ve^{2})^{3}(vf_{L}^{\mu})(N\ve^{2}x)$. This modulated energy functional is similar to the one in \cite{chen2023derivation} but carries an extra scattering function $f_{L}^{\mu}$ satisfying the scattering equation \eqref{equ:scattering equation,neumann}.

Here is the main proposition of the section.
\begin{prop}\label{prop:semiclassical limit and the modulated energy}
Let $(\be, \ka)=(1, 1)$, $\phi_{N, t}^{\ve}$ be the solution to Equation~\eqref{equ:N-hartree equation} with the initial data $\phi^\init$, and $(\rho,\bu)$ be the solution to the compressible Euler equation with the initial data $(\rho^{\mathrm{in}},\bu^\init)\in H^5(\R^3)\times H^4(\R^3; \R^3)$.
Then for any $T_0>0$ before the blow-up time $T_{\ast}$ of Equation~\eqref{equ:euler equation,a0} (or Equation~\eqref{equ:euler equation,c0}), we have the following convergence of mass and momentum densities as $\frac{1}{N}+\varepsilon\rightarrow 0$:
\begin{enumerate}[$(i)$]
\item In the case $\la(N)=1$, the limiting equation is given by Equation~\eqref{equ:euler equation,a0} where we see the emergence of the scattering length $\mathfrak{a}_{0}$ in the pressure term. Moreover, there exists $C=C_{T_0}>0$ such that
\begin{align}
    &\Nrm{\rho^{\ve}-\rho}{L^{\infty}_t([0,T_{0}])L^{2}_x}^{2}\le C\(
    \cM \lrc{\phi_{N}^{\ve},\rho,\bu}(0)+\ve^{2}+\frac{1}{L}\)\ ,\label{equ:convergence,uniform in h,a0, hartree-euler,density}\\
    &\Nrm{\vect{J}^{\ve}-\rho \bu}{L^{\infty}_t([0,T_{0}])L^{1}_x}^{2}\le  C\( \cM \lrc{\phi_{N}^{\ve},\rho, \bu}(0)+\ve^{2}+\frac{1}{L}\)\label{equ:convergence,uniform in h,a0, hartree-euler,moment} \ ,\\
    &\Nrm{|i\ve\nabla \phi_{N}^{\ve}|^{2}-\rho\n{\bu}^{2}}{L^{\infty}([0,T_{0}])L^{1}_x}^{2}\le C\(
    \cM \lrc{\phi_{N}^{\ve},\rho,\bu}(0)+\ve^{2}+\frac{1}{L}\)\ .
\end{align}
\item In the case $\la(N)\to \infty$ with $\la(N)= \(\ln N\)^\upalpha$, the limiting equation is given by Equation~\eqref{equ:euler equation,c0} where we see the emergence of the electrostatic capacity $\mathfrak{c}_{0}$ in the pressure term. Moreover, there exists $C=C_{T_0}>0$ such that
\begin{align}
&\Nrm{\rho^{\ve}-\rho}{L^{\infty}_t([0,T_{0}])L^{2}_x}^{2}\le C\(
\cM \lrc{\phi_{N}^{\ve},\rho,\bu}(0)+\ve^{2}+\frac{\la(N)}{L}+\eta\lrs{\tfrac{1}{\la(N)}}\)\ ,\label{equ:convergence,uniform in h,hartree-euler,density}\\
&\Nrm{\vect{J}^{\ve}-\rho\, \bu}{L^{\infty}_t([0,T_{0}])L^{1}_x}^{2}\le C\( \cM \lrc{\phi_{N}^{\ve},\rho,\bu}(0)+\ve^{2}+\frac{\la(N)}{L}+
\eta\lrs{\tfrac{1}{\la(N)}}\)\label{equ:convergence,uniform in h,c0, hartree-euler,moment}\ ,\\
&\begin{multlined}[t][.87\textwidth]
    \Nrm{|i\ve\nabla \phi_{N}^{\ve}|^{2}-\rho\n{\bu}^{2}}{L^{\infty}_t([0,T_{0}])L^{1}_x}^{2}\le C\bigg(
\cM \lrc{\phi_{N}^{\ve},\rho,\bu}(0)+\ve^{2}+\frac{\la(N)}{L}+
\eta\lrs{\tfrac{1}{\la(N)}}\bigg)\ .\label{equ:convergence,uniform in h,c0, hartree-euler,kinetic}\end{multlined}
\end{align}
 Here, the convergence rate $\eta(\ve)$, which tends to zero as $\ve\to 0$, depends on the profile near the zero point of the potential $v(x)$.
\end{enumerate}
\end{prop}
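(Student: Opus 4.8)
The plan is to run the modulated energy method: derive a Grönwall-type inequality for $\cM\lrc{\phi_{N}^{\ve},\rho,\bu}(t)$ and then read off the three stated convergences from a bound on $\cM$. Fix $T_0<T_\ast$; by the hyperbolic well-posedness theory quoted in the introduction the Euler solution $(\rho,\bu)$ enjoys the claimed regularity on $[0,T_0]$, so $\Nrm{\bu}{L^\infty_t W^{1,\infty}_x}$, $\Nrm{\rho}{L^\infty_t H^5_x}$ and the companion norms are finite constants depending only on $T_0$ and the data; these enter as Grönwall coefficients. Since $(\beta,\kappa)=(1,1)$ one has $\widetilde\mu=1$ and $\mu=1/\la(N)$, and I first record the two reductions that turn the interaction terms into the Euler pressure. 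First, $\intd K(x)\dd x=\la(N)\intd v(y)f_{L}^{\mu}(y)\dd y = 4\pi\asc_0^{\mu}\bigl(1+\mathcal{O}(\asc_0^{\mu}/L)\bigr)$ by the Neumann analysis of Lemma~\ref{lem:correlation_structure} and Lemma~\ref{lem:appendix_version_correlation_structures}; combined with the approximate-identity estimate of Lemma~\ref{lem:quantative estimate for identity approximation}, $K$ acts like $4\pi\asc_0^{\mu}\,\delta_0$ up to $L^p$ errors of size $\tfrac{\la(N)}{L}$ tested against the uniformly bounded Sobolev norms of $\phi^\varepsilon_{N,t}$ provided by Appendix~\ref{appendix:Dispersive PDEs}. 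Second, $\asc_0^{\mu}=\asc_0$ identically when $\mu=1$ (Remark~\ref{remark:scattering_length}), whereas $\asc_0^{\mu}\to\mathfrak{c}_0$ with rate $\eta(1/\la(N))$ when $\la(N)\to\infty$ (Lemma~\ref{lem:semiclassical_limit_of_scattering_length}, Proposition~\ref{prop:quantitative_scattering_length}); thus the coupling constant $4\pi\asc_0^{\mu}$ is $4\pi\asc_0$ in the GP case and $4\pi\mathfrak{c}_0$ in the HC case, up to the error $\eta(1/\la)$.

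\textbf{The differential inequality.} Because the Hamiltonian $\tfrac12\intd\n{\varepsilon\grad\phi_{N,t}^{\ve}}^2\dd x+\tfrac12\intd(K\ast\rho^\varepsilon_t)\rho^\varepsilon_t\dd x$ is conserved along \eqref{equ:N-hartree equation}, differentiating $\cM$ annihilates those two terms, leaving
\[
\tfrac{d}{dt}\cM(t)=\tfrac{d}{dt}\!\left[\operatorname{Re}\!\intd\conj{i\varepsilon\grad\phi_{N,t}^{\ve}}\cdot\bu_t\,\phi_{N,t}^{\ve}\dd x+\tfrac12\intd\n{\bu_t}^2\rho_t^\varepsilon\dd x+\tfrac{c}{2}\intd\rho_t^2\dd x-c\intd\rho_t\rho_t^\varepsilon\dd x\right],
\]
where $c=4\pi\asc_0$ (GP) or $c=4\pi\mathfrak{c}_0$ (HC). Inserting the balance equations \eqref{eq:Hartree-continuity} for $(\rho^\varepsilon,\vect{J}^\varepsilon,e^\varepsilon)$ and the Euler equations \eqref{eq:euler_equation_velocity_form} for $(\rho,\bu)$ and performing the standard algebraic rearrangement, the leading contributions organize into: (a) a commutator-type term $-\intd\grad\bu_t:\operatorname{Re}\bigl(\conj{(i\varepsilon\grad+\bu_t)\phi_{N,t}^{\ve}}\otimes(i\varepsilon\grad+\bu_t)\phi_{N,t}^{\ve}\bigr)\dd x$, bounded by $2\Nrm{\grad\bu_t}{L^\infty}\cM(t)$; (b) pressure terms of the form $\intd c\,(\rho_t-\rho_t^\varepsilon)\grad\cdot(\cdots)\dd x$ together with the quantum-pressure contribution $\sim\varepsilon^2\intd\lapl\rho_t^\varepsilon(\cdots)\dd x$, which after the approximate-identity reduction and the uniform $\weight{\varepsilon\grad}^4$ bounds are bounded by $C\cM(t)+C(\varepsilon^2+\tfrac{\la}{L}+\eta(\tfrac1\la))$; and (c) the genuine error force $\vect{l}^\varepsilon_t$ tested against $\bu_t$, which is exactly the discrepancy $\intd K(x-y)[\,\cdots\,]\dd y$ and is again controlled by $C(\varepsilon^2+\tfrac{\la}{L}+\eta(\tfrac1\la))$ via Lemmas~\ref{lem:quantative estimate for identity approximation} and \ref{lem:appendix_version_correlation_structures} (in the $\la\equiv1$ case $\eta\equiv0$ and $\tfrac{\la}{L}=\tfrac1L$). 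Altogether
\[
\tfrac{d}{dt}\cM(t)\le C_{T_0}\,\cM(t)+C_{T_0}\bigl(\varepsilon^2+\tfrac{\la(N)}{L}+\eta(\tfrac1{\la(N)})\bigr).
\]
Since $\cM$ need not be pointwise nonnegative, I apply Grönwall to $\cM(t)+C'(\varepsilon^2+\tfrac{\la}{L}+\eta)$, using the coercivity $\cM_{\mathrm{pot.}}(t)\ge\tfrac{c}{2}\Nrm{\rho_t^\varepsilon-\rho_t}{L^2}^2-C(\tfrac{\la}{L}+\eta)$ — itself a consequence of the two reductions above and $\Nrm{\rho^\varepsilon_t}{L^2}\lesssim1$ — to get $\sup_{[0,T_0]}\cM(t)\le C_{T_0}\bigl(\cM(0)+\varepsilon^2+\tfrac{\la}{L}+\eta(\tfrac1\la)\bigr)$.

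\textbf{Extracting the densities.} The $L^2$ bound on $\rho^\varepsilon-\rho$ is immediate from the coercivity inequality. For the momentum, $\vect{J}_t^\varepsilon-\rho_t^\varepsilon\bu_t=-\operatorname{Re}\bigl(\conj{\phi_{N,t}^{\ve}}\,(i\varepsilon\grad+\bu_t)\phi_{N,t}^{\ve}\bigr)$, so Cauchy--Schwarz gives $\Nrm{\vect{J}_t^\varepsilon-\rho_t^\varepsilon\bu_t}{L^1}\le\Nrm{\phi_{N,t}^{\ve}}{L^2}\bigl(2\cM(t)\bigr)^{1/2}$, and $\Nrm{\rho_t^\varepsilon\bu_t-\rho_t\bu_t}{L^1}\le\Nrm{\bu_t}{L^\infty}\Nrm{\rho_t^\varepsilon-\rho_t}{L^1}$ is controlled by the density estimate (passing from $L^2$ to $L^1$ using that $\rho_t^\varepsilon,\rho_t$ have unit mass and uniformly bounded higher Sobolev norms). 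For the kinetic density, expand $\n{i\varepsilon\grad\phi_{N,t}^{\ve}}^2=\n{(i\varepsilon\grad+\bu_t)\phi_{N,t}^{\ve}}^2-2\operatorname{Re}\bigl(\conj{(i\varepsilon\grad+\bu_t)\phi_{N,t}^{\ve}}\cdot\bu_t\phi_{N,t}^{\ve}\bigr)+\n{\bu_t}^2\rho_t^\varepsilon$; the first two summands integrate to $O(\cM(t)+\cM(t)^{1/2})$ by Cauchy--Schwarz, while $\n{\bu_t}^2\rho_t^\varepsilon-\rho_t\n{\bu_t}^2=\n{\bu_t}^2(\rho_t^\varepsilon-\rho_t)$ is again handled by the density estimate. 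Combining with the bound on $\sup_{[0,T_0]}\cM$ yields all three estimates of the proposition, with the stated error $\varepsilon^2$, $\tfrac1L$ (resp. $\tfrac{\la(N)}{L}$), and $\eta(\tfrac1{\la(N)})$.

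\textbf{Main obstacle.} The crux is steps (b)--(c): one must show that the microscopic interaction energy density $\tfrac12(K\ast\rho^\varepsilon)\rho^\varepsilon$ and the correction force $\vect{l}^\varepsilon$ collapse in the limit to \emph{precisely} the Euler pressure $2\pi\asc_0^{\mu}\rho^2\to2\pi\asc_0\rho^2$ (GP) or $2\pi\mathfrak{c}_0\rho^2$ (HC), with the advertised quantitative error. This is where the Neumann-problem eigenvalue and pointwise estimates together with the semiclassical asymptotics of the scattering length from Section~\ref{section:Scattering Function and its Semiclassical Analysis} are indispensable, and where the restriction $N\gtrsim\exp(\varepsilon^{-100/(1-\upalpha)})$ is used so that $\tfrac{\la(N)}{L}=\tfrac{(\ln N)^\upalpha}{N\varepsilon^{6}}$, together with the $\varepsilon$-negative regularity constants of Appendix~\ref{appendix:Dispersive PDEs}, remains negligible against the stated rates.
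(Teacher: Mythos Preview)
Your proposal is correct and follows essentially the same route as the paper: differentiate the modulated energy, use the continuity equations~\eqref{eq:Hartree-continuity} and the Euler system to organize $\tfrac{d}{dt}\cM$ into a commutator term, a quantum-pressure term of order $\varepsilon^2$, and an error $\mathrm{Er}=\mathrm{Er}_1+\mathrm{Er}_2$ handled by the approximate-identity lemma and the scattering-length asymptotics; then use coercivity of $\cM_{\mathrm{pot.}}$ (up to the same errors) and Grönwall. Two small remarks: for the momentum density the paper bounds $\Nrm{(\rho^\varepsilon-\rho)\bu}{L^1}\le\Nrm{\bu}{L^2}\Nrm{\rho^\varepsilon-\rho}{L^2}$ directly, which is simpler than your detour through $L^1$ of $\rho^\varepsilon-\rho$; and the exponential restriction $N\gtrsim\exp(\varepsilon^{-100/(1-\upalpha)})$ is not used here---the proposition is stated with the raw errors $\tfrac{\la(N)}{L}$ and $\eta$, and the restriction enters only later when assembling the main theorems.
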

\begin{proof}
    It suffices to consider the case $\la(N)\rightarrow \infty$, since the case $\la(N)= 1$ follows in a similar manner.
 The proof follows from \cite{chen2023derivation} with some slight modifications; nevertheless, we present it here for completeness.
    By a direct calculation (cf. \cite[Proposition 3.3]{chen2023derivation}), we have
    \begin{equation}
        \begin{aligned}[t]
            \frac{\d}{\d t}\cM\lrc{\phi_{N}^{\ve},\rho,\bu}(t)
        =&-\sum_{\jj=1}^{3}\sum_{k=1}^{3}\int_{\R^{3}}\pa_{k}u^{\jj}\operatorname{Re}
            \lrs{(i\ve\pa_{k}+u^{k})\phi_{N}^{\ve}\ol{\lrs{i\ve\pa_{\jj}+u^{\jj}}\phi_{N}^{\ve}}}\dd x\\
            &\quad\,-\frac{4\pi \mathfrak{c}_{0}}{2}\intd(\operatorname{div}\bu)(\rho^{\ve}-\rho)^{2}\dd x-\frac{\ve^{2}}{4}\intd
             \rho^{\ve}\Delta \operatorname{div}\bu \dd x+\mathrm{Er}
        \end{aligned}
    \end{equation}
    where $\bu = (u^1, u^2, u^3)$ and the error term is given by
    \begin{align*}
    \mathrm{Er}=\frac{1}{2}\intd (\operatorname{div}\bu) \(4\pi\mathfrak{c}_0\rho^\varepsilon-K\ast\rho^\varepsilon\)\rho^\varepsilon\dd x
    +\intd \bu\cdot \vect{l}^\varepsilon\dd x=:\mathrm{Er}_1+\mathrm{Er}_2\ .
    \end{align*}
    Then, by H\"{o}lder's inequality, we obtain
    \begin{equation}\label{equ:elementary estimate,evolution,modulated energy}
        \begin{aligned}[t]
            \frac{\d}{\d t}\cM\lrc{\phi_{N}^{\ve},\rho,\bu}(t)
            &\le C\nv{\nabla \bu}_{L^{\wq}_x}
            \lrs{\intd \n{(i\ve\nabla+\bu)\phi_{N}^{\ve}}^{2}\dd x+4\pi \mathfrak{c}_{0}\intd (\rho^{\ve}-\rho)^{2}\dd x}\\
            &\quad\, +\frac{\ve^{2}}{4}\nv{\rho^{\ve}}_{L^{1}_x}\nv{\Delta \operatorname{div}\bu}_{L^{\wq}_x}
            +\n{\mathrm{Er}}\ .
        \end{aligned}
    \end{equation}

    It remains to estimate the error term $\mathrm{Er}$. For the $\mathrm{Er}_{1}$ term, by H\"{o}lder's inequality, we obtain
    \begin{align}\label{equ:estimate,Er1}
    \n{\mathrm{Er}_{1}}\le& \Nrm{\operatorname{div} \bu}{L^{\infty}_x}
    \Nrm{\(4\pi\mathfrak{c}_0-\textstyle\int K\dd x\)\rho^\varepsilon}{L^2_x}\Nrm{\rho^\varepsilon}{L^2_x}\\
    &+\Nrm{\operatorname{div} \bu}{L^{\infty}_x}
    \Nrm{\(\textstyle\int K\dd x\, \delta_0-K\)\ast\rho^\varepsilon}{L^{3/2}_x}\Nrm{\rho^\varepsilon}{L^3_x}\notag
    \end{align}
    where $\delta_0$ is the Dirac delta function. On the one hand, by the Property~\eqref{part:approximation_of_scattering_length} of the scattering function, we have
    \begin{align*}
    \intd  K \dd x=\la(N)\intd  v(x)f_{L}^{\mu}(x)\dd x=4\pi \asc_{0}^{\mu}+\mathcal{O}\lrs{\frac{\la(N)}{L}}\ ,
    \end{align*}
    where $\mu=\la(N)^{-1}$ due to that $\be=\ka=1$.
    Using the fact that $\nv{\rho^{\ve}}_{L^{2}_x}\lesssim 1$, Lemma~\ref{lem:phi4_bound},  and
    \begin{align*}
        \n{\asc_{0}^{\mu}-\mathfrak{c}_{0}}\le \eta(\mu)\ ,
    \end{align*}
    which is given by Proposition \ref{prop:quantitative_scattering_length}, we get
    \begin{align}\label{equ:estimate,Er1,error,a0}
    \n{4\pi \mathfrak{c}_{0}-\textstyle\intd K\dd x}\Nrm{\rho^{\ve}}{L^{2}_x}\le C \(\frac{\la(N)}{L}+\eta(\mu)\)\ .
    \end{align}
    On the other hand, the identity approximation \eqref{equ:quantative estimate for identity approximation} in Lemma \ref{lem:quantative estimate for identity approximation} implies that
    \begin{align}\label{equ:estimate,Er1,error,identity}
    \bbn{\lrs{K-\delta_0\textstyle\intd K\dd x}*\rho^{\ve}}_{L^{\frac{3}{2}}_x}\lesssim&\, \frac{\la(N)}{N\ve^{3}}\Nrm{\lra{x} vf^{\mu}_L}{L^1_x}\Nrm{\lra{\varepsilon\nabla}\rho^{\ve}}{L^{\frac{3}{2}}_x}
    \lesssim  \frac{\la(N)}{N\ve^{4}}\ .
    \end{align}
    Combining estimates \eqref{equ:estimate,Er1}--\eqref{equ:estimate,Er1,error,identity}, we arrive at
    \begin{align}\label{equ:estimate,Er1,final}
    \n{\mathrm{Er}_{1}}\lesssim& \frac{\la(N)}{L}+\eta(\mu)+\frac{\la(N)}{N\ve^{6}}\lesssim \frac{\la(N)}{L}+\eta(\mu)\ .
    \end{align}

    For the $\mathrm{Er}_{2}$ term, we start by writing
    \begin{align*}
        \mathrm{Er}_2 = \sum^3_{\jj=1}\frac12 \intdd \n{x-y}K(x-y)\frac{u^\jj(x)-u^\jj(y)}{\n{x-y}}\bd_{\jj}\rho^{\varepsilon}_t(y)\rho^{\varepsilon}_t(x)\dd x\d y
    \end{align*}
    which then means
    \begin{align}
        \n{\mathrm{Er}_2}\lesssim \frac{\Nrm{\grad \bu}{L^\infty}}{N\varepsilon^3} \lambda(N)\Nrm{\n{x} v f^\mu_{L}}{L^1_x}\Nrm{\varepsilon\grad\rho^\varepsilon}{L^\frac32_x}\Nrm{\rho^\varepsilon}{L^3_x}\lesssim  \frac{\lambda(N)}{N\varepsilon^6}\ .
    \end{align}
    Hence we obtain the desired bound
    \begin{align}\label{equ:error estimate,Er}
        \n{\mathrm{Er}}\lesssim \frac{\la(N)}{L}+\eta(\mu)\ .
    \end{align}

    In a similar manner as the analysis on the error term $\mathrm{Er}_{1}$, we have the coercivity estimate for the modulated energy, that is,
    \begin{align}
        \cM\lrc{\phi_{N}^{\ve},\rho,\bu}(t)=&\, \frac{1}{2}\int_{\R^{3}}|(i\ve\nabla+\bu)\phi_{N}^{\ve}|^{2}\dd x
        +\frac{4\pi \mathfrak{c}_{0}}{2}\nv{\rho^{\ve}-\rho}_{L^{2}_x}^{2}\label{equ:coercive estimate,modulated energy}\\
        &+\frac{1}{2}\intd \((K-\mathfrak{c}_{0}\delta)*\rho^{\ve}\)
        \rho^{\ve}\dd x\notag\\
        \gtrsim&\,  \frac{1}{2}\int_{\R^{3}}|(i\ve\nabla+\bu)\phi_N^{\ve}|^{2}\dd x
        +\frac{4\pi \mathfrak{c}_{0}}{2}\Nrm{\rho^{\ve}-\rho}{L^{2}_x}^{2} -\frac{\la(N)}{L}-\eta(\mu)\ .\notag
    \end{align}
    Substituting the error term Estimate~\eqref{equ:error estimate,Er} on $\mathrm{Er}$ and the coercivity Estimate~\eqref{equ:coercive estimate,modulated energy} into Estimate~\eqref{equ:elementary estimate,evolution,modulated energy} on the evolution of the modulated energy, we obtain the bound
    \begin{align}
        \frac{\d}{\d t}\cM\lrc{\phi_{N}^{\ve},\rho,\bu}(t)\lesssim\cM\lrc{\phi_{N}^{\ve},\rho,\bu}(t)+ \ve^{2}+\frac{\la(N)}{L}+\eta(\mu)\ .
    \end{align}
    Then, by Gronwall's lemma, we get that
    \begin{align}\label{equ:upper bound for modulated energy}
        \cM\lrc{\phi_{N}^{\ve},\rho,\bu}(t)+\frac{\la(N)}{L}
        \leq \exp(CT_{0})
        \lrs{\cM\lrc{\phi_{N}^{\ve},\rho,\bu}(0)+\ve^{2}+\frac{\la(N)}{L}+\eta(\mu)}\ .
    \end{align}

    Finally, by \eqref{equ:upper bound for modulated energy} and the coercivity estimate~\eqref{equ:coercive estimate,modulated energy}, we immediately have
    \begin{align}\label{equ:quantitative estimate,moment}
    \Nrm{(i\ve\nabla+\bu)\phi_{N}^{\ve}}{L^{\infty}_t([0,T_{0}])L^{2}_x}^{2}\lesssim_{T_0} \cM\lrc{\phi_{N}^{\ve},\rho,\bu}(0)+\ve^{2}+\frac{\la(N)}{L}+\eta(\mu)\ ,
    \end{align}
    and the convergence of mass density
    \begin{align}\label{equ:quantitative estimate,mass}
    &\Nrm{\rho^{\ve}-\rho}{L^{\infty}_t([0,T_{0}])L^{2}_x}^{2}\lesssim_{T_0}\cM\lrc{\phi_{N}^{\ve},\rho,\bu}(0)+\ve^{2}+\frac{\la(N)}{L}+\eta(\mu)\ .
    \end{align}

    For the convergence of momentum density, by the triangle inequality, H\"{o}lder's inequality, and Estimates~\eqref{equ:quantitative estimate,moment}--\eqref{equ:quantitative estimate,mass}, we get
\begin{align*}
\Nrm{\vect{J}^{\ve}-\rho\,\bu}{L^{1}_x}\leq &\Nrm{\vect{J}^{\ve}-\rho^{\ve}\bu}{L^{1}_x}+\Nrm{\rho^{\ve}\bu-\rho\, \bu}{L^{1}_x}\\
\leq& \Nrm{\phi_{N}^{\ve}}{L^{2}_x}\Nrm{(i\ve\nabla+\bu)\phi_{N}^{\ve}}{L^{2}_x}+\Nrm{\bu}{L^{2}_x}
\nv{\rho^{\ve}-\rho}_{L^{2}_x}\\
\leq &C(T_{0})\lrs{\mathcal{M}\lrc{\phi_{N}^{\ve},\rho,\bu}(0)+\ve^{2}+\frac{\la(N)}{L}+\eta(\mu)}^{\frac{1}{2}}.
\end{align*}
This completes the proof of Inequality~\eqref{equ:convergence,uniform in h,c0, hartree-euler,moment}.

Lastly, for the convergence of kinetic energy density, we also have
\begin{align*}
&\Nrm{|i\varepsilon\nabla \phi_{N}^{\ve}|^{2}-\rho\n{\bu}^{2}}{L^{1}_x}\\
\leq& \Nrm{(i\ve\nabla+\bu)\phi_{N}^{\ve}}{L^{2}_x}\Nrm{(i\ve\nabla-\bu)\phi_{N}^{\ve}}{L^{2}_x}+\Nrm{\rho^{\ve}-\rho}{L^{2}_x}
\Nrm{\bu}{L^{4}_x}^2\\
\leq& C(T_{0})\lrs{\cM\lrc{\phi_{N}^{\ve},\rho,\bu}(0)+\ve^{2}+\frac{\la(N)}{L}+\eta(\mu)}^{\frac{1}{2}},
\end{align*}
which completes the proof of Inequality~\eqref{equ:convergence,uniform in h,c0, hartree-euler,kinetic}.
\end{proof}

\subsection{The method of the WKB analysis}\label{section:WKB Analysis Method}
In the section, we handle the cases $\ka=\frac{1}{2},0$ by a unified approach, the WKB analysis method, which has been widely used in the study of the asymptotic behavior of the wave function. See also the monograph \cite{carles2021semi} for a systematic study of the NLS equation.
Here, following \cite{carles2007wkb,carles2021semi}, we deal with the modified Gross--Pitaevskii equation \eqref{equ:N-hartree equation}, which requires a subtle analysis of the scattering function in different physical regimes.

Define
\begin{align}
    a_{N}^{\ve}(t, x):=\phi_{N}^{\ve}(t, x) e^{-i \varphi_{\mathrm{eik}}(t, x) / \ve}\ ,
\end{align}
where $\varphi_{\mathrm{eik}}(t)$ satisfies the so called eikonal equation:
\begin{equation}\label{equ:eikonal,phase}
\left\{
\begin{aligned}
&\pa_{t}\varphi_{\mathrm{eik}}+\tfrac{1}{2}|\nabla\varphi_{\mathrm{eik}}|^{2}=0\\
&\varphi_{\mathrm{eik}}(0)=\varphi_{\mathrm{eik}}^{\init}
\end{aligned}
\right. \ ,
\end{equation}
which is a Hamilton--Jacobi equation.
Then $\phi_{N}^{\ve}$ solves the modified Gross--Pitaevskii equation \eqref{equ:N-hartree equation} if and only if $a_{N}^{\ve}$ solves the modified amplitude equation:
\begin{equation}\label{equ:eikonal equation,h}
\left\{
\begin{aligned}
\partial_t a_{N}^{\ve}+\nabla \varphi_{\mathrm{eik}} \cdot \nabla a_{N}^{\ve}+\tfrac{1}{2} a_{N}^{\ve}\, \Delta \varphi_{\mathrm{eik}} & =i \tfrac{\ve}{2} \Delta a_{N}^{\ve}-i \ve^{-1} F(a_{N}^{\ve}) a_{N}^{\ve} \\
a_{N}^{\ve}|_{t=0} & =a^{\ve, \init}
\end{aligned}
\right. \ .
\end{equation}

The main result of this section is the following proposition.

\begin{prop}\label{prop:semiclassical limit,wkb}
Let $s>\frac{3}{2}$, and $p\in[1,2]$.
Assume the initial data satisfy
\begin{align*}
&\phi_{N}^{\ve}(0)=a^{\ve,\mathrm{in}}e^{i\varphi_{\mathrm{eik}}^{\mathrm{in}}/\ve},\quad
 \varphi_{\mathrm{eik}}^{\mathrm{in}}\in C^{\infty}(\R^3),\quad \pa_{x}^{\sigma}\varphi_{\mathrm{eik}}^{\mathrm{in}}\in L^{\infty}(\R^3) \quad \forall \sigma \in \N_0^3,\ |\sigma|\geq 2\ ,\\
& a^{\mathrm{in}}\in \bigcap_{k\geq 0}H^{k}(\R^3),\quad
\quad \nv{a^{\ve,\mathrm{in}}}_{H^{s}_x}\leq C\ .
\end{align*}
\begin{enumerate}[$(i)$]
\item For the BGP regime in the critical case:
$$\beta=1,\ \kappa=\tfrac{1}{2},\ \la(N)=(\ln N)^{\upalpha}\text{ with }\upalpha\in[0,1)\ ,$$
we have
\begin{equation}\label{equ:semiclassical limit,convergence rate,k=1/2,precise}
\left\{
\begin{aligned}
&\Nrm{\rho^{\ve}-|a|^{2}}{L^{\infty}_t([0, T_{0}])L^{p}_x}\lesssim \nv{a^{\ve,\mathrm{in}}-a^{\mathrm{in}}}_{H^{s}_x}+\ve+\eta(\ve)\ ,\\[3pt]
&\Nrm{\vect{J}^{\ve}-|a|^{2}\nabla \varphi_{\mathrm{eik}}}{L^{\infty}_t([0, T_{0}])L^{p}_x}\lesssim \nv{a^{\ve,\mathrm{in}}-a^{\mathrm{in}}}_{H^{s}_x}+\ve+\eta(\ve)\ ,\\[3pt]
&\Nrm{|i\ve\nabla \phi_{N}^{\ve}|^{2}-|a|^{2}|\nabla \varphi_{\mathrm{eik}}|^{2}}{L^{\infty}_t([0,T_{0}])L^{1}_x}
\lesssim \nv{a^{\ve,\mathrm{in}}-a^{\mathrm{in}}}_{H^{s}_x}+\ve+\eta(\ve)\ ,
\end{aligned}
\right.
\end{equation}
where $(a,\varphi_{\mathrm{eik}})$ satisfies the eikonal system
 \begin{equation}\label{equ:eikonal equation,theorem, precise}
 \left\{
 \begin{aligned}
&\pa_{t}a+\nabla \varphi_{\mathrm{eik}}\cdot \nabla a+\tfrac{1}{2}a\,\Delta \varphi_{\mathrm{eik}}=-i 4\pi \mathfrak{c}_{0}|a|^{2}a,\\
&\pa_{t}\varphi_{\mathrm{eik}}+\tfrac{1}{2}|\nabla \varphi_{\mathrm{eik}}|^{2}=0,
 \end{aligned}
 \right.
 \end{equation}
 with $\mathfrak{c}_{0}$ being the capacity of $v$.
 \item For the SGP regime:
 $$\beta=1,\ \kappa=0,\ \la(N)=(\ln N)^{\upalpha} \text{ with } \upalpha\in[0,1)\ ,$$
 we have
 \begin{equation}\label{equ:semiclassical limit,convergence rate,k=0, precise}
\left\{
    \begin{aligned}
        &\Nrm{\rho^{\ve}-|a|^{2}}{L^{\infty}_t([0, T_{0}])L^{p}_x}\lesssim \nv{a^{\ve,\mathrm{in}}-a^{\mathrm{in}}}_{H^{s}_x}+\ve\ ,\\[3pt]
        &\Nrm{\vect{J}^{\ve}-|a|^{2}\nabla \varphi_{\mathrm{eik}}}{L^{\infty}_t([0, T_{0}])L^{p}_x}\lesssim \nv{a^{\ve,\mathrm{in}}-a^{\mathrm{in}}}_{H^{s}_x}+\ve\ ,\\[3pt]
        &\Nrm{|i\ve\nabla \phi_{N}^{\ve}|^{2}-|a|^{2}|\nabla \varphi_{\mathrm{eik}}|^{2}}{L^{\infty}_t([0,T_{0}])L^{1}_x}
        \lesssim \nv{a^{\ve,\mathrm{in}}-a^{\mathrm{in}}}_{H^{s}_x}+\ve\ ,
    \end{aligned}
\right.
\end{equation}
where $(a,\varphi_{\mathrm{eik}})$ satisfies the trivial eikonal system
 \begin{equation}\label{equ:eikonal equation, precise}
 \left\{
 \begin{aligned}
&\pa_{t}a+\nabla \varphi_{\mathrm{eik}}\cdot \nabla a+\tfrac{1}{2}a\,\Delta \varphi_{\mathrm{eik}}=0\\
&\pa_{t}\varphi_{\mathrm{eik}}+\tfrac{1}{2}|\nabla \varphi_{\mathrm{eik}}|^{2}=0
 \end{aligned}
 \right. \ .
 \end{equation}
\end{enumerate}
\end{prop}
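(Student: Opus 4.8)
The plan is to run the whole argument through the modified amplitude $a_{N}^{\ve}:=\phi_{N}^{\ve}e^{-i\varphi_{\mathrm{eik}}/\ve}$, which (by the computation recorded just before the proposition) solves the modified amplitude equation~\eqref{equ:eikonal equation,h}, with $\varphi_{\mathrm{eik}}$ the $\ve$-independent solution of the Hamilton--Jacobi equation~\eqref{equ:eikonal,phase} on the fixed interval $[0,T_{0}]$ on which it stays smooth ($\nabla\varphi_{\mathrm{eik}}$ solving a Burgers-type system there, so all its space derivatives are controlled on $[0,T_{0}]$). First I would record the exact algebraic identities obtained from $\ve\nabla\phi_{N}^{\ve}=\big(\ve\nabla a_{N}^{\ve}+i a_{N}^{\ve}\nabla\varphi_{\mathrm{eik}}\big)e^{i\varphi_{\mathrm{eik}}/\ve}$, namely $\rho^{\ve}=|a_{N}^{\ve}|^{2}$, $\vect{J}^{\ve}=|a_{N}^{\ve}|^{2}\nabla\varphi_{\mathrm{eik}}+\ve\,\im\!\big(\conj{a_{N}^{\ve}}\nabla a_{N}^{\ve}\big)$, and $|i\ve\nabla\phi_{N}^{\ve}|^{2}=|a_{N}^{\ve}|^{2}|\nabla\varphi_{\mathrm{eik}}|^{2}+\ve^{2}|\nabla a_{N}^{\ve}|^{2}+2\ve\,\nabla\varphi_{\mathrm{eik}}\cdot\im\!\big(\conj{a_{N}^{\ve}}\nabla a_{N}^{\ve}\big)$. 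The upshot is that, once we have a uniform-in-$(N,\ve)$ bound $\sup_{[0,T_{0}]}\Nrm{a_{N}^{\ve}}{H^{s}}\le C$ and a quantitative estimate for $a_{N}^{\ve}-a$, all three density convergences reduce to H\"older's inequality, the embeddings $H^{s}\hookrightarrow L^{q}$ for $2\le q\le\infty$ (valid since $s>\tfrac32$), Cauchy--Schwarz for the $\mathcal{O}(\ve)$ remainders, and the regularity of $\varphi_{\mathrm{eik}}$ on $[0,T_{0}]$.

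Next I would identify the effective nonlinearity. Since $\beta=1$, $K(x)=\la(N)(N\ve^{2\ka})^{3}(vf_{L}^{\mu})(N\ve^{2\ka}x)$ with $\mu=\widetilde\mu/\la(N)$ and $\widetilde\mu=\ve^{2(1-\ka)}$. Lemma~\ref{lem:appendix_version_correlation_structures} (together with Lemma~\ref{lem:convergence_of_scattering_length_semiclassical} and Proposition~\ref{prop:quantitative_scattering_length}) gives $\intd K\,\d x=4\pi\widetilde\mu\,\mathfrak{a}_{0}^{\mu}+\mathcal{O}\!\big(\tfrac{\la}{L}\widetilde\mu\big)$ and $\mathfrak{a}_{0}^{\mu}=\mathfrak{c}_{0}-\eta(\mu)$, so $\ve^{-1}\intd K\,\d x=4\pi\ve^{\,1-2\ka}\mathfrak{a}_{0}^{\mu}+(\text{negligible})$; when $\ka=\tfrac12$ this equals $4\pi\mathfrak{a}_{0}^{\mu}\to 4\pi\mathfrak{c}_{0}$ and when $\ka=0$ it equals $4\pi\ve\,\mathfrak{a}_{0}^{\mu}\to 0$, all error contributions being $\lesssim\eta(\ve)+\ve$ under the standing restriction $N\gtrsim\exp(\ve^{-100/(1-\upalpha)})$ (which forces $\tfrac{1}{\ln N}\lesssim\ve$ and makes the $\tfrac{\la}{L}$-errors super-small, since $L=N\ve^{2\ka+4}$). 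Feeding this into the quantitative approximate-identity bound of Lemma~\ref{lem:quantative estimate for identity approximation}, using $\Nrm{K}{L^{1}}\lesssim\widetilde\mu$ and the concentration scale $(N\ve^{2\ka})^{-1}$ of $K$, I would show that, uniformly for $g$ bounded in $H^{s}$, the term $\ve^{-1}(K*|g|^{2})$ approximates $4\pi\mathfrak{c}_{0}|g|^{2}$ in $H^{s-1}$ with error $\mathcal{O}(\eta(\ve)+\ve)\Nrm{g}{H^{s}}^{2}$ when $\ka=\tfrac12$, and is itself $\mathcal{O}(\ve)\Nrm{g}{H^{s}}^{2}$ in $H^{s-1}$ when $\ka=0$.

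The core of the argument, following \cite{carles2007wkb,carles2021semi}, is a Grenier-type energy estimate for~\eqref{equ:eikonal equation,h} in $H^{s}$ (in fact in every $H^{k}$, by propagation of regularity). Testing $\langle\nabla\rangle^{s}$ applied to the equation against $\langle\nabla\rangle^{s}a_{N}^{\ve}$: the transport operator $\partial_{t}+\nabla\varphi_{\mathrm{eik}}\cdot\nabla+\tfrac12\Delta\varphi_{\mathrm{eik}}$ produces only commutators $[\langle\nabla\rangle^{s},\nabla\varphi_{\mathrm{eik}}\cdot\nabla]$ bounded through $\varphi_{\mathrm{eik}}$, the semiclassical term $i\tfrac{\ve}{2}\Delta a_{N}^{\ve}$ is skew-adjoint and drops out up to an $\ve$-bounded commutator, and the singular nonlinearity is tame because its $\ve^{-1}$ is compensated by $\Nrm{K}{L^{1}}\lesssim\widetilde\mu=\ve^{2(1-\ka)}\le\ve$ for $\ka\in\{0,\tfrac12\}$: thus $g\mapsto\ve^{-1}(K*|g|^{2})g$ is locally Lipschitz on $H^{s}$ ($s>\tfrac32$) with a constant uniform in $(N,\ve)$ on bounded sets, using that convolution with the nonnegative $L^{1}$ kernel $K$ is bounded on $H^{s}$. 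This closes an a priori estimate and yields $\sup_{[0,T_{0}]}\Nrm{a_{N}^{\ve}}{H^{s}}\le C$ uniformly. Subtracting the equation for $a$ --- which is~\eqref{equ:eikonal equation,h} with the $i\tfrac{\ve}{2}\Delta$ term deleted and $\ve^{-1}(K*|a|^{2})a$ replaced by its limit $-i4\pi\mathfrak{c}_{0}|a|^{2}a$ (resp.\ by $0$), so that $(a,\varphi_{\mathrm{eik}})$ solves the asserted eikonal system --- the difference $w^{\ve}:=a_{N}^{\ve}-a$ satisfies the same transport equation with a source whose three pieces are, respectively, $\mathcal{O}(\ve)$ in $H^{s}$ (absorbing the loss of one derivative since $a\in\bigcap_{k}H^{k}$), Lipschitz in $w^{\ve}$, and $\mathcal{O}(\eta(\ve)+\ve)$ (resp.\ $\mathcal{O}(\ve)$) in $H^{s-1}$ by the previous step. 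Gr\"onwall's inequality then gives $\sup_{[0,T_{0}]}\Nrm{a_{N}^{\ve}-a}{H^{s}}\lesssim_{T_{0}}\Nrm{a^{\ve,\mathrm{in}}-a^{\mathrm{in}}}{H^{s}}+\ve+\eta(\ve)$, with the $\eta(\ve)$ term absent for $\ka=0$. \textbf{The main obstacle} is exactly to verify, with explicit constants, that the $\ve^{-1}$ in front of the strongly concentrating Hartree nonlinearity is genuinely absorbed by the scaling identity $\widetilde\mu=\ve^{2(1-\ka)}\le\ve$, so that the energy estimate closes on an $\ve$-independent time interval; this is where the criticality/subcriticality restriction $\ka\le\tfrac12$ is needed, and it is why the supercritical case $\ka=1$ must instead be handled by the modulated-energy argument of Proposition~\ref{prop:semiclassical limit and the modulated energy}.

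Finally, inserting the $H^{s}$ estimate into the identities of the first paragraph and using $H^{s}\hookrightarrow L^{q}$ for $2\le q\le\infty$ --- so that $|a_{N}^{\ve}|^{2}-|a|^{2}=w^{\ve}\conj{a_{N}^{\ve}}+a\,\conj{w^{\ve}}$ is controlled in every $L^{q}$, $q\ge1$ --- together with the regularity of $\varphi_{\mathrm{eik}}$, I would read off the stated rates: for $\rho^{\ve}-|a|^{2}$ in $L^{p}$ with $p\in[1,2]$; for $\vect{J}^{\ve}-|a|^{2}\nabla\varphi_{\mathrm{eik}}$ in $L^{p}$, the $\ve\,\im(\conj{a_{N}^{\ve}}\nabla a_{N}^{\ve})$ remainder being $\mathcal{O}(\ve)$ after interpolating between its $L^{1}$ and $L^{2}$ bounds; and for $|i\ve\nabla\phi_{N}^{\ve}|^{2}-|a|^{2}|\nabla\varphi_{\mathrm{eik}}|^{2}$ in $L^{1}$, the two remaining terms being $\mathcal{O}(\ve^{2})$ and $\mathcal{O}(\ve)$. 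This establishes~\eqref{equ:semiclassical limit,convergence rate,k=1/2,precise} and~\eqref{equ:semiclassical limit,convergence rate,k=0, precise}.
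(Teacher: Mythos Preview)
Your proposal is correct and follows essentially the same approach as the paper: the paper likewise factors the argument through the modified amplitude $a_{N}^{\ve}$, first proving the uniform-in-$(N,\ve)$ bound $\sup_{[0,T_{0}]}\Nrm{a_{N}^{\ve}}{H^{s}}\le C$ via an $H^{s}$ energy estimate on~\eqref{equ:eikonal equation,h} (using $\ve^{-1}\Nrm{K}{L^{1}}\lesssim\ve^{1-2\ka}$ for $\ka\le\tfrac12$), then obtaining $\Nrm{a_{N}^{\ve}-a}{H^{s}}\lesssim\Nrm{a^{\ve,\mathrm{in}}-a^{\mathrm{in}}}{H^{s}}+\ve+\eta(\ve)$ by subtracting the limiting equation, splitting the nonlinear error exactly as you do (Lipschitz part plus the $\ve^{-1}K-4\pi\mathfrak{c}_{0}\delta$ replacement error controlled via Lemma~\ref{lem:quantative estimate for identity approximation} and Part~\eqref{part:approximation_of_scattering_length} of Lemma~\ref{lem:appendix_version_correlation_structures}), and closing by Gr\"onwall. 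The only cosmetic difference is that the paper carries the replacement error directly in $H^{s}$ (rather than $H^{s-1}$) by exploiting $a\in\bigcap_{k}H^{k}$, and then reads off the density convergences from the same algebraic identities you wrote down.
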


Proposition~\ref{prop:semiclassical limit,wkb} follows from the following two lemmas, and we shall present the proof of Proposition~\ref{prop:semiclassical limit,wkb} at the end of the section.
\begin{lem}\label{lem:well-posedness,uniform estimate,a}
Let $s>\frac{3}{2}$ and $s\in \mathbb{N}$.
Assume the initial data satisfy
\begin{align}\label{equ:eikonal equation,initial condition}
 \varphi_{\mathrm{eik}}^{\mathrm{in}}\in C^{\infty}(\R^3),\quad \pa_{x}^{\sigma}\varphi_{\mathrm{eik}}^{\mathrm{in}}\in L^{\infty}(\R^3) \quad \text{for $|\sigma|\geq 2$,}
\quad \nv{a^{\ve,\mathrm{in}}}_{H^{s}_x}\leq C\ .
\end{align}
Then
there exists $T_{0}>0$ independent of $\ve\in (0,1]$ such that Equation~\eqref{equ:eikonal,phase} has a unique solution $\varphi_{\mathrm{eik}}(t)$ satisfying
\begin{align}\label{equ:eikonal equation,property}
\varphi_{\mathrm{eik}}\in C^{\infty}([0,T_{0}]\times \R^{3}), \quad \nv{\pa_{x}^{\sigma}\varphi_{\mathrm{eik}}}_{L^{\infty}_t([0,T_{0}])L^{\infty}_x}<\infty\quad \forall \sigma\in \N_0^3,\ |\sigma|\geq 2\ ,
\end{align}
and Equation~\eqref{equ:eikonal equation,h} admits a unique solution $a_{N}^{\ve}\in C([0,T_{0}];H^{s}(\R^3))$.
Moreover,
$a_{N}^{\ve}$ is bounded in $L^{\infty}([0,T_{0}]; H^{s}(\R^3))$ uniformly in
$N$ and $\ve$.
\end{lem}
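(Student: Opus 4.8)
The plan is to establish Lemma~\ref{lem:well-posedness,uniform estimate,a} in two stages: first the eikonal (Hamilton--Jacobi) equation~\eqref{equ:eikonal,phase}, then the modified amplitude equation~\eqref{equ:eikonal equation,h}. For the first stage, since $\varphi_{\mathrm{eik}}^{\mathrm{in}}\in C^\infty(\R^3)$ with $\partial_x^\sigma\varphi_{\mathrm{eik}}^{\mathrm{in}}\in L^\infty$ for all $|\sigma|\ge 2$, the gradient $\nabla\varphi_{\mathrm{eik}}$ satisfies the inviscid Burgers-type system $\partial_t(\nabla\varphi)+ (\nabla\varphi\cdot\nabla)(\nabla\varphi)=0$, which is symmetric hyperbolic in the variable $w=\nabla\varphi_{\mathrm{eik}}$. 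The classical method of characteristics (equivalently, standard quasilinear hyperbolic theory, e.g.\ \cite{majda1984compressible}) gives a local-in-time smooth solution on an interval $[0,T_0]$ whose length depends only on $\|\partial_x^\sigma\varphi_{\mathrm{eik}}^{\mathrm{in}}\|_{L^\infty}$ for $2\le|\sigma|\le s+1$, and a bootstrap argument propagates the bounds $\|\partial_x^\sigma\varphi_{\mathrm{eik}}\|_{L^\infty_t([0,T_0])L^\infty_x}<\infty$ for all higher $\sigma$. Crucially, $T_0$ is independent of $\varepsilon$ since the eikonal equation itself carries no $\varepsilon$.

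For the second stage, I would freeze $\varphi_{\mathrm{eik}}$ from stage one and treat~\eqref{equ:eikonal equation,h} as a semilinear Schr\"odinger-type equation for $a_N^\varepsilon$ with an $\varepsilon$-dependent but \emph{skew-adjoint} leading perturbation $i\tfrac{\varepsilon}{2}\Delta$, a transport term with smooth bounded (in the relevant derivatives) coefficients $\nabla\varphi_{\mathrm{eik}}$, a zeroth-order term $\tfrac12 a_N^\varepsilon\Delta\varphi_{\mathrm{eik}}$, and the nonlinear term $-i\varepsilon^{-1}F(a_N^\varepsilon)a_N^\varepsilon$ with $F(a)=K\ast|a|^2$. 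The $\varepsilon^{-1}$ prefactor looks dangerous, but because $F$ is real-valued, the term $-i\varepsilon^{-1}F(a_N^\varepsilon)a_N^\varepsilon$ is a gauge-type (purely imaginary, pointwise-multiplicative modulo the convolution) term; when performing energy estimates in $H^s$ one exploits that $\re\langle a, -i\varepsilon^{-1}F(a)a\rangle=0$, so the $\varepsilon^{-1}$ factor does \emph{not} enter the $L^2$ energy identity, and for higher derivatives it only appears against commutators $[\partial_x^\sigma, F(a_N^\varepsilon)\cdot]$, which are controlled using $\|K\|_{L^1}$-type bounds (here $\|K\|_{L^1}=\lambda(N)\|vf^\mu_{L\text{ or }0}\|_{L^1}$, bounded since $\lambda(N)=(\ln N)^\upalpha$ and $\mu\to 0$, cf.\ Lemma~\ref{lem:convergence_of_scattering_length_semiclassical}) together with Moser-type product and commutator estimates. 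This is exactly the structure analyzed in \cite{carles2007wkb,carles2021semi}; I would set up a standard approximation scheme (e.g.\ Friedrichs mollification or a Picard iteration on a time interval shortened in an $\varepsilon$-independent way), derive the a priori estimate $\tfrac{d}{dt}\|a_N^\varepsilon(t)\|_{H^s}^2\lesssim (1+\|a_N^\varepsilon(t)\|_{H^s}^2)^{?}\|a_N^\varepsilon(t)\|_{H^s}^2$ with constants depending only on $\|\partial_x^\sigma\varphi_{\mathrm{eik}}\|_{L^\infty_{t,x}}$ and $\|K\|_{L^1}$ but \emph{not} on $\varepsilon$, and then invoke Gr\"onwall plus a continuation argument to obtain a uniform lifespan $T_0>0$ and a uniform bound in $L^\infty([0,T_0];H^s(\R^3))$.

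The main obstacle is checking, with genuine care, that the $\varepsilon^{-1}F(a_N^\varepsilon)a_N^\varepsilon$ term contributes no factor of $\varepsilon^{-1}$ to the $H^s$ energy estimate: one must track the commutator $[\langle\nabla\rangle^s, F(a_N^\varepsilon)]a_N^\varepsilon$ and show, using the Kato--Ponce commutator inequality and the boundedness of $\|K\|_{L^1}$ (which is where the restriction $\upalpha\in[0,1)$ and the scaling of $K$ enter), that $\varepsilon^{-1}|\re\langle \langle\nabla\rangle^s a_N^\varepsilon, [\langle\nabla\rangle^s, F(a_N^\varepsilon)]a_N^\varepsilon\rangle|$ is actually bounded by $C\|a_N^\varepsilon\|_{H^s}^3$ with $C$ independent of $\varepsilon$ --- the $\varepsilon^{-1}$ being absorbed because the top-order term in the commutator vanishes by the reality of $F$ and the remaining terms gain a derivative that, after integration by parts against $i\tfrac{\varepsilon}{2}\Delta$ or directly, produces the compensating power. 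A secondary but routine point is verifying that the local solution map is well-defined on an $\varepsilon$-uniform time interval despite the singular coefficient, which I would handle by the Bona--Smith-type argument or by working first at high regularity and then lowering to $s>\tfrac32$ by persistence of regularity. Once the uniform $H^s$ bound is in hand, the convergence statements~\eqref{equ:semiclassical limit,convergence rate,k=1/2,precise}--\eqref{equ:semiclassical limit,convergence rate,k=0, precise} in Proposition~\ref{prop:semiclassical limit,wkb} follow by comparing $a_N^\varepsilon$ with the solution $a$ of the limiting system and estimating the difference, using the scattering-length convergence $\eta(\varepsilon)=\mathfrak{c}_0-\mathfrak{a}_0^\mu\to 0$ from Proposition~\ref{prop:quantitative_scattering_length} to control the discrepancy between $F(\cdot)$ and the limiting nonlinearity $4\pi\mathfrak{c}_0|\cdot|^2$.
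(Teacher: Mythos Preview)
Your overall two-stage plan (eikonal equation via characteristics, then amplitude equation via $H^s$ energy estimates with an iterative scheme) is exactly the paper's approach, and your treatment of the eikonal part is correct.

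However, there is a genuine gap in your handling of the nonlinear term $-i\varepsilon^{-1}F(a_N^\varepsilon)a_N^\varepsilon$. You propose to absorb the $\varepsilon^{-1}$ through the gauge structure (reality of $F$) together with a claimed ``derivative gain that, after integration by parts against $i\tfrac{\varepsilon}{2}\Delta$ or directly, produces the compensating power.'' This mechanism does not exist. The gauge cancellation $\re\langle\partial_x^\sigma a,-iF(a)\partial_x^\sigma a\rangle=0$ does kill the top-order piece, but the commutator $[\partial_x^\sigma,F(a)]a$ produces, via Kato--Ponce, a term of size $\|K\|_{L^1}\|a\|_{H^s}^2\|a\|_{L^\infty}$ with \emph{no} extra factor of $\varepsilon$; integrating by parts against $i\tfrac{\varepsilon}{2}\Delta$ does not help, since that term is skew-adjoint and vanishes on its own in the energy identity rather than interacting with the commutator. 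So after your argument you are still left with a coefficient $\varepsilon^{-1}\|K\|_{L^1}$, and you only assert $\|K\|_{L^1}$ is \emph{bounded}, which would leave a bad $\varepsilon^{-1}$.

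The point you are missing---and which the paper exploits directly, without invoking the gauge structure at all---is that $\|K\|_{L^1}$ is not merely bounded but is itself small in $\varepsilon$. By the scattering identity (Part~\eqref{part:approximation_of_scattering_length} of Lemma~\ref{lem:appendix_version_correlation_structures}), $\|K\|_{L^1}=\lambda(N)\int v f_L^\mu\,dx\sim \lambda(N)\mu=\widetilde{\mu}=N^{1-\beta}\varepsilon^{2(1-\kappa)}$, so for $\beta=1$ one has $\varepsilon^{-1}\|K\|_{L^1}\lesssim\varepsilon^{1-2\kappa}$. This is $\le 1$ precisely when $\kappa\le\tfrac12$, which is exactly the regime (BGP/SGP, and similarly HD with $\beta>1$) where WKB analysis is being applied; for $\kappa=1$ the paper uses the modulated energy method instead. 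With this observation the paper simply bounds the full nonlinear contribution by $\varepsilon^{-1}\|F(a_N^\varepsilon)a_N^\varepsilon\|_{H^s}\lesssim\varepsilon^{1-2\kappa}\|a_N^\varepsilon\|_{L^\infty}\|a_N^\varepsilon\|_{H^s}^2$ and closes the estimate without any commutator gymnastics. Once you identify the smallness of $\|K\|_{L^1}$, your argument can be repaired (and in fact simplified).
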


\begin{proof}
    The local well-posedness of Equation~\eqref{equ:eikonal,phase} follows from the standard characteristic method. By applying $\nabla_{x}$ to Equation~\eqref{equ:eikonal,phase}, we get
    \begin{equation}\label{equ:eikonal,phase,derivative}
    \left\{
    \begin{aligned}
    &\pa_{t}\nabla_{x}\varphi_{\mathrm{eik}}+(\nabla_{x}\varphi_{\mathrm{eik}}\cdot \nabla_{x})\nabla_{x}\varphi_{\mathrm{eik}}=0\\
    &\nabla_{x} \varphi_{\mathrm{eik}}(0)=\nabla\varphi_{\mathrm{eik}}^{\mathrm{in}}
    \end{aligned}
    \right. \ .
    \end{equation}
    Then the solution has the form
    \begin{align}
    \nabla_{x}\varphi_{\mathrm{eik}}(t,x)=(\nabla\varphi_{\mathrm{eik}}^{\mathrm{in}})(X^{-1}(t,x))\ ,
    \end{align}
    where $X(t, x)$ is the flow map given by
    \begin{align*}
    X(t,x)=x+t\nabla\varphi_{\mathrm{eik}}^{\mathrm{in}}(x)\ .
    \end{align*}
    Due to the initial condition that $\Nrm{\nabla^{2}\varphi_{\mathrm{eik}}^{\mathrm{in}}}{L^{\infty}}<\infty$,
    for the local time, the flow map is a bilinear smooth map.
    Therefore, the properties \eqref{equ:eikonal equation,property} follow from the initial conditions \eqref{equ:eikonal equation,initial condition}.

    To construct a solution to Equation~\eqref{equ:eikonal equation,h}, we solve the iterative scheme
    \begin{equation}\label{equ:eikonal,iteration equation}
    \left\{
    \begin{aligned}
    \partial_t a_{N,j+1}^{\ve}+\nabla \varphi_{\mathrm{eik}} \cdot \nabla a_{N,j+1}^{\ve}+\tfrac{1}{2} a_{N,j+1}^{\ve} \Delta \varphi_{\mathrm{eik}} & =i \tfrac{\ve}{2} \Delta  a_{N,j+1}^{\ve}-i\ve^{-1}F( a_{N,j}^{\ve}) a_{N,j}^{\ve} \\
     a_{N,j}^{\ve}(0) & =a^{\ve,\mathrm{in}}
    \end{aligned}
    \right. \ .
    \end{equation}
    By the standard argument of hyperbolic PDE theory (see, e.g., \cite[Proposition 1]{carles2007wkb}), the problem boils down to obtaining energy estimates for Equation~\eqref{equ:eikonal,iteration equation} in $H^{s}(\R^3)$.
    Let $\sigma \in \N_0^{3}$ with $|\sigma|=s$. Applying $\partial_x^{\sigma}$ to \eqref{equ:eikonal,iteration equation}, we have
    \begin{align}\label{equ:eikonal,iteration equation,higher order}
    \partial_{t} \partial_{x}^{\sigma} a_{N}^{\ve}+\nabla \varphi_{\mathrm{eik}} \cdot \nabla \partial_{x}^{\sigma} a_{N}^{\ve}=i \tfrac{\ve}{2}\, \Delta \partial_{x}^{\sigma} a_{N}^{\ve}-i \ve^{-1} \partial_{x}^{\sigma}\left(F(a_{N}^{\ve})\right)+R_{N}^{\ve,\sigma}\ ,
    \end{align}
    where
    \begin{align*}
    R_{N}^{\ve,\sigma}=&\com{\nabla \varphi_{\mathrm{eik}} \cdot \nabla, \partial_{x}^{\sigma}} a_{N}^{\ve}-\tfrac{1}{2} \partial_{x}^{\sigma}\left(a_{N}^{\ve} \Delta \varphi_{\mathrm{eik}}\right)\\
    =&\pa_{x}^{\sigma}\nabla \varphi_{\mathrm{eik}}\cdot \nabla a_{N}^{\ve}-\tfrac{1}{2}\pa_{x}^{\sigma}\,a_{N}^{\ve}\Delta\,\varphi_{\mathrm{eik}}-
    \tfrac{1}{2}a_{N}^{\ve}\Delta\,\varphi_{\mathrm{eik}}\ .
    \end{align*}
    We take the inner product of \eqref{equ:eikonal,iteration equation,higher order} with $\partial_{x}^{\sigma} a_{N}^{\ve}$, and consider the real part. We then have
    \begin{align*}
    &\frac{1}{2} \frac{\d}{\d t}\Nrm{\partial_{x}^{\sigma} a_{N}^{\ve}}{L^2_x}^2+\operatorname{Re} \int_{\mathbb{R}^{3}} \overline{\partial_{x}^{\sigma} a_{N}^{\ve}} \nabla \vp_{\mathrm{eik }} \cdot \nabla \partial_{x}^{\sigma} a_{N}^{\ve}\dd x \\
    &\leq \ve^{-1}\left\|F(a_{N}^{\ve}) \right\|_{H^{s}_x}\Nrm{a_{N}^{\ve}}{H^{s}_x}
    +\Nrm{R_{N,\ve}^{\sigma}}{L^2_x}\Nrm{a_{N}^{\ve}}{H^{s}_x}\ ,
    \end{align*}
    where the first term of the right-hand side of \eqref{equ:eikonal,iteration equation,higher order} vanishes.
    On the one hand, applying Young's inequality and fractional Leibniz rule yields
    \begin{align*}
    \Nrm{F(a_{N}^{\ve})}{H^{s}_x} \leq&\, \ve^{-1}\la(N)\Nrm{vf^\mu_L}{L^1_x} \Nrm{|a_{N}^{\ve}|^{2}}{H^{s}_x}
    \lesssim \ve^{1-2\ka}\Nrm{a_{N}^{\ve}}{L^{\infty}_x}\Nrm{a_{N}^{\ve}}{H^{s}_x}
    \end{align*}
    where in the last inequality we have used Part~\eqref{part:approximation_of_scattering_length} of Lemma~\ref{lem:appendix_version_correlation_structures}.
    On the other hand, we have
    $$
    \begin{aligned}
    \left|\operatorname{Re} \int_{\mathbb{R}^{3}} \overline{\partial_{x}^{\sigma} a_{N}^{\ve}} \nabla \varphi_{\mathrm{eik}} \cdot \nabla \partial_{x}^{\sigma} a_{N}^{\ve}\dd x\right|
    & =\frac{1}{2}\int_{\mathbb{R}^{3}}|\partial_{x}^{\sigma} a_{N}^{\ve}|^2 \Delta \varphi_{\mathrm{eik}}\dd x \lesssim \Nrm{a_{N}^{\ve}}{H^{s}_x}^2\ ,
    \end{aligned}
    $$
    where in the last inequality we have used that $\Delta \varphi_{\mathrm{eik}} \in L^{\infty}\left([0,T_{0}] \times \mathbb{R}^{3}\right)$.
    In addition, by the boundedness of $\varphi_{\mathrm{eik}}$, we also have
    \begin{align*}
    \Nrm{R_{N,\ve}^{\sigma}}{L^{2}_x}\lesssim&\, \Nrm{\pa_{x}^{\sigma}\nabla \varphi_{\mathrm{eik}}}{L^{\infty}_x}\Nrm{\nabla a_{N}^{\ve}}{L^{2}_x}+\Nrm{\pa_{x}^{\sigma}a_{N}^{\ve}}{L^{2}_x}\Nrm{\Delta\varphi_{\mathrm{eik}}}{L^{\infty}_x} +
    \Nrm{a_{N}^{\ve}}{L^{2}_x}\nv{\Delta\varphi_{\mathrm{eik}}}_{L^{\infty}_x}
    \lesssim \Nrm{a_{N}^{\ve}}{H^{s}_x}.
    \end{align*}
    Therefore, for $\ka\in [0,\frac{1}{2}]$, by summing over $\sigma$ such that $|\sigma|=s$, we arrive at
    \begin{align*}
    \frac{\d}{\d t}\Nrm{a_{N}^{\ve}}{H^{s}_x}^{2} \lesssim&\, (\Nrm{a_{N}^{\ve}}{L^{\infty}_x}+1)\Nrm{a_{N}^{\ve}}{H^{s}_x}^{2}
    \lesssim (\Nrm{a_{N}^{\ve}}{H^{s}_x}+1)\Nrm{a_{N}^{\ve}}{H^{s}_x}^{2}\ .
    \end{align*}
    By Gronwall's lemma, we conclude that there exists $T_{0}>0$ such that
    $$
    \Nrm{a_{N}^{\ve}}{{L^{\infty}_t([0,T_{0}])H^{s}_x}} \leq C\left(s,\left\|a^{\ve,\mathrm{in}}\right\|_{H^{s}_x}\right)\ .
    $$
    This yields uniform boundedness in the large norm. Since the convergence in the small norm (that is, contraction in $L^2$) follows from a similar way, we omit it for simplicity.
\end{proof}

\begin{lem}\label{lem:convergence,aNh-a}
    Under the assumption of Proposition~\ref{prop:semiclassical limit,wkb}, the following holds:
    \begin{enumerate}[$(i)$]
     \item For the critical case that $\ka=\frac{1}{2}$, we have
    \begin{align}\label{equ:convergence,a,ka=1}
    \nv{a_{N}^{\ve}-a}_{L^{\infty}_t([0,T_{0}])H^{s}_x}\lesssim \nv{a^{\ve,\mathrm{in}}-a^{\mathrm{in}}}_{H^{s}_x}+\ve+\eta(\ve)\ ,
    \end{align}
    where $a(t)$ satisfies
    \begin{align}\label{equ:eikonal equation,amplitude}
    \pa_{t}a+\nabla \varphi_{\mathrm{eik}}\cdot \nabla a+\tfrac{1}{2}a\,\Delta \varphi_{\mathrm{eik}}=-i 4\pi \mathfrak{c}_{0}|a|^{2}a\ ,
    \end{align}
    with $\mathfrak{c}_{0}$ being the capacity of the potential $v$.
    \item For the subcritical case that $\ka=0$, we have
    \begin{align}\label{equ:convergence,a,ka=0}
    \nv{a_{N}^{\ve}-a}_{L^{\infty}_t([0,T_{0}])H^{s}_x}\lesssim \nv{a^{\ve,\mathrm{in}}-a^{\mathrm{in}}}_{H^{s}_x}+\ve\ ,
    \end{align}
    where $a(t)$ satisfies the transport equation
    \begin{align}\label{equ:transport equation,amplitude}
    \pa_{t}a+\nabla \varphi_{\mathrm{eik}}\cdot \nabla a+\tfrac{1}{2}a\,\Delta \varphi_{\mathrm{eik}}=0\ .
    \end{align}
    \end{enumerate}
\end{lem}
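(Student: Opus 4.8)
The plan is to obtain~\eqref{equ:convergence,a,ka=1}--\eqref{equ:convergence,a,ka=0} from a single Gr\"onwall estimate for the difference $r_{N}^{\ve}:=a_{N}^{\ve}-a$ in $H^{s}$, treating both cases at once by writing the limiting amplitude equations~\eqref{equ:eikonal equation,amplitude} and~\eqref{equ:transport equation,amplitude} as $\pa_{t}a+\nabla\varphi_{\mathrm{eik}}\cdot\nabla a+\tfrac12 a\,\Delta\varphi_{\mathrm{eik}}=-i\mathfrak{g}|a|^{2}a$, where $\mathfrak{g}=4\pi\mathfrak{c}_{0}$ when $\ka=\tfrac12$ and $\mathfrak{g}=0$ when $\ka=0$. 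Subtracting this from~\eqref{equ:eikonal equation,h} gives
\begin{equation*}
\pa_{t}r_{N}^{\ve}+\nabla\varphi_{\mathrm{eik}}\cdot\nabla r_{N}^{\ve}+\tfrac12 r_{N}^{\ve}\,\Delta\varphi_{\mathrm{eik}}=i\tfrac{\ve}{2}\Delta a_{N}^{\ve}-i\ve^{-1}F(a_{N}^{\ve})a_{N}^{\ve}+i\mathfrak{g}|a|^{2}a\ .
\end{equation*}
By Lemma~\ref{lem:well-posedness,uniform estimate,a} and the hypotheses of Proposition~\ref{prop:semiclassical limit,wkb}, both $a_{N}^{\ve}$ and $a$ live on a common $\ve$-independent interval $[0,T_{0}]$ and are bounded there in $L^{\infty}_{t}H^{s}_{x}$ (indeed $a\in C([0,T_{0}];\bigcap_{k}H^{k})$), uniformly in $(N,\ve)$; so all Sobolev norms of $a_{N}^{\ve}$ and $a$ appearing below are $O_{T_{0}}(1)$.

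The transport and dispersive contributions are handled exactly as in the energy estimate of Lemma~\ref{lem:well-posedness,uniform estimate,a}: applying $\pa_{x}^{\sigma}$ with $|\sigma|=s$ and pairing with $\pa_{x}^{\sigma}r_{N}^{\ve}$ in $L^{2}$, the transported term equals $-\tfrac12\intd(\Delta\varphi_{\mathrm{eik}})\n{\pa_{x}^{\sigma}r_{N}^{\ve}}^{2}\dd x$, which is $\lesssim\Nrm{r_{N}^{\ve}}{H^{s}_x}^{2}$ because $\Delta\varphi_{\mathrm{eik}}\in L^{\infty}([0,T_{0}]\times\R^{3})$ although $\nabla\varphi_{\mathrm{eik}}$ itself grows linearly; the commutator $\com{\nabla\varphi_{\mathrm{eik}}\cdot\nabla,\pa_{x}^{\sigma}}r_{N}^{\ve}$ and the term $\tfrac12 r_{N}^{\ve}\Delta\varphi_{\mathrm{eik}}$ involve only derivatives of $\varphi_{\mathrm{eik}}$ of order $\geq 2$, hence are $\lesssim\Nrm{r_{N}^{\ve}}{H^{s}_x}^{2}$ using~\eqref{equ:eikonal equation,property}. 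Splitting $i\tfrac{\ve}{2}\Delta a_{N}^{\ve}=i\tfrac{\ve}{2}\Delta r_{N}^{\ve}+i\tfrac{\ve}{2}\Delta a$, the first piece is skew-adjoint and drops out of the real energy, while the second contributes $\lesssim\ve\Nrm{a}{H^{s+2}_x}\Nrm{r_{N}^{\ve}}{H^{s}_x}\lesssim_{T_{0}}\ve\Nrm{r_{N}^{\ve}}{H^{s}_x}$.

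The heart of the matter is the nonlinear term, which we split as
\begin{equation*}
-i\ve^{-1}F(a_{N}^{\ve})a_{N}^{\ve}+i\mathfrak{g}|a|^{2}a=-i\ve^{-1}\lrs{F(a_{N}^{\ve})a_{N}^{\ve}-F(a)a}-i\lrs{\ve^{-1}F(a)a-\mathfrak{g}|a|^{2}a}\ .
\end{equation*}
For the first bracket, write $F(a_{N}^{\ve})a_{N}^{\ve}-F(a)a=F(a_{N}^{\ve})\,r_{N}^{\ve}+\big(K*(\n{a_{N}^{\ve}}^{2}-\n{a}^{2})\big)a$ and use the fractional Leibniz/Moser product estimates in $H^{s}$ (valid since $s>\tfrac32$) together with the crucial uniform operator bound $\ve^{-1}\Nrm{K*\,\cdot\,}{H^{s}_x\to H^{s}_x}\lesssim\ve^{-1}\Nrm{K}{L^{1}_x}=\ve^{-1}\la(N)\Nrm{v f_{L}^{\mu}}{L^{1}_x}\lesssim\ve^{-1}\widetilde\mu=\ve^{1-2\ka}\leq 1$, which holds precisely because $\ka\leq\tfrac12$ (here $\Nrm{v f_{L}^{\mu}}{L^{1}_x}\lesssim\mu$ by Lemma~\ref{lem:appendix_version_correlation_structures}); this shows the first bracket contributes $\lesssim_{T_{0}}\Nrm{r_{N}^{\ve}}{H^{s}_x}^{2}$ to $\tfrac{\d}{\d t}\Nrm{r_{N}^{\ve}}{H^{s}_x}^{2}$. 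For the second bracket, write $\ve^{-1}F(a)a-\mathfrak{g}\n{a}^{2}a=\ve^{-1}\big((K-\delta_{0}\textstyle\intd K\dd z)*\n{a}^{2}\big)a+\big(\ve^{-1}\intd K\dd z-\mathfrak{g}\big)\n{a}^{2}a$; the first summand is controlled by the identity-approximation estimate of Lemma~\ref{lem:quantative estimate for identity approximation} and is $O(N^{-1}\ve^{-C})$ for some $C>0$, hence $o(\ve)$ in the regime under consideration (in particular under $N\gtrsim\exp(\ve^{-100/(1-\upalpha)})$), while for the second one uses $\intd K\dd z=\la(N)\intd v f_{L}^{\mu}\dd z=4\pi\widetilde\mu\,\mathfrak{a}_{0}^{\mu}+O(\widetilde\mu/L)$ (the $f_{L}^{\mu}$-versus-$f_{0}^{\mu}$ discrepancy being $O(1/L)$ by Lemma~\ref{lem:appendix_version_correlation_structures}), so that $\ve^{-1}\intd K\dd z=4\pi\ve^{1-2\ka}\mathfrak{a}_{0}^{\mu}+o(\ve)$: when $\ka=\tfrac12$ this equals $4\pi\mathfrak{c}_{0}+4\pi(\mathfrak{a}_{0}^{\mu}-\mathfrak{c}_{0})+o(\ve)=\mathfrak{g}+O(\eta(\ve))$ by Proposition~\ref{prop:quantitative_scattering_length} (note $\mu=\ve/\la(N)\leq\ve$, so $\eta(\mu)\leq\eta(\ve)$), and when $\ka=0$ it equals $4\pi\ve\,\mathfrak{a}_{0}^{\mu}+o(\ve)=\mathfrak{g}+O(\ve)$.

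Collecting the estimates yields $\tfrac{\d}{\d t}\Nrm{r_{N}^{\ve}}{H^{s}_x}^{2}\lesssim_{T_{0}}\Nrm{r_{N}^{\ve}}{H^{s}_x}^{2}+(\ve+\eta(\ve))^{2}$ in the critical case (with $\eta(\ve)$ absent when $\ka=0$), whence Gr\"onwall's lemma and $\Nrm{r_{N}^{\ve}(0)}{H^{s}_x}=\Nrm{a^{\ve,\mathrm{in}}-a^{\mathrm{in}}}{H^{s}_x}$ give~\eqref{equ:convergence,a,ka=1} and~\eqref{equ:convergence,a,ka=0}. The main obstacle is the bookkeeping forced by the $\ve^{-1}$ prefactor: one must simultaneously establish that $\ve^{-1}F$ defines a nonlinearity bounded uniformly in $(N,\ve)$ --- which rests on $\ka\leq\tfrac12$ and the small-mass bound $\Nrm{v f_{L}^{\mu}}{L^{1}_x}\lesssim\mu$ --- and extract the sharp consistency rate $\ve+\eta(\ve)$ by combining the identity-approximation estimate with the semiclassical asymptotics $\mathfrak{a}_{0}^{\mu}\to\mathfrak{c}_{0}$; the linear growth of $\nabla\varphi_{\mathrm{eik}}$ is a secondary technicality, already neutralized by the skew-symmetry of the transport operator in the energy method exactly as in Lemma~\ref{lem:well-posedness,uniform estimate,a}.
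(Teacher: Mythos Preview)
Your proposal is correct and follows essentially the same approach as the paper's proof: subtract the limiting amplitude equation from~\eqref{equ:eikonal equation,h}, run the $H^{s}$ energy estimate of Lemma~\ref{lem:well-posedness,uniform estimate,a} on the difference, split the nonlinear forcing into a Lipschitz part controlled via $\ve^{-1}\Nrm{K}{L^{1}_x}\lesssim\ve^{1-2\ka}\le 1$ and a consistency part handled by the identity-approximation Lemma~\ref{lem:quantative estimate for identity approximation} together with $\intd K\dd x=4\pi\widetilde\mu\,\asc_0^{\mu}+O(\widetilde\mu/L)$ and $\asc_0^{\mu}-\mathfrak{c}_0=O(\eta(\mu))$, then close with Gr\"onwall. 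Your unification of the two cases via the parameter $\mathfrak{g}$ is a mild cosmetic improvement, but the argument is otherwise the same as the paper's.
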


\begin{proof}
    We can also obtain the local well-posedness of Equations~\eqref{equ:eikonal equation,amplitude} and \eqref{equ:transport equation,amplitude} by the standard argument of hyperbolic PDE theory as shown in Lemma~\ref{lem:well-posedness,uniform estimate,a}. Thus, $a(t)$ is well-defined on $[0,T_{0}]$. Next, we only need to provide the proof of the $\ka=\frac{1}{2}$ case, as the $\ka=0$ case follows from a similar way.

    \bigskip
    \noindent\textbf{The $\ka=\frac{1}{2}$ case.}  Let $\mathrm{d}_{N}^{\ve}=a_{N}^{\ve}-a$.
    Taking a difference between Equations~\eqref{equ:eikonal equation,h} and \eqref{equ:eikonal equation,amplitude},
     we have
    \begin{align*}
    \pa_{t}\mathrm{d}_{N}^{\ve}+\nabla \varphi_{\mathrm{eik}}\cdot \nabla \mathrm{d}_{N}^{\ve}+\tfrac{1}{2}\mathrm{d}_{N}^{\ve}\,\Delta \varphi_{\mathrm{eik}}
    =i\frac{\ve}{2}\Delta \mathrm{d}_{N}^{\ve}+i\frac{\ve}{2}\Delta a-
    i\ve^{-1}F(a_{N}^{\ve})a_{N}^{\ve}-i4\pi \mathfrak{c}_{0}|a|^{2}a\ .
    \end{align*}
    Repeating the proof of the $H^{s}$ energy estimate in Lemma~\ref{lem:well-posedness,uniform estimate,a}, by Young's inequality and the boundedness of $a(t)$, we arrive at
    \begin{equation}\label{equ:difference,error term}
        \begin{aligned}
            \frac{\d}{\d t}\nv{\mathrm{d}_{N}^{\ve}}_{H^{s}_x}^{2}\lesssim&\, \nv{\mathrm{d}_{N}^{\ve}}_{H^{s}_x}^{2}+\ve\nv{\Delta a}_{H^{s}_x}\nv{\mathrm{d}_{N}^{\ve}}_{H^{s}_x}
            +\Nrm{\ve^{-1}F(a_{N}^{\ve})a_{N}^{\ve}-4\pi \mathfrak{c}_{0}|a|^{2}a}{H^{s}_x}\nv{\mathrm{d}_{N}^{\ve}}_{H^{s}_x}\\
            \lesssim&\, \nv{\mathrm{d}_{N}^{\ve}}_{H^{s}_x}^{2}+\ve^{2}
            +\Nrm{\ve^{-1}F(a_{N}^{\ve})a_{N}^{\ve}-4\pi \mathfrak{c}_{0}|a|^{2}a}{H^{s}_x}^{2}
        \end{aligned}
    \end{equation}
    We are left to control the last term on the right hand side of Estimate~\eqref{equ:difference,error term}.
    By the triangle inequality, we have that
    \begin{align*}
    &\Nrm{\ve^{-1}F(a_{N}^{\ve})a_{N}^{\ve}-4\pi \mathfrak{c}_{0}|a|^{2}a}{H^{s}_x}\\
    \leq& \Nrm{\ve^{-1}F(a_{N}^{\ve})a_{N}^{\ve}-\ve^{-1}F(a)a}{H^{s}_x}
    +\Nrm{\ve^{-1}F(a)a-4\pi \mathfrak{c}_{0}|a|^{2}a}{H^{s}_x}=:\, I+II\ .
    \end{align*}

    For the term $I$, using the fact that $H^{s}(\R^3)$ is a Banach algebra for $s>\frac{3}{2}$, a simple calculation yields that
\begin{align*}
I\leq&\, C(\Nrm{a_{N}^{\ve}}{H^{s}_x},\Nrm{a}{H^{s}_x})
\Nrm{\ve^{-1}K}{L^{1}_x}\Nrm{\mathrm{d}_{N}^{\ve}}{H^{s}_x}\lesssim\Nrm{\mathrm{d}_{N}^{\ve}}{H_x^{s}}\ ,
\end{align*}
where in the last inequality we have used that $\ve^{-1}\Nrm{K}{L^{1}_x}\lesssim 1$, which
 follows from Identity~\eqref{eq:approximation_of_scattering_length} for $\ka=\frac{1}{2}$.

For the term $II$, by the fractional Leibniz rule, we get
\begin{align*}
\Nrm{\ve^{-1}F(a)a-4\pi \mathfrak{c}_{0}|a|^{2}a}{H^{s}_x}\lesssim& \Nrm{(\ve^{-1}K-4\pi \mathfrak{c}_{0}\delta)*\lra{\nabla}^{s}|a|^{2}}{L^{2}_x}\Nrm{a}{L^{\infty}_x}\\
& +
\Nrm{(\ve^{-1}K-4\pi \mathfrak{c}_{0}\delta)*|a|^{2}}{L^{2}_x}\Nrm{\lra{\nabla}^{s} a}{L^{\infty}_x}\ .
\end{align*}
Due to the boundedness of $\lra{\nabla}^{s}a$, we only need to deal with the term $\ve^{-1}K-4\pi \mathfrak{c}_{0}\delta_0$ which provides the smallness.
By triangle inequality, we obtain
    \begin{align}\label{equ:difference,error term,II}
       & \Nrm{(\ve^{-1}K-4\pi \mathfrak{c}_{0}\delta)*\lra{\nabla}^{s}|a|^{2}}{L^{2}_x}\\
        \leq& \bbabs{\textstyle\intd \ve^{-1} K\dd x-4\pi \mathfrak{c}_{0}}\Nrm{\weight{\nabla}^{s}|a|^{2}}{L^{2}_x} +\bbn{\lrs{\textstyle \ve^{-1}K-\delta\intd \ve^{-1} K\dd x}*\lra{\nabla}^{s}|a|^{2}}_{L^{2}_x}\ . \notag
    \end{align}
For the first term on the right hand side of \eqref{equ:difference,error term,II}, we use the identity approximation \eqref{equ:quantative estimate for identity approximation} in Lemma~\ref{lem:quantative estimate for identity approximation} to get
\begin{multline}\label{equ:difference,error term,II,identity}
\bbn{\lrs{\textstyle \ve^{-1}K-\delta\intd \ve^{-1} K\dd x}*\lra{\nabla}^{s}|a|^{2}}_{L^{2}_x}
\lesssim \frac{\la(N)}{N\ve^2}\Nrm{\lra{x}vf_{L}^{\mu}}{L^{1}_x}\Nrm{\lra{\nabla}^{s+1}|a|^{2}}{L^{2}_x}
\lesssim \frac{\la(N)}{N\ve^{2}}
\end{multline}
where in the last inequality we have used that $0\leq f_{L}^{\mu}\leq 1$.

For the second term on the right hand side of Inequality~\eqref{equ:difference,error term,II},
by Identity~\eqref{eq:approximation_of_scattering_length} and Proposition \ref{prop:quantitative_scattering_length}, we have
\begin{align}\label{equ:difference,error term,c0}
\bbabs{\intd \ve^{-1} K\dd x-4\pi \mathfrak{c}_{0}}
\le \bbabs{\intd \ve^{-1} K\dd x-4\pi \asc_{0}^{\mu}}+4\pi \n{\asc_{0}^{\mu}-\mathfrak{c}_{0}}  \lesssim \displaystyle\frac{\la(N)}{L}+\eta(\ve)\ ,
\end{align}
where $\eta(\ve)$ is the rate of convergence of $\asc_{0}^{\mu}$.

Combining Inequalities~\eqref{equ:difference,error term,II}--\eqref{equ:difference,error term,c0}, we arrive at the estimate
\begin{align*}
II\lesssim \frac{\la(N)}{N\ve^{2}}+\frac{\la(N)}{L}+\eta(\ve)\ .
\end{align*}
Plugging the estimates for $I$ and $II$ into Inequality~\eqref{equ:difference,error term}, we arrive at
\begin{align*}
\frac{\d}{\d t}\Nrm{\mathrm{d}_{N}^{\ve}}{H^{s}_x}^{2}\lesssim \Nrm{\mathrm{d}_{N}^{\ve}}{H^{s}_x}^{2}+\ve^{2}+\lrs{\frac{\la(N)}{N\ve^{2}}+\frac{\la(N)}{L}+\eta(\ve)}^{2}.
\end{align*}
Therefore, by Gronwall's lemma, we conclude the proof of Inequality~\eqref{equ:convergence,a,ka=1}.

\end{proof}

We are now ready to prove the main proposition.

\begin{proof}[\textbf{Proof of Proposition~$\ref{prop:semiclassical limit,wkb}$}]
    For the convergence of mass density, we use the uniform bound of $a_{N}^{\ve}$ proved in Lemma~\ref{lem:well-posedness,uniform estimate,a} to get
    \begin{align*}
    \Nrm{\rho^{\ve}-|a|^{2}}{L^{p}_x}\leq&\Nrm{a_{N}^{\ve}-a}{L^{2}_x}\lrs{\Nrm{a_{N}^{\ve}}{L^{\frac{2p}{p-2}}_x}+\Nrm{a}{L^{\frac{2p}{p-2}}_x}}
    \lesssim \Nrm{a_{N}^{\ve}-a}{L^{2}_x}\ .
    \end{align*}

    For the convergence of momentum density, noting that
    \begin{align*}
        i\ve \nabla \phi_{N}^{\ve}=i\ve(\nabla a_{N}^{\ve})e^{i\varphi_{\mathrm{eik}}/\ve}-(\nabla \varphi_{\mathrm{eik}})a_{N}^{\ve}e^{i\varphi_{\mathrm{eik}}/\ve}\ ,
    \end{align*}
    which implies
    \begin{align*}
    \vect{J}^{\ve}=\im\lrs{\ve\nabla \phi_{N}^{\ve}\,\ol{\phi_{N}^{\ve}}}=
    \im\lrs{\ve\nabla a_{N}^{\ve}\,\ol{a_{N}^{\ve}}}+|a_{N}^{\ve}|^{2}\nabla \varphi_{\mathrm{eik}}\ ,
    \end{align*}
    then we  have
    \begin{align*}
    &\Nrm{\vect{J}^{\ve}-|a|^{2}\nabla \varphi_{\mathrm{eik}}}{L^{p}_x}\\
    \leq& \ve\Nrm{a_{N}^{\ve}}{L^{\frac{2p}{2-p}}_x}\nv{\nabla a_{N}^{\ve}}_{L^{2}_x}+\Nrm{a_{N}^{\ve}-a}{L^{2}_x}\lrs{\Nrm{a_{N}^{\ve}}{L^{\frac{2p}{2-p}}_x}+\Nrm{a}{L^{\frac{2p}{2-p}}_x}}
    \Nrm{\nabla \varphi_{\mathrm{eik}}}{L^{\infty}_x}\\
    \lesssim& \ve+\Nrm{a_{N}^{\ve}-a}{L^{2}_x}\ .
    \end{align*}

    Lastly, for the convergence of energy, note that
    \begin{align*}
    &\Nrm{|i\varepsilon\nabla \phi_{N}^{\ve}|^{2}-|a|^{2}|\nabla \varphi_{\mathrm{eik}}|^{2}}{L^{1}_x}\\
    \leq& \Nrm{\lrs{|a_{N}^{\ve}|^{2}-|a|^{2}}|\nabla \varphi_{\mathrm{eik}}|^{2}}{L^{1}_x}+2\ve\Nrm{(\nabla a_{N}^{\ve}\cdot \nabla \varphi_{\mathrm{eik}}) a_{N}^{\ve}}{L^{1}_x}+\ve^{2}\Nrm{\nabla a_{N}^{\ve}}{L^{2}_x}^{2}\\
    \lesssim& \Nrm{a_{N}^{\ve}-a}{L^{2}_x}+\ve\ .
    \end{align*}

    Therefore, combining Estimates~\eqref{equ:convergence,a,ka=1} and \eqref{equ:convergence,a,ka=0}, we arrive at the desired Estimates~\eqref{equ:semiclassical limit,convergence rate,k=1} and \eqref{equ:semiclassical limit,convergence rate,k=0}.
\end{proof}

\section{Proof of the Main Results}\label{sect:proof_of_main_results}
%% ============================================================================

In this section, we prove the main results based on the quantitative estimates established in Sections~\ref{section:bogoliubov_approximation}--\ref{section:Semiclassical Limit of the Modified Gross--Pitaevskii Equation}.
As shown in Section~\ref{section:Semiclassical Limit of the Modified Gross--Pitaevskii Equation}, for the $\beta=1$ situation there are three typical cases when studying the limiting behavior of the modified Gross--Pitaevskii equation, $\ka\in \{0, 1/2, 1\}$, which lead to different effective limiting equations. To simplify the notations in the proof, we shall only provide the proofs for the three typical cases since the proof for more general $\ka\in(0,1)$ is similar.

\begin{proof}[Proof of Theorem \ref{thm:main_result_GP-HC}]
By the triangle inequality, we have
\begin{align*}
\nrm{\rho_{N:1}^{\varepsilon}-\rho}_{L_{x}^{2}}\leq&\, \nrm{\rho_{N:1}^{\varepsilon}-\rho^{\varepsilon}}_{L_{x}^{2}}+
 \nrm{\rho^{\varepsilon}-\rho}_{L_{x}^{2}}\ ,\\
\nrm{\vect{J}_{N:1}^{\varepsilon}-\vect{J}}_{L_{x}^{1}}\leq&\, \nrm{\vect{J}_{N:1}^{\varepsilon}-\vect{J}^{\varepsilon}}_{L_{x}^{1}}+
 \| \vect{J}^{\varepsilon}-\vect{J}\|_{L_{x}^{1}}\ .
\end{align*}
Hence, by Proposition~\ref{prop:approximation,density_and_momentum} and Proposition~\ref{prop:semiclassical limit and the modulated energy}, we complete the proof of the convergence of mass and momentum densities for the GP/HC regimes.
Next, we deal with the convergence of the energy densities.

For the GP Regime, the parameters are given by
\begin{align*}
\textstyle \beta=1,\ \kappa=1,\ \la=1,\ \mu=1,\ \mathfrak{b}_{0}=\mathfrak{a}_{0}-\frac{1}{4\pi}
\int_{\mathbb{R}^{3}}v(f_{0})^{2}\dd x\ .
\end{align*}
By the triangle inequality, we have
\begin{align*}
\Nrm{E_{N,\mathrm{kin.}}^{\varepsilon}-\tfrac{1}{2}\rho|\bu|^{2}-4\pi \mathfrak{b}_{0}\rho^{2} }{L_{x}^{1}}
\leq&\, \Nrm{E_{N,\mathrm{kin.}}^{\varepsilon}-\tfrac{1}{2}|\ve\nabla \phi_{N}^{\ve}|^{2}-4\pi \mathfrak{b}_{0}|\phi_{N}^{\ve}|^{4}}{L_{x}^{1}}\\
&+\frac{1}{2}\Nrm{|\ve\nabla \phi_{N}^{\ve}|^{2}-\rho|\bu|^{2}}{L_{x}^{1}}+4\pi \mathfrak{b}_{0}\Nrm{|\phi_{N}^{\ve}|^{4}-\rho^{2}}{L_{x}^{1}}\ ,
\end{align*}
and
\begin{multline*}
	\Nrm{E_{N,\mathrm{int.}}^{\ve}-4\pi (\mathfrak{a}_{0}-\mathfrak{b}_{0})\rho^{2}}{L_{x}^{1}}
	\leq\Nrm{E_{N,\mathrm{int.}}^{\ve}-4\pi (\mathfrak{a}_{0}-\mathfrak{b}_{0})|\phi_{N}^{\ve}|^{4}}{L_{x}^{1}}+4\pi |\mathfrak{a}_{0}-\mathfrak{b}_{0}|\Nrm{|\phi_{N}^{\ve}|^{4}-\rho^{2}}{L_{x}^{1}}.
\end{multline*}
By Proposition \ref{prop:many-body_energies_mean-field_approximation} and Proposition \ref{prop:semiclassical limit and the modulated energy}, we obtain
   \begin{equation}
            \left\{
            \begin{aligned}
                &\Nrm{E_{N,\mathrm{kin.}}^{\varepsilon}-\tfrac{1}{2}\rho\n{\bu}^2-4\pi \mathfrak{b}_0 \rho^{2}}{L^{\infty}_t([0,T_{0}])L^{1}_x}^{2}
                \lesssim \cM^\init+\varepsilon^{2}+\tfrac{1}{\ln N}\ ,\\
                &\Nrm{E_{N,\mathrm{int.}}^{\varepsilon}-4\pi (\mathfrak{a}_{0}-\mathfrak{b}_{0})\rho^{2}}{L_{t}^{\infty}([0, T_{0}])L_{x}^{1}}^{2}\lesssim
                \cM^\init+\varepsilon^{2}+\tfrac{1}{\ln N}\ ,
            \end{aligned}
            \right.
         \end{equation}
and hence conclude the convergence of the energy densities.

For the HC Regime, the parameters are given by
 \begin{align*}
 \textstyle \beta=1,\ \kappa=1,\ \la=(\ln N)^{\upalpha},\ \mu=\frac{1}{\la},\ \mathfrak{b}_{0}^{\mu}=\mathfrak{a}_{0}^{\mu}-\frac{1}{4\pi\mu} \int_{\mathbb{R}^{3}}v(f_{0}^{\mu})^{2}\dd x\ .
 \end{align*}
By the triangle inequality, we obtain
\begin{align*}
\Nrm{E_{N,\mathrm{kin.}}^{\varepsilon}-\tfrac{1}{2}\rho|\bu|^{2}-4\pi \mathfrak{c}_{0}\rho^{2} }{L_{x}^{1}} \leq&\, \Nrm{E_{N,\mathrm{kin.}}^{\varepsilon}-\tfrac{1}{2}|\ve\nabla \phi_{N}^{\ve}|^{2}-4\pi \mathfrak{b}_{0}^{\mu}|\phi_{N}^{\ve}|^{4}}{L_{x}^{1}}\\
&+\frac{1}{2}\Nrm{|\ve\nabla \phi_{N}^{\ve}|^{2}-\rho|\bu|^{2}}{L_{x}^{1}}+4\pi \mathfrak{b}_{0}^{\mu}\Nrm{|\phi_{N}^{\ve}|^{4}-\rho^{2}}{L_{x}^{1}}\\
&+4\pi\n{\mathfrak{a}_{0}^{\mu}-\mathfrak{c}_{0}}
\| \rho^{2}\|_{L_{x}^{1}}+4\pi |\asc_0^{\mu}-\mathfrak{b}_{0}^{\mu}|\| \rho^{2}\|_{L_{x}^{1}}\ .
\end{align*}
By Proposition \ref{prop:many-body_energies_mean-field_approximation}, Proposition \ref{prop:semiclassical limit and the modulated energy}, Proposition \ref{prop:quantitative_scattering_length}, and Inequality~\eqref{equ:semiclassical estimates,b0} in Lemma \ref{lem,semiclassical estimates,scattering function,appendix}, we complete the proof of the kinetic energy part.

For the interaction energy part, by the triangle inequality, we have
\begin{align*}
	\|E_{N,\mathrm{int.}}^{\ve}\|_{L_{x}^{1}}
	\leq\|E_{N,\mathrm{int.}}^{\ve}-4\pi (\mathfrak{a}_{0}^{\mu}-\mathfrak{b}_{0}^{\mu})|\phi_{N}^{\ve}|^{4}\|_{L_{x}^{1}}+4\pi |\mathfrak{a}_{0}^{\mu}-\mathfrak{b}_{0}^{\mu}|\||\phi_{N}^{\ve}|^{4}\|_{L_{x}^{1}}\ .
\end{align*}
Using Proposition \ref{prop:many-body_energies_mean-field_approximation}, \eqref{equ:semiclassical estimates,b0} in Lemma \ref{lem,semiclassical estimates,scattering function,appendix}, and the $L^{4}$ uniform bound for $\phi_{N}^{\ve}$ in Lemma \ref{lem:phi4_bound}, we arrive at
\begin{align}
&\|E_{N,\mathrm{int.}}^{\ve}\|_{L_{x}^{1}}\lesssim \frac{1}{\ln N}+\eta(\frac{1}{\la(N)})\ .
\end{align}
Hence, we have completed the proof of Theorem \ref{thm:main_result_GP-HC}.
\end{proof}

\begin{proof}[Proof of Theorem \ref{thm:main_result_HD}]
Using again the triangle inequality, we have
\begin{align*}
	\nrm{\rho_{N:1}^{\varepsilon}-\rho}_{L_{x}^{p}}\leq&\, \nrm{\rho_{N:1}^{\varepsilon}-\rho^{\varepsilon}}_{L_{x}^{p}}+
	\nrm{\rho^{\varepsilon}-\rho}_{L_{x}^{p}}\ ,\\
	\nrm{\vect{J}_{N:1}^{\varepsilon}-\vect{J}}_{L_{x}^{1}}\leq&\, \nrm{\vect{J}_{N:1}^{\varepsilon}-\vect{J}^{\varepsilon}}_{L_{x}^{1}}+
	\| \vect{J}^{\varepsilon}-\vect{J}\|_{L_{x}^{1}}\ .
\end{align*}
Hence, by Proposition~\ref{prop:approximation,density_and_momentum} and Propositions~\ref{prop:semiclassical limit,wkb}, we complete the proof of the convergence of mass and momentum densities for the BGP/SGP regimes.
Next, we deal with the convergence of the energy densities. It suffices to consider the BGP regime, as the others follow similarly.
For the BGP Regime, the parameters are given by
\begin{align*}
\textstyle	\beta=1,\ \kappa=\frac{1}{2},\ \la\in\{1,(\ln N)^{\upalpha}\},\ \mu=\frac{\ve}{\la},\ \mathfrak{b}_{0}^{\mu}=\mathfrak{a}_{0}^{\mu}-\frac{1}{4\pi\mu}
	\int_{\mathbb{R}^{3}}v(f_{0}^{\mu})^{2}\dd x\ .
\end{align*}
By the triangle inequality, we have
\begin{align*}
	\|E_{N,\mathrm{kin.}}^{\varepsilon}-\frac{1}{2}|a|^{2}|\nabla \varphi_{\mathrm{eik}}|^{2}  \|_{L_{x}^{1}} \leq&\, \|E_{N,\mathrm{kin.}}^{\varepsilon}-\tfrac{1}{2}|\ve\nabla \phi_{N}^{\ve}|^{2}-4\pi\mu \la\mathfrak{b}_{0}^{\mu}|\phi_{N}^{\ve}|^{4}\|_{L_{x}^{1}}\\
	&+\frac{1}{2}\||\ve\nabla \phi_{N}^{\ve}|^{2}-|a|^{2}|\nabla \varphi_{\mathrm{eik}}|^{2}\|_{L_{x}^{1}}+4\pi \ve\mathfrak{b}_{0}^{\mu}\||\phi_{N}^{\ve}|^{4}\|_{L_{x}^{1}}\ ,
\end{align*}
and
\begin{align*}
	\|E_{N,\mathrm{int.}}^{\ve}\|_{L_{x}^{1}}
	\leq\|E_{N,\mathrm{int.}}^{\ve}-4\pi \mu \la(\mathfrak{a}_{0}^{\mu}-\mathfrak{b}_{0}^{\mu})|\phi_{N}^{\ve}|^{4}\|_{L_{x}^{1}}+4\pi\ve |\mathfrak{a}_{0}^{\mu}-\mathfrak{b}_{0}^{\mu}|\||\phi_{N}^{\ve}|^{4}\|_{L_{x}^{1}}\ .
\end{align*}
By Proposition \ref{prop:many-body_energies_mean-field_approximation}, Propositions~\ref{prop:semiclassical limit,wkb},
and the $L^{4}$ uniform bound for $\phi_{N}^{\ve}$ in Lemma \ref{lem:well-posedness,uniform estimate,a}, we obtain
\begin{align*}
		&\Nrm{E_{N,\mathrm{kin.}}^{\varepsilon}-\tfrac{1}{2}|a|^{2}|\nabla \varphi_{\mathrm{eik}}|^{2}}{L_{x}^{1}} \lesssim
		\frac{1}{\ln N}+\nv{a^{\ve,\mathrm{in}}-a^{\mathrm{in}}}_{H^{s}_x}+\ve+\eta(\ve)\ ,\\
		&\Nrm{E_{N,\mathrm{int.}}^{\ve}}{L_{x}^{1}}\lesssim
		\frac{1}{\ln N}+\ve\ .
\end{align*}
Hence, we conclude the convergence of the energy densities.
\end{proof}

\noindent \textbf{Acknowledgements} The authors would like to express gratitude to Xuwen Chen for helpful discussions and to Arnaud Triay for pointing out the connection between our problem's setting and the beyond Gross--Pitaevskii scaling regime.
%% ============================================================================
\appendix
%% ============================================================================

%% ============================================================================
\section{Semiclassical Analysis of the Two-Body Scattering Problem}\label{appendix:semiclassical_two-body_problem}
%% ============================================================================

In this appendix, we collection some results regarding the semiclassical analysis of the two-body scattering problem.

\subsection{Proof of Proposition \ref{prop:quantitative_scattering_length}}

\begin{proof}
    Notice, by rescaling Problem \eqref{def:zero-energy_scattering_problem} by some fixed constants, we could without loss of generality choose $\Nrm{v}{L^{\infty}_x}=1$ and $\mathfrak{c}_{0}=R_{0}=1$. Also, we could also assume $v_{1}(x)\equiv 1$ when $|x|=1$.

    Setting $m(r)=rf_{0}^{\mu}(r)$, we recast Problem \eqref{def:zero-energy_scattering_problem}
to the Sturm--Liouville problem
\begin{align}\label{eq:Sturm-Liouville_problem}
 -\mu\,m''+v\, m = 0 \quad \text{ with } \quad  m(0)=0,\quad \lim_{r\rightarrow \infty} \frac{m(r)}{r}= 1\ .
\end{align}
Note that the last boundary condition is equivalent to having $m'(R_0)=1$.
We employ the comparison method which we illustrate in Figure \ref{figure:comparison_method}.
    \begin{figure}[!htb]
        \begin{tikzpicture}
            \node[draw,rectangle,inner sep=0.1cm,outer sep=0.1cm] (1) at (8,3) {\makecell[l]{{\color{red}\textbf{------}} $\ol{v}(r)$\\ \textbf{------} $v(r)$\\
            {\color{blue}\textbf{------}} $\underline{v}(r)$} };
            \draw[->] (-1,0)--(6,0) node [right]{$r$};
            \draw[->] (0,-1)--(0,3.8);
            \draw (4.5,1)parabola [bend at end](5,0);
            %\draw    (251,4.58) .. controls (308.5,0) and (292,164.5) .. (350.5,96) .. controls (4.509,28) and (4.574.5.5,118) .. (4.578.5,14.50) .. controls (4.582.5,162) and (4.593.5,184.5) .. (519.5,181) ;
            \draw (0,2).. controls (0.5,1) and (0.7,3) .. (1,3).. controls (1.5,3) and (2.5,0.8).. (3,1.5)
            ..controls (3.8,3)..(4.5,1);
            \draw[very thick,blue] (0,1)--(4.5,1);
            \draw[very thick,blue] (4.5,0)--(5,0);
            \draw[dashed,blue] (4.5,1)--(4.5,0);
            \draw[very thick,red] (0,3)--(4.5,3);
            \draw[very thick,red] (4.5,1)--(5,1);
            \draw[dashed,red] (4.5,3)--(4.5,1);
            \draw[dashed,red] (5,1)--(5,0);
            \draw[decorate,decoration={brace,raise=2pt}] (5,0)--(4.5,0);
            \draw[decorate,decoration={brace,raise=2pt}] (4.5,0)--(4.5,1);
            \node at (4.75,-0.5) {$\theta$};
            \node at (4,0.5) {$\theta^{n}$};
            \node[left] at(0,3) {1};
            \node[below left] at(0,0) {0};
            \node[below right] at(5,0) {1};
        \end{tikzpicture}
        \caption{}\label{figure:comparison_method}
    \end{figure}
    In particular, we have the bound
    \begin{align}
         \theta^{n}\id_{[0,1-\theta)}(r)=:\underline{v}(r)\leq v(r)\leq \ol{v}(r):=\id_{[0,1-\theta)}(r)+\theta^{n}\id_{[1-\theta,1]}(r)\ ,
    \end{align}
    where the parameter $\theta$, which depends on $\mu$, is to be determined. Here, we have used the vanishing condition of $v$. Hence we consider the corresponding second-order equations
    \begin{equation}\label{eq:second order odes}
    \begin{cases}
        -\mu\,\ol{m}'' +\ol{v}\,\ol{m} =0 & \text{ with }\quad  \ol{m}(0)=0,\ \ol{m}'(1)=1\ ,\\
        -\mu\, m''+v\,m =0 \quad & \text{ with } \quad  m(0)=0,\ m'(1)=1\ ,\\
        -\mu\, \underline{m}''+\underline{v}\,\underline{m}=0& \text{ with } \quad  \underline{m}(0)=0,\ \underline{m}'(1-\theta)=1\ .
    \end{cases}
    \end{equation}

    Notice, by direct computation, we have that
    \begin{equation}\label{eq:lower bound solution}
    \underline{m}(r)=
        \begin{cases}
            \displaystyle \sqrt{\frac{\mu}{\theta^{n}}}\frac{\sinh\(\sqrt{\frac{\theta^{n}}{\mu}}r\)}{
            \cosh\(\sqrt{\frac{\theta^{n}}{\mu}}(1-\theta)\)}& \text{ if }  r\in (0,1-\theta],\\[10pt]
            r-\underline{\mathfrak{a}}_{0}(\mu,\theta) & \text{ if } r\geq [1-\theta,\infty),
        \end{cases}
    \end{equation}
    and
    \begin{equation}\label{eq:upper bound solution}
        \ol{m}(r)=
        \begin{cases}
        A\sinh\(\frac{r}{\sqrt{\mu}}\)    &\text{ if } r\in(0,1-\theta)\ ,\\[10pt]
        \sqrt{\frac{\mu}{\theta^{n}}}\sinh\(\sqrt{\frac{\theta^{n}}{\mu}}(1-r)\)
        +B\cosh\(\sqrt{\frac{\theta^{n}}{\mu}}(1-r)\)   &\text{ if } r\in(1-\theta,1)\ ,\\[10pt]
        r-\ol{\mathfrak{\mathfrak{a}}}_{0}(\mu,\theta)  &\text{ if } r\geq 1\ ,
        \end{cases}
    \end{equation}
    where $\ol{\asc}_0$ and $\underline{\asc}_0$ are the corresponding scattering length.
    The coefficients $A$ and $B$ are uniquely determined by the compatibility conditions
    \begin{align}\label{eq:compatibility_conditions}
        \lim_{r\nearrow (1-\theta)}\ol{m}(r)=\lim_{r\searrow (1-\theta)}\ol{m}(r)\quad\text{ and }\quad \lim_{r\nearrow (1-\theta)}\ol{m}'(r)=\lim_{r\searrow (1-\theta)}\ol{m}'(r).
    \end{align}
    Now, applying the standard comparison theorem for second-order linear equations (see, for instance, \cite[Chapter 8]{coddington1955theory}), we have that
    \begin{align*}
        0\le \ol{m}(r)\le m(r)\le \underline{m}(r)
    \end{align*}
    for all $r\ge 0$. In particular, by Formulae \eqref{eq:lower bound solution} and \eqref{eq:upper bound solution} and Conditions \eqref{eq:compatibility_conditions}, it follows that
    \begin{align*}
        1-\ol{\mathfrak{\mathfrak{a}}}_{0}(\mu,\theta) =\ol{m}(1)\le 1-\asc_{0}^{\mu}\le \underline{m}(1) =1-\underline{\mathfrak{\mathfrak{a}}}_{0}(\mu,\theta)
    \end{align*}
    where
    \begin{align*}
        &1-\underline{\mathfrak{\mathfrak{a}}}_{0}(\mu,\theta)=
         \sqrt{\tfrac{\mu}{\theta^{n}}}\tanh\(\sqrt{\tfrac{\theta^{n}}{\mu}}(1-\theta)\)+\theta\ ,\\
        &1-\ol{\mathfrak{\mathfrak{a}}}_{0}(\mu,\theta)=\frac{\sqrt{\frac{\mu}{\theta^{n}}}\tanh\(\sqrt{\frac{\theta^{n+2}}{\mu}}\)-\sqrt{\mu} \tanh\(\frac{1-\theta}{\sqrt{\mu}}\)}{1-\sqrt{\frac{\mu^{2}}{\theta^{n}}}
        \tanh\(\frac{1-\theta}{\sqrt{\mu}}\)\tanh\(\sqrt{\frac{\theta^{n+2}}{\mu}}\)}\ .
    \end{align*}
    By choosing $\theta=\mu^{\frac{1}{n+2}}$, we deduce that
    \begin{align}
    \ol{m}(1)=\mathcal{O}(\mu^{\frac{1}{n+2}})\quad \text{ and }\quad \underline{m}(1)\sim \sqrt{\frac{\mu}{\theta^{n}}}=\mathcal{O}(\mu^{\frac{1}{n+2}})
    \end{align}
    when $\mu$ is sufficiently small.
\end{proof}

\subsection{Semiclassical Estimates of Scattering Function}\label{section:dirichlet problem,appendix}
Let us summarize some of the properties of the scattering solution $f^\mu_0$ in the next lemma. For readers familiar with the literature on the Gross--Pitaevskii regime, the following lemma is nothing more than the semiclassical extension of result in \cite[Lemma 5.1]{erdos2010derivation}.
\begin{lem}\label{lem,semiclassical estimates,scattering function,appendix}
    Suppose $v$ is a nonnegative bounded function  supported in $\{\n{x}\le R_0\}$ and $\asc_{0}^{\mu}$ be the corresponding scattering length. Let $f_0^\mu$ be the solution to the Dirichlet Problem \eqref{def:zero-energy_scattering_problem}. Then we have the following:
    \begin{enumerate}[$(i)$]
        \item\label{part:zero-energy_scattering_function_pointwise_bound}
        We have the pointwise estimate
        \begin{align}\label{equ:zero-energy_scattering_function_pointwise_bound}
            \exp\(-\zeta_0^\frac12/\mu^{\frac12}\) \le c_{1}^{\mu}:=f_{0}^{\mu}(0)\le f_{0}^{\mu}(r) \le 1, \quad \text{ for all } r\ge 0\ ,
        \end{align}
        where $\zeta_0:= R_0^2  \Nrm{v}{L^\infty}$.

        \item\label{part:zero-energy_scattering_function_gradient_pointwise_bound} We have the following gradient estimates
        \begin{align}
            \n{\grad f_{0}^{\mu}(x)} \le  \frac{\asc_0^{\mu}}{\n{x}^2} \quad \text{ and } \quad \n{\grad f_{0}^{\mu}(x)} \le \frac{1-c_{1}^{\mu}}{\n{x}}\ .
        \end{align}
        \item\label{part:zero-energy_scattering_function_uniform_bound} We have the following uniform bound on the gradient
        \begin{align}\label{est:unscaled_uniform_grad_w_bound}
            \n{\grad f_{0}^{\mu}(x)} \le \frac{2}{\n{x}^2+R_0^2}\(\asc_0^{\mu}+\frac{\mathfrak{c}_0-\asc_0^{\mu}}{\mu}\zeta_0 \)\ .
        \end{align}
        \item Lastly, we have the bound
        \begin{align}\label{equ:semiclassical estimates,b0}
            0\le \asc_0^{\mu}-\frac{1}{4\pi}\intd \n{\grad f_{0}^{\mu}(x)}^2\dd x=\frac{1}{4\pi\mu}\intd v(x) (f_{0}^{\mu}(x))^2\dd x  \le \frac{\asc_0^{\mu}}{R_0}(\mathfrak{c}_0-\asc_0^{\mu})\ ,
        \end{align}
        which tends to zero as $\mu\rightarrow 0$.
    \end{enumerate}
\end{lem}

\subsection{Proof of Lemma \ref{lem:correlation_structure}}\label{appendix:neumann}
Noticing that
\begin{align*}
   &f^\varepsilon_{N, \ell}(x):=f^\mu_L(N^\beta\varepsilon^{2\kappa}x),\quad E_{N,l}^{\ve}=E_{L}^{\mu}\frac{\la(N)(N^{\be}\ve^{2\ka})^{3}}{N}\ ,\\
   &\mu=\frac{N^{1-\beta}\varepsilon^{2(1-\kappa)}}{\la(N)},
\quad L=N^{\beta}\ve^{2\kappa}l\ ,
\end{align*}
by scaling, we need only to consider the Neumann boundary problem
\begin{equation}\label{eq:neumann_boundary_problem}
\left\{
\begin{aligned}
&\lrs{-\mu\, \lapl+ v}f_{L}^{\mu}=E_{L}^{\mu}f_{L}^{\mu},\quad \text{for $|x|<L$}\\
&\text{$f_{L}^{\mu}(L)=1$ and $\pa_{r}f_{L}^{\mu}(L)=0$}
\end{aligned}
\right. \ ,
\end{equation}
with $L\ge R_0$. Using the estimates obtained in the previous section \ref{section:dirichlet problem,appendix}, we extend the $\mu=1$ results given in \cite[Appendix A]{erdos2006derivation} to the semiclassical regime $\mu\to 0$. The technical difficulty lies in the nonlinear dependence between the scattering length $a_{0}^{\mu}$ and the parameter $\mu$, which means that a linear scaling analysis would not work. Instead, the proof is based on the comparison principle and techniques of the construction of auxiliary functions.

\begin{lem}\label{lem:appendix_version_correlation_structures}
    Let $v$ be a nonnegative bounded function supported in $\{\n{x}\le R_0\}$ and $\asc_{0}^{\mu}$ be the corresponding scattering length.  Let $f_0^\mu$ be the solution to the Dirichlet Problem \eqref{def:zero-energy_scattering_problem}, $f^{\mu}_{L}$ be the ground state of the Neumann problem \eqref{eq:neumann_boundary_problem} and $E_{\mathrm{gs}}:=E_{L}^{\mu}$ the corresponding ground state energy. Then, for $L$ sufficiently large, we have:
    \begin{enumerate}[$(i)$]
        \item\label{part:groundstate_asymptotic_expansion_neumann} ground state energy $E_{\mathrm{gs}}$ has the asymptotic upper bound expression
        \begin{align}\label{eq:groundstate_asymptotic_expansion_neumann}
            E_{\mathrm{gs}} \le \frac{3\mu\asc_0^{\mu}}{L^3}\(1+\mathcal{O}\(\frac{R_0}{L}\)\)\quad \text{ as } \quad R_0/L\rightarrow 0\ .
        \end{align}

        \item\label{part:neumann_scattering_function_pointwise_bound} For all $\n{x}\le L$, we have
        \begin{align}\label{est:pointwise_for_gstate}
            \exp\(-\zeta_0^\frac12/\mu^{\frac12}\) \le f_{L}^{\mu}(x)\le 1\ ,
        \end{align}
        which also yields
        \begin{align}\label{est:pointwise_for_gstate2}
            1-f_{L}^{\mu}(x)\le\min\(1-\exp\(-\zeta_0^\frac12/\mu^{\frac12}\), \frac{\asc_0^{\mu}}{\n{x}}\)\ ,
        \end{align}
    where $\zeta_0:= R_0^2  \Nrm{v}{L^\infty}$.
In fact, we have the uniform bound
        \begin{align}
            \n{f_{0}^{\mu}(x)-f_{L}^{\mu}(x)}\le C \frac{\asc_0^{\mu}}{L}\ .
        \end{align}
        \item\label{part:approximation_of_scattering_length} We have that
        \begin{align}
            \intd v f_{L}^{\mu}\dd x  =& 4\pi \mu\,\asc_0^{\mu} + \mathcal{O}\(\frac{\asc_0^{\mu}}{L}\),\label{eq:approximation_of_scattering_length}\\
            \intd v(f_{L}^{\mu})^2\dd x  = &\intd v(f_{0}^{\mu})^2\dd x + \mathcal{O}\(\frac{\mu\,\asc_0^{\mu}}{L}\)\label{equ:Dirichlet,Neumann}\ .
        \end{align}
        \item\label{part:neumann_scattering_function_pointwise_gradient_bound} There exists a universal constant $C>0$ such that we have the following gradient estimates
        \begin{align}
            \n{\grad f_{L}^{\mu}(x)} \le  \frac{\asc_{0}^{\mu}+\mathcal{O}(R_0/L)}{\n{x}^2} \quad \text{ and } \quad \n{\grad f_{L}^{\mu}(x)} \le \frac{2+\mathcal{O}\(R_0/L\)}{\n{x}}\ .
        \end{align}
        \item\label{part:neumann_scattering_function_uniform_bound} Moreover,  there exists $C>0$, independent of $\mu$, such that we have the following bound
        \begin{align}\label{est:uniform_grad_w_bound_neumann}
            \n{\grad f_{L}^{\mu}(x)} \le  \frac{C}{\n{x}^2+R_0^2}\(\asc_0^{\mu}+\frac{\mathfrak{c}_0-\asc_0^{\mu}}{\mu}\zeta_0 \)
        \end{align}
        when $L$ is sufficiently large.
    \end{enumerate}
\end{lem}

\begin{proof}
	For simplicity, we omit the parameters and use the shorthands
	\begin{align*}
	\asc_{0}=\asc_{0}^{\mu},\quad \sfc_{1}=\sfc_{1}^{\mu},\quad f_{\mathrm{gs}}=f^{\mu}_{L},\quad f_{0}=f_{0}^{\mu},\quad w_{0}=1-f_{0}^{\mu}\ .
	\end{align*}
	
    To prove Part \eqref{part:groundstate_asymptotic_expansion_neumann}, we recall that the ground state energy of the Neumann problem is
    \begin{align*}
        E_{\mathrm{gs}} = \inf_{\varphi \in H^1(\n{x}\le L)} \frac{\int_{\n{x}\le L}\mu\n{\grad\varphi}^2+v\varphi^2\dd x}{\int_{\n{x}\le L}\varphi^2\dd x}\ .
    \end{align*}
    With an appropriate choice of trial function, let us show an upper bound for the ground state energy.  Noting that $f_{0}(r)$ is the solution to Dirichlet Problem \eqref{eq:zero-energy_scattering} and $w_0=1-f_{0}$, we see that
    \begin{align*}
        &\int_{\n{x}\le L} \mu \n{\grad f_{0}}^2+v\, (f_{0})^2\dd x\\
        &=\mu\int_{\n{x}=L} w_0 \frac{\bd w_0}{\bd n}\d S+\int_{\n{x}\le L} w_0 (-\mu\,\lapl w_0)+vw_0(w_0-1)+v(1-w_0)\dd x\\
        &\le \int_{\n{x}\le L} w_0 (-\mu\,\lapl w_0)+vw_0(w_0-1)+v(1-w_0)\dd x
        = 4\pi\mu\, \asc_0
    \end{align*}
    where we have used the fact that $w_0(r)$ is radially decreasing.
    Next, notice that
    \begin{align*}
        \int_{\n{x}\le L} (f_{0}(x))^2\d x =&\, 4\pi \int^L_0 m_0(r)^2\d r
         \ge\, 4\pi \int^L_{R_0} (r-\asc_0)^2\d r \ge \frac{4\pi}{3}\(L-R_0\)^3 .
    \end{align*}
    Hence, for $L$ much larger than $R_0$, we see that
    \begin{align}
        E_{\mathrm{gs}}\le \frac{3\mu \asc_0}{L^3\(1-\frac{R_0}{L}\)^3} \le \frac{3\mu\, \asc_0}{L^3}\(1+\mathcal{O}\(\frac{R_0}{L}\)\)\ .
    \end{align}

    To prove Part \eqref{part:neumann_scattering_function_pointwise_bound}, we start by making the observation that  $f_{\mathrm{gs}}$ satisfies
    \begin{align*}
    (-\mu\,\lapl+v)(f_{0}-f_{\mathrm{gs}})=-E_{\mathrm{gs}}f_{\mathrm{gs}}\leq 0
    \end{align*}
    on $\{\n{x}\le L\}$. Applying the maximum principle for elliptic operators (see, e.g., \cite[Chapter 3]{gilbarg1977elliptic}), we get that
    \begin{align}\label{equ:comparion,fd,fn}
  f_{0}(x)\leq f_{\mathrm{gs}}(x)\ ,
    \end{align}
    which yields the lower bound by using \eqref{equ:zero-energy_scattering_function_pointwise_bound}.
    Similarly, observe that
    \begin{align*}
        (-\mu\,\lapl+v)(f_{\mathrm{gs}}-f_{0}^{\mu}-\tfrac{2\asc_0}{L}+\tfrac{E_{\mathrm{gs}}}{6\mu}\n{x}^2)=E_{\mathrm{gs}}(f_{\mathrm{gs}}-1)-\(\tfrac{2\asc_0}{L}-\tfrac{E_{\mathrm{gs}}}{6\mu}\n{x}^2\)v \le 0
    \end{align*}
    when $L$ is sufficiently large. Again, by the maximum principle, we have that
    \begin{align*}
        f_{\mathrm{gs}}(x)\le f_{0}^{\mu}(x)+\frac{2\asc_0}{L}-\tfrac{E_{\mathrm{gs}}}{6\mu}\n{x}^2 \le f_{0}^{\mu}(x)+ C\frac{\asc_0}{L}
    \end{align*}
    for all $\n{x}\le L$.
    This completes the proof of Part~\eqref{part:neumann_scattering_function_pointwise_bound}.

    To prove Part~\eqref{part:approximation_of_scattering_length}, by the uniform bound from Part~\eqref{part:neumann_scattering_function_pointwise_bound}, we have that
    \begin{align*}
    \intd vf_{\mathrm{gs}}\dd x=\intd vf_{0}\dd x+\intd v(f_{\mathrm{gs}}-f_{0})=4\pi\mu\,\asc_{0}+\mathcal{O}\lrs{\frac{\asc_{0}}{L}}
    \end{align*}
    and
    \begin{align*}
    \intd vf_{\mathrm{gs}}^{2}\dd x=&\, \intd vf_{0}^{2}\dd x+\intd v(f_{\mathrm{gs}}-f_{0})(f_{\mathrm{gs}}+f_{0})\dd x\\
    \le &\, \intd vf_{0}^{2}\dd x+\frac{C\mu\,\asc_0}{L}\frac{1}{4\pi\mu}\intd v(f_{\mathrm{gs}}+f_{0})\dd x\ .
    \end{align*}

    For Part \eqref{part:neumann_scattering_function_pointwise_gradient_bound}, let $m_1(r):= r f_{\mathrm{gs}}(r)$, then we see that $m_1$ satisfies the equation
    \begin{align}\label{eq:second_order_diffeq_m_neumann}
    &-\mu\, m_1'' + \(v(r)-E_{\mathrm{gs}}\) m_1 = 0\ .
    \end{align}

    In the radiation region that $r\in[R_{0},L]$, we can solve explicitly for $m_1$ to get
    \begin{align*}
        m_1(r) = L \cos\(\sqrt{\tfrac{E_{\mathrm{gs}}}{\mu}}(r-L)\)+\sqrt{\frac{\mu}{E_{\mathrm{gs}}}}\sin\(\sqrt{\tfrac{E_{\mathrm{gs}}}{\mu}}(r-L)\)\ .
    \end{align*}
    Furthermore, since $\sqrt{E_{\mathrm{gs}}/\mu}\ll 1$, we could expand $m_1(r)$ and $m_1'(r)$ to the fourth order in $\sqrt{E_{\mathrm{gs}}/\mu}$ to find
    \begin{align*}
    m_1(r)=r-\frac{E_{\mathrm{gs}} L^3}{3\mu}+\mathcal{O}\(\frac{\asc_0}{L}\) \quad \text{ and } \quad m_1'(r)=1+\mathcal{O}\(\frac{\asc_0}{L}\)
    \end{align*}
    on $[R_0, L]$ when $L$ is sufficiently large. In particular, there exists some univerisal constant $c>0$ such that
    \begin{align*}
    \bd_r f_{\mathrm{gs}}(r)=&\frac{rm_1'(r)-m_1(r)}{r^{2}}\le  c\frac{\asc_0}{r^2}\\
    \bd_r f_{\mathrm{gs}}(r)=&\frac{m_1'(r)-f_{\mathrm{gs}}(r)}{r} =\frac{1-f_{\mathrm{gs}}(r)+\mathcal{O}(\asc_0/L)}{r}\ .
    \end{align*}

    In the scattering region that $r\in [0,R_{0}]$, consider the auxiliary function
    \begin{align}
    g(r)=rm_1'(r)-m_1(r)+\frac{E_{\mathrm{gs}}}{\mu}\int_{0}^{r}s^2 f_{\mathrm{gs}}(s)\dd s
    = \frac{1}{\mu}\int_{0}^{r}s^2 v(s)f_{\mathrm{gs}}(s)\dd s.
    \end{align}
    Then, for $r\in [0,R_{0}]$, we have that $g'(r)=rv(r)m_1(r)/\mu\geq 0$
    which implies $g$ is monotone nondecreasing and
    \begin{align*}
    g(r)\le&\, g(R_0)= R_0m_1'(R_0)-m_1(R_0)+\frac{E_{\mathrm{gs}}}{\mu}\int_{0}^{R_0}s^2 f_{\mathrm{gs}}(s)\dd s
    \le\, \frac{E_{\mathrm{gs}}\(L^3+R_0^3\)}{3\mu}+c\, \asc_0\ .
    \end{align*}
    Hence, by Part \eqref{part:groundstate_asymptotic_expansion_neumann}, we see that
    \begin{align*}
    \n{\bd_r f_{\mathrm{gs}}(r)} \le \frac{g(R_0)+\frac{E_{\mathrm{gs}}}{\mu}\int_{0}^{R_0}s^2 f_{\mathrm{gs}}(s)\dd s}{r^2} \le \frac{\asc_0+\mathcal{O}\(\frac{R_0}{L}\)}{r^2}\ .
    \end{align*}
    Lastly, we write
    \begin{align*}
    \bd_r f_{\mathrm{gs}}(r) =\frac{m_1'(r)+\frac{E_{\mathrm{gs}}}{\mu}\int_{0}^{r} m_1(s)\dd s-\frac{E_{\mathrm{gs}}}{\mu}\int_{0}^{r} m_1(s)\dd s-f_{\mathrm{gs}}(r)}{r}\ .
    \end{align*}
    Using a similar auxiliary function argument, one obtains the estimate
    \begin{align*}
    \n{\bd_r f_{\mathrm{gs}}(r)} \le  \frac{2+\mathcal{O}(\asc_0/L)}{r}\ .
    \end{align*}

    To prove Part \eqref{part:neumann_scattering_function_uniform_bound}, notice that in the radiation region, we have the estimate
    \begin{align*}
        (R_0^2+r^2)\n{\grad f_{\mathrm{gs}}(r)} \le \asc_0\(\frac{R_0^2}{r^2}+1\)\le C\asc_0\ .
    \end{align*}
    In the scattering region, using the fact that $f_{\mathrm{gs}}(r)\le 1$, we have that
    \begin{align*}
        (R_0^2+r^2)\n{\grad f_{\mathrm{gs}}(r)}\le&\, 2R_0^2\(\frac{E_{\mathrm{gs}}}{\mu r^2}\int_{0}^{r}s^2 f_{\mathrm{gs}}(s)\dd s
        + \frac{1}{\mu r^2}\int_{0}^{r}s^2 v(s)f_{\mathrm{gs}}(s)\dd s\)\\
        \le&\, 2\frac{R_0^3}{\mu}\(E_{\mathrm{gs}}+v_0\)\Nrm{f_{\mathrm{gs}}}{L^\infty(r\le R_0)}\\
        \le&\, 2\frac{R_0^3}{\mu}\(E_{\mathrm{gs}}+v_0\)\(\frac{\mathfrak{c}_0-\asc_0}{R_0}+C\frac{\asc_0}{L}\)
    \end{align*}
    which yields the desired result.
\end{proof}

%% ============================================================================
\section{Tools from Dispersive PDE Theory}\label{appendix:Dispersive PDEs}
%% ============================================================================
In this section, we shall record some useful tools in dispersive PDE theory and prove some properties of the  modified Gross--Pitaevskii equation, which we used throughout the paper. Most of the proofs are standard in the literature, but nevertheless we need to reproduce much of the proofs since we need to bookkeep the nontrivial dependence of the results on $\varepsilon$.
\subsection{Preliminary estimates}
Recall we have the equation
\begin{align}\label{eq:modified_Gross--Pitaevskii2}
    i\varepsilon\,\bd_t\phi = -\tfrac{\varepsilon^2}{2}\lapl \phi + (K\ast |\phi|^2)\phi
\end{align}
with the potential
    $K(x)=\lambda(N) (N^{\beta}\varepsilon^{2\kappa})^3 v(N^\beta \varepsilon^{2\kappa} x)f_{N, \ell}^{\varepsilon}(x)$.

For any fixed $\varepsilon \in (0, 1)$ and $N$, the proof of the global well-posedness and scattering of Equation \eqref{eq:modified_Gross--Pitaevskii2} in energy space is similar to the proof of the cubic nonlinear Schr\"odinger equation, which is standard in the literature (e.g. see \cite{ginibre1985scattering}).
It is also clear that if $\phi_{N, t}^{\varepsilon}$ is an $H^1$-solution of Equation \eqref{eq:modified_Gross--Pitaevskii2} then the solution satisfies the following conversation laws
\begin{align}
    M_t=&\, \Nrm{\phi_{N, t}^{\varepsilon}}{L^2_x}^2 = \Nrm{\phi^{\init}}{L^2_x}^2 = M_0\ ,\\
    E_t=&\, \textstyle\frac12\Nrm{\varepsilon \grad \phi_{N, t}^{\varepsilon}}{L^2_x}^2+ \frac12\intd (K\ast|\phi_{N, t}^{\varepsilon}|^2)\n{\phi_{N, t}^{\varepsilon}}^2\dd x = E_0\ .
\end{align}
To simplify the notations, we write $\phi_t = \phi^\varepsilon_{N, t}$ in this appendix. Certainly, to obtain uniform bounds, the restriction between $N$ and $\ve$ is needed. The exponential restriction \eqref{equ:restriction,theorem,gp,hc}, if needed, suffices for our goal.

Let us state some useful estimates.

\begin{lem}\label{lem:phi4_bound}
    Let $\phi$ be a solution to Equation~\eqref{eq:modified_Gross--Pitaevskii2}  with  initial data $\weight{\varepsilon\grad}\phi^{\init}\in L^2(\R^3)$. Then there exists $C$, dependent only on $E_0$, such that we have the estimate
    \begin{align}
         \Nrm{\rho^\varepsilon_t}{L^\infty_t L^2_x}\lesssim 1.
    \end{align}
\end{lem}

\begin{lem}[{\hspace{-0.05em}\cite[Lemmas A.5]{chen2022quantitative}}]\label{lem:quantative estimate for identity approximation}
    Let $W_{N}(x)=N^{3\be}h(N^{\be}x)-b_{1}\delta$, where $b_{1}=\intd h\dd x$. For any $0\leq s\leq 1$,
    \begin{align}\label{equ:quantative estimate for identity approximation}
    \Nrm{W_{N}*h}{L^{p}_x}\lesssim N^{-\be s}\nv{\lra{\nabla}^{s}h}_{L^{p}_x}
    \end{align}
    for any $1<p<\wq$. The implicit constant depends only on $\nv{\lra{x}h(x)}_{L^{1}_x}$.
\end{lem}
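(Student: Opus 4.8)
The statement to prove is the quantitative identity-approximation estimate of Lemma~\ref{lem:quantative estimate for identity approximation}: for $W_N(x) = N^{3\beta} h(N^\beta x) - b_1\delta$ with $b_1 = \intd h\,\d x$, one has $\Nrm{W_N * h}{L^p_x} \lesssim N^{-\beta s}\nrm{\langle\nabla\rangle^s h}_{L^p_x}$ for $0\le s\le 1$ and $1<p<\infty$, with implicit constant depending on $\nrm{\langle x\rangle h(x)}_{L^1_x}$. This is a scaling-robust statement about how fast a mollifier $h_N(x):=N^{3\beta}h(N^\beta x)$ (of unit mass $b_1$, up to normalization) converges to $b_1$ times the Dirac mass when tested against a function of finite smoothness. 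The plan is to reduce to the endpoint cases $s=0$ and $s=1$ and interpolate.

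First I would handle $s=1$. Write the convolution pointwise as
\begin{align*}
  (W_N * h)(x) &= \intd N^{3\beta} h(N^\beta y)\big(h(x-y) - h(x)\big)\,\d y\\
  &= \intd h(z)\big(h(x - N^{-\beta}z) - h(x)\big)\,\d z,
\end{align*}
after the change of variables $y = N^{-\beta}z$, using $b_1 = \intd h(z)\,\d z$. Then the difference $h(x-N^{-\beta}z) - h(x) = -\int_0^1 N^{-\beta} z\cdot(\nabla h)(x - \theta N^{-\beta}z)\,\d\theta$, so by Minkowski's integral inequality in $L^p_x$,
\begin{align*}
  \Nrm{W_N * h}{L^p_x} \le N^{-\beta}\intd |h(z)|\,|z|\Big(\int_0^1 \Nrm{(\nabla h)(\cdot - \theta N^{-\beta}z)}{L^p_x}\,\d\theta\Big)\d z
  \le N^{-\beta}\nrm{|z| h(z)}_{L^1_z}\Nrm{\nabla h}{L^p_x},
\end{align*}
by translation invariance of the $L^p$ norm. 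This gives the $s=1$ bound with the stated dependence on $\nrm{\langle x\rangle h}_{L^1}$.

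For $s=0$, the bound $\Nrm{W_N * h}{L^p_x} \lesssim \Nrm{h}{L^p_x}$ follows from Young's convolution inequality together with $\Nrm{W_N}{\mathcal M} \le \Nrm{h_N}{L^1} + |b_1| = 2\nrm{h}_{L^1} \le C$ (the measure $W_N$ has total variation bounded uniformly in $N$, and $\nrm{h}_{L^1}\le \nrm{\langle x\rangle h}_{L^1}$). Then the general case $0<s<1$ follows by complex (or real) interpolation between these two endpoints, since the operator $g \mapsto W_N * g$ is bounded $L^p \to L^p$ and $W^{1,p}\to L^p$ with norms $C$ and $CN^{-\beta}$ respectively, and the interpolation space between $L^p$ and $W^{1,p}$ with parameter $s$ is the Bessel potential space $H^{s,p}$ with norm $\nrm{\langle\nabla\rangle^s \cdot}_{L^p}$. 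I do not expect any serious obstacle here; the only mild technical point is to phrase the interpolation cleanly (one may instead interpolate directly via the Mihlin multiplier theorem, writing $\widehat{W_N}(\xi) = \widehat h(N^{-\beta}\xi) - \widehat h(0)$ and estimating $|\widehat{W_N}(\xi)| \lesssim \min(1, N^{-\beta}|\xi|)\,\nrm{\langle x\rangle h}_{L^1} \lesssim (N^{-\beta}|\xi|)^s \nrm{\langle x\rangle h}_{L^1}$, so that $W_N*g = m_N(D)g$ with $m_N(\xi) = \widehat{W_N}(\xi)\langle\xi\rangle^{-s} \cdot \langle\xi\rangle^s$ and the symbol $\widehat{W_N}(\xi)\langle\xi\rangle^{-s}$ satisfies Mihlin bounds uniformly $\cdot N^{-\beta s}$ — though verifying the derivative bounds on $\widehat{W_N}$ requires $h$ to have enough decay, so the change-of-variables/interpolation route is cleaner under the stated hypotheses). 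Since this is a cited lemma from \cite{chen2022quantitative}, a short argument of this form suffices and the main work has already been done in the literature.
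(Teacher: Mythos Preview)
Your proposal is correct. The paper does not give its own proof of this lemma; it is simply cited from \cite{chen2022quantitative}, so there is no in-paper argument to compare against, and your endpoint-plus-interpolation approach (the $s=1$ bound via the fundamental theorem of calculus and Minkowski, the $s=0$ bound via Young, then complex interpolation) is exactly the standard route and matches what one finds in the cited source.
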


\subsection{Estimates for the modified GP equation: \texorpdfstring{$\beta=1$}{beta=1} case}
In this section, we consider the GP/BGP/SGP/HC regimes.
The Sobolev norm of the solution to Equation \eqref{eq:modified_Gross--Pitaevskii2} on any fixed interval $[0, T]$ grows polynomially in $\varepsilon^{-1}$. The proof is based on an argument introduced by Bourgain  in \cite{bourgain1996growth} (cf. \cite[Ch. V, Theorem 2.13]{bourgain1999global} ).

\begin{lemma} \label{lem:semiclasscial_propagation_of_regularity}
    Assume $s>1$. Let $\phi$ be a solution to \eqref{eq:modified_Gross--Pitaevskii2} in the GP or HC regime whose initial data satisfy: there exists $C_0>0$, independent of $N$ and $\varepsilon$, such that
    \begin{align*}
        &M_0\le C_0,\quad  E_0 \le C_0, \quad \text{ and }\quad \Nrm{\lra{\ve \nabla}^{s}\phi^{\init}}{L^2_x} \le C_0\ .
    \end{align*}
    Then there holds that
    \begin{align}\label{equ:polynomial growth,sobolev norm}
        \Nrm{\weight{\varepsilon\grad}^s\phi(t)}{L^2_x}\lesssim P(\ve^{-1},t).
    \end{align}
    where $P(x,y)$ is a polynomial function.
\end{lemma}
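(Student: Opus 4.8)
The plan is to adapt Bourgain's iteration trick for controlling the growth of high Sobolev norms of NLS solutions to the present semiclassical setting, carefully bookkeeping the $\varepsilon$-dependence of every constant. The key observation is that, for $\beta=1$, the effective potential $K$ satisfies $\Nrm{K}{L^1_x}\lesssim \lambda\mu = \lambda\widetilde\mu \cdot \mu/\widetilde\mu \lesssim 1$ uniformly in $N$ (by Part~(iii) of Lemma~\ref{lem:appendix_version_correlation_structures} together with Lemma~\ref{lem:convergence_of_scattering_length_semiclassical} and Remark~\ref{remark:scattering_length}), and that the mass and energy conservation laws give uniform control on $\Nrm{\phi_t}{H^1_\varepsilon}$ where $H^1_\varepsilon$ is the space with norm $\Nrm{\weight{\varepsilon\grad}\cdot}{L^2_x}$. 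The scheme is then: (1) rescale to the $\varepsilon=1$ equation, (2) establish a \emph{short-time} $S^0$ Strichartz bound on an interval whose length is a fixed power of $\varepsilon$, via a standard contraction/bootstrap using Lemma~\ref{lem:semiclassical_Strichartz_est} and the fractional Leibniz rule (exactly as in Proposition~\ref{prop:global_estimates_for_modified_GP_HD} for the HD case, but now tracking the $\lambda\varepsilon^{-\upnu}$-type factors that come from $\Nrm{K}{L^1}$ and the Sobolev embedding constants), and (3) iterate this local bound $\mathcal{O}(\varepsilon^{-c})$ times to cover the fixed interval $I$, with the Sobolev norm growing by a bounded \emph{multiplicative} factor on each step, so that after $\mathcal{O}(\varepsilon^{-c})$ steps one gets a bound of the form $\exp(C\varepsilon^{-c})$ — which is \emph{not} polynomial.

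To actually obtain the polynomial bound \eqref{equ:polynomial growth,sobolev norm} one must be more careful, and this is where Bourgain's trick enters. The idea is that the increment of $\Nrm{D^s\phi}{L^2}$ over a local interval of length $\tau\sim\varepsilon^c$ is not merely multiplicative but is of the form $\Nrm{D^s\phi(t+\tau)}{L^2}\le \Nrm{D^s\phi(t)}{L^2}\big(1 + C\tau^\theta\Nrm{D^s\phi(t)}{L^2}^{\delta}\big)$ for some small gain $\theta>0$ and some $\delta<1$ coming from interpolating the nonlinear term between the conserved $H^1_\varepsilon$-norm and the top norm $D^s$. Summing such a recursion over the $\sim\varepsilon^{-c}$ steps in $I$, one derives a \emph{differential-inequality}-type bound: if $A(t):=\Nrm{D^s\phi(t)}{L^2}$ then $A$ satisfies, in an averaged sense, $\frac{d}{dt}A \lesssim \varepsilon^{-c'} A^{\sigma}$ with $\sigma<1$ (or $\sigma=1$ with a harmless $\varepsilon^{-c'}$ coefficient, still giving a polynomial-in-$\varepsilon^{-1}$ bound after Gr\"onwall on a \emph{fixed} time interval, since $\exp(C\varepsilon^{-c'}|I|)$ would be exponential — so one genuinely needs $\sigma<1$). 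Concretely, one writes the Duhamel formula for $D^s\phi$ on $[t,t+\tau]$, applies Strichartz, uses the Leibniz rule to put all but one derivative on one factor and estimate the remaining low-regularity factors by $\Nrm{\phi}{L^5_tL^5_x}$-type Strichartz norms controlled by $H^1_\varepsilon$-conservation (as in \eqref{scattering-norm}), and tracks that the power of $A$ appearing is strictly below $1$ because only \emph{one} factor carries the full $s$ derivatives while the $L^5_tL^5_x$ norm of $\phi$ on a short interval is itself small like $\tau^{\theta}$.

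The main obstacle will be step (2)–(3) bookkeeping: ensuring that all the $\varepsilon$-powers accumulated from (a) the $S^0_\varepsilon$ norm prefactors $\varepsilon^{1/q}$, (b) the Sobolev embedding $\Nrm{\phi}{L^\infty_x}\lesssim \varepsilon^{-3/2}\Nrm{\weight{\varepsilon\grad}^{s}\phi}{L^2_x}$, and (c) the factor $\Nrm{K}{L^1}\lesssim\lambda\mu$ (which in the BGP/SGP regimes carries its own $\varepsilon^{2(1-\kappa)}$ smallness, helping, but in the GP regime is just $\mathcal{O}(1)$), combine to yield a \emph{finite} power $P(\varepsilon^{-1})$ rather than an exponential. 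This is precisely the place where one must use the short-time smallness $\tau^\theta$ to beat the negative powers of $\varepsilon$ coming from the embedding constants, so the correct choice of $\tau$ as a power of $\varepsilon$ (balanced against the polynomial degree one is willing to tolerate) is delicate. A secondary technical point: one needs the solution to actually exist and remain in $H^s$ on all of $I$, which follows by a continuity/bootstrap argument once the a priori bound \eqref{equ:polynomial growth,sobolev norm} is in hand, combined with local well-posedness in $H^s_\varepsilon$ (standard, via Lemma~\ref{lem:semiclassical_Strichartz_est} and the algebra property of $H^s$ for $s>3/2$). I would also note that the statement only asks for $s>1$, but the Sobolev embedding step really wants $s>3/2$ for the algebra property; this is reconciled by using the mixed Strichartz norms rather than pointwise-in-time $H^s$ bounds for the low-regularity factors, so that only $H^1_\varepsilon$-control plus one top derivative is needed. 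Once the recursion with exponent $\sigma<1$ is established, Gr\"onwall's lemma on the fixed interval $I$ closes the argument and produces the polynomial $P$.
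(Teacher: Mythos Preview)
Your high-level diagnosis is right---this is Bourgain's polynomial-growth argument, and the crucial point is an increment inequality $\|D^s\phi(t_{j+1})\|^2\le \|D^s\phi(t_j)\|^2 + C\|D^s\phi(t_j)\|^{2-\gamma}$ on short subintervals that iterates to a polynomial bound. But your execution plan diverges from the paper's in a way that makes your life much harder than necessary.

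The paper's proof is almost entirely your step~(1), carried out completely and then stopped. One sets $u(T,X):=\phi(\varepsilon T,\varepsilon X)$, which converts the semiclassical modified GP equation into a \emph{non}-semiclassical Hartree equation $i\partial_T u=-\tfrac12\Delta_X u+(W*|u|^2)u$ with $W(X)=\varepsilon^3K(\varepsilon X)$, so $\|W\|_{L^1}=\|K\|_{L^1}\lesssim 1$. In the rescaled variables the fixed interval $I$ becomes $[0,|I|/\varepsilon]$, and the initial data satisfy $\|u_0\|_{H^1},\|u_0\|_{H^s}\sim\varepsilon^{-3/2}$. The paper then invokes a separate result (Lemma~\ref{lemma:growth,sobolev norm,K}) of the form $\|u(T)\|_{H^s}\le C_1\|W\|_{L^1}^{c_1}\|u_0\|_{H^1}^{c_2}\|u_0\|_{H^s}\,|T|^{(s-1)/\delta}$, i.e.\ polynomial in \emph{all} of $\|W\|_{L^1}$, $\|u_0\|_{H^1}$, and $T$. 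Plugging in $T\sim\varepsilon^{-1}$ and the rescaled data sizes, and undoing the rescaling, yields~\eqref{equ:polynomial growth,sobolev norm}. All of the delicate $\varepsilon$-bookkeeping you worry about in steps~(2)--(3) disappears, because in the rescaled problem there is no $\varepsilon$ left in the PDE; the only $\varepsilon$-dependence is through the length of the time interval and the size of the data, both of which enter polynomially.

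The supporting lemma is proved via $X^{s,b}$ spaces rather than Strichartz norms: one differentiates $\|u(t)\|_{\dot H^s}^2$ in $t$, bounds the resulting trilinear form by $\|u(t_j)\|_{H^s}\|W*[(\Delta^{s/2}\bar u)u]\,u\|_{X^{0,-b}}$, and then runs the standard ``Case~A/Case~B'' frequency analysis (either a low-frequency factor already carries $\gtrsim|\xi_1|^\delta$, or the modulation weights satisfy $|\tau-|\xi|^2|+|\tau_1+|\xi_1|^2|\gtrsim|\xi_1|^2$). This produces the sublinear exponent $2-\gamma$ cleanly. Your plan to extract the gain from Strichartz norms plus short-time smallness $\tau^\theta$ is a different mechanism and, as you yourself say, requires balancing $\tau\sim\varepsilon^c$ against Sobolev-embedding losses; it is not clear that balance closes without further ideas. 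Note also that the global $L^5_tL^5_x$ bound~\eqref{scattering-norm} you invoke is established in the paper only for the HD regime ($\beta>1$), not for GP/HC, so you cannot lean on it here. The rescaling trick sidesteps all of this: once $\varepsilon$ is gone from the equation, the subinterval length is determined by $\|W\|_{L^1}$ and $\|u_0\|_{H^1}$ alone, and the only thing to verify is that Bourgain's constants depend \emph{polynomially} on those quantities---which the $X^{s,b}$ argument delivers.
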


%% ============================================================================
\section{Estimates for the Pair Excitation Function}\label{appendix:pair_excitation_estimates}
%% ============================================================================
Let us summarize some estimates of the pair excitation function $k_{N,t}^{\ve}(x,y)$ defined as in Equation \eqref{def:pair_excitation_function}. See for example \cite[Appendix B]{benedikter2015quantitative} for more details.

By suppressing some indices, we write the pair excitation function as
\begin{align}\label{def:pair_excitation_function_appendix}
    k_{t}(x, y) =
        -N w_{N, \ell}^{\varepsilon}(x-y)  \phi_t(x)\phi_t(y) \ .
\end{align}
where $w_{N, \ell}^{\varepsilon}(x)$ corresponds to the Neumann Problem \eqref{def:correlation_structure_neumann_problem}, and $\phi_t=\phi_{N, t}^{\varepsilon}$ solves the modified Gross--Pitaevskii equation \eqref{eq:modified_Gross--Pitaevskii}.

\begin{lem}\label{lem:pair_excitation_estimates}
    Suppose the pair excitation function $k_t$ is defined as in Equation \eqref{def:pair_excitation_function_appendix}, then we have the following:
    \begin{enumerate}[$(i)$]
        \item\label{part:pair_excitation_est_for_k} There exists $C>0$, depending only on  the energy $E_0$, and constant $M>0$, such that
            \begin{align}
                &\Nrm{k_t}{\mathrm{HS}} \lesssim 1 ,\\
                &\Nrm{\varepsilon\grad_x k_t}{\mathrm{HS}}+ \Nrm{\varepsilon\grad_y k_t}{\mathrm{HS}}\le C\la^{\frac{1}{2}}\sqrt{N}\ ,\label{equ:estimate on k,H1}\\
                &\sup_{x \in \R^3}\Nrm{k_t(x, \cdot)}{L^2_y}=\sup_{x \in \R^3}\Nrm{k_t( \cdot, x)}{L^2_y} \le C\ve^{-M}\lra{t}^{M}\ ,\label{est:L-inftyL-2_of_k}\\
                &\Nrm{\varepsilon \grad_x(k_t\, \conj{k_t})}{\rm HS}+ \Nrm{\varepsilon \grad_y(k_t\, \conj{k_t})}{\rm HS} \lesssim 1\  . \label{est:derivative_of_composition_of_ks}
            \end{align}

            \item \label{part:pair_excitation_est_for_p} The operators $p_{N, t}^\varepsilon, r_{N, t}^\varepsilon, \sh_{N,t}^\varepsilon$ are Hilbert--Schmidt operators and the following bounds hold
            \begin{align}
                &\Nrm{\sh_{N, t}^{\varepsilon}}{\mathrm{HS}}  \le C \Nrm{k_t}{\rm HS}e^{C\, \Nrm{k_t}{\rm HS}}\lesssim 1 \ ,\label{equ:estimate on u,L2}\\
                &\Nrm{p_{N, t}^{\varepsilon}}{\mathrm{HS}}\le C \Nrm{k_t}{\rm HS}^2e^{C\, \Nrm{k_t}{\rm HS}}\lesssim 1\ ,\\
                &\Nrm{r_{N, t}^{\varepsilon}}{\mathrm{HS}}\le C \Nrm{k_t}{\rm HS}^3e^{C\,\Nrm{k_t}{\rm HS}}\lesssim 1\ .
            \end{align}
            Similarly, we see that $\varepsilon \grad_1 p_{N, t}^\varepsilon, \varepsilon \grad_2 p_{N, t}^\varepsilon, \varepsilon \grad_1r_{N, t}^\varepsilon, \varepsilon \grad_2 r_{N, t}^\varepsilon$ are also Hilbert--Schmidt operators with the bounds
            \begin{align}
                &\Nrm{\varepsilon \grad_\ii p_{N, t}^{\varepsilon}}{\mathrm{HS}}\le C \Nrm{\varepsilon \grad_\ii(k_t\, \conj{k_t})}{\rm HS}e^{C\,\Nrm{k_t}{\rm HS}} \lesssim 1 \ , \label{equ:estimate on p,H1}\\
                &\Nrm{\varepsilon \grad_\ii r_{N, t}^{\varepsilon}}{\mathrm{HS}}\le C \Nrm{\varepsilon \grad_\ii(k_t\, \conj{k_t})}{\rm HS}\Nrm{k_t}{\rm HS}e^{C\, \Nrm{k_t}{\rm HS}}\lesssim 1 \ .\label{equ:estimate on r,H1}
            \end{align}
            \item \label{part:pair_excitation_pointwise_est_for_p} From the definition and Lemma \ref{lem:correlation_structure}, we have the following pointwise bounds
            \begin{align}
                &\n{k_t(x, y)} \le C\min\(N, \frac{\asc_{0}^{\mu}}{\varepsilon^{2}\n{x-y}}\) \chi_{\{\n{x-y}\le \ell\}}\n{\phi_t(x)}\n{\phi_t(y)}\ , \label{est:k_pointwise}\\
                & \n{p_{N, t}^\varepsilon(x, y)} \le  \,  C\varepsilon^{-6}\n{\phi_t(x)} \n{\phi_t(y)}e^{C\,\Nrm{k_t}{\rm HS}}\ ,\\
                & \n{r_{N, t}^\varepsilon(x, y)} \le \, C\varepsilon^{-6}\n{\phi_t(x)} \n{\phi_t(y)}e^{C\,\Nrm{k_t}{\rm HS}}\ . \label{est:r_pointwise}
            \end{align}
            Furthermore, by Lemma~\ref{lem:semiclasscial_propagation_of_regularity}, we also have
            \begin{align}
                &\sup_{x \in \R^3}\Nrm{p_{N, t}^\varepsilon(x, \cdot)}{L^2_y} +\sup_{x \in \R^3}\Nrm{r_{N, t}^\varepsilon(x, \cdot)}{L^2_y}+\sup_{x \in \R^3}\Nrm{\sh_{N, t}^\varepsilon(x, \cdot)}{L^2_y}\le  C\ve^{-M}\lra{t}^{M}\ .
            \end{align}

    \end{enumerate}
\end{lem}

\begin{lem}\label{lem:pair_excitation_estimates_time-derivative}
 Suppose the pair excitation function $k_t$ is defined as in Equation \eqref{def:pair_excitation_function_appendix}, then we have the following
    \begin{enumerate}[$(i)$]
        \item \label{Part:time-derivative_est_for_k} There exists $C>0$, depending only on the energy $E_0$, and constant $M>0$ such that
            \begin{align}
                \Nrm{\varepsilon\dot k_t}{\mathrm{HS}}+\Nrm{\varepsilon^2\ddot k_t}{\mathrm{HS}}
                +\Nrm{\varepsilon^2 \grad_\ii(\dot k_t\, \conj{k_t})}{\rm HS}+ \Nrm{\varepsilon^2 \grad_\ii(k_t\, \conj{\dot k_t})}{\rm HS}\le& C\ve^{-M}\lra{t}^{M} \label{est:HS_time-derivative_k}\
                ,\\
                \Nrm{k_t(-\varepsilon^2\lapl)\conj{k_t}}{\rm HS}+\Nrm{\varepsilon\dot k_t(-\varepsilon^2 \lapl) \conj{k_t}}{\rm HS}\le & C\ve^{-M}\lra{t}^{M} \ .\label{est:HS_time-derivative_and_2spatial-derivative_k}
            \end{align}
            Similarly, we also have the bounds
            \begin{align}
                &\sup_{x \in \R^3}\Nrm{\varepsilon\dot k_t(x, \cdot)}{L^2_y} +\sup_{x \in \R^3}\Nrm{\varepsilon^2\ddot k_t(x, \cdot)}{L^2_y}\lesssim \ve^{-M}\lra{t}^{M}\ .
            \end{align}
            \item \label{Part:time-derivative_est_for_p} The operators $\varepsilon\dot p_{N, t}^\varepsilon, \varepsilon\dot r_{N, t}^\varepsilon$ are Hilbert--Schmidt operators and we have the bounds
            \begin{align}
                &\Nrm{\varepsilon\dot p_{N, t}^{\varepsilon}}{\mathrm{HS}}\le C\Nrm{k_t}{\rm HS} \Nrm{\varepsilon\dot k_t}{\rm HS}e^{C\, \Nrm{k_t}{\rm HS}}\lesssim \ve^{-M}\lra{t}^{M}\ ,\label{est:HS_2-time-derivative_p}\\
                &\Nrm{\varepsilon\dot r_{N, t}^{\varepsilon}}{\mathrm{HS}}\le C \Nrm{k_t}{\rm HS}^2\Nrm{\varepsilon\dot k_t}{\rm HS}e^{C\, \Nrm{k_t}{\rm HS}}\lesssim \ve^{-M}\lra{t}^{M}\ . \label{est:HS_2-time-derivative_r}
            \end{align}
            Similarly, for $\varepsilon^2 \grad_1 \dot p_{N, t}^\varepsilon, \varepsilon^2 \grad_2 \dot p_{N, t}^\varepsilon, \varepsilon^2 \grad_1\dot r_{N, t}^\varepsilon, \varepsilon^2 \grad_2 \dot r_{N, t}^\varepsilon$, we have
            \begin{align}
                \Nrm{\varepsilon^2\grad_\ii\, \dot p_{N, t}^\varepsilon}{\rm HS}+ \Nrm{\varepsilon^2\grad_\ii\, \dot r_{N, t}^\varepsilon}{\rm HS}
                \lesssim \ve^{-M}\lra{t}^{M} \label{est:HS_2-time-derivative_and_spatial-derivative_r}\ .
            \end{align}
            \item \label{Part:time-derivative_pointwise_est_for_k} Moreover, we also have the following estimates
            \begin{align}
                \sup_{x, \in \R^3}\Nrm{\varepsilon\dot p_{N, t}^\varepsilon(x, \cdot)}{L^2_y} +
                \sup_{x, \in \R^3}\Nrm{\varepsilon\dot r_{N, t}^\varepsilon(x, \cdot)}{L^2_y} \lesssim\ve^{-M}\lra{t}^{M}. \notag
            \end{align}
        \end{enumerate}
\end{lem}

\renewcommand{\bibname}{\centerline{Bibliography}}
\bibliographystyle{abbrv}
\bibliography{Euler}

\end{document}